\newcommand\s[1]{$\smash{#1}$}
\newtheorem{thm}{Theorem}[section]
\newtheorem{lem}[thm]{Lemma}
\newtheorem{cor}[thm]{Corollary}
\newtheorem{ppn}[thm]{Proposition}
\theoremstyle{definition}
\newtheorem{dfn}[thm]{Definition}
\newtheorem{rmk}[thm]{Remark}
\def\vec{\underline}
\newcommand{\beq}{\begin{equation}}
\newcommand{\eeq}{\end{equation}}
\newcommand{\I}{\mathbf{1}}
\newcommand{\Ind}[1]{\mathbf{1}\{#1\}}
\newcommand{\f}{\frac}
\renewcommand{\log}{\ln}
\newcommand{\set}[1]{\{#1\}}
\renewcommand{\P}{\mathbb{P}}
\newcommand{\E}{\mathbb{E}}
\newcommand{\Bin}{\mathrm{Bin}}
\newcommand{\RPC}{\mathrm{RPC}}
\newcommand{\PP}{\textup{\textsf{P}}}
\newcommand{\relent}{\mathcal{H}}
\DeclareMathOperator{\Var}{Var}
\DeclareMathOperator{\Cov}{Cov}
\DeclareMathOperator{\supp}{supp}
\newcommand{\st}{\textup{\textsf{t}}}
\newcommand{\bemph}[1]{\textbf{\textup{#1}}}
\newcommand{\hDEL}{\hat{\nabla}}
\newcommand{\dDEL}{\dot{\nabla}}
\newcommand{\lit}{\TTT{L}}
\newcommand{\ulit}{\vec{\smash{\TTT{L}}}}
\newcommand{\TTT}[1]{\textup{\texttt{#1}}}
\newcommand{\zro}{\TTT{0}}
\newcommand{\one}{\TTT{1}}
\newcommand{\free}{\TTT{f}}
\newcommand{\UETA}{\vec{\smash{\ETA}}}
\newcommand{\ux}{\vec{x}}
\newcommand{\ETA}{\mathfrak{y}}
\newcommand{\ww}{\TTT{w}}
\newcommand{\vv}{\TTT{v}}
\newcommand{\rr}{\TTT{r}}
\newcommand{\sw}{\TTT{s}}
\newcommand{\hs}{\hat{\sw}}
\newcommand{\ds}{\dot{\sw}}
\newcommand{\dv}{\dot{\vv}}
\newcommand{\hv}{\hat{\vv}}
\newcommand{\dr}{\dot{\rr}}
\newcommand{\dw}{\dot{\ww}}
\newcommand{\hw}{\hat{\ww}}
\newcommand{\uw}{\vec{\smash{\ww}}}
\newcommand{\udw}{\vec{\smash{\dw}}}
\newcommand{\uhw}{\vec{\hw}}
\newcommand{\hq}{\hat{q}}
\newcommand{\dq}{\dot{q}}
\newcommand{\dz}{\dot{z}} % SP recursion
\newcommand{\dfz}{\dot{\mathfrak{z}}} % SP var mgl
\newcommand{\hfz}{\hat{\mathfrak{z}}} % SP cl mgl
\newcommand{\efz}{\bar{\mathfrak{z}}} % SP edge mgl
\newcommand{\vph}{\varphi}
\newcommand{\dph}{\dot{\vph}}
\newcommand{\hph}{\hat{\vph}}
\newcommand{\eph}{\bar{\vph}}
\newcommand{\GG}{\mathscr{G}} % instance
\newcommand{\HH}{\mathscr{H}} % Hamiltonian
\newcommand{\SOL}{\textsf{S}} % solutions
\newcommand{\LOC}{\textsf{Q}} % locally maxl
\newcommand{\asat}{\alpha_\textup{sat}}
\newcommand{\aG}{\alpha_\textup{Ga}}
\newcommand{\aubd}{\alpha_\textup{ubd}}
\newcommand{\amax}{\alpha_{\max}}
\newcommand{\EE}{\textup{\textsf{E}}}
\newcommand{\EMIN}{\EE_{\min}} 
\newcommand{\ee}{\textup{\textsf{e}}}
\newcommand{\emin}{\ee_{\min}}
\newcommand{\einf}{\ee_\bullet}
\newcommand{\esup}{\ee^\bullet}
\newcommand{\pinf}{\pgs^\bullet}
\newcommand{\psup}{\pgs_\bullet}
\newcommand{\pgs}{\textup{\textsf{p}}}
\newcommand{\pmax}{\textup{\textsf{p}}_{\max}}
\newcommand{\pubd}{\textup{\textsf{p}}_\textup{ubd}}
\newcommand{\eone}{\ee_{\onersb}}
\newcommand{\ealg}{\ee_\textup{alg}}
\newcommand{\elbd}{\ee_\textup{lbd}}
\newcommand{\CC}{\mathbf{c}}
\newcommand{\MM}{\mathcal{M}}
\newcommand{\MMnf}{\mathcal{M}^\bullet}
\newcommand{\MMstar}{\mathcal{M}^\star}
\newcommand{\WP}{\textsf{\footnotesize WP}}
\newcommand{\hWP}{\hat{\WP}}
\newcommand{\dWP}{\dot{\WP}}
\newcommand{\SP}{\textup{\textsf{\footnotesize SP}}}
\newcommand{\dSP}{\hat{\SP}}
\newcommand{\hSP}{\dot{\SP}}
\newcommand{\tree}{\tau}
\newcommand{\tq}{\tilde{q}}
\newcommand{\AM}{\textup{\textsc{am}}}
\newcommand{\GM}{\textup{\textsc{gm}}}
\newcommand{\AAA}{\mathbf{A}}
\newcommand{\GGG}{\mathbf{G}}
\newcommand{\PPP}{\mathbf{P}}
\newcommand{\QQQ}{\mathbf{Q}}
\newcommand{\SSS}{\mathbf{S}}
\newcommand{\RRR}{\mathbf{R}}
\newcommand{\binLAM}{\bar{L}_\AM}
\newcommand{\binLGM}{\bar{L}_\GM}
\newcommand{\LAM}{L_\AM}
\newcommand{\LGM}{L_\GM}
\newcommand{\lam}{\ell_\AM}
\newcommand{\lgm}{\ell_\GM}
\newcommand{\ldam}{\ell_{d,\AM}}
\newcommand{\ldgm}{\ell_{d,\GM}}
\newcommand{\pam}{p_\AM}
\newcommand{\pgm}{p_\GM}
\newcommand{\LdAM}{L_{d,\AM}}
\newcommand{\LdGM}{L_{d,\GM}}
\newcommand{\binLdAM}{\bar{L}_{d,\AM}}
\newcommand{\binLdGM}{\bar{L}_{d,\GM}}
\newcommand{\branch}{\bm{b}} %(d-1)(k-1)
\newcommand{\ep}{\epsilon}
\newcommand{\bep}{\bm{\ep}}
\newcommand{\bup}{\bm{\Upsilon}}
\newcommand{\bde}{\bm{\delta}}
\newcommand{\bpi}{\bm{\pi}}
\newcommand{\bta}{\bm{\tau}}
\newcommand{\bxi}{\bm{\xi}}
\newcommand{\bvp}{\bm{\varpi}}
\newcommand{\bsi}{\bm{\sigma}}
\newcommand{\bB}{\bm{B}}
\newcommand{\hB}{\hat{\bm{B}}}
\newcommand{\dB}{\dot{\bm{B}}}
\newcommand{\bN}{\bm{N}}
\newcommand{\hN}{\hat{\bm{N}}}
\newcommand{\dN}{\dot{\bm{N}}}
\newcommand{\bPi}{\bm{\Pi}}
\newcommand{\bGa}{\bm{\Gamma}}
\newcommand{\bXi}{\bm{\Xi}}
\newcommand{\bTe}{\bm{\Theta}}
\newcommand{\PROJ}{\bm{P}}
\newcommand{\onersb}{\textup{\oldstylenums{1}\textsc{rsb}}}
\newcommand{\tworsb}{\textup{\oldstylenums{2}\textsc{rsb}}}
\newcommand{\frsb}{\textup{\textsc{frsb}}}
\newcommand{\FF}{\mathfrak{F}}
\newcommand{\SIGMA}{\mathfrak{S}}
\newcommand{\RSFF}{\mathfrak{F}^\textup{\textsc{rs}}}
\newcommand{\PhiOne}{\Phi^{\onersb}}
\newcommand{\PhiTwo}{\Phi^{\tworsb}}
\newcommand{\WW}{\mathcal{W}}
\newcommand{\FG}{\mathcal{G}}
\newcommand{\NU}{\nu}
\newcommand{\ZETA}{\zeta}
\title{Breaking of 1RSB in random MAX-NAE-SAT}
\author[Z.\ Bartha]{Zsolt Bartha$^\star$}
\author[N.\ Sun]{Nike Sun$^{\circ\star}$}
\author[Y.\ Zhang]{Yumeng Zhang$^\bullet$}
\thanks{$^{\star}$Statistics Department, Berkeley;
$^{\circ}$Mathematics Department, MIT; 
$^{\bullet}$Statistics Department, Stanford.}
\newcommand{\II}{\textup{\ref{II}}}
\newcommand{\pQ}{\bar{Q}}
\newcommand{\PROB}{\mathscr{P}}
\newcommand{\dF}{\dot{F}}
\newcommand{\dH}{\dot{H}}
\newcommand{\dZ}{\dot{Z}}
\newcommand{\hZ}{\hat{Z}}
\newcommand{\eZ}{\bar{Z}}
\newcommand{\dL}{\dot{L}}
\newcommand{\hJ}{\hat{J}}
\newcommand{\av}[1]{\langle#1\rangle}
\newcommand{\bph}{\acute{\vph}}
\begin{document}

\begin{abstract}
For several models of random constraint satisfaction problems, it was conjectured by physicists and later proved that a sharp satisfiability transition occurs. For random $k$-\textsc{sat} and related models it happens at clause density $\alpha=\alpha_\textup{sat}\asymp 2^k$. Just below the threshold, further results suggest that the solution space has a ``$\onersb$'' structure of a large bounded number of near-orthogonal clusters inside $\set{0,1}^N$.

In the \emph{unsatisfiable} regime $\alpha>\asat$, it is natural to consider the problem of \emph{max-satisfiability}: violating the least number of constraints. This is a combinatorial optimization problem on the random energy landscape defined by the problem instance. For a simplified variant, the \emph{strong refutation} problem, there is strong evidence that an algorithmic transition occurs around $\alpha=N^{k/2-1}$. For $\alpha$ \emph{bounded} in $N$, a very precise estimate of the max-sat value was obtained by Achlioptas, Naor, and Peres (2007), but it is not sharp enough to indicate the nature of the energy landscape. Later work (Sen, 2016; Panchenko, 2016) shows that for $\alpha$ very large (roughly,  $\Omega(64^k)$) the max-sat value approaches the \emph{mean-field} (complete graph) limit: this is conjectured to have an ``$\frsb$'' structure where near-optimal configurations form clusters within clusters, in an ultrametric hierarchy of infinite depth inside $\set{0,1}^N$. A stronger form of \textsc{frsb} was shown in several recent works to have algorithmic implications (again, in complete graphs). Consequently we find it of interest to understand how the model transitions from $\onersb$ near $\asat$, to (conjecturally) $\frsb$ for large $\alpha$. In this paper we show that in the random regular $k$-\textsc{nae-sat} model, the $\onersb$ description breaks down already above $\alpha\asymp 4^k/k^3$. This is proved by an explicit perturbation in the $\tworsb$ parameter space. The choice of perturbation is inspired by the ``bug proliferation'' mechanism proposed by physicists (Montanari and Ricci-Tersenghi, 2003; Krzakala, Pagnani, and Weigt, 2004), corresponding roughly to a percolation-like threshold for a subgraph of dependent variables.
\end{abstract}

\maketitle

\section{Introduction}

A \bemph{random constraint satisfaction problem} (random \textsc{csp}), broadly construed, is any problem specified by $N$ variables subject to $M$ random constraints. We shall consider a prototypical example, \bemph{random regular $k$-\textsc{nae-sat}}, where an instance $\GG_N$ involves $N$ binary variables $x_i\in\set{\zro,\one}$, subject to $M=N\alpha$ random constraints such that each constraint involves a subset of $k$ variables (the formal definition is below). In the \bemph{satisfiable} regime $0\le\alpha\le\asat$, with high probability the solution space is a nonempty (random) subset $\SOL(\GG_N)\subseteq\set{\zro,\one}^N$. It is predicted by physicists \cite{krzakala2007gibbs} to undergo a precise series of sharp structural transitions as $\alpha$ increases between zero and $\asat$. Several of these predictions have now been supported by rigorous results: for example, we point to works on solution geometry \cite{achlioptas2006solution, achlioptas2008algorithmic, MR2823097}, the exact satisfiability threshold $\asat$ \cite{achlioptas2002asymptotic,coja2012catching,MR3440193}, the number of solutions \cite{ssz}, and associated inference problems \cite{MR3818090}. In particular it is known that $\asat = 2^{k-1}\log2 - O(1)$.

In this paper we consider the \bemph{unsatisfiable} regime $\alpha>\asat$, where with high probability the solution space $\SOL(\GG_N)$ is empty. It then becomes natural to study the \bemph{max-satisfiable value}
(or \bemph{ground state energy})
	\[\emin(\GG_N)
	\equiv \f1N \min
	\bigg\{
	\#\Big\{\textup{constraints violated by $\ux$}\Big\}
	: \ux\in\set{\zro,\one}^N\bigg\}\,.
	\]
The computer science literature on this problem has primarily focused on the regime where $\alpha=\alpha_N$ \bemph{diverges} in $N$. In this regime, an easy union bound gives $\emin(\GG_N) = (1-o_N(1))\alpha_N/2^{k-1}$, which allows for a simple phrasing of the so-called \emph{strong refutation} problem \cite{MR2121179}: is there an efficiently computable bound 
$\ealg(\GG_N)\le\emin(\GG_N)$
(for \emph{any} $\GG_N$) which satisfies
$\ealg(\GG_N)= (1+o_N(1))\alpha_N/2^{k-1}$ with high probability for \emph{random} $\GG_N$? An efficient (spectral) strong refutation algorithm  exists above $\alpha_N \approx N^{k/2-1}$ (\cite{MR2286509}, and extended by \cite{allen2015refute}). On the other hand, within a large family of convex programming algorithms (as defined by the \emph{sum-of-squares hierarchy}) it has been shown that many problems of this kind are solvable in subexponential but not polynomial time for $1\ll \alpha_N\ll N^{k/2-1}$ \cite{MR1832812,schoenebeck2008linear,MR3678176,kothari2017sum}.

In the regime where $\alpha$ does \bemph{not} diverge with $N$, very strong bounds on $\emin(\GG_N)$ are given by \cite{MR2295994}, as we will review below. However, the bounds are not quite precise enough to give information about the nature of the energy landscape. More recent results in the spin glass literature \cite{MR3854043,MR3783209} show that for $\alpha$ very large (roughly,  $\Omega(64^k)$) the max-sat value approaches the \bemph{mean-field} (complete graph) limit, which is given by a Parisi-type variational formula \cite{MR3737919} (in the physics literature see  \cite{leuzzi2001k,crisanti20023}). The solution of the mean-field variational formula is conjectured to be ``\bemph{full replica symmetry breaking}'' ($\frsb$), e.g.\ by analogy with the  zero-temperature Sherrington--Kirkpatrick model \cite{auffinger2017sk}. A stronger version of $\frsb$ has been shown in several recent works (in mean-field settings) to have algorithmic implications \cite{addario2018algorithmic,subag2018following,montanari2018optimization}. By contrast, results near the satisfiability threshold \cite{MR3440193,ssz} are consistent only with ``\bemph{one-step replica symmetry breaking}'' ($\onersb$). This is to say that as $\alpha$ increases from $\asat$ to $\infty$, the model must transition from $\onersb$ to $\frsb$; and one may even speculate further on whether the $N^{k/2-1}$ threshold in the algorithmic literature relates to a transition in the type of $\frsb$.

In this paper we study a phenomenon which is proposed in the physics literature as the first transition
beyond $\asat$ in the type of \textsc{rsb}.
It is predicted to occur at an explicit value $\aG$
\cite{MRT,KPW} (termed the \bemph{Gardner transition}, after \cite{gardner1985spin}) --- by a mechanism of \bemph{bug proliferation}, which we describe below. A simple consequence of this prediction is that the ground state energy would coincide with the $\onersb$ value $\eone$ up to $\aG$, but not thereafter. Our main result is a rigorous upper bound on this transition:

\begin{thm}\label{t:main}
For all $k\ge k_0$ (where $k_0$ is an absolute constant), if $\GG_N$ is an instance of random regular $k$-\textup{\textsc{nae-sat}} on $N$ variables subject to $N\alpha$ constraints
(Definition~\ref{d:naesat}), and $\E$ is expectation over $\GG_N$, then the quantity
	\beq\label{e:diff}
	\liminf_{N\to\infty}
	\bigg\{\E[\emin(\GG_N)] - \eone(\alpha)\bigg\}
	\eeq
is well-defined and nonnegative for all $\asat\le\alpha\le4^k/k$. It is strictly positive for all $\aG\le \alpha\le 4^k/k$ where $\aG\asymp 4^k/k^3$.
The formal characterizations of $\eone(\alpha)$ and $\aG$ appear below in Propositions~\ref{p:eone.defn}~and~\ref{p:gardner.defn}.\end{thm}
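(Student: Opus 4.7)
The plan is to use a Guerra--Panchenko-type interpolation to exhibit $\eone(\alpha)$ as the value of a more general $\tworsb$ free energy functional $\PhiTwo$ at a diagonal (degenerate) point, and then to produce a perturbation that strictly improves it. Explicitly, the $\tworsb$ variational problem is defined over probability measures $\MMstar$ (distributions on distributions of cavity messages) with Parisi parameters $m_1\le m_2$, and one has the one-sided bound $\E[\emin(\GG_N)]\ge\PhiTwo(\MMstar)-o_N(1)$ for all admissible $\MMstar$. The $\onersb$ optimum $\MM$ of Proposition~\ref{p:eone.defn} is a fixed point of the $\onersb$ cavity recursion, and the embedding $\MMstar=\delta_{\MM}$ reduces $\PhiTwo(\delta_{\MM})$ to $\PhiOne(\MM)=\eone(\alpha)$. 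This immediately gives the nonnegativity of the liminf in \eqref{e:diff} on the full range $\asat\le\alpha\le4^k/k$.

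\textbf{Bug-proliferation perturbation.} For strict positivity above $\aG$, the plan is to construct an explicit one-parameter family $\MMstar_\ep$ of admissible $\tworsb$ measures with $\MMstar_0=\delta_{\MM}$, and show that $\PhiTwo(\MMstar_\ep)>\PhiOne(\MM)$ for all sufficiently small $\ep>0$. Following the heuristic of \cite{MRT,KPW}, I would construct $\MMstar_\ep$ by ``splitting'' each $\onersb$ cluster into a family of sub-clusters indexed by local ``bug'' configurations --- small, rigid, connected subgraphs of the variable-clause bipartite graph whose assignments are forced to flip jointly by the NAE-SAT constraints surrounding them. The weight on each sub-cluster is dictated by the local partition function of the bug, and $\ep$ parameterizes the density of bug perturbations. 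By construction, this perturbation is transverse to the diagonal embedding and respects the $m_1\le m_2$ admissibility.

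\textbf{Expansion and branching threshold.} Expanding $\PhiTwo(\MMstar_\ep)-\PhiOne(\MM)$ in $\ep$, the linear term vanishes since $\MM$ is a stationary point of $\PhiOne$. The quadratic term is a positive multiple of the spectral gap between the leading eigenvalue of a linearized cavity operator and unity; it is strictly positive exactly when that eigenvalue exceeds $1$. After direct computation in the regular NAE-SAT cavity equations, this spectral condition reduces to a supercriticality criterion for a multi-type Galton--Watson process in which each bug site produces new flippable neighbors through clauses shared with the surrounding $\onersb$ cluster. In the regime $\asat\le\alpha\le4^k/k$, this branching number is, to leading order in $k$, of order $\alpha k^3/4^k$, so crosses unity precisely at $\aG\asymp 4^k/k^3$; above that threshold the quadratic gain is strictly positive and dominates the higher-order $\ep$-corrections.

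\textbf{Main obstacle.} The hard part will be carrying out the quadratic-form analysis sharply enough to locate $\aG$ at the claimed scale $4^k/k^3$. This requires (i) quantitative control of the $\onersb$ cavity distribution $\MM$ on the range $\asat\le\alpha\le4^k/k$, precise enough to extract local marginals around a typical variable; (ii) a combinatorial description of admissible bug subgraphs and tight estimates on the associated local partition functions; and (iii) verification that the constructed $\MMstar_\ep$ remains inside the $\tworsb$ parameter space for all small $\ep$. The delicate step is matching the polynomial factor $k^3$: this requires tracking the logarithmic/polynomial corrections in the cavity expansion against the moments of the bug-branching process, so that the sign change of the quadratic form is pinned down at the right order in $k$, rather than merely at the crude scale $\alpha\asymp 4^k$.
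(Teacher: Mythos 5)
Your overall strategy — use the zero-temperature $\tworsb$ interpolation bound to recover $\eone$ at a degenerate evaluation point, then exhibit a perturbation that strictly improves the bound, with the spectral radius of a linearized operator controlling the sign — is exactly the paper's strategy, and your identification of the threshold scale $\alpha k^3/4^k$ is correct. The gaps are in how the perturbation is actually constructed and expanded; several steps you describe do not work as stated.

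First, the perturbation \emph{cannot} be a spread of $Q$ alone with the Parisi parameters held equal. If $y_1=y_2$, then $\FG(y_1,y_2,Q)$ and $\WW(y_1,y_2,Q)$ depend on $Q$ only through its barycenter $\int\rho\,dQ$, so spreading $Q$ off the point $Q_\II$ has no first-order (or any-order) effect. The paper's small parameter is the split $\ZETA=1-y_1/y_2$, and the moments $\bde,\bep$ of the spread of $Q$ must be tied to $\ZETA$ at a specific scaling ($\bde,\bep=O(\ZETA^2)$, $\bup=O(\ZETA^4)$). Your $\MMstar_\ep$ construction, parametrized by a single density $\ep$ of ``bug'' sub-clusters with the Parisi split mentioned only as an admissibility constraint, does not contain this coupling and therefore never sees the effect you want.

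Second, your claim that ``the linear term vanishes since $\MM$ is a stationary point of $\PhiOne$'' is not how the expansion works here. In the paper's Proposition~\ref{p:two.perturb}, the first-order-in-perturbation quantity $(\I_9,\PROJ(\bPi-y\NU^{-1}\ZETA\bGa)\bta)$ does \emph{not} vanish; it survives and enters the expansion \emph{squared} (with a negative sign from $\log(1+t)=t-t^2/2+\cdots$ after forming the difference $\log\WW - \alpha(k-1)\log\FG$). The crucial cancellation that makes the final sign computation tractable is the algebraic identity $\bPi\bxi=0$ for the stability eigenvector $\bxi$, not stationarity of $\PhiOne$. This reduces the leading-order term from $\ZETA^4$ to $\ZETA^5$, and the sign is then read off from the explicit scalar products \eqref{e:scalar.product.sign}.

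Third, replacing the heuristic ``bug configurations'' with a concrete perturbation is where most of the work lives, and your plan does not address it. The paper chooses the perturbation direction to be the eigenvector $\bxi$ of the $9\times9$ stability matrix $\bB$ with top eigenvalue $\lambda$, and splits $\bxi=\bvp+\bsi$ so that the pieces satisfy the linear constraints \eqref{e:lin.constraints} required for $Q$ to remain a valid probability measure. Without this explicit choice you cannot complete the square into the factor $(\branch\bB-\bm{I})$, which is what makes the branching condition $\branch\lambda>1$ appear rigorously rather than heuristically. You also need to verify that the $O(\ZETA^6)$ errors are uniform so that the $\ZETA^5$ term dominates for small $\ZETA$; your plan is silent on this.

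Finally, a sign note: the paper's interpolation bound is $-\einf\le\PhiTwo$, so improving the lower bound on $\einf$ requires making $\PhiTwo$ strictly \emph{smaller} than $\PhiOne(y,\dq_y)=\FF(y)/y$. Your inequality $\E[\emin]\ge\PhiTwo$ with the conclusion ``$\PhiTwo>\PhiOne$'' is internally consistent only if your $\PhiTwo$ is the negative of the paper's; make sure that convention is held consistently when you track signs of the quadratic form.
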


\noindent We will see soon (\S\ref{ss:intro.anp}) that the ground state energy is naturally parametrized as
	\[\emin= \f{\alpha (1-\pmax)}{2^{k-1}}\]
for $0\le \pmax\le1$. The first assertion of the theorem, the nonnegativity of \eqref{e:diff}, improves on the best previous upper bound on $\pmax$ by a factor  $1-\Omega(x)$ where the correction $\Omega(x)$ reflects the typical sizes of \bemph{clusters} of near-max-satisfiable configurations. We give the basic intuition for this correction in \S\ref{ss:intro.clustering}, and show in \eqref{e:alpha.onersb.correction} that in the regime $2^k k^2 \ll_k\alpha \ll_k 4^k/k$ we expect a correction $x\ge\Omega(1/d^{1/2})$. In \S\ref{ss:comparison.first.mmt} (Corollary~\ref{c:alpha.onersb.correction}) we state a more precise bound for all $\asat\le\alpha\le 4^k/k$.
 
The result relies on an abstract ``interpolation bound'' proved in \cite{ssz}, which was adapted from a combination of  prior works \cite{MR1930572, MR1957729, MR2095932, MR3161470, MR3256814}. Its main consequence, for our purpose, is stated in Proposition~\ref{p:interpolation} below; it involves an optimization over parameters $0\le y_1\le y_2$ and over a large space of probability measures $Q$. We prove Theorem~\ref{t:main} by direct analysis of the bound in a specific region of $(y_1,y_2,Q)$. This seems to bear some resemblance to approaches of \cite{MR3783558,auffinger2017sk},  although only at a high level. Our explicit choice of perturbation is based on the ``bug proliferation'' mechanism proposed by physicists \cite{MRT,KPW}, which we detail in the introductory section below. We leave as an open question to prove the matching lower bound, i.e., to show that $\lim_N\E[\emin(\GG_N)]=\eone(\alpha)$ for all $\asat\le\alpha\le\aG$.

In the remainder of this introductory section we present some guiding heuristics for this model, leading to the formal definitions of $\eone(\alpha)$ and $\aG$. Our discussion is based primarily on \cite{MR2295994}, together with the two papers from the physics literature that describe the bug proliferation mechanism: of the latter, one studies a similar model as here for $k=3,4$ \cite{MRT}, while the other studies the $q$-coloring model \cite{KPW}. We will focus on the combinatorial intuition for $k$-\textsc{nae-sat} which simplifies when $k$ is large. At the end of this section we outline the proof of Theorem~\ref{t:main}. Before proceeding further, we formally define the model:

\begin{dfn}[random regular \textsc{nae-sat}]
\label{d:naesat}
Let $d,k$ be positive integers, and assume $N$ is a positive integer such that $M = Nd/k$ is also integer. A \bemph{random $d$-regular $k$-\textsc{nae-sat}} instance on $N$ variables is encoded by a random bipartite graph $\GG_N$. The vertex set of $\GG_N$ is partitioned into $V=\set{v_1,\ldots,v_N}$ (\bemph{variables}) and $F=\set{a_1,\ldots,a_M}$ (constraints or \bemph{clauses}). The two sets $V,F$ are joined by a set $E$ of random edges, generated according to the ``configuration model'': give $d$ half-edges to each $v\in V$, give $k$ half-edges to each $a\in F$, then take a uniformly random matching between the $V$-incident and $F$-incident half-edges to form a total of $Nd=Mk$ edges. Note that the sampling procedure can result in multi-edges, so $\GG_N$ is more precisely a multi-graph. Finally, assign to each $e\in E$ an independent label $\lit_e$ sampled uniformly from $\set{\zro,\one}$. We denote the instance as $\GG_N=(V,F,E,\ulit)$. For $e\in E$ we write $v(e)$ for the incident variable, and $a(e)$ for the incident clause. We write
	{\setlength{\jot}{0pt}\begin{align*}
	\hDEL e
	\equiv \delta a(e)\setminus e
	&=
	\set{\textup{edges
	incident to $e$ through a clause}}
	\,,\\
	\dDEL e
	\equiv \delta v(e)\setminus e
	&=\set{\textup{edges 
	incident to $e$ through a variable}}
	\,.
	\end{align*}}%
For any variable $v\in V$ we write $\delta v$ for the ordered $d$-tuple of edges incident to $v$, and $\partial v$ for the ordered $d$-tuple of clauses $(a(e))_{e\in\delta v}$. For any clause $a\in F$ we write $\delta a$ for the ordered $k$-tuple of edges incident to $a$, and $\partial a$ for the ordered $k$-tuple of variables $(v(e))_{d\in\delta a}$. If $a\in F$ and $v\in V$ are neighbors joined by a single edge $e$ (as will most often be the case) then we write $e\equiv (av)$. Given a variable assignment $\ux\in\set{\zro,\one}^N$, a clause $a\in F$ is \bemph{violated} if and only if the $k$-tuple $(\lit_e \oplus x_{v(e)})_{e\in\delta a}$ is \bemph{all equal} (all $\zro$ entries or all $\one$ entries). A \bemph{solution} of $\GG_N$ is a variable assignment $\ux\in\set{\zro,\one}^N$ that violates no clauses.
\end{dfn}

\label{d:maxnaesat}
\begin{dfn}[energy lanscape and max-satisfiable value]
Given an instance $\GG_N$
generated as in Definition~\ref{d:naesat}, its \bemph{energy landscape} or \bemph{Hamiltonian} is simply the total count of violated clauses: for $\ux\in\set{\zro,\one}^N$,
	\beq\label{e:ham}
	\HH_N(\ux)
	= \sum_{a\in F} \HH_a(\ux)
	= \sum_{a\in F}
	\I\bigg\{
	\#\Big\{
	e\in\delta a
	: \lit_e \oplus x_{v(e)}=\one
	\Big\} \in\set{0,k}
	\bigg\}\,.
	\eeq
Note that $\HH_N$ is a random function on 
$\set{\zro,\one}^N$ determined by the instance $\GG_N$.
The solutions of $\GG_N$ are precisely the zeroes of $\HH_N$. The \bemph{max-satisfiable value}
(\bemph{ground state energy}) of $\GG_N$ is
	\[\emin(\GG_N)
	\equiv
	\f{\EMIN(\GG_N)}{N}
	\equiv
	\f1N
	\min\bigg\{\HH_N(\ux)
	:\ux\in\set{\zro,\one}^N
	\bigg\}\,.
	\]
Note that $0\le\emin(\GG_N)\le \alpha\equiv d/k$, and $\emin(\GG_N)$ is positive if and only if $\GG_N$ has no proper solutions. Let
	\begin{align*}
	\f{\alpha(1-\pinf(\alpha))}{2^k}
	\equiv \einf(\alpha)
	&\equiv\liminf_{N\to\infty}\E[\emin(\GG_N)]\\
	&\le\limsup_{N\to\infty}\E[\emin(\GG_N)]
	\equiv\esup(\alpha)
	\equiv \f{\alpha(1-\psup(\alpha))}{2^k}
	\,.
	\end{align*}
(If the two sides are equal we write
$\ee_\star(\alpha)\equiv\einf(\alpha)=\esup(\alpha)$.)
We also write  ``$\amax(\pgs)\le\alpha$'' to mean that $\pinf(\alpha')<\pgs$ for all $\alpha'>\alpha$, and similarly ``$\amax(\pgs)\ge\alpha$'' to mean that $\psup(\alpha')>\pgs$ for all $\alpha'<\alpha$.
\end{dfn}

\begin{rmk} Physicists predict that a broad family of random \textsc{csp}s (including (\textsc{nae}-)\textsc{sat}, proper coloring, and independent set)
exhibit qualitatively similar phase diagrams (\cite{krzakala2007gibbs} and refs.\ therein). The existing rigorous literature has proved different aspects of these predictions in different models, including for at least six closely related variants of the model specified in Definition~\ref{d:naesat}: namely,
random regular $k$-\textsc{nae-sat},
random $k$-\textsc{nae-sat},
random regular $k$-\textsc{sat},
random $k$-\textsc{sat},
random regular $k$-hypergraph bicoloring,
random $k$-hypergraph bicoloring.
Throughout this introduction, to simplify the discussion we will (nonrigorously) transfer all existing results to the setting of random regular $k$-\textsc{nae-sat}. It is not unreasonable to expect that a result proved in any of the other models can be reproved in random regular $k$-\textsc{nae-sat}, which is mathematically the simplest of all the six. Certainly, however, none of our formal results relies on this assumption.
\end{rmk}

To explain the basic intuitions underlying this paper, in \S\ref{ss:intro.anp} we review the first moment bound of \cite{MR2295994} in the setting of random regular $k$-\textsc{nae-sat}. We then explain in \S\ref{ss:intro.clustering} why the first moment bound is loose, and a rough heuristic correction. In \S\ref{ss:intro.perc} we explain that when $\alpha$ is not too large the heuristic correction is a reasonable approximation, but it should fail beyond some threshold $\alpha \asymp 4^k/k^3$. In \S\ref{ss:intro.comb} and \S\ref{ss:intro.tree.formula} we explain the more refined heuristic provided by the $\onersb$ combinatorial framework. This leads to the formal definitions of $\eone(\alpha)$ and $\aG$, in \S\ref{ss:intro.formulas} and \S\ref{ss:intro.gardner} respectively. Finally, in \S\ref{ss:intro.interp} we state the interpolation bound and describe the proof approach.

\subsection{First moment bound}\label{ss:intro.anp} 
Throughout this paper we write $f_{n,k}\asymp g_{n,k}$ to indicate that $C^{-1} \le f_{n,k}/g_{n,k} \le C$ for a constant $C$ not depending on $n,k$. We write $f\ll_k g$ to indicate that $\lim_{k\to\infty} f/g=0$. We parametrize
	\beq\label{e:cc.parametrization}
	\CC\equiv \f{\alpha}{2^{k-1}\log2}\,,
	\quad
	\ee\equiv \f{ \alpha(1-\pgs)}{2^{k-1}}\,.
	\eeq
To explain the above parametrization of $\ee$,
consider an instance $\GG_N$ of $d$-regular random $k$-\textsc{nae-sat}, and let $\HH_N$ be its Hamiltonian defined by \eqref{e:ham} above. For any \emph{fixed} $\ux\in\set{\zro,\one}^N$, the number of constraints that it violates is distributed as $\HH_N(\ux) \sim \Bin(M, 1/2^{k-1})$, so $\E\HH_N(\ux)/N = \alpha/2^{k-1}$. Therefore it is certainly the case that $\E[\emin(\GG_N)] \le \alpha/2^{k-1}$, so it is natural to parametrize energies as in \eqref{e:cc.parametrization}. Now, following \cite{MR2295994}, for any given energy level $0\le\ee\le\alpha$, and any $0<\eta\le 1$, we can consider
	\beq\label{e:def.anp.reweighting}
	X_{\ee,\eta}
	= \sum_{\ux\in\set{\zro,\one}^N}
	\I\bigg\{\f{\HH_N(\ux)}{N} \le \ee\bigg\}
	\f{\eta^{\HH_N(\ux)}}{\eta^{N\ee}}
	\ge
	\#\bigg\{\ux \in\set{\zro,\one}^N:
	\f{\HH_N(\ux)}{N} \le \ee\bigg\}
	\equiv Y_\ee\,.\eeq
If $\E$ is expectation over the random instance $\GG_N$, then
	\beq\label{e:ANP.first.moment.f.eta}
	\E X_{\ee,\eta}
	\le \E
	\sum_{\ux\in\set{\zro,\one}^N}
	\f{\eta^{\HH_N(\ux)}}{\eta^{N\ee}}
	= \f{2^N}{\eta^{N\ee}}
	\bigg\{
	1-\f{2}{2^k}+\f{2\eta}{2^k}
	\bigg\}^{N\alpha}
	= \exp\bigg\{
	N \textsf{f}_\eta(\alpha,\ee)
	\bigg\}\,.\eeq
If $\alpha,\ee$ are fixed, a stationary point of 
$\textsf{f}$ as a function of $\eta$ is given by
	\beq\label{e:anp.eta.star}
	\eta
	=\eta(\alpha,\ee)
	=\f{\ee(2^{k-1}-1)}{\alpha-\ee}
	=
	\f{(1-\pgs)(2^{k-1}-1)}
	{2^{k-1}-(1-\pgs)}
	\equiv \eta(\pgs)\,.
	\eeq
Setting $\textsf{f}_{\eta(\pgs)}(\alpha,\ee)=0$ gives the relation $\alpha=\aubd(\pgs)
	\equiv \CC(\pgs) \cdot 2^{k-1}\log2$ where
	\beq\label{e:rs.alpha.ubd}
	\CC(\pgs)
	\equiv\f1{(2^{k-1}-(1-\pgs))
	\log \f{2^{k-1}-(1-\pgs)}{2^{k-1}-1}
	+(1-\pgs)\log(1-\pgs)}
	\le
	\f1{\pgs+(1-\pgs)\log(1-\pgs)}\,.
	\eeq
Note that $\CC(\pgs)$ is strictly decreasing with respect to $\pgs$, since
	\[
	\f{d}{d\pgs}\f1{\CC(\pgs)}
	= \log \f{2^{k-1}-(1-\pgs)}{(2^{k-1}-1)(1-\pgs)}
	>0\,.
	\]
We have $\CC(\pgs)\uparrow\infty$ as $\pgs\downarrow0$, and
	\[
	\CC(1)
	= \f1{-2^{k-1} \log(1-1/2^{k-1})}
	= 1 - \Theta(2^{-k})\,.
	\] 
Therefore the inverse function is well-defined for all $\alpha\ge \CC(1) 2^{k-1}\log 2$, and we denote it as
	\beq\label{e:anp.e.lbd}
	\elbd(\alpha)
	\equiv \f{\alpha(1-\pubd(\alpha))}{2^{k-1}}
	\equiv (\aubd)^{-1}(\alpha)\,.
	\eeq
Since $\textsf{f}_\eta$ is decreasing in $\alpha$,
we conclude that $\textsf{f}_{\eta(\pgs)}(\alpha,\ee)<0$ for all $\alpha>\aubd(\pgs)$. For any such $\alpha$, Markov's inequality gives that
$\P(Y_\ee>0)\le \E Y_\ee\le \E X_{\ee,\eta}$ is exponentially small with respect to $N$. We can summarize the above as

\begin{lem}\label{l:anp.ubd}
If $\GG_N$ is random regular $k$-\textup{\textsc{nae-sat}} on $N$ variables subject to $N\alpha$ constraints, then
	\beq\label{e:anp.ubd}
	\liminf_{N\to\infty}
	\E[\emin(\GG_N)]
	=\liminf_{N\to\infty}
	\E\bigg[
	\min\bigg\{
	\ee\ge0
	: Y_\ee >0
	\bigg\}\bigg]
	\ge 
	\elbd(\alpha)
	\eeq
as defined by \eqref{e:anp.e.lbd}. (In the shorthand of Definition~\ref{d:naesat}, we have $\einf(\alpha)\ge\elbd(\alpha)$.)
\end{lem}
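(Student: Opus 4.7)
The lemma essentially summarizes the first-moment argument already laid out in the preceding text, so my plan has two parts: justify the pointwise identity, then run the Markov bound on the reweighted partition function.

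\emph{The equality.} For any realization of $\GG_N$, the Hamiltonian $\HH_N$ takes only finitely many nonnegative integer values, so the set $\{\ee\ge 0:Y_\ee>0\}$ equals $[\emin(\GG_N),\infty)$ with minimum attained at $\emin(\GG_N)$. Hence $\min\{\ee\ge 0:Y_\ee>0\}=\emin(\GG_N)$ as random variables, and the equality in \eqref{e:anp.ubd} follows by taking expectations and then $\liminf_N$.

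\emph{The inequality.} Fix any $0<\pgs<1$ with $\alpha>\aubd(\pgs)$, set $\ee\equiv\alpha(1-\pgs)/2^{k-1}$, and take the reweighting $\eta=\eta(\pgs)$ from \eqref{e:anp.eta.star}; note that $\eta(\pgs)<1$ whenever $\pgs>0$, as can be verified directly from \eqref{e:anp.eta.star}. By the definition \eqref{e:def.anp.reweighting} we then have $Y_\ee\le X_{\ee,\eta(\pgs)}$ pointwise, and \eqref{e:ANP.first.moment.f.eta} gives $\E X_{\ee,\eta(\pgs)}=\exp(N\,\textsf{f}_{\eta(\pgs)}(\alpha,\ee))$. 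The value $\aubd(\pgs)$ is chosen precisely so that $\textsf{f}_{\eta(\pgs)}(\aubd(\pgs),\ee)=0$; since the per-constraint factor $1-2(1-\eta)/2^k$ in \eqref{e:ANP.first.moment.f.eta} is strictly less than $1$ for $\eta(\pgs)<1$, $\textsf{f}_\eta$ is strictly decreasing in $\alpha$, so $\textsf{f}_{\eta(\pgs)}(\alpha,\ee)<0$ for our $\alpha>\aubd(\pgs)$. Markov's inequality then yields $\P(Y_\ee>0)\le\E Y_\ee\le\E X_{\ee,\eta(\pgs)}\to 0$ exponentially in $N$.

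\emph{Conclusion.} For any $\ee'<\elbd(\alpha)$, the unique $\pgs$ with $\ee'=\alpha(1-\pgs)/2^{k-1}$ satisfies $\alpha>\aubd(\pgs)$ by the strict monotonicity of $\CC(\pgs)$ recorded after \eqref{e:rs.alpha.ubd}. The previous step gives $\P(\emin(\GG_N)\le\ee')\to 0$, and since $\emin\ge 0$ deterministically, $\E[\emin(\GG_N)]\ge\ee'\,\P(\emin(\GG_N)>\ee')\to\ee'$; sending $\ee'\uparrow\elbd(\alpha)$ completes the proof. No substantive obstacle arises, as the only nontrivial choice---the reweighting exponent---is already pinned down by the stationary-point computation \eqref{e:anp.eta.star}; the interesting question, driving the rest of the paper, is whether this first-moment bound is in fact sharp.
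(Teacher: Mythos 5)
Your proof takes exactly the paper's route --- the reweighted count $X_{\ee,\eta}$, the Markov bound, and inverting $\aubd$ to get $\elbd$ --- and both the pointwise equality $\min\{\ee\ge0:Y_\ee>0\}=\emin(\GG_N)$ and the final limiting step (bounding $\E[\emin]$ from below and sending $\ee'\uparrow\elbd(\alpha)$) are cleanly justified.

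There is, however, a misstatement in the Markov step that a careful reader would get stuck on. You fix $\pgs$ and set $\ee=\alpha(1-\pgs)/2^{k-1}$, so $\ee$ is tied to the \emph{given} $\alpha$. You then claim $\textsf{f}_{\eta(\pgs)}(\aubd(\pgs),\ee)=0$ and invoke monotonicity-in-$\alpha$ via the per-constraint factor $1-2(1-\eta)/2^k<1$. Neither claim fits your parametrization. The defining relation is $\textsf{f}_{\eta(\pgs)}\big(\aubd(\pgs),\aubd(\pgs)(1-\pgs)/2^{k-1}\big)=0$; since $-\log\eta(\pgs)>0$ and your $\ee$ is strictly larger than $\aubd(\pgs)(1-\pgs)/2^{k-1}$ (because $\alpha>\aubd(\pgs)$), one actually has $\textsf{f}_{\eta(\pgs)}(\aubd(\pgs),\ee)>0$. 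And the per-constraint-factor argument shows $\textsf{f}_\eta$ decreases in $\alpha$ only when $\ee$ (and $\eta$) are held fixed --- but your $\ee$ scales with $\alpha$, and the term $-\ee\log\eta(\pgs)$ is then \emph{increasing} in $\alpha$, so decrease is not immediate from that observation. The conclusion is recoverable: with $\ee$ tied to $\alpha$ the map $\alpha\mapsto\textsf{f}_{\eta(\pgs)}(\alpha,\alpha(1-\pgs)/2^{k-1})$ is affine in $\alpha$, and since its value at $\aubd(\pgs)$ vanishes by definition, its slope equals $-\log2/\aubd(\pgs)<0$, giving $\textsf{f}_{\eta(\pgs)}(\alpha,\alpha(1-\pgs)/2^{k-1})=(\log2)(1-\alpha/\aubd(\pgs))<0$ for $\alpha>\aubd(\pgs)$. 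Alternatively --- and this is closer to what the paper's one-line remark means --- freeze $\ee_0\equiv\aubd(\pgs)(1-\pgs)/2^{k-1}$, for which the per-constraint-factor monotonicity does apply and $\textsf{f}_{\eta(\pgs)}(\alpha,\ee_0)<0$; then send $\pgs\downarrow\pubd(\alpha)$ so that $\ee_0\uparrow\elbd(\alpha)$. Either fix makes the argument airtight.
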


Lemma~\ref{l:anp.ubd} is the first moment bound from \cite{MR2295994}, transferred to the setting of regular \textsc{nae-sat}. Recalling \eqref{e:cc.parametrization}, if $\CC$ is large then the expression
\eqref{e:anp.e.lbd} 
for $\pubd$ can be approximated by
	\[
	\f1\CC
	= g(\pubd)
	= \pubd+(1-\pubd)\log(1-\pubd)
	= \f{(\pubd)^2}{2}
	\bigg\{1 + O(\pubd)\bigg\}
	\,,
	\]
so that $\pubd = (2/\CC)^{1/2} + O(1/\CC)$. We point out that \cite{MR2295994} studies the more difficult model of random $k$-\textsc{sat}, and their main result is a much more challenging lower bound, which is done by the second moment method. Translating their full result to our model would give
	\beq\label{e:anp.result}
	\aubd(\pgs)
	\bigg\{1 + O\bigg( \f{k}{2^{k/2}}\bigg)
	\bigg\}\le \amax(\pgs) \le \aubd(\pgs)\,.
	\eeq
We will not seek to rigorously prove the lower bound in \eqref{e:anp.result}, since we expect it to be easier than the lower bound already achieved by \cite{MR2295994}. The more interesting open problem is to establish that $\eone(\alpha)$ is tight for $\alpha\le\aG$.

\subsection{Clustering of near-max-satisfiable configurations}\label{ss:intro.clustering} We next describe the intuition for why the first moment bound  \eqref{e:anp.ubd} cannot be exactly sharp. Suppose for the sake of argument that it is. Let $\ee=\elbd$ and $\eta=\eta(\pubd)$ as above. Any $\ux\in\set{\zro,\one}^N$ that contributes to $X_{\ee,\eta}$ will be max-satisfiable, so it certainly must satisfy the weaker condition of being \bemph{locally max-satisfiable}, in the sense that flipping any single variable $x_v$ cannot decrease the number of violated constraints. Explicitly, let $F_\zro$ be the number of clauses incident to $v$ which are satisfied only if $x_v=\zro$:
	\[F_\zro
	= \#\bigg\{
	e\in\delta v:
	\#\Big\{g\in\hDEL(e):
	\lit_e \oplus \lit_g \oplus x_{v(g)} = \one
	\Big\} = k-1
	\bigg\}\,,
	\]
and similarly $F_\one$. The spin $x_v=\zro$ is locally max-satisfiable if and only if $F_\zro\ge F_\one$. 
Let $X_{\ee,\eta}(x,\ell_\zro,\ell_\one)$
denote the contribution to $X_{\ee,\eta}$ from configurations $\ux$ with $(x_v,F_\zro,F_\one)=(x,\ell_\zro,\ell_\one)$. By taking expectation only over the edge labels $\lit_e$ around the clauses neighboring $v$, we find
	\beq\label{e:first.mmt.zero}
	\E\bigg\{ X_{\ee,\eta}
	(\zro,\ell_\zro,\ell_\one)\bigg\}
	=C_N
	\Ind{\ell_\zro\ge\ell_\one}
	\binom{d}{\ell_\zro,\ell_\one}
	\bigg(
	1-\f4{2^k}
	\bigg)^{d-\ell_\zro-\ell_\one}
	\bigg(\f2{2^k}\bigg)^{\ell_\zro}
	\bigg(\f{2\eta}{2^k}\bigg)^{\ell_\one}
	\eeq
where $C_N$ is a factor not depending on $\ell_\zro,\ell_\one$, and
for any $a_1+\ldots+a_t\le b$ we abbreviate
	\[
	\binom{b}{a_1,\ldots,a_t}
	= \f{b!}{a_1! 
	\cdots a_t! (b-a_1-\ldots-a_t)!}\,.
	\]
Summing \eqref{e:first.mmt.zero} over $\ell_\zro\ge\ell_\one$, we find that the total expected contribution to $X_{\ee,\eta}$ from configurations with $x_v=\zro$ is
	\begin{align*}
	\E\bigg\{
	X_{\ee,\eta}(x_v=\zro)\bigg\}
	&= C_N
	\sum_{0\le\ell\le d}
	\binom{d}{\ell}
	\bigg(
	1-\f4{2^k}
	\bigg)^{d-\ell}
	\bigg(\f{2(1+\eta)}{2^k}\bigg)^\ell
	P_{\eta,\ell}\,,\\
	P_{\eta,\ell}
	&= \P\bigg(\Bin\bigg(\ell,
	\f{\eta}{1+\eta}
	\bigg)\le\f{\ell}{2}\bigg)\,.\end{align*}
Simply using the crude bound $1/2 \le P_{\eta,\ell} \le 1$ gives
	\[
	\E\bigg\{X_{\ee,\eta}(x_v=\zro)\bigg\} 
	\asymp C_N
	\bigg(
	1-\f{4}{2^k} + \f{2(1+\eta)}{2^k}
	\bigg)^d\,.
	\]
Now note that if $F_\zro=F_\one$ then variable $v$ is \emph{free}, meaning that flipping $x_v$ alone does not change the total number of violated constraints. Summing \eqref{e:first.mmt.zero} over $\ell_\zro=\ell_\one=\ell/2$ gives
	\[
	\E\bigg\{ X_{\ee,\eta}
	(x_v=\zro,\textup{$v$ is free})\bigg\}
	=C_N
	\sum_{\ell\textup{ even}}
	\binom{d}{\ell}
	\bigg(
	1-\f4{2^k}
	\bigg)^{d-\ell}
	\bigg(\f{4\eta^{1/2}}{2^k}\bigg)^\ell
	P_\ell
	\]
where $P_\ell = \P(\Bin(\ell,1/2)=\ell/2)\asymp1/\ell^{1/2}$. Now assume that $\CC$ is large, so  $\pgs$ is small and we see from \eqref{e:anp.eta.star} that $\eta\asymp1$.  Without the factor $P_\ell$, the above sum is dominated by $\ell \asymp d\eta^{1/2}/2^k\asymp d/2^k$. Accounting for $P_\ell$ results in
	\[\E\bigg\{X_{\ee,\eta}
	(x_v=\zro,\textup{$v$ is free})\bigg\}
	\asymp
	\f{C_N}{(d/2^k)^{1/2}}
	\bigg(
	1-\f{4}{2^k} + \f{4\eta^{1/2}}{2^k}
	\bigg)^d\,.
	\]
(A more careful version of this calculation appears in Section~\ref{s:recursions}.) This would suggest the typical fraction of variables that are free is something like
	\begin{align*}
	\pi_\free
	&\asymp
	\f{\E[X_{\ee,\eta}(x_v=\zro,\textup{$v$ is free})]}
	{\E[X_{\ee,\eta}(x_v=\zro)]}
	\asymp
	\f1{(d/2^k)^{1/2}}
	\bigg(
	1-\f{4}{2^k} + \f{4\eta^{1/2}}{2^k}
	\bigg)^d
	\bigg/\bigg(
	1-\f{4}{2^k} + \f{2(1+\eta)}{2^k}
	\bigg)^d\\
	&=
	\f1{(d/2^k)^{1/2}}
	\bigg(
	1 - \f{2(1-\eta^{1/2})^2/2^k}
	{1-4/2^k + 2(1+\eta)/2^k}
	\bigg)^d
	\,,
	\end{align*}
If we assume that $k^2 \ll_k \CC \ll_k 2^k/k$, then the above simplifies to
	\begin{align*}
	\pi_\free
	&\asymp \f1{(d/2^k)^{1/2}}
	\bigg(
	1 - \f{2}{2^k\CC}
	\bigg[1 + 
	O\bigg(\f1{\CC^{1/2}}\bigg)
	\bigg]
	\bigg)^d\\
	&=\f1{(d/2^k)^{1/2}}
	\exp\bigg\{
	-\f{d}{2^{k-1}\CC}
	\bigg[1 + 
	O\bigg(\f1{\CC^{1/2}}\bigg)
	\bigg]
	+ O\bigg(\f{d}{4^k\CC^2}\bigg)
	\bigg\}
	\asymp \f1{d^{1/2}}\,.\end{align*}
Suppose the configuration $\ux$ has order $N/d^{1/2}$ free variables. Suppose for simplicity that they \emph{do not interact}, meaning that \emph{flipping any subset of free variables does not change the total number of violated constraints}. We will examine the validity of this supposition in \S\ref{ss:intro.perc}, but we simply grant it for now. This would mean that for a typical max-satisfiable configuration $\ux$ we can find at least $2^{N\pi_\free}$  nearby configurations $\ux'$ with $\HH_N(\ux)=\HH_N(\ux')$. But this would mean $\E X_{\ee,\eta}\ge2^{N\pi_\free}$,
in contradiction with our choice of $\ee=\elbd$
and $\eta=\eta(\pubd)$ which ensures  that
$\E X_{\ee,\eta}$ is exponentially small in $N$. This suggests that $\elbd$ (or equivalently its inverse $\aubd$) cannot be tight bounds; our main theorem verifies this by establishing the lower bound $\einf(\alpha)>\elbd(\alpha)$. The above calculation suggests that $\exp\{N \textsf{f}_\eta(\alpha,\ee)\}$ overestimates the typical value of $X_{\ee,\eta}$ by at least a factor $2^{N\pi_\free}$ where $\pi_\free\asymp1/d^{1/2}$, which suggests, in the regime $k^2\ll_k\CC\ll_k 2^k/k$ (equivalently $2^k k^2 \ll_k \alpha\ll 4^k/k$), that
	\beq\label{e:alpha.onersb.correction}
	\amax(\pgs)
	\le\bigg\{
	1-\Omega\bigg(\f{1}{d^{1/2}}\bigg)
	\bigg\}
	\aubd(\pgs)\,.
	\eeq
In \S\ref{ss:comparison.first.mmt} (Corollary~\ref{c:alpha.onersb.correction})
we prove a rigorous bound which covers the full regime
$\asat\le\alpha\le 4^k/k$, and agrees with 
\eqref{e:alpha.onersb.correction}
for $k^2\ll_k\CC\ll_k 2^k/k$. In fact in this regime we conjecture the estimate $\Omega(1/d^{1/2})$ to be tight.

\subsection{Percolation of dependent free variables}
\label{ss:intro.perc}

We now revisit the above assumption that the free variables do not interact. Take a clause $a$ with no incident multi-edges (as will be the case for most clauses), and suppose it neighbors two free variables $v\ne w$. If the values of $\lit_{au}\oplus x_u$ for $u\in\partial a \setminus \set{v,w}$ are all $\zro$ or all $\one$, then $x_v$ and $x_w$ are \bemph{linked}, meaning they cannot both be arbitrarily flipped without increasing the number of violated constraints. For a free variable $v$, the number of linked free variables $w$ sharing a clause with $v$ is (on average, heuristically)
	\beq\label{e:percrate}
	r =
	\bigg\{(d-1)(k-1)\bigg\} \times
	\pi_\free \times
	\f1{2^{k-2}}
	\asymp \f{d^{1/2}k}{2^k}\,,
	\eeq
where the factor $(d-1)(k-1)$ accounts for the branching factor of the underlying graph $\GG_N$. We view the process of linked free variables as a dependent percolation on $\GG_N$ spreading at rate $r$ given by \eqref{e:percrate}. As long as the rate is small, corresponding to $d \ll_k 4^k/k^2$ or 
	\[
	\alpha \ll_k \f{4^k}{k^3}\,,
	\]
we would expect the percolation to be \bemph{subcritical}, in the sense that the
free subgraph
---
the subgraph of $\GG_N$ induced by 
free variables and linking clauses
---
is mostly a \bemph{forest} of $\tilde{O}(1)$-sized trees. Moreover, roughly a $(1-r)$-fraction of free variables should be \bemph{isolated} (not linked to any other frees), so for small $r$ it is a reasonable approximation to assume that none of the free variables interact.

As we detail in \S\ref{ss:intro.comb} below, in the context of the current problem, the $\onersb$ framework is simply a convenient combinatorial model for the free subgraph, which captures the effect of free variables on the total energy in a well-organized manner. It yields the prediction that the limiting ground state energy is \emph{exactly} $\eone(\alpha)$, where $\eone(\alpha)$ is an explicit function defined below in Proposition~\ref{p:eone.defn}. The threshold $\aG$, given formally by Proposition~\ref{p:gardner.defn}, is an explicit prediction of the exact percolation threshold for the $\onersb$ combinatorial model. The derivation of $\eone(\alpha)$ relies crucially on the assumption that the free subgraph \emph{is} essentially a forest, which should not be the case beyond $\aG$. This is the basic intuition for our main result which verifies that $\eone(\alpha)$ is indeed incorrect beyond $\aG$.

We remark that it is a much more challenging problem to obtain a sharper estimate of the asymptotic ground state energy in the regime $\alpha>\aG$. The main result that we know of was obtained for the random $k$-\textsc{sat} model \cite{MR3783209} (see also \cite{MR3854043}) by comparison with mean-field limits \cite{leuzzi2001k,crisanti20023}; from the discussion in \cite{MR3783209} the estimate requires roughly $\alpha\ge\Omega(64^k)$. A related result was obtained for the max-cut problem by \cite{MR3630296}, for random  graphs of large degree. It remains a difficult challenge to understand the regime between the mean-field (i.e., complete graph) limits and $\aG$.

Having laid out the basic intuitions for the model, we next proceed to define the $\onersb$ combinatorial framework. We emphasize that the $\onersb$ model itself is a heuristic, which plays no formal role in the proof of our main result. We introduce it because it is the quickest way to motivate the exact definitions of $\eone$ and $\aG$. We point to \cite{MR2518205} for an introductory account and further references on the $\onersb$ framework.

\subsection{Combinatorial model of near-max-satisfiable clusters}\label{ss:intro.comb}

Following our earlier discussion, we now restrict attention to the subspace $\LOC(\GG_N)\subseteq\set{\zro,\one}^N$ of configurations that are  locally max-satisfiable.
Define a graph on vertex set $\LOC$ by putting an edge between $\ux$ and $\ux'$ if and only if they differ in a single coordinate and $\HH_N(\ux)=\HH_N(\ux')$. A \bemph{(locally max-satisfiable) cluster} is any subset $\omega\subseteq\LOC$ that constitutes a maximal connected component in that graph. The $\onersb$ heuristic models a cluster as follows:

\begin{dfn}[warning configurations]
Suppose $\GG_N=(V,F,E,\ulit)$ is any $d$-regular $k$-\textsc{nae-sat} problem instance. A \bemph{warning configuration} on $\GG_N$ is an element $\uw \in \set{\zro,\one,\free}^{2E}$ which assigns a pair $\ww_e\equiv(\dw_e,\hw_e)$ to each edge $e\in E$, satisfying conditions that we now specify. We take the convention throughout that  $x\oplus\free\equiv\free$. Define
	\[
	\ell_x(\hw_1,\ldots,\hw_{d-1})
	\equiv 
	\#\bigg\{1\le i\le d-1 : \hw_i=x\bigg\}.
	\]
Then $\uw$ is a valid warning configuration if and only if it satisfies variable relations
	\beq\label{e:wp.var}
	\dw_e
	=\dWP\Big(\hw_g : g\in\dDEL(e)\Big)
	=\begin{cases}
	\zro &
	\ell_\zro(\hw_g: g\in\dDEL(e))
	>\ell_\one(\hw_g : g\in\dDEL(e))\,,
	\\
	\one &
	\ell_\zro(\hw_g : g\in\dDEL(e))
	<\ell_\one(\hw_g : g\in\dDEL(e))
	\,,\\
	\free &
	\ell_\zro(\hw_g : g\in\dDEL(e))
	=\ell_\one(\hw_g : g\in\dDEL(e))\,,
	\end{cases}
	\eeq
as well as clause relations
	\beq\label{e:wp.clause}
	\hw_e
	=\hWP\Big(\dw_g : g\in\hDEL(e)\Big)
	=\begin{cases}
	\zro &
	\lit_g\oplus\dw_g=\lit_e\oplus\one
	\textup{ for all }g \in \hDEL(e)\,,\\
	\one &
	\lit_g\oplus\dw_g=\lit_e\oplus\zro
	\textup{ for all }g \in \hDEL(e)\,, \\
	\free &\textup{otherwise}\,,
	\end{cases}\eeq
for all $e\in E$.  We may write $\hWP\equiv\hWP_{k-1}$ and $\dWP\equiv\dWP_{d-1}$ to emphasize the number of arguments. Given $\uw$ let
	\[\UETA
	\equiv
	\UETA(\uw)\equiv
	\bigg(
	\dWP_d(\hw_e : e\in\delta v)
	\bigg)_{v\in V}
	\in\set{\zro,\one,\free}^N\,.
	\]
If $\#\set{v\in V : \ETA_v = \free} \le N/k^2$ then we say that $\uw$ is \bemph{near-frozen}.
\end{dfn}

Under the $\onersb$ heuristic, there is essentially a bijective correspondence
	\beq\label{e:bij}
	\bigg\{\begin{array}{c}
	\textup{locally max-satisfiable}\\
	\textup{clusters $\omega\subseteq\LOC
		\subseteq\set{\zro,\one}^N$}
	\end{array}
	\bigg\}
	\leftrightarrow
	\bigg\{
	\begin{array}{c}
	\textup{near-frozen warning}\\
	\textup{configurations
	$\uw\in\set{\zro,\one,\free}^{2E}$}
	\end{array}
	\bigg\}
	\eeq
between clusters $\omega$ and near-frozen warning configurations $\uw$. A loose characterization of the correspondence is that
$\UETA\equiv\UETA(\uw)$ encodes the smallest subcube of $\set{\zro,\one}^N$ containing $\omega$: $\ETA_v\in\set{\zro,\one}$ if and only if $x_v=\ETA_v$ for all $\ux\in\omega$, and $\ETA_v=\free$ 
if and only if $x_v$ takes both values $\set{\zro,\one}$. A more precise interpretation is that
	{\setlength{\jot}{0pt}\begin{align*}
	\dw_e
	&= \textup{\bemph{variable-to-clause
	warning} along $e$}\\
	&=\textup{locally optimal choice within $\omega$ of $x_{v(e)}$ in absence of edge $e$,}\\
	\hw_e
	&= \textup{\bemph{clause-to-variable warning} along $e$}\\
	&=\textup{locally optimal choice within $\omega$ of  $x_{v(e)}$ ``in absence of'' edges $\dDEL(e)$,}
	\end{align*}}%
where $\free$ means that both spins $\set{\zro,\one}$ are locally optimal. Under this interpretation, the $\dw,\hw$ must then satisfy local consistency relations, which are the so-called \bemph{warning propagation} (\textsc{wp}) equations \eqref{e:wp.clause} and \eqref{e:wp.var}.
The near-frozen restriction rules out configurations such as $\uw=\free^{2E}$ (all messages $\free$) which we do not expect to correspond to any actual cluster.

\subsection{Tree formula for the max-satisfiable value}
\label{ss:intro.tree.formula}

To give an explicit calculation, let $\tree=(V',F',E')$ be a finite bipartite tree (representing an $O(1)$-sized subgraph of $\GG_N$) with variables at its leaves. Say $\tree$ has a \bemph{frozen boundary}, in the sense  that $\dw_e\in\set{\zro,\one}$ is fixed at every leaf edge $e$. By applying the maps $\hWP,\dWP$ recursively inwards from the leaves, we see that there is exactly one valid warning configuration $\uw$ on $\tree$ that is consistent with the boundary condition.  Let $\EMIN(\tree)$ be the minimum number of clauses violated by any configuration $\ux\in\set{\zro,\one}^{V'}$ with $x_{v(e)}=\dw_e$ at the leaves. We next explain that $\EMIN(\tree)$ can be computed by a simple  dynamic-programming-type method.

Let $E''$ be the set of non-leaf edges of $\tree$.
For any $e\in E''$ we let
$\hat{\tree}_e$ be the component containing $a(e)$ in $\tree\setminus\dDEL(e)$, and let $\dot{\tree}_e$ be the component containing $v(e)$ in $\tree\setminus e$. Let $\hat{\EE}_e=\EMIN(\hat{\tree}_e)$ and $\dot{\EE}_e=\EMIN(\dot{\tree}_e)$. If $V''$ denotes the non-leaf variables of $\tau$,
around any $v\in V''$ we have
	\beq\label{e:sp.var.penalty}
	\EMIN(\tau)
	= \dph(\uhw_{\delta v})
	+\sum_{e\in\delta v}\hat{\EE}_e\,,\quad
	\dph(\uhw_{\delta v})
	\equiv
	\min\bigg\{
	\ell_\zro(\uhw_{\delta v}),
	\ell_\one(\uhw_{\delta v})
	\bigg\}\,.\eeq
Similarly, around any clause $a\in F'$, we have
	\beq\label{e:sp.clause.penalty}
	\EMIN(\tau)
	=\hph(\vec{\lit\oplus\dw}_{\delta a})
	+
	\sum_{e\in\delta a}\dot{\EE}_e\,,\quad
	\hph(\vec{\lit\oplus\dw}_{\delta a})
	\equiv
	\I\bigg\{
	\hWP_k(\vec{\lit\oplus\dw}_{\delta a})\ne\free
	\bigg\}\,.\eeq
We sometimes write 
 $\dph\equiv\dph_d$
 and $\hph\equiv\hph_k$ to emphasize the number of arguments.
Finally, for any $e\in E''$ we have
	\beq\label{e:sp.edge.penalty}
	\EMIN(\tau)
	=\eph(\ww_e)+ \dot{E}_e+\hat{E}_e\,,\quad
	\eph(\ww_e)\equiv\I\bigg\{
		\dw_e\oplus\hw_e=\one\bigg\}
	\,.\eeq
By summing over the internal vertices and subtracting over the internal edges, we arrive at
	\beq\label{e:tree.ground.state.formula}
	\sum_{v\in V''}\dph(\uhw_{\delta v})
	+\sum_{a\in F'}\hph
	(\vec{\lit\oplus\dw}_{\delta a})
	-\sum_{e\in E''} \eph(\ww_e)
	= \bigg(\#V''+\#F'-\#E''\bigg)\EMIN(\tau)
	=\EMIN(\tau)\,,
	\eeq
where the last equality uses that $\tau$ is a tree. Thus the max-satisfiable value of a tree with frozen boundary is a sum of local functionals $\dph$, $\hph$, $\eph$ of the warning configuration.

The $\onersb$ heuristic further assumes that for near-frozen warning configurations, the entire graph $\GG_N=(V,F,E,\ulit)$ can essentially be carved into trees with frozen boundaries. (In reality, even in the regime where free variables do not percolate,
a typical warning configuration may contain a bounded number of small cycles of free warnings which do not admit a tree decomposition. However these few cycles should only affect the number of violated clauses by $O(1)$, so can be ignored in the heuristic analysis.)
Then, by summing \eqref{e:tree.ground.state.formula} over the components of the tree decomposition, we conclude that  $\uw$ corresponds to a cluster $\omega\subseteq\set{\zro,\one}^N$ at energy level
	\beq\label{e:ground.state.energy.bethe}
	\EMIN(\omega;\GG_N)
	\equiv \min\bigg\{\HH_N(\ux)
		: \ux\in\omega\bigg\}
	=
	\vph(\uw)
	\equiv\sum_{v\in V}\dph(\uhw_{\delta v})
	+\sum_{a\in F}\hph
	(\vec{\lit\oplus\dw}_{\delta a})
	-\sum_{e\in E} \eph(\ww_e)
	\,.
	\eeq
This is the main advantage of the $\uw$ encoding; it allows us to read off $\EMIN(\omega;\GG_N)$ as a sum of local terms.

\subsection{Explicit 1RSB prediction}\label{ss:intro.formulas}

Going back to the bijection \eqref{e:bij}, we can take a parameter $y\ge0$ and consider
	\beq\label{e:ytilted}
	\mathfrak{Z}(y)
	=\sum_\omega
		\exp\bigg\{ -y \EMIN(\omega;\GG_N)\bigg\}
	=\sum_{\uw}
	\exp\bigg\{-y\vph(\uw)\bigg\}
	\eeq
where the first sum goes over clusters,
while the second sum goes over near-frozen warning configurations. The corresponding probability measure on warning configurations is given by
	\beq\label{e:wp.model}
	\mu_y(\uw)
	= \f{\exp\{-y\vph(\uw)\}}{
	\mathfrak{Z}(y)}\,.
	\eeq
This is sometimes called the \bemph{survey propagation} or $\SP_y$ model, and can be viewed as a refinement of the reweighting $\eta^{\HH_N(\ux)}$ discussed in \S\ref{ss:intro.anp}. The ``lifting'' from $\eta^{\HH_N(\ux)}$ to $e^{-y\vph(\uw)}$ represents one level of \bemph{replica symmetry breaking}. The 
\bemph{$\onersb$ solution} to the original model is given by the \bemph{replica symmetric} solution to the ``lifted'' model \eqref{e:wp.model}. This sometimes goes by the name of \bemph{survey propagation} (\textsc{sp}). In particular, the  \bemph{$\onersb$ (\textsc{sp}) equations} are simply the \bemph{replica symmetric} or \bemph{belief propagation} (\textsc{bp}) equations for the lifted model. They can be defined as a pair of mappings on the space
	\beq\label{e:MM.symmetric}
	\MM
	\equiv\bigg\{
	\textup{probability measures
	$q$ on $\set{\zro,\one,\free}$
	satisfying $q(\zro)=q(\one)$}
	\bigg\}\,.
	\eeq
The clause survey propagation takes $\dq\in\MM$ and outputs
	\beq\label{e:sp.clause}
	[\hSP_y(\dq)](\hw)
	= %\f1{\hz}
	\sum_{\udw}
	\I\bigg\{
	\hw=\hWP_{k-1}(\udw)
	\bigg\}
	\prod_{i=1}^{k-1} \dq(\dw_i)\,,
	\eeq
where the sum goes over $\udw\in\set{\zro,\one,\free}^{k-1}$, and $\hSP_y(\dq)$ is a probability measure on $\set{\zro,\one,\free}$, and in fact
$\hSP_y(\dq)\in\MM$.
The variable survey propagation takes $\hq\in\MM$ and outputs
	\beq\label{e:sp.var}
	[\dSP_y(\hq)](\dw)
	=\f1{\dz}
	\sum_{\uhw}
	\I\bigg\{\dw=\dWP_{d-1}(\uhw)\bigg\}
	\exp\bigg\{-y\dph_{d-1}(\uhw)
	\bigg\}
	\prod_{i=1}^{d-1} \hq(\hw_i)\,,
	\eeq
where the sum is over $\uhw\in\set{\zro,\one,\free}^{d-1}$, and $\dz$ is the normalization such that
$\dSP_y(\hq)\in\MM$. Let $\SP_y\equiv\dSP_y\circ\hSP_y$. Now, recalling \eqref{e:cc.parametrization}, we hereafter restrict consideration to parameters $y\ge0$ satisfying
	\beq\label{e:y.restriction}
	\gamma \equiv 2\CC \bigg( 1 - \f1{e^{y/2}}\bigg)^2
	\asymp 1\,.
	\eeq
Note $\gamma\asymp \CC\min\set{1,y^2}$.
If $\CC$ is large then \eqref{e:y.restriction} forces $y\asymp 1/\CC^{1/2}$. If $\CC\asymp1$ then it only forces that $y\ge\Omega(1)$. Define
	\begin{align} \label{e:MM.bullet}
	\MMnf
	&\equiv
	\bigg\{
	q\in\MM : q(\free) \le \f1{k^2}
	\bigg\}\,,\\ \label{e:MM.gamma}
	\MM^\gamma
	&\equiv\bigg\{
	q\in\MMnf : 
	q(\free) 
	\asymp \f{2^{-k\gamma/2}}{
	(\max\set{\CC k e^{-y/2},1})^{1/2}}
	\bigg\}\,.
	\end{align}
We prove the following result on fixed points of the $\SP_y$ recursion:

\begin{ppn}[proved in Section~\ref{s:recursions}]\label{p:sp} Suppose $\alpha=\CC 2^{k-1}\log2$ with $\asat\le\alpha \le 4^k/k$, and suppose $y\ge0$ satisfies \eqref{e:y.restriction}. Then in the set $\MMnf$ there is a unique $\dq_y$ satisfying the fixed-point equation $\dq_y=\SP_y(\dq_y)$. It must further lie in the smaller domain $\MM^\gamma$.
\end{ppn}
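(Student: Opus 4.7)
By the symmetry $q(\zro) = q(\one)$ built into $\MM$, every $q \in \MM$ is determined by the single scalar $p = q(\free) \in [0,1]$, so $\SP_y$ reduces to a one-dimensional map $F_y:[0,1]\to[0,1]$. A direct evaluation of \eqref{e:sp.clause} (which does not involve $y$) yields that if $\dq$ corresponds to $p$, then $\hq = \hSP_y(\dq)$ satisfies $\hq(\zro) = \hq(\one) = q_h := ((1-p)/2)^{k-1}$. Letting $(\ell_\zro, \ell_\one, \ell_\free)$ be multinomial with parameters $(d-1; q_h, q_h, 1 - 2q_h)$, the fixed-point equation $\dq_y = \SP_y(\dq_y)$ reduces to the scalar equation $p = F_y(p)$, where from \eqref{e:sp.var}
\beq
F_y(p) = \f{\E\bigl[\I\{\ell_\zro = \ell_\one\} e^{-y \ell_\zro}\bigr]}{\E\bigl[e^{-y\min(\ell_\zro,\ell_\one)}\bigr]}.
\eeq
For $p \in [0, 1/k^2]$ and $\asat \le \alpha \le 4^k/k$ we have $q_h \le 2^{-k+1}$ with $\lambda := (d-1)q_h \asymp \CC k\log 2$, so $(\ell_\zro,\ell_\one)$ is well-approximated by a pair of i.i.d.\ Poisson($\lambda$) variables.

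The main step is a Laplace-type estimate of the ratio. Via Stirling, the numerator summand $e^{-y\ell}\P(\ell_\zro = \ell)^2$ has a saddle at $\ell^* = \lambda e^{-y/2}$, yielding
\beq
\E\bigl[\I\{\ell_\zro = \ell_\one\}e^{-y\ell_\zro}\bigr] \asymp \f{1}{\sqrt{\max(\lambda e^{-y/2},1)}}\cdot \exp\bigl(-2\lambda(1 - e^{-y/2})\bigr).
\eeq
For the denominator, conditioning on $U := \ell_\zro + \ell_\one \sim \textup{Poi}(2\lambda)$ and writing $\min(\ell_\zro,\ell_\one) = U/2 - |\ell_\zro - U/2|$, a direct Poisson-MGF computation gives $\E\bigl[e^{-y\min(\ell_\zro,\ell_\one)}\bigr] \asymp \exp\bigl(-\lambda(1 - e^{-y})\bigr)$. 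The exponents combine to $-\lambda(1 - e^{-y/2})^2$, and substituting $\lambda \asymp \CC k \log 2$ together with $\gamma = 2\CC(1 - e^{-y/2})^2$ yields
\beq
F_y(p) \asymp \f{2^{-k\gamma/2}}{\sqrt{\max(\CC k e^{-y/2}, 1)}},
\eeq
which matches \eqref{e:MM.gamma}. Hence $F_y(\MMnf) \subseteq \MM^\gamma \subseteq \MMnf$, so every fixed point in $\MMnf$ automatically lies in $\MM^\gamma$.

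Existence of a fixed point of the continuous self-map $F_y$ of $\MMnf$ follows from the one-dimensional intermediate value theorem. For uniqueness I would differentiate through the chain $p \mapsto q_h \mapsto \lambda$: one computes $|d\lambda/dp| \asymp \CC k^2\log 2$ and $|d\log F_y/d\lambda| \le (1-e^{-y/2})^2 + O(1/\lambda)$, giving $|d\log F_y/dp| \asymp k^2\gamma \asymp k^2$ by \eqref{e:y.restriction}. Combined with $F_y(p) \asymp 2^{-k\gamma/2}/\sqrt{\max(\CC k e^{-y/2},1)}$ (exponentially small in $k$), this gives $|F_y'(p)| \ll 1$ throughout $\MMnf$, and Banach contraction yields uniqueness.

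The principal technical difficulty is producing the two-sided Laplace estimate uniformly across the full range $\asat \le \alpha \le 4^k/k$. As $\alpha$ varies, $\CC$ sweeps from $\Theta(1)$ up to $\Theta(2^k/k)$ and, via \eqref{e:y.restriction}, the admissible $y$ sweeps from $\Theta(1)$ down to $\Theta(\CC^{-1/2})$, so the saddle location $\lambda e^{-y/2}$ crosses the threshold $1$ somewhere in this window. The prefactor $1/\sqrt{\max(\CC k e^{-y/2},1)}$ reflects the transition between a Gaussian/local-CLT regime (the sum near the saddle is well-approximated by an integral) and a discrete tail-dominated regime (a single term dominates); matching constants cleanly across this crossover, and verifying that the Poisson approximation to the multinomial remains uniformly valid, is where most of the work lies.
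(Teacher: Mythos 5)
Your proposal follows the same overall architecture as the paper's proof: reduce $\SP_y$ to a univariate map, establish a two-sided estimate showing the image of $\MMnf$ lies in $\MM^\gamma$ (the analogue of Proposition~\ref{p:initial.contraction}), obtain existence by the intermediate value theorem, and obtain uniqueness by a contraction estimate on the derivative. Where you differ is in the calculational tool: you pass to a Poisson approximation with a Laplace-type saddle analysis, whereas the paper stays with the exact binomial weights $\AAA_\ell,\GGG_\ell$, reparametrizes the sums $\dz_\zro,\dz_\free$ via the tilted/conditioned binomials $\LAM,\LGM$, and compares them to the untilted $\binLAM,\binLGM$ through elementary Chernoff bounds (Lemma~\ref{l:binom.Z.estimates}). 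Your exponent and prefactor computations are correct and agree with \eqref{e:MM.gamma}; the Poisson route is heuristically cleaner but, as you flag, requires quantifying the multinomial-to-Poisson and saddle errors uniformly over the full range of $(\CC,y)$, which is exactly the labor the paper's direct binomial manipulations are designed to sidestep.

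The one step that is a genuine gap as written is the derivative bound for uniqueness. You assert $|d\log F_y/d\lambda|\le(1-e^{-y/2})^2+O(1/\lambda)$ by ``differentiating through'' the Laplace asymptotic; an $\asymp$ estimate does not license term-by-term differentiation, so this is not yet a proof. The paper avoids the issue by computing the derivative exactly from \eqref{e:variable.w.to.x},
\[
-\tilde{x}'(w)=\f{\tilde{x}(1-\tilde{x})\,\E(\LAM-\LGM)}{w(1-w)}\,,
\]
which converts the derivative into a difference of conditional means, and then bounds $\E\LAM$ and $\E\LGM$ directly (Lemmas~\ref{l:simplified.binom.EL.estimates}~and~\ref{l:one.sided}). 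An analogous exact identity holds in your Poisson picture: $d\log F_y/d\lambda$ is (up to the factor $2/\lambda$) the difference between the mean of the tilted conditional law in the numerator and that in the denominator. You need two-sided control of those two means, not a formal derivative of the Laplace formula. Once you replace that step with an exact-mean computation, the remaining pieces --- the self-mapping estimate $F_y(\MMnf)\subseteq\MM^\gamma$, the IVT existence argument, and the conclusion $|F_y'|\ll 1$ from $F_y(p)\asymp 2^{-k\gamma/2}$ times $|d\log F_y/dp|\asymp k^2$ --- fall into place.
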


\noindent Let $\dq=\dq_y$ be as given by Proposition~\ref{p:sp},
and denote $\hq\equiv\hq_y\equiv\hSP_y(\dq_y)$.
Recall the local functionals 
$\dph$, $\hph$, $\eph$ from
\eqref{e:sp.var.penalty},
\eqref{e:sp.clause.penalty},
\eqref{e:sp.edge.penalty}.
We can define
three probability measures ---
$\dot{\nu}_y$ over $\uhw\in\set{\zro,\one,\free}^d$, $\hat{\nu}_y$ over $\udw\in\set{\zro,\one,\free}^k$, and lastly $\bar{\nu}_y$ over $\ww=(\dw,\hw)\in\set{\zro,\one,\free}^2$ --- as follows:
	\begin{align}
	\label{e:sp.var.marginal}
	\dot{\nu}_y(\uhw)
	&=\f1{\dfz_y(\hq)}
	\exp\bigg\{
	-y\dph_d(\uhw)
	\bigg\}
	\prod_{i=1}^d\hq(\hw_i)\,,\\
	\label{e:sp.clause.marginal}
	\hat{\nu}_y(\udw)
	&= \f1{\hfz_y(\dq)}
	\exp\bigg\{-y\hph_k(\dw)\bigg\}
	\prod_{i=1}^k\dq(\dw_i)\,,\\
	\label{e:sp.edge.marginal}
	\bar{\nu}_y(\ww)
	&= 
	\f1{\efz_y(\dq,\hq)}
	\exp\bigg\{
	-\eph(\dw,\hw)
	\bigg\} \dq(\dw)\hq(\hw)\,.
	\end{align}
Under the \textsc{sp} heuristic, the local marginals of the measure \eqref{e:wp.model} are approximately given by the $\nu$: for instance,
	\[
	\mu_y\bigg(\uw\in\set{\zro,\one,\free}^{2E}:
		\uhw_{\delta v}=\uhw\bigg)
	\approx \dot{\nu_y}(\uhw)\,.
	\]
The corresponding energy level can be obtained by averaging \eqref{e:ground.state.energy.bethe} with respect to the $\nu$: this gives
	\beq\label{e:onersb.ee.of.y}
	\ee(y)
	= \sum_{\uhw} \dph_d(\uhw)
		\dot{\nu}_y(\uhw)
	+ \alpha
	\sum_{\udw}
	\hph_k(\udw)
		\hat{\nu}_y(\udw)
	- d
	\sum_{\ww} \eph(\ww)
		\bar{\nu}_y(\ww)\,.
	\eeq
The \textsc{sp} heuristic further predicts that
$N^{-1}\log\mathfrak{Z}(y)$ converges (for a suitable range of $y$) to the replica symmetric formula,
	\beq\label{e:FF.free.energy}
	\FF(y)
	=\log\dfz_y(\hq)
	+\alpha\bigg\{\log\hfz_y(\dq)
	-k\log\efz_y(\dq,\hq)
	\bigg\}\,,
	\eeq
where $\dfz_y$, $\hfz_y$, and $\efz_y$ are the normalizing constants from \eqref{e:sp.var.marginal}, \eqref{e:sp.clause.marginal},  and \eqref{e:sp.edge.marginal}.
Now, returning to \eqref{e:ytilted}, suppose that we had an ``energetic complexity function'' 
function $\Sigma$ such that
	\[
	\mathfrak{Y}_\ee
	\equiv\#\bigg\{\uw 
	: \vph(\uw) \approx N\ee \bigg\}
	\approx\E\mathfrak{Y}_\ee
	\approx\exp\bigg\{N\Sigma(\ee)\bigg\}
	\]
where the interpretation for $\Sigma(\ee)<0$ 
is that $\E\mathfrak{Y}_\ee$ is exponentially small with respect to $N$ so $\mathfrak{Y}_\ee=0$ whp.
Then we would expect
	\[
	\exp\bigg\{
	N\FF(y)
	\bigg\}\approx
	\mathfrak{Z}(y)\approx
	\exp\bigg\{
	N\max_\ee\Big\{
	\Sigma(\ee)-y\ee
	: \Sigma(\ee)\ge0
	\Big\}
	\bigg\}\,,
	\]
that is to say, given $\Sigma$ we can obtain $\FF$ by taking the Legendre dual. Of course, we are in the opposite situation: we already obtained explicit expressions \eqref{e:onersb.ee.of.y}~and~\eqref{e:FF.free.energy} for $\ee(y)$ and $\FF(y)$, but we do not know $\Sigma$. We therefore formally define the \bemph{energetic complexity function} as
$\SIGMA(y)= \FF(y) + y \ee(y)$. (While the informal complexity $\Sigma$ is a function of $\ee$, the formal complexity $\mathfrak{S}$
is a function of $y$.) Recall \eqref{e:y.restriction} and let
	\beq\label{e:Gamma.function}
	\Gamma(y)
	\equiv
	\CC\bigg( 1-\f{1+y}{e^y} \bigg)\,.
	\eeq
It is straightforward to verify that $\gamma(y)/2 \le \Gamma(y)\le\gamma(y)$ for all $y\ge0$; see Figure~\ref{f:gammas}. For small $y$
(corresponding, via \eqref{e:y.restriction}, to large $\CC$) we have 
	\[\Gamma(y)=\gamma(y)
	\bigg\{ 1-O(y)\bigg\}\,.\]
For $y\ge\Omega(1)$ (corresponding, via \eqref{e:y.restriction}, to $\CC\asymp1$) we have instead
	\[2\Gamma(y)=\gamma(y)
	\bigg\{ 1+O(e^{-y/2})\bigg\}\,.\]
The following proposition formally defines the 
$\onersb$ formula.

\begin{ppn}[proved in Section~\ref{s:recursions}]
\label{p:eone.defn} Suppose $k\ge k_0$ and 
$\asat\le\alpha\le 4^k/k$; and denote $\CC = \alpha/(2^{k-1}\log2)$. Then, on the range of $y$ satisfying \eqref{e:y.restriction}, the function $\SIGMA(y)= \FF(y) + y \ee(y)$ is smooth and strictly decreasing, with a unique root $y_\star=y_\star(\alpha)$. With $\Gamma(y)$ as defined by \eqref{e:Gamma.function}, this root satisfies the estimate
	\beq\label{e:Gamma.star}
	\Gamma(y_\star) = 1 + O\bigg(
		\f1{e^{\Omega(k)}}
		\bigg)\,.\eeq
The \bemph{$\onersb$ ground state energy} 
can be defined as
	\beq\label{e:def.eone}
	\eone(\alpha)\equiv\ee(y_\star(\alpha))
	=- \inf \bigg\{ \f{\FF(y)}{y}
	:y\ge0\,,
	y \textup{ satisfies }\eqref{e:y.restriction}
	\bigg\}
	\eeq
(the equivalence of the last two quantities will be proved).
\end{ppn}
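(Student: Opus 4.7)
The plan is to leverage Proposition~\ref{p:sp} together with the standard envelope identity for Bethe free energies, and then reduce the $\onersb$ variational formula to a Legendre duality. For each $y$ in the window \eqref{e:y.restriction}, Proposition~\ref{p:sp} supplies a unique $\dq_y\in\MMnf$ (in fact in $\MM^\gamma$); since $\SP_y$ is real-analytic in $(y,\dq)$ and is contractive near its fixed point (as should be extracted from the proof of Proposition~\ref{p:sp}), the implicit function theorem makes $y\mapsto\dq_y$ smooth. This in turn makes $\dfz_y$, $\hfz_y$, $\efz_y$, and hence $\FF(y)$ and $\ee(y)$, smooth functions of $y$ on \eqref{e:y.restriction}.

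Differentiating $\FF(y)$ yields an explicit contribution from the tilts $e^{-y\dph}$ and $e^{-y\hph}$ in \eqref{e:sp.var.marginal}--\eqref{e:sp.clause.marginal} and implicit contributions through $\dq_y$ and $\hq_y$. The implicit contributions vanish at the fixed point because $\FF$ is exactly the Bethe free energy of \eqref{e:wp.model}, for which stationarity with respect to the messages is the \textsc{sp} equation $\dq=\SP_y(\dq)$. The explicit term gives $\FF'(y)=-\ee(y)$, hence
\[
\SIGMA'(y)=\FF'(y)+\ee(y)+y\,\ee'(y)=y\,\ee'(y).
\]
A variance-type identity applied to \eqref{e:sp.var.marginal}--\eqref{e:sp.edge.marginal}, combined with the nondegeneracy $\dq_y(\free)>0$, gives $\ee'(y)<0$ (the perturbation of $\dq_y$ induced by varying $y$ is itself controlled by the contraction from Proposition~\ref{p:sp} and is subdominant to the direct variance contribution). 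So $\SIGMA$ is smooth and strictly decreasing on \eqref{e:y.restriction}, hence has at most one root.

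For existence of $y_\star$ together with the estimate \eqref{e:Gamma.star}, the next step is to insert the asymptotic form $\dq_y(\free)\asymp 2^{-k\gamma/2}(\max\{\CC k e^{-y/2},1\})^{-1/2}$ from Proposition~\ref{p:sp} into each of $\dfz_y,\hfz_y,\efz_y$ and expand to leading order in the small parameter $\dq_y(\free)$. A bookkeeping computation reveals that the leading contribution to $\SIGMA(y)$ is proportional to $1-\Gamma(y)$, with relative corrections of size $e^{-\Omega(k)}$ coming from the $2^{-k\gamma/2}$ factor. Consequently $\SIGMA(y)>0$ whenever $\Gamma(y)<1-O(e^{-\Omega(k)})$ and $\SIGMA(y)<0$ whenever $\Gamma(y)>1+O(e^{-\Omega(k)})$; combined with monotonicity, this places the unique root $y_\star$ inside the asserted window. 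Because $\Gamma/\gamma$ ranges from near $1$ at small $y$ to near $1/2$ at large $y$, the expansion has to be run separately in the two regimes $\CC\asymp 1$ (where $y\asymp 1$) and $\CC$ large (where $y\asymp 1/\CC^{1/2}$) and glued at the boundary $\CC\asymp 1$.

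Finally, the second equality in \eqref{e:def.eone} follows from
\[
\frac{d}{dy}\frac{\FF(y)}{y}=\frac{\FF'(y)\,y-\FF(y)}{y^2}=-\frac{\SIGMA(y)}{y^2},
\]
which shows that $y\mapsto\FF(y)/y$ has a unique critical point at $y_\star$, a global minimum on \eqref{e:y.restriction}, where $\SIGMA(y_\star)=0$ forces $\FF(y_\star)/y_\star=-\ee(y_\star)=-\eone(\alpha)$. The main obstacle is the expansion in the third paragraph: monotonicity, the envelope identity, and Legendre duality are structural, but pinning down $\Gamma(y_\star)$ to relative error $e^{-\Omega(k)}$ requires simultaneous control of the three normalizers across the full window \eqref{e:y.restriction} with all subleading corrections tracked in terms of the small parameter $\dq_y(\free)$.
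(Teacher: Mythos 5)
Your outline follows the paper's proof almost step for step: the envelope argument that $\FF'(y)=-\ee(y)$ is exactly Lemma~\ref{l:stationarity}, the relation $\frac{d}{dy}(\FF(y)/y)=-\SIGMA(y)/y^2$ is \eqref{e:deriv.Fy.over.y}, and the expansion of $\SIGMA(y)$ to $\log2\,(1-\Gamma(y))+O(e^{-\Omega(k)})$ via the smallness of $\dq_y(\free)$ is precisely Lemmas~\ref{l:FF.y.estimate}--\ref{l:energy.function.y.estimate}. You also correctly flag the normalizer expansion as requiring care across the two regimes of $y$.

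However, you substantially underweight the monotonicity of $\SIGMA$, and the mechanism you invoke for it would not by itself close the step. You argue that $\ee'(y)<0$ because (i) there is a direct ``variance'' term and (ii) the implicit contribution from $\partial_y\dq_y$ is ``controlled by the contraction from Proposition~\ref{p:sp}.'' The contraction estimate of Proposition~\ref{p:derivative.MMstar} bounds the Jacobian $\partial_{\dq}\SP_y$, hence the \emph{denominator} in the implicit-function formula for $x_y',w_y'$, but it says nothing about the cross-partials $\partial_y\SP_y$ nor about the sensitivity of $\ee$ to $\dq$; smallness of $\partial_{\dq}\SP_y$ alone does not force the implicit term in $\FF''$ to be subdominant. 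What the paper actually does (Proposition~\ref{p:second.derivative}, proved in \S\ref{ss:convexity}) is write $\FF''(y)$ via the implicit-function determinant formula (Lemma~\ref{l:second.derivative}), exploit the identity $G_{Xy}=0$ to collapse the correction to a single term, prove $(G_{XW})^2-G_{XX}G_{WW}>0$ with $|G_{XX}G_{WW}/(G_{XW})^2|\le O(e^{-\Omega(k)})$ (Lemma~\ref{l:determinant}), bound $-G_{XX}(G_{Wy})^2/(G_{XW})^2\le O(\CC^{1/2}e^{-y}e^{-\Omega(k)})$, and show $G_{yy}\asymp\CC/e^y$ (Lemma~\ref{l:Gyy}) via a delicate covariance decomposition resting on the binomial estimates of Lemma~\ref{l:S.estimates}. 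This is the longest technical piece of the argument --- the paper defers it to the very end --- so your proposal, while structurally faithful, treats the hardest step as a side remark.
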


\begin{figure}[h]
\centering
\begin{subfigure}[b]{0.45\textwidth}
\centering
\includegraphics[height=1.8in]{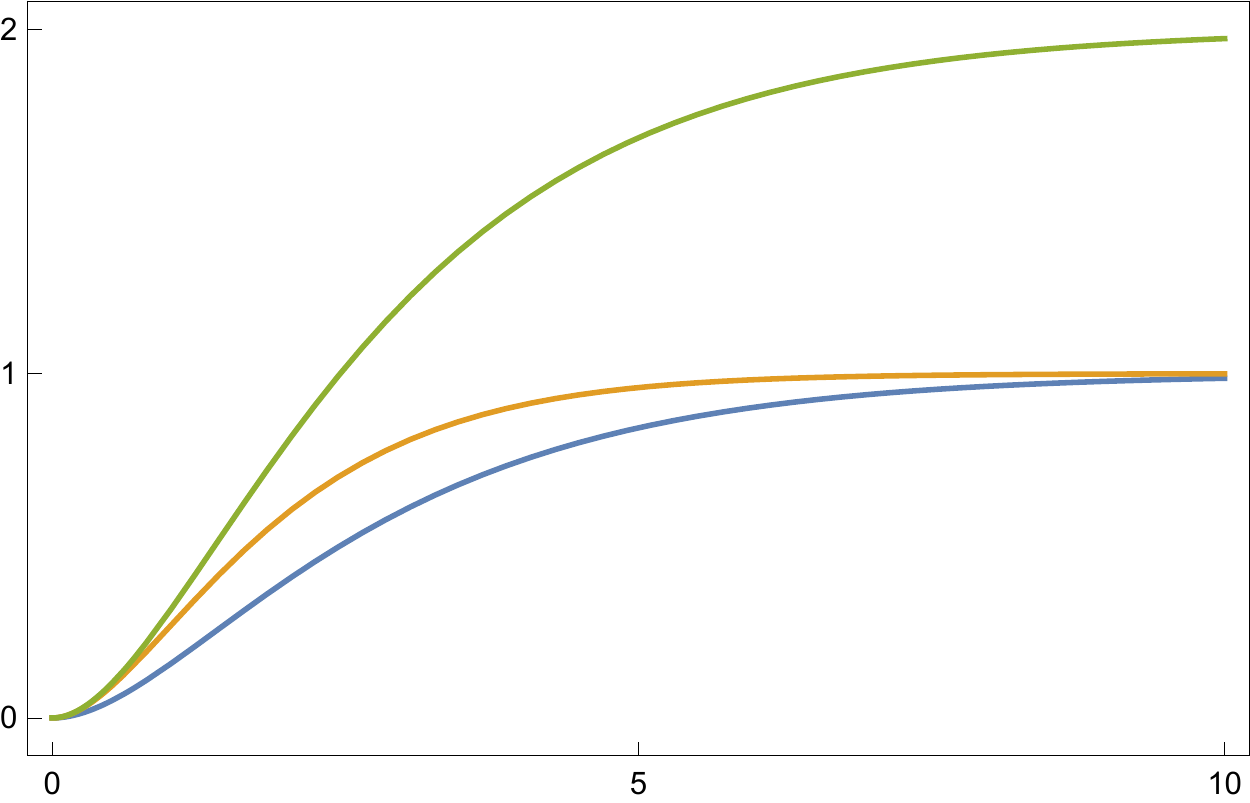}
\caption{$\gamma(y)/2\le\Gamma(y)\le\gamma(y)$
for all $y\ge0$.\\
$\Gamma(y_\star)\doteq1$
for both \textsc{rs} and $\onersb$ solutions
\eqref{e:Gamma.star}.}
\label{f:gammas}
\end{subfigure}
\quad
\begin{subfigure}[b]{0.45\textwidth}
\centering
\includegraphics[height=1.8in]{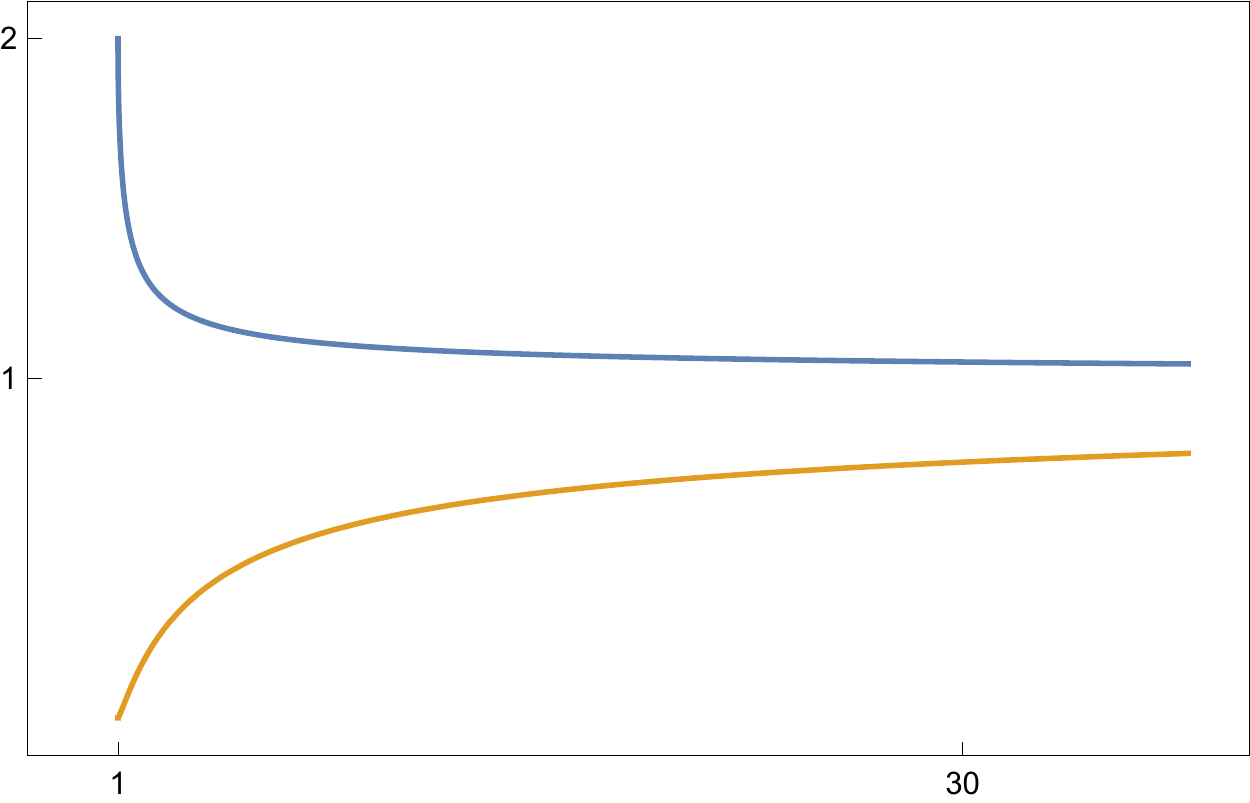}
\caption{Upper curve: $\gamma(y_\star)$ 
as a function of $\CC\ge1$.\\
Lower curve: $\exp(-y_\star)$
as a function of $\CC\ge1$.}
\label{f:gamma.of.c}
\end{subfigure}
\caption{Approximate parameters of the $\onersb$ solution. At clause density $\alpha=\CC 2^{k-1}\log2$,
the max-satisfiable value is $\ee=\alpha(1-\pgs)/2^{k-1}$ where $1-\pgs\doteq\eta\doteq e^{-y}$ is given approximately by the lower curve in panel \textsc{(b)}. At this precision it is consistent with the
replica symmetric (\textsc{rs}) solution (cf.\ the estimate of  \cite{MR2295994}). 
A more precise comparison between \textsc{rs} and $\onersb$ is given by Corollary~\ref{c:alpha.onersb.correction}.}
\end{figure}

\noindent To show that $\SIGMA$ is decreasing in $y$, we will in fact show that $\SIGMA'(y)=y\ee'(y)=-y\FF''(y)<0$. In \S\ref{ss:variational.discussion} (Remark~\ref{r:stationarity}) we review the physical interpretation of $\FF''(y)$.

We remark that the estimate \eqref{e:Gamma.star} is a rather lossy approximation of $\eone(\alpha)$. In fact, on its own it does not carry more information than the first moment \cite{MR2295994} bound: observe from \eqref{e:anp.eta.star}  that $\eta=\eta(\pgs)=(1-\pgs)[1+O(\pgs/2^k)]$. Substituting into \eqref{e:rs.alpha.ubd} gives
	\beq\label{e:log.eta.satisfies.conds}
	\f1{\CC(\pgs)}
	=\bigg( 1-\eta+\eta\log\eta\bigg)
		\bigg\{1+ O\bigg(\f1{e^{\Omega(k)}}\bigg)\bigg\}
	=\bigg( 1 - \f{1+y}{e^y}\bigg)
	\bigg\{1+ O\bigg(\f1{e^{\Omega(k)}}\bigg)\bigg\}\,,
	\eeq
simply by taking $y=-\log\eta$. Thus more care is needed to obtain a comparison such as \eqref{e:alpha.onersb.correction} with the first moment. Towards this end, let us comment briefly on what Proposition~\ref{p:eone.defn} implies for $\dq(\free)$.
Recall from Proposition~\ref{p:sp} that $\dq=\dq_y\in\MM^\gamma$, meaning (see \eqref{e:MM.gamma}) that
	\[\dq(\free)\asymp \f{2^{-k\gamma/2}}{
	(\max\set{\CC k e^{-y/2},1})^{1/2}}
	\asymp
	\f1{2^{k\gamma/2}}
	\bigg(\min\bigg\{
	\f{e^{y/2} \min\set{1,y^2}}{k}
	,1\bigg\}\bigg)^{1/2}
	\le O\bigg(
	\f1{2^{k\gamma/2}}
	\bigg)
	\]
for $\gamma=\gamma(y)$. It follows from \eqref{e:y.restriction} that $\CC k e^{-y/2}= O(1)$ if and only if $y\ge 2\log k - O(1)$. For such $y_\star\ge2\log k-O(1)$, the estimate \eqref{e:Gamma.star} implies
	\[
	\CC=
	\Gamma(y_\star)
	\bigg(1-\f{1+y_\star}{e^{y_\star}}\bigg)^{-1}
	=	1 - \f{O(\log k)}{k^2}\,,
	\]
meaning $\alpha$ is only slightly above the satisfiability threshold. In this regime
	\[
	\f{k\gamma(y_\star)}{2}
	=\f{k\gamma(y_\star)}{2\Gamma(y_\star)}
	\Gamma(y_\star)
	=k\bigg\{1+O\bigg(\f1{e^{y_\star/2}}\bigg)\bigg\}
	\bigg\{1+O\bigg(
		\f1{e^{\Omega(k)}}\bigg)\bigg\}
	=k + O(1)\,,
	\]
so $\dq(\free) \asymp 2^{-k}$. This is consistent with estimates slightly below the satisfiability threshold obtained by \cite{MR3440193}.

We now discuss $y\le 2\log k + O(1)$. In general, for any fixed $\CC$ the value $\Gamma(y)$ is strictly increasing in $y$, therefore $y_\star$ must be roughly decreasing with $\CC$ (modulo the error in the estimate \eqref{e:Gamma.star}). The ratio $\gamma(y)/\Gamma(y)$ is a function of $y$ alone, and is increasing in $y$.
Therefore, as $\CC$ increases, $\gamma(y_\star)\doteq\gamma(y_\star)/\Gamma(y_\star)$ decreases smoothly, from $\gamma\doteq 2$ to $\gamma\doteq 1$ (Figure~\ref{f:gamma.of.c}). For $\Omega(1/k^2)\le y\le 2\log k + O(1)$
we have $\dq(\free) = k^{O(1)}/2^{k\gamma/2}$,
which is roughly increasing as $\CC$ decreases if we ignore the $k^{O(1)}$ factor. Finally, if $y = O(1/k^2)$ then
	\[
	\f{k\gamma(y_\star)}{2}
	=\f{k\gamma(y_\star)}{2\Gamma(y_\star)}
		\Gamma(y_\star)
	=\f{k}{2} \bigg\{ 1 + O(y) \bigg\}
	\bigg\{1+O\bigg(
		\f1{e^{\Omega(k)}}\bigg)\bigg\}
	= \f{k}{2} + O(1)\,,
	\]
so in this regime we have
	\beq\label{e:sqrt.d}
	\dq(\free)
	\asymp \f{y}{(2^k k)^{1/2}}
	\asymp \f1{(2^k \CC k)^{1/2}}
	\asymp \f1{d^{1/2}}\,,
	\eeq
which matches with \eqref{e:alpha.onersb.correction}.

\subsection{Explicit Gardner threshold}
\label{ss:intro.gardner}

We now describe the exact predicted threshold $\aG$ for the stability of the $\onersb$ solution. Recall the loose calculation \eqref{e:percrate} of the branching rate of linked frees. One can refine this by considering the rate of ``bug proliferation'' \cite{MRT,KPW} in the warning model: if a warning incoming to a vertex is changed, it may change an outgoing warning, and one can calculate the branching rate of this process. Explicitly, let
	\[
	\bigg(
	\dw_{ai}:	2\le a\le d,
				2\le i \le k
	\bigg) 
	\equiv (\dw_j)_{1\le j\le \branch}
	\equiv \dw_{1:\branch}
	\in \set{\zro,\one,\free}^{\branch}
	\]
where we have abused notation and made the identification $\dw_{ai}\equiv \dw_{(a-2)(k-1)+(i-1)}$. Recall the mappings $\dWP$ and $\hWP$ defined in \eqref{e:wp.var} and \eqref{e:wp.clause}. 
Define $\hw_a\equiv\hWP(\dw_{a,2},\ldots,\dw_{a,k})$ for each $2\le a\le d$, and then let
	{\setlength{\jot}{0pt}\begin{align*}
	\WP(\dw_{1:\branch})
	&\equiv\dWP(\hw_2,\ldots,\hw_d)\\
	\vph(\dw_{1:\branch})
	&\equiv\dph(\hw_2,\ldots,\hw_d)
	\end{align*}}%
Let $\dq_y$ be as given by Proposition~\ref{p:sp}. Then, for $\dv,\dr,\dw,\ds\in\set{\zro,\one,\free}$, let
	\beq\label{e:full.stability.matrix}
	\bB_{\dv\dr,\dw\ds}
	\equiv
	\f{\displaystyle\sum_{\udw_\circ}
	\I\bigg\{
	\begin{array}{c}
	\dv=\WP(\dw,\dw_{2:\branch})\\
	\dr=\WP(\ds,\dw_{2:\branch})
	\end{array}
	\bigg\}
	\exp\bigg\{
	-y \vph(\ds,\dw_{2:\branch})
	\bigg\}
	\dq_y(\dw)
	\prod_{i=2}^{\branch}
	\dq_y(\dw_i)
	}
	{\displaystyle\sum_{\dw_{1:\branch}}
	\I\bigg\{
	\dv=\WP(\dw_{1:\branch})
	\bigg\}
	\exp\bigg\{
	-y \vph(\dw_{1:\branch})
	\bigg\}
	\prod_{i=1}^{\branch}
	\dq_y(\dw_i)
	}\,.
	\eeq
This defines a $9\times 9$ matrix $\bB$, which is the \bemph{stability matrix} for our model. We let $\bB_{\neq}$ be the $6\times 6$ submatrix with row and column indices in $\set{(\dw,\ds): \dw\ne\ds}$, and let $\lambda\equiv\lambda_y(\alpha)$ be the largest eigenvalue of $\bB_{\neq}$. The physics literature \cite{MRT,KPW} proposes that the $\onersb$ solution is correct as long as $\branch\lambda_y(\alpha)$ (a refinement of \eqref{e:percrate}) is less than one at $y=y_\star(\alpha)$. We extract its large-$k$ behavior in the following:

\begin{ppn}[proved in Section~\ref{s:gardner}]
\label{p:gardner.defn}
The \bemph{Gardner threshold} $\aG$ can be formally defined as 
	\[
	\aG 
	\equiv \sup\bigg\{
	\alpha\le \f{4^k}{k}:
	\branch\lambda_{y_\star(\alpha)}(\alpha)
	\le 1
	\bigg\}\,,
	\]
where $y_\star(\alpha)$ is the root given by Proposition~\ref{p:sp}.
The large-$k$ behavior is given by $\aG \asymp 4^k/k^3$.
\end{ppn}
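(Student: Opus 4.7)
By the definition of $\aG$ as a supremum, the task reduces to estimating $\branch\,\lambda_{y_\star(\alpha)}(\alpha)$ precisely enough to locate its crossing of $1$. I focus on the regime $\alpha\asymp 4^k/k^3$, where by Proposition~\ref{p:sp} we have $y_\star=O(1/k^2)$ and $\dq_{y_\star}(\free)\asymp 1/d^{1/2}$ via \eqref{e:sqrt.d}. As a first simplification, the $\zro\leftrightarrow\one$ symmetry built into $\MM$ (see \eqref{e:MM.symmetric}) is respected by $\WP$ and $\vph$, so $\bB_{\neq}$ block-diagonalizes into a $3\times 3$ symmetric and a $3\times 3$ anti-symmetric block, reducing the eigenvalue problem to an explicit $3\times 3$ computation.

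The next step is an asymptotic evaluation of the entries of $\bB_{\neq}$. Because $\dw$ (and $\ds$) appears only as the input $\dw_{2,2}$ to clause $a=2$, only $\hw_2\equiv\hWP(\dw_{2,2},\ldots,\dw_{2,k})$ can differ between the two configurations. The corresponding clause output changes only when the other $k-2$ inputs are non-free and suitably aligned---an event of probability $\asymp 1/2^{k}$ under $\prod\dq_{y_\star}$. Given this clause-level flip, the variable outputs $\dv=\dWP(\hw_2,\hw_3,\ldots,\hw_d)$ and $\dr=\dWP(\hw_2',\hw_3,\ldots,\hw_d)$ differ only if the count $\ell_\zro-\ell_\one$ over the remaining messages $\hw_3,\ldots,\hw_d$ lies in $\{-1,0,+1\}$. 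Under the tilted measure implicit in the numerator of \eqref{e:full.stability.matrix} (which, after integrating out $\udw$, becomes the measure \eqref{e:sp.var.marginal} with $\dot\nu_{y_\star}$-marginal $\dq_{y_\star}$ by construction), this near-tie event has probability of order $\dq_{y_\star}(\free)$: heuristically $\ell_\zro-\ell_\one$ is approximately Gaussian with mean $\asymp \mu y_\star$ and variance $\asymp \mu$ for $\mu\asymp d\,\hq(\zro)\asymp d/2^k$, and its density at each $j\in\{-1,0,+1\}$ is of order $e^{-\mu(1-e^{-y_\star/2})^2}/\mu^{1/2}\asymp\dq_{y_\star}(\free)$ (matching \eqref{e:sqrt.d}). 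Combining the clause- and variable-level sensitivities, every nonzero entry of $\bB_{\neq}$ is $\asymp \dq_{y_\star}(\free)/2^k$, and diagonalizing the resulting $3\times 3$ block gives
\[
\lambda_{y_\star(\alpha)}(\alpha)\asymp\f{\dq_{y_\star}(\free)}{2^k}\asymp\f{1}{2^k\,d^{1/2}}\,.
\]

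Multiplying by $\branch=(d-1)(k-1)\asymp dk$ yields $\branch\,\lambda_{y_\star(\alpha)}(\alpha)\asymp d^{1/2}k/2^k$, which is strictly increasing in $\alpha$ (through $d=\alpha k$) and crosses $1$ precisely when $d\asymp 4^k/k^2$, i.e., $\alpha\asymp 4^k/k^3$. In the complementary range $\asat\le\alpha\ll 4^k/k^3$ the parameter $y_\star$ is larger and $\dq_{y_\star}(\free)$ is correspondingly smaller (order $2^{-k\gamma/2}$ or less, by the discussion following Proposition~\ref{p:eone.defn}); the same derivation then gives $\branch\lambda\ll 1$ throughout. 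Hence no further crossings occur, and the supremum defining $\aG$ is attained at $\alpha\asymp 4^k/k^3$.

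I expect the principal obstacle to be the careful combinatorial expansion in step two. The tilting factor $e^{-y\vph(\ds,\dw_{2:\branch})}$ is indexed by the perturbed configuration $\ds$ while the prior $\dq_y(\dw)$ is applied to $\dw$, so the two sides of \eqref{e:full.stability.matrix} are coupled non-trivially, and one must track contributions from all nine transitions $(\dw,\ds)\to(\dv,\dr)$ with $\dw\ne\ds$ and $\dv\ne\dr$. Extracting the leading $3\times 3$ block with uniform constants, and threading the estimate $\dq_{y_\star}(\free)\asymp 1/d^{1/2}$ from Proposition~\ref{p:sp} through both the clause- and variable-level sensitivity computations (while also handling the near-tie terms $j=\pm 1$ alongside the exact tie $j=0$), forms the bulk of the technical work.
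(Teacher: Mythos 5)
Your plan follows the same route as the paper's proof: reduce the $6\times 6$ matrix $\bB_{\neq}$ to a small explicit block, estimate the leading eigenvalue as $\lambda\asymp\dq_{y_\star}(\free)/2^k$, and combine with $\dq_{y_\star}(\free)\asymp 1/d^{1/2}$ (from \eqref{e:sqrt.d}) and $\branch\asymp dk$ to locate the crossing at $\alpha\asymp 4^k/k^3$. The differences are in the details of the reduction and of the eigenvalue estimate. The paper observes that the $\zro\one$ and $\one\zro$ rows of $\bB$ vanish identically, so the spectrum of $\bB_{\neq}$ reduces to the $4\times 4$ block $\bB_{4\times 4}$ (indices $\free\zro,\free\one,\zro\free,\one\free$), and then diagonalizes explicitly to get the exact leading eigenvalue $\lambda=(\rho_\zro)^{k-2}(S_0+S_1 e^{-y/2})/\dz$ from \eqref{e:Gardner.lambda.explicit}. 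Your symmetric/anti-symmetric $3\times 3$ reduction via the $\zro\leftrightarrow\one$ symmetry is also valid (each $3\times 3$ block inherits a zero row, so one lands on the same $2\times 2$ problem), but is an extra step compared to what the paper does.

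Two points in your plan would need tightening. First, the assertion that \emph{every} nonzero entry of $\bB_{\neq}$ is $\asymp\dq_{y_\star}(\free)/2^k$ is not uniformly correct: since $\bB=D^{-1}(\bN/\dz)D$ with $D=\mathrm{diag}(\rho_{\dw})$, the entries of $\bB$ carry $\rho_{\dw}/\rho_{\dv}$ factors that range over $\asymp x$, $\asymp 1$, and $\asymp 1/x$, and even the entries of $\bN/\dz$ are $\asymp S_i(\rho_\zro)^{k-2}/\dz$ with $S_1$ and $S_0$ comparable only when $\lgm$ is large. The eigenvalue is still $\asymp x/2^k$, because the relevant quantity is the geometric mean of the off-diagonal $2\times 2$ entries, i.e., $S_0\pm S_1 e^{-y/2}$, but one cannot deduce this by bounding all entries uniformly; the explicit diagonalization the paper carries out avoids this issue. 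Second, the Gaussian density heuristic for the near-tie event (giving $\asymp e^{-\mu(1-e^{-y_\star/2})^2}/\mu^{1/2}$) is exactly what Lemma~\ref{l:binom.Z.estimates} makes rigorous (applied with $d-2$ in place of $d-1$), and the paper uses that lemma to control $S_0+S_1 e^{-y/2}\asymp S_0$ and $\dz$; you would need to thread those binomial estimates through to replace the heuristic. Finally, the paper handles the regime $\alpha\ll 4^k/k^3$ more directly: from $\branch\lambda\asymp\CC k^2 x$ and the fact that $x$ is exponentially small in $k$ uniformly for $\asat\le\alpha\le 4^k/k$, one sees $\branch\lambda\ge 1$ forces $\CC\ge e^{\Omega(k)}$, and only in that regime does one invoke $x\asymp 1/d^{1/2}$. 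Your ``correspondingly smaller'' reasoning reaches the same conclusion but with less control over constants.
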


\subsection{Interpolation bound}\label{ss:intro.interp}

As mentioned before, our proof of Theorem~\ref{t:main} is based on a general interpolation upper bound, in the spirit of \cite{MR1930572, MR1957729, MR2095932, MR3161470, MR3256814}. The precise bound that we use, as we now describe, is a generalization of a similar result in \cite{ssz}. Let $\Omega$ be the space of probability measures on $\set{\zro,\one,\free}$. We write $\rho$ for elements of $\Omega$, and $Q$ for probability measures over $\Omega$. Similarly as above, we will abuse notation and write
	\[\bigg
	(\dw_{ai} :	1\le a\le d,
					2\le i\le k
	\bigg)
	\equiv
	(\dw_j)_{1\le j\le D}
	\equiv
	\dw_{1:D}
	\]
where $D\equiv d(k-1)$. Let $\hw_a\equiv\hWP(\dw_{a,2},\ldots,\dw_{a,k})$ and $\vph(\dw_{1:D})\equiv \dph_d(\hw_1,\ldots,\hw_d)$. Define
	\begin{align}
	\label{e:tworsb.clause}
	\FG(y_1,y_2,Q)
	&=\int
	\bigg\{
	\sum_{\dw_{1:k}}
	\exp(-y_2\hph(\dw_{1:k}))
	\prod_{j=1}^k \rho_j(\dw_j)
	\bigg\}^{y_1/y_2}\prod_{j=1}^k dQ(\rho_j)\,,\\
	\label{e:tworsb.var}
	\WW(y_1,y_2,Q)
	&=\int
	\bigg\{
	\sum_{\dw_{1:D}}\exp(-y_2 \vph(\dw_{1:D}))
	\prod_{i=1}^D
	\rho_i(\dw_i)
	\bigg\}^{y_1/y_2}
	\prod_{i=1}^D dQ(\rho_i)\,.
	\end{align}
For $0\le y_1\le y_2$, the \bemph{zero-temperature 2RSB functional} is defined by
	\beq\label{e:Phi.tworsb}
	\PhiTwo(y_1,y_2,Q)
	\equiv
	\f1{y_1}\log \WW(y_1,y_2,Q)
	-\f{\alpha(k-1)}{y_1}\log \FG(y_1,y_2,Q)\,.
	\eeq
A heuristic derivation of $\PhiTwo$ is presented in Section~\ref{s:interpolation}, but we briefly describe it here. For simplicity assume $\alpha\equiv d/k$ is an integer, and let $\GG_N$ be an instance of random $d$-regular $k$-\textsc{nae-sat} on $N$ variables. Remove $\alpha(k-1)$ clauses and their incident edges at random, and call the resulting graph $\GG_{N+1/2}$: it is still a $k$-\textsc{nae-sat} instance on $N$ variables, but is no longer $d$-regular since some variables have open ``slots'' (missing edges). Then introduce a new variable $v\equiv v_{N+1}$, together with $d$ new clauses. For each new clause, add one new edge connecting the clause to $v$, and $k-1$ new edges connecting the clause to the open ``slots'' in $\GG_{N+1/2}$. Then the resulting graph $\GG_{N+1}$ is an instance of random $d$-regular $k$-\textsc{nae-sat} on $N+1$ variables. For $\beta\ge0$ we can consider
	\beq\label{e:gibbs.intermediate}
	\mu_\beta(\ux)
	\equiv
	\f{ \exp\{ -\beta \HH_{N+1/2}(\ux) \}}
		{Z_{N+1/2}(\beta)}
	\eeq
where $Z_{N+1/2}(\beta)$ is the normalizing constant that makes $\mu_\beta$ a probability measure over $\ux\in\set{\zro,\one}^N$. The structure of $\mu_\beta$ is not known. However, by analogy with other models \cite{derrida1981random,derrida1985generalization,MR875300,parisi1979infinite,MR2999044}, a natural simplifying assumption is that it has a 
\bemph{hierarchichal
(ultrametric) structure} with Poisson--Dirichlet weights on each level of the hierarchy. This means that the $\ell$-point marginals of $\mu_\beta$, for bounded $\ell$, converge in the large-$N$ limit to an explicit form: for a two-level hierarchy,
	\beq\label{e:rpc.cavity}
	\mu_\beta(x_1,\ldots,x_\ell)
	\approx \sum_{s,t\ge1} \nu_{st}
		\prod_{i=1}^\ell w_{st,i}(x_i)\,,
	\eeq
where the $w_{st,i}$ are sampled recursively as follows. Let $\PROB_0\equiv\PROB$ be the space of probability measures over $\set{\zro,\one}$, and for $r\ge1$ let $\PROB_r$ be the space of probability measures over $\PROB_{r-1}$. Let $Q_\beta\in \PROB_2$. Let $(r_{s,i})_{s,i}$ be i.i.d.\ samples from law $Q_\beta$. For each $i$ and each $s$, let $(w_{st,i})_{t\ge1}$ be a sequence of i.i.d.\ samples from $r_{s,i}$. Note $r_{s,i}\in\PROB_1$ so $w_{st,i}\in\PROB$.  Independently, $(\nu_{st})_{s,t\ge1}$ are random weights sampled from the law of a \bemph{Ruelle probability cascade} (RPC) with parameters $0<m_1<m_2<1$ --- a two-level version of the standard Poisson--Dirichlet process (see \cite[Ch.~2]{MR3052333} and Section~\ref{s:interpolation}). Under assumption
\eqref{e:rpc.cavity}, and taking $\beta\to\infty$ with
$m_i\beta\to y_i$, one has
	\begin{align*}
	\lim_{\beta\to\infty}
	\f1\beta
	\log\f{Z_N(\beta)}{Z_{N+1/2}(\beta)}
	&\approx
	\f{\alpha(k-1)}{y_1}
	\log\FG(y_1,y_2,Q)\,,\\
	\lim_{\beta\to\infty}
	\f1\beta
	\log\f{Z_{N+1}(\beta)}{Z_{N+1/2}(\beta)}
	&\approx
	\f1{y_1}\log\WW(y_1,y_2,Q)\,,
	\end{align*}
where $Q$ is a probability measure over $\Omega$, obtained as a projection of $Q_\beta$. The basic idea is as follows:
\begin{enumerate}[a.]
\item Project $w\in\PROB$ to $\ww\in\set{\zro,\one,\free}$
where $\set{w\textup{ near }\I_\zro}$ maps to $\ww=\zro$,
$\set{w\textup{ near }\I_\one}$ maps to $\ww=\one$,
and the remaining $w\in\PROB$ map to $\ww=\free$. Denote this mapping $\pi : \PROB\to\set{\zro,\one,\free}$.
\item Project $r\in\PROB_1$ to $\rho\in\Omega$ via the pushforward, $\rho(\ww)
= (\pi_\sharp r)(\ww)
=r ( \pi^{-1}(\ww))$.

\item Project $Q_\beta\in\PROB_2$ to a probability measure $Q$ over $\Omega$ via another pushforward,
$Q=(\pi_\sharp)_\sharp Q_\beta
= Q_\beta \circ (\pi_\sharp)^{-1}$.
\end{enumerate}
The details are given in Section~\ref{s:interpolation}.  Combining the above relations gives the heuristic approximation
	\[ -\ee_\star = \lim_{\beta\to\infty}
	\f1{N\beta}\log Z_N(\beta)
	=\lim_{\beta\to\infty}
	\f1\beta\log\f{Z_{N+1}(\beta)}{Z_N(\beta)}
	=\PhiTwo(y_1,y_2,Q)\,.
	\]
The following proposition shows that one side of the approximation can be made rigorous: 

\begin{ppn}[proved in Section~\ref{s:interpolation}]\label{p:interpolation}
For any parameters $0\le y_1\le y_2$ and any probability measure $Q$ over $\Omega$, we have a corresponding zero-temperature 2RSB bound $-\einf\le \PhiTwo(y_1,y_2,Q)$.
\end{ppn}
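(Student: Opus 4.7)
The plan is to derive Proposition~\ref{p:interpolation} as the zero-temperature limit of a positive-temperature 2RSB interpolation bound, following the framework of \cite{ssz} (which itself builds on \cite{MR1930572, MR1957729, MR2095932, MR3161470, MR3256814}).

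First I would reduce to positive temperature. Set $Z_N(\beta) = \sum_{\ux} \exp(-\beta \HH_N(\ux))$ for $\beta > 0$. Since $Z_N(\beta) \ge \exp(-\beta \EMIN(\GG_N))$, one has $(N\beta)^{-1}\E[\log Z_N(\beta)] \ge -\E[\emin(\GG_N)]$, so
\[
\limsup_{\beta\to\infty}\limsup_{N\to\infty} \f{\E[\log Z_N(\beta)]}{N\beta} \ge -\einf.
\]
It therefore suffices to construct, for each $\beta > 0$ and parameters $0 < m_1 < m_2 < 1$, a probability measure $Q_\beta \in \PROB_2$ such that an upper bound of the form $\limsup_N N^{-1}\E[\log Z_N(\beta)] \le \beta\, \Phi^\beta(m_1, m_2, Q_\beta)$ holds, and whose scaling limit as $\beta \to \infty$ with $m_i = y_i/\beta$ and $(\pi_\sharp)_\sharp Q_\beta \to Q$ reproduces $\beta\, \PhiTwo(y_1, y_2, Q)$ to leading order.

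Second, the positive-temperature bound itself would be obtained by a two-level Franz--Leone--Panchenko interpolation. Using the cavity construction sketched before \eqref{e:gibbs.intermediate}, I would interpolate the Gibbs measure $\mu_\beta$ between its true CSP form and a factorized reference model whose cavity fields are drawn from a two-level Ruelle probability cascade with weights $(\nu_{st})$ and parameters $0 < m_1 < m_2 < 1$. Denote by $Z_N(t,\beta)$ the interpolating partition function for $t \in [0,1]$. The main calculation is the monotonicity $\tfrac{d}{dt}\E[\log Z_N(t,\beta)] \le 0$, which reduces via Panchenko's invariance identity for marked Poisson point processes to the concavity of $x \mapsto x^{m_1/m_2}$ and $x \mapsto x^{m_2}$ on $[0,\infty)$ (both exponents lie in $(0,1)$). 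Integrating in $t$ and applying an Aizenman--Sims--Starr telescoping over $N$ then yields a bound of the form
\[
\f{\E[\log Z_N(\beta)]}{N} \le \f{1}{m_1}\log \WW^\beta(m_1,m_2,Q_\beta) - \f{\alpha(k-1)}{m_1}\log\FG^\beta(m_1,m_2,Q_\beta) + o_N(1),
\]
where $\WW^\beta, \FG^\beta$ are the positive-temperature analogues of \eqref{e:tworsb.var}--\eqref{e:tworsb.clause}, obtained by replacing the hard warning weights $\exp(-y_i\,\cdot)$ by the soft Gibbs weights $\exp(-\beta\,\cdot)$, with $m_1/m_2$ playing the role of $y_1/y_2$.

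Finally I would take $\beta \to \infty$ along $m_i = y_i/\beta$. Under the pushforward $\pi : \PROB \to \set{\zro,\one,\free}$ described in items (a)--(c) preceding the proposition, Gibbs weights $w \in \PROB$ concentrate onto $\I_\zro$, $\I_\one$, or the ``soft'' interior (which maps to $\free$); correspondingly $(\pi_\sharp)_\sharp Q_\beta \to Q$, and the kernels $\WW^\beta, \FG^\beta$ degenerate precisely to $\WW(y_1,y_2,Q), \FG(y_1,y_2,Q)$, with the prefactor $m_1^{-1} = \beta/y_1$ absorbed into the $y_1^{-1}$ of \eqref{e:Phi.tworsb}. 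Combining with the first-paragraph reduction gives $-\einf \le \PhiTwo(y_1, y_2, Q)$. The main obstacle is verifying the sign of $\tfrac{d}{dt}\E[\log Z_N(t,\beta)]$, which demands careful accounting of the two-level Poisson point processes defining the RPC and their interaction with the bipartite cavity structure of $\GG_N$; this is the heart of the Guerra--Panchenko method and is already treated in \cite{ssz} for a single-level bound, so the two-level extension should follow by iterating that analysis. A secondary technical point is confirming continuity of $\WW^\beta, \FG^\beta$ under the combined $\beta \to \infty$ limit and pushforward, so that the bound holds for an \emph{arbitrary} $Q \in \PROB(\Omega)$, including degenerate measures supported on $\rho$ with $\rho(\free) = 1$.
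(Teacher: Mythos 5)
Your proposal takes essentially the same route as the paper: reduce via $Z_N(\beta)\ge\exp\{-\beta\,\EMIN(\GG_N)\}$, invoke the positive-temperature 2RSB interpolation bound (which the paper simply cites as Proposition~\ref{p:interp.pos.temp}, proved in \cite[\S E.4]{ssz}, rather than re-deriving the Franz--Leone/Panchenko monotonicity calculation), and pass to the $\beta\to\infty$ limit along $m_i=y_i/\beta$. That much is right.

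The one place you are vaguer than necessary — and where the paper's argument is actually cleaner than what you sketch — is the construction of the intermediate measure. You propose to build a family $Q_\beta\in\PROB_2$ with $(\pi_\sharp)_\sharp Q_\beta\to Q$ and then worry about ``continuity of $\WW^\beta,\FG^\beta$ under the combined limit.'' The paper sidesteps this entirely by choosing a single, $\beta$-\emph{independent} $\pQ=\mathfrak{r}_\sharp Q\in\PROB_2$, where $\mathfrak{r}$ is the explicit section $\rho\mapsto r^\rho\equiv\sum_{\ww}\rho(\ww)\,\I_{w^\ww}$ with $w^\zro=\I_\zro$, $w^\one=\I_\one$, $w^\free=(\I_\zro+\I_\one)/2$. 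Because every element of $\supp\pQ$ is supported on exactly these three atoms, the inner integrands satisfy $\hat{X}_{a,\beta}(w_{1:k})^{y_2/\beta}\to\exp\{-y_2\hph(\ww_{1:k})\}$ and $X_\beta(w_{1:D})^{y_2/\beta}\to\exp\{-y_2\vph(\ww_{1:D})\}$ pointwise, so $G_{\beta,y_1/\beta,y_2/\beta}(\pQ)\to\FG(y_1,y_2,Q)$ and $W_{\beta,y_1/\beta,y_2/\beta}(\pQ)\to\WW(y_1,y_2,Q)$ directly — no approximation of $Q$, no separate continuity lemma. Your phrase ``Gibbs weights concentrate onto $\I_\zro$, $\I_\one$, or the soft interior'' describes the physics but obscures the point that the paper's $\pQ$ lives on those three atoms from the outset, which is what makes the bound hold for an arbitrary $Q$ (including degenerate ones) with no extra work.
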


\noindent
The detailed heuristic derivation of 
$\PhiTwo$, as well as the proof of Proposition~\ref{p:interpolation}, are given in Section~\ref{s:interpolation}. There are two simple ways in which $\PhiTwo$ can degenerate:
\begin{enumerate}[I.]
\item The probability measure $Q$ is fully supported on a single element $\rho\in\Omega$. In this case $\PhiTwo(y_1,y_2,Q)$ depends only on $y_2$ and $\rho$, so we can define $
\PhiTwo(y_1,y_2,Q)\equiv \PhiOne(y_2,\rho)$.
\item \label{II}
The probability measure $Q$ decomposes as
$Q=\rho_\zro Q_\zro + \rho_\one Q_\one + \rho_\free Q_\free$ where each $Q_{\dw}$ is fully supported on the single element $\I_{\dw}\in\Omega$. In this case we have $\PhiTwo(y_1,y_2,Q)=\PhiOne(y_1,\rho)$.
\end{enumerate}
Let $\dq_y$ be the solution of Proposition~\ref{p:sp}, and define
	\beq\label{e:Q.II}
	Q_\II
	\equiv Q_{\II,y}
	\equiv 
	\sum_{\dw\in\set{\zro,\one,\free}}
	\dq_y(\dw)
	Q_{\dw}
	\eeq
where each $Q_{\dw}$ is the unit mass on $\I_{\dw}\in\Omega$. One can verify by straightforward algebraic manipulations that
	\[\PhiOne(y,\dq_y)=\f{\FF(y)}{y}\,.\]
Thus an immediate consequence of Proposition
~\ref{p:interpolation} is that $-\einf$ is upper bounded by
	\beq\label{e:inf.Fy.over.y}
	\PhiTwo(y,y,Q_{\II,y})
	=\PhiOne(y,\dq_y)
	=\f{\FF(y)}{y}\,,
	\eeq
throughout the range of $y$ where $\dq_y$ is defined. It has been observed in the physics literature \cite{MRT,KPW} that linearizing the stationarity equations for the functional $\PhiTwo$ (equivalently, the \bemph{2RSB cavity equations})
	\[
	\f{\partial\PhiTwo(y_1,y_2,Q)}{\partial Q}=0
	\]
around $Q=Q_\II$ gives rise to the stability matrix $\bB_{\neq}$ introduced in \S\ref{ss:intro.gardner}. To prove Theorem~\ref{t:main}, we show that an explicit perturbation of $(y,y,Q_\II)$ decreases the value of $\PhiTwo$ as soon as the top eigenvalue of $\bB_{\neq}$ exceeds $1/\branch$. While the physics literature certainly hints that this would be the case, to our knowledge this rigorous connection between the Gardner eigenvalue and the stability of the 2RSB functional has not been previously established.

\subsection*{Organization of paper} 
In Section~\ref{s:recursions} we prove Propositions~\ref{p:sp}~and~\ref{p:eone.defn}, as well as the general version of \eqref{e:alpha.onersb.correction}. In Section~\ref{s:gardner} we prove Proposition~\ref{p:gardner.defn} and Theorem~\ref{t:main}. Finally, in Section~\ref{s:interpolation} we review the 2RSB heuristic and prove Proposition~\ref{p:interpolation}. 

\subsection*{Acknowledgements} We thank Florent Krzakala, Andrea Montanari, Guilhem Semerjian, and Allan Sly for many helpful conversations. N.S.\ is supported in part by NSF CAREER grant DMS-1752728.

\section{Tree recursions and 1RSB value}
\label{s:recursions}

In this section we analyze the survey propagation (\textsc{sp}) recursions introduced in
\S\ref{ss:intro.formulas}. First, in \S\ref{ss:simplification.sp} we write the \textsc{sp} recursions in a simplified form. The proof of Proposition~\ref{p:sp} is then given in \S\ref{ss:contraction}. We give some discussion on the \textsc{sp} formula as a variational problem in \S\ref{ss:variational.discussion}.
The comparison with the first moment bound
(extending \eqref{e:alpha.onersb.correction}) appears in \S\ref{ss:comparison.first.mmt},
and the proof of Proposition~\ref{p:eone.defn} 
appears in \S\ref{ss:ground.state}. Some technical lemmas are deferred to \S\ref{ss:binomial}.
 Recalling \eqref{e:cc.parametrization}, we assume throughout the paper that $\alpha=d/k =\CC 2^{k-1}\log2$ with $\asat\le\alpha \le 4^k/k$. We always restrict our consideration to parameters $y\ge0$ 
satisfying \eqref{e:y.restriction}.
From now on we will often suppress $y$ from the notation, e.g.\ we write $\SP\equiv\dSP\circ\hSP$
rather than $\SP_y\equiv\dSP_y\circ\hSP_y$. 

\subsection{Simplification of SP equations}
\label{ss:simplification.sp} Recall \eqref{e:MM.symmetric} that $\MM$ denotes the space of all probability measures $q$ on $\set{\zro,\one,\free}$ with the symmetry $q(\zro)=q(\one)$. We first take advantage of this symmetry to simplify the SP recursions: the mapping from  $\dq\in\MM$ to $\hq=\hSP(\dq)$ to $\tq=\dSP(\hq)$ can be naturally expressed as \emph{univariate} mappings from $x$ to $w$ to $\tilde{x}$ where $x\equiv \dq(\free)$, $w\equiv 1-\hq(\free)$, and $\tilde{x} \equiv \tq(\free)$. First, the clause recursion \eqref{e:sp.clause} can be rewritten as
	\beq\label{e:clause.x.to.w}
	w = w(x)
	= \f{2(1-x)^{k-1}}{2^{k-1}}
	\le \f{4}{2^k}\,.
	\eeq
To simplify the variable recursion \eqref{e:sp.var}, first write
	\[
	\AM \equiv \f{1+e^{-y}}{2}\,,\quad
	\GM\equiv \f1{\exp\{y/2\}}\,,
	\]
and use these to define (for $0\le\ell\le d-1$) the binomial weights
	\begin{align*}
	\AAA_\ell&\equiv \AAA_{d-1,\ell}(w)
	\equiv
		\binom{d-1}{\ell}
		(w\cdot \AM)^\ell(1-w)^{d-1-\ell}\,,
		\\
	\GGG_\ell
	&\equiv \GGG_{d-1,\ell}(w)
	\equiv \binom{d-1}{\ell}
		(w\cdot \GM)^\ell(1-w)^{d-1-\ell}\,.
	\end{align*}
Note that $1/2\le\AM\le1$ always, but $\GM$ can be arbitrarily small. For each $\ell$ we also define
	\begin{align}
	\PPP_\ell
	&\equiv \P\bigg(
		\Bin\bigg(
		\ell,\f{1}{1+e^y}
		\bigg) < \f{\ell}{2}
		\bigg)\label{e:arith.prob}\,,\\
	\QQQ_\ell
	&\equiv \P\bigg(
		\Bin\bigg(
		\ell,\f{1}{1+e^y}
		\bigg) = \f{\ell}{2}
		\bigg)\label{e:geom.prob.alt}\,,\\
	\SSS_\ell 
	&\equiv \P\bigg(
		\Bin\bigg(\ell,\f12
		\bigg) =\f{\ell}{2}
		\bigg)
		\label{e:geom.prob}\,,
	\end{align}
Note that $\PPP_\ell=0$ for $\ell=0$, $\SSS_\ell=0$ for all $\ell$ odd, and $\AAA_\ell\QQQ_\ell=\GGG_\ell\SSS_\ell$. Let
	\begin{align}
	\label{e:def.dz.zro}
	\dz_\zro \equiv \dz_\zro(w)
	&\equiv
	\sum_{\ell=0}^{d-1}
		\AAA_\ell\PPP_\ell
	\equiv \sum_{\ell=0}^{d-1}
		\AAA_{d-1,\ell}(w)\PPP_\ell\,,\\
	\dz_\free \equiv \dz_\free(w)
	&=\sum_{\ell=0}^{d-1}
		\AAA_\ell
		\QQQ_\ell
	= \sum_{\ell=0}^{d-1}
		\GGG_\ell
		\SSS_\ell\,.
	\label{e:def.dz.free}
	\end{align}
Let $\dz(w)\equiv 2\dz_\zro(w)+\dz_\free(w)$. The variable recursion \eqref{e:sp.var} can be rewritten as
	\beq\label{e:variable.w.to.x}
	\tilde{x}(w) 
	=
	\f{\dz_\free(w)}
	{2\dz_\zro(w)+\dz_\free(w)}
	= \f{\dz_\free(w)}
	{\dz(w)}
	\,.
	\eeq
To analyze the recursion, we let $\LAM$ and $\LGM$ be random variables with distribution
	\begin{align}
	\label{e:arith.Z}
	\P(\LAM=\ell) 
	&= 
	\f{\AAA_\ell\PPP_\ell}{\dz_\zro}
	=\f1{\dz_\zro}\binom{d-1}{\ell}
	(w\cdot\AM)^\ell (1-w)^{d-1-\ell}
	\P\bigg(\Bin\bigg(\ell,\f1{1+e^y}\bigg)<\f12\bigg)
	\,,\\
	\label{e:geom.Z}
	\P(\LGM=\ell) 
	&=\f{\GGG_\ell\SSS_\ell}{\dz_\free}
	=\f1{\dz_\free}
	\binom{d-1}{\ell}
	(w\cdot\GM)^\ell (1-w)^{d-1-\ell}
	\P\bigg(\Bin\bigg(\ell,\f12\bigg)
		=\f{\ell}{2}\bigg)
	\,,
	\end{align}
for $0\le \ell\le d-1$.  For comparison, let $\binLAM$ and $\binLGM$ be random variables with distributions
	\begin{align}\label{e:binLAM}
	\P(\binLAM=\ell) 
	&= \f{\AAA_\ell}{(1-w+w\cdot\AM)^{d-1}}
	\quad\textup{for }0\le\ell\le d-1\,,\\
	\label{e:binLGM}
	\P(\binLGM=\ell) 
	&= \f{\GGG_\ell}{(1-w+w\cdot\GM)^{d-1}}
	\quad\textup{for }0\le\ell\le d-1\,.
	\end{align}
Let $\lam\equiv\E \binLAM$ and $\lgm\equiv\E \binLGM$.

\subsection{Contraction of SP}
\label{ss:contraction}

Recall \eqref{e:MM.symmetric} that $\MM$ denotes the space of all probability measures $q$ on $\set{\zro,\one,\free}$ with the symmetry $q(\zro)=q(\one)$; and recall \eqref{e:MM.bullet} and \eqref{e:MM.gamma} the definitions of $\MM^\gamma\subseteq\MMnf\subseteq\MM$. Let
	\[
	\MMstar
	\equiv\bigg\{
	q\in\MMnf : 
	q(\free) 
	\asymp \f{e^{-\Theta(k)}}{\CC^{1/2}}
	\bigg\}\,\]
Recall \eqref{e:MM.gamma} and note that $\MM^\gamma\subseteq\MMstar$: if $\CC$ is large then \eqref{e:y.restriction} forces $y\asymp1/\CC^{1/2}$, so
	\[
	\f{2^{-k\gamma/2}}
	{(\max\set{\CC k e^{-y/2},1})^{1/2}}
	\asymp
	\f{2^{-k\gamma/2}}{(\CC k)^{1/2}}
	\asymp \f{e^{-\Theta(k)}}
		{\CC^{1/2}}\,.
	\]
If $\CC\asymp1$ then it forces $y\ge\Omega(1)$, so
	\[ \f{e^{-\Theta(k)}}{\CC^{1/2}}
	\asymp
	\f{2^{-k\gamma/2} }{(\CC k)^{1/2}}
	\le\f{2^{-k\gamma/2}}{
		(\max\set{\CC k e^{-y/2},1})^{1/2}}
	\le 2^{-k\gamma/2}
	\asymp 
	\f{e^{-\Theta(k)}}{\CC^{1/2}}\,.
	\]
The next two propositions, which will be proved in this subsection, summarize the contractive behavior of the \textsc{sp} recursion in our regime of interest.

\begin{ppn}\label{p:initial.contraction}
Suppose $\alpha=\CC 2^{k-1}\log2$ with $\asat\le\alpha \le 4^k/k$, and suppose $y\ge0$ satisfies \eqref{e:y.restriction}.
Then for any $\dq\in\MMnf$ we have $\SP_y(\dq)\in\MM^\gamma\subseteq \MMstar$.
\end{ppn}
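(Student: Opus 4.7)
The plan is to use the univariate reduction $x\mapsto w(x)\mapsto\tilde{x}(w)$ from \eqref{e:clause.x.to.w}--\eqref{e:variable.w.to.x}, which collapses one step of the $\SP_y$ recursion to a scalar map on $\dq(\free)$. The hypothesis $\dq\in\MMnf$ gives $x=\dq(\free)\le 1/k^2$, so $(1-x)^{k-1}=1+O(1/k)$ and $w=w(x)\asymp 2^{-k}$. Since $d=k\alpha$ and $\alpha\asymp \CC 2^{k-1}$, the relevant scale is $dw\asymp \CC k$, and the main task reduces to sharp asymptotics for $\dz_\zro(w)$ and $\dz_\free(w)$ defined in \eqref{e:def.dz.zro}--\eqref{e:def.dz.free}.

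For $\dz_\zro(w)$, the factor $\PPP_\ell=\P(\Bin(\ell,1/(1+e^y))<\ell/2)$ has a bias $1/2-p\asymp\min\{1,y\}$. Under the admissible range \eqref{e:y.restriction}, either $\CC y^2\asymp 1$ (small $y$, large $\CC$) or $y\ge\Omega(1)$ (for $\CC\asymp 1$); in both sub-cases $\sqrt{\lam}\,(1/2-p)\ge\Omega(\sqrt k)$ with $\lam\asymp\CC k$. A Chernoff bound then gives $\PPP_\ell\ge 1-e^{-\Omega(k)}$ throughout the typical range of $\binLAM$, yielding
\[
\dz_\zro(w)=(1-w+w\AM)^{d-1}\bigl(1-e^{-\Omega(k)}\bigr).
\]
For $\dz_\free(w)$, using $\SSS_\ell\asymp 1/\sqrt{\ell}$ for $\ell$ even (and $\SSS_\ell=0$ otherwise) together with a local CLT (or direct Stirling estimate) for $\binLGM$, whose mean is $\lgm\asymp\CC k e^{-y/2}$, one obtains
\[
\dz_\free(w)\asymp\f{(1-w+w\GM)^{d-1}}{\sqrt{\max\{1,\lgm\}}}.
\]
The ``$\max\{1,\cdot\}$'' handles both $\lgm\gg 1$ (Gaussian averaging of $1/\sqrt{\ell}$) and $\lgm\lesssim 1$ (mass near $\ell=0$, so $\SSS_0=1$ dominates).

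Combining these two estimates and applying the algebraic identity $\AM-\GM=(1-\GM)^2/2$, the ratio of the $(1-w+w\cdot)^{d-1}$ factors is $\exp(-(d-1)w(1-\GM)^2/2)(1+o(1))$. Substituting $(1-\GM)^2=\gamma/(2\CC)$ together with $dw\asymp 2\CC k\log 2$ produces an exponent $-k\gamma\log 2/2+o(1)$, i.e.\ a factor $2^{-k\gamma/2}$. Therefore
\[
\tilde{x}=\f{\dz_\free}{2\dz_\zro+\dz_\free}
\asymp\f{2^{-k\gamma/2}}{\sqrt{\max\{1,\CC k e^{-y/2}\}}},
\]
which is precisely the defining property of $\MM^\gamma$; the chain $\MM^\gamma\subseteq\MMstar$ was already verified in the discussion preceding the proposition. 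Membership in $\MMnf$ is automatic: since $\gamma\asymp 1$, $\tilde{x}\le 2^{-\Omega(k)}\ll 1/k^2$.

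The main obstacle is keeping the $\asymp$ constants uniform across the sub-regime dichotomy singled out above: $\CC$ large with $y\asymp\CC^{-1/2}$ (where $\GM\approx 1$ and $\lgm\asymp\lam$) versus $\CC\asymp 1$ with $y$ possibly as large as $2\log k$ (where $\GM$ may be as small as $1/k$ and $\lgm$ may drop below $1$). I expect this to be handled by a pair of technical binomial lemmas deferred to \S\ref{ss:binomial}, giving matching two-sided estimates on $\E[\PPP_{\binLAM}]$ and on $\E[\I\{\binLGM\text{ even}\}\,\SSS_{\binLGM}]$; once those are in place, the contraction claim reduces to the algebraic simplification above.
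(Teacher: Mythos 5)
Your argument is correct and follows essentially the same route as the paper's proof: compute $w(x)\asymp 4/2^k$ from $\dq\in\MMnf$, then invoke binomial estimates for $\dz_\zro(w)$ and $\dz_\free(w)$ (what the paper packages as Lemma~\ref{l:binom.Z.estimates}) to evaluate $\tilde{x}(w)$, arriving at the $2^{-k\gamma/2}/\sqrt{\max\{\CC k e^{-y/2},1\}}$ form via the exponent $dw(\AM-\GM)$. Your use of the identity $\AM-\GM=(1-\GM)^2/2$ together with $(1-\GM)^2=\gamma/(2\CC)$ is a clean way to spell out the algebra that the paper leaves implicit, and your sub-case dichotomy ($\CC$ large vs.\ $\CC\asymp1$) matches the case analysis in the paper's deferred binomial lemma.
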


\begin{ppn}
\label{p:derivative.MMstar}
Suppose $\alpha=\CC 2^{k-1}\log2$ with $\asat\le\alpha \le 4^k/k$, and suppose $y\ge0$ satisfies \eqref{e:y.restriction}. Then the derivative of the survey propagation map satisfies the estimate
	\[
	\Omega\bigg(
		\f{e^{-\Theta(k)}}{\CC^{1/2}}
		\bigg)
	\le
	\f{d [\SP_y(\dq)](\free)}
		{d\dq(\free)}\le O\bigg(
		\f1
			{e^{\Omega(k)}}
		\bigg)
	\]
uniformly over all
 $\dq\in\MMstar$.
\end{ppn}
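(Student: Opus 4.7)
I would parametrize $\MMstar$ by the single scalar $x = \dq(\free) \in [0,1/k^2]$ (by the symmetry $\dq(\zro) = \dq(\one)$, this determines $\dq$) and apply the chain rule through the intermediate variable $w = 1-\hq(\free) = 2(1-x)^{k-1}/2^{k-1}$ of \eqref{e:clause.x.to.w}, which gives immediately $dw/dx = -(k-1)w/(1-x) \asymp -k/2^k$ on $\MMstar$. The harder factor is $d\tilde x/dw$, where $\tilde x \equiv \dz_\free(w)/\dz(w) = [\SP_y(\dq)](\free)$. The key observation is that each summand $\binom{d-1}{\ell}(wc)^\ell(1-w)^{d-1-\ell}$ in \eqref{e:def.dz.zro} and \eqref{e:def.dz.free} has log-derivative $[\ell-(d-1)w]/[w(1-w)]$ in $w$, independent of $c$. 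Setting $\lam_* \equiv \E\LAM$ and $\lgm_* \equiv \E\LGM$ for the expectations under the reweighted laws \eqref{e:arith.Z} and \eqref{e:geom.Z}, this produces
\[
\frac{d\log\dz_\zro}{dw} = \frac{\lam_*-(d-1)w}{w(1-w)},\qquad
\frac{d\log\dz_\free}{dw} = \frac{\lgm_*-(d-1)w}{w(1-w)}.
\]
Applying the quotient rule to $\tilde x = \dz_\free/\dz$ and exploiting the identity $2\dz_\zro/\dz = 1-\tilde x$ yields a clean cancellation which, after composition with $dw/dx$, gives the closed form
\[
\frac{d[\SP_y(\dq)](\free)}{d\dq(\free)}
= \tilde x(1-\tilde x)(\lam_*-\lgm_*)\cdot\frac{k-1}{(1-w)(1-x)}.
\]

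Each factor except $\lam_* - \lgm_*$ can now be estimated directly. On $\MMstar$, $x = O(1/k^2)$ so $w\asymp 1/2^k$, whence $(1-w)(1-x)(1-\tilde x) = 1-o_k(1)$ and $k-1\asymp k$; Proposition~\ref{p:initial.contraction} pins down $\tilde x \asymp e^{-\Theta(k)}/\CC^{1/2}$. The remaining and main obstacle is a two-sided estimate $\lam_*-\lgm_* \asymp k$. The guiding heuristic is that without the conditioning factors $\PPP_\ell$ and $\SSS_\ell$ one has $\E\binLAM - \E\binLGM \approx (d-1)w(\AM-\GM) = (d-1)w\cdot(1-e^{-y/2})^2/2$; since $(d-1)w \asymp k\CC$, this equals $\Theta(\CC k(1-e^{-y/2})^2) = \Theta(k\gamma) \asymp k$ by the restriction \eqref{e:y.restriction}. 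The technical step, which I would relegate to the binomial lemmas of \S\ref{ss:binomial}, is to verify that neither the conditioning by $\PPP_\ell$ (a one-sided tail event for $\Bin(\ell,1/(1+e^y))$ of probability bounded below, since $1/(1+e^y)\le 1/2$) nor the local-CLT-type weight $\SSS_\ell \asymp 1/\sqrt\ell$ (which selects one parity of $\ell$ and shifts its mean by $O(1)$) distorts these expectations by more than universal constants, so $\lam_*-\lgm_*\asymp k$ is preserved. The subtlety is achieving this uniformly over $\asat\le\alpha\le 4^k/k$, where $\gamma$ interpolates between approximately $1$ (small $y$, large $\CC$) and approximately $2$ (large $y$, $\CC\asymp 1$), forcing separate analyses of the two endpoints.

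Putting everything together yields $|d\tilde x/dx| \asymp k^2\tilde x$. The upper bound $k^2\tilde x \le O(k^2/2^{k\gamma/2}) \le O(1/e^{\Omega(k)})$ absorbs the polynomial factor into the exponential (using $\gamma \ge 1-o_k(1)$ and $\log k = o(k)$), while the lower bound $k^2\tilde x \ge \Omega(k^2 e^{-\Theta(k)}/\CC^{1/2}) \ge \Omega(e^{-\Theta(k)}/\CC^{1/2})$ is immediate from $k^2\ge 1$. These are exactly the two-sided bounds stated in the proposition.
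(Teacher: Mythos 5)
Your chain-rule decomposition through $w = w(x)$ and the closed form
\[
\frac{d[\SP_y(\dq)](\free)}{d\dq(\free)}
= \frac{(k-1)\,\tilde x(1-\tilde x)\,(\lam_*-\lgm_*)}{(1-w)(1-x)}
\]
are exactly what the paper does (it writes $-w'(x)=kw\{1+O(1/k)\}$ rather than the exact $(k-1)w/(1-x)$, but that is cosmetic), and your use of $\MMstar$ to pin $\tilde x\asymp e^{-\Theta(k)}/\CC^{1/2}$ via Proposition~\ref{p:initial.contraction} is also the paper's move. The one place you depart is the claim $\lam_*-\lgm_*\asymp k$, which you say to ``relegate to the binomial lemmas of \S\ref{ss:binomial}.'' Those lemmas do not deliver it: Lemma~\ref{l:simplified.binom.EL.estimates} gives only the Jensen bound $|\E\LAM-\lam|\le O((\CC k)^{1/2})$ (and similarly for $\LGM$), which is far weaker than the $O(1)$ shift your local-CLT heuristic would need, and Lemma~\ref{l:one.sided} gives only the one-sided $\E(\LAM-\LGM)\ge\Omega(k)$. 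So $\E(\LAM-\LGM)$ is sandwiched between $\Omega(k)$ and $O((\CC k)^{1/2})$ by the paper's lemmas, and for $\CC\gg k$ this does not give a two-sided $\asymp k$.

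Your intuition (the $\PPP_\ell$-tilt is $1-e^{-\Omega(k)}$ in the bulk so shifts $\E\LAM$ negligibly, and the $\SSS_\ell\asymp\ell^{-1/2}$ tilt shifts $\E\LGM$ by $O(1)$ by a derivative/variance calculation) is almost certainly correct, but it would require a new lemma to make rigorous — the paper deliberately settles for the cruder Jensen estimate, flagging after Lemma~\ref{l:simplified.binom.EL.estimates} that ``we do not expect this bound to be tight, but it will suffice for our purposes.'' And indeed it does suffice: plugging the weaker upper bound into your closed form gives $d[\SP(\dq)](\free)/d\dq(\free)\le O(k\tilde x(\CC k)^{1/2})$, and with $\tilde x\asymp e^{-\Theta(k)}/\CC^{1/2}$ this is $O(k^{3/2}e^{-\Theta(k)})=O(1/e^{\Omega(k)})$, still absorbed by the exponential headroom. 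So you are proposing to prove a genuinely sharper intermediate estimate than the proof actually requires; either accept the looser bound and cite Lemmas~\ref{l:simplified.binom.EL.estimates}--\ref{l:one.sided} as the paper does, or be prepared to add a new local-CLT computation that the current \S\ref{ss:binomial} does not contain.
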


\begin{proof}[Proof of Proposition~\ref{p:sp}]
It follows from Proposition~\ref{p:initial.contraction} that if $\dq(\free)=1/k^2$ then $[\SP(\dq)](\free)<\dq(\free)$. On the other hand, an easy calculation gives that if $\dq(\free)=0$ then
$[\SP(\dq)](\free)>\dq(\free)$. It follows by the intermediate value theorem that a fixed point $[\SP(\dq)](\free)=\dq(\free)$ must exist with $\dq(\free)\le1/k^2$. By Proposition~\ref{p:initial.contraction}, any such fixed point must in fact lie in $\MM^\gamma\subseteq\MMstar$. It then follows from Proposition~\ref{p:derivative.MMstar} that the fixed point is unique.
\end{proof}

\noindent The following lemma is straightforward, and its proof is deferred:

\begin{lem}[proved in \S\ref{ss:binomial}]
\label{l:binom.Z.estimates}
If $\alpha=\CC 2^{k-1}\log2$ with $\asat\le\alpha \le 4^k/k$, and $w\asymp 1/2^k$, then we have
	\begin{align}
	\label{e:bounds.Z.zro}
	1-O\bigg(\f1{\exp\{\Omega(k)\}}\bigg)
	&\le
	\f{\dz_\zro(w)}{(1-w+w\cdot\AM)^{d-1}}
	\le1\,,\\
	\label{e:bounds.Z.free}
	\f{\dz_\free}{(1-w+w\cdot\GM)^{d-1}}
	&\asymp
	\f1{(\max\set{dw\cdot\GM,1})^{1/2}}\,,
	\end{align}
uniformly over $y\ge0$ satisfying  \eqref{e:y.restriction}.
\end{lem}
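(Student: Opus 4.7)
The idea is to recast both ratios as expectations against the binomial laws \eqref{e:binLAM}--\eqref{e:binLGM}. Setting $p_\AM = w\AM/(1-w+w\AM)$ and $p_\GM = w\GM/(1-w+w\GM)$, we have $p_\AM \asymp w \asymp 1/2^k$ and $p_\GM \asymp w\GM$, so the means satisfy $\lam \asymp dw \asymp \CC k$ and $\lgm \asymp dw\GM$. The identity $\AM/\GM = \cosh(y/2)$ will be useful: it gives $1 - \GM/\AM = (1-e^{-y/2})^2/(1+e^{-y}) \asymp (1-e^{-y/2})^2$.

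For \eqref{e:bounds.Z.zro}, the upper bound is immediate from $\PPP_\ell \le 1$. For the matching lower bound, I apply Chernoff to the tail: since $q \equiv 1/(1+e^y) \le 1/2$, an exponential Markov bound at the symmetric tilt gives $1 - \PPP_\ell \le [\cosh(y/2)]^{-\ell}$. Averaging against $\binLAM$ and using the binomial moment generating function,
\[
\E\bigl[1 - \PPP_{\binLAM}\bigr]
\;\le\; \bigl(1 - p_\AM(1 - 1/\cosh(y/2))\bigr)^{d-1}
\;\le\; \exp\bigl(-\Omega(dw(1-e^{-y/2})^2)\bigr).
\]
By \eqref{e:y.restriction} the exponent is $\asymp k\CC(1-e^{-y/2})^2 = k\gamma/2 \asymp k$, yielding $\dz_\zro/(1-w+w\AM)^{d-1} \ge 1 - e^{-\Omega(k)}$.

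For \eqref{e:bounds.Z.free}, the algebraic identity $\AAA_\ell \QQQ_\ell = \GGG_\ell \SSS_\ell$ (verified directly using $\AM^\ell q^{\ell/2}(1-q)^{\ell/2} = \GM^\ell/2^\ell$) already reduces the ratio to $\E[\SSS_{\binLGM}]$. Since $\SSS_0 = 1$ and $\SSS_\ell \asymp \ell^{-1/2}$ for even $\ell \ge 2$ (while $\SSS_\ell = 0$ for odd $\ell$), I split at $\lgm \asymp 1$. In the small-mean regime $\lgm \le 1$, we have $\P(\binLGM = 0) = (1-p_\GM)^{d-1} \ge e^{-2\lgm} \asymp 1$, so $\E[\SSS_{\binLGM}] \asymp 1 \asymp 1/\sqrt{\max(\lgm,1)}$. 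In the large-mean regime $\lgm \ge 1$, a local CLT for the binomial gives $\P(\binLGM = \ell) \asymp 1/\sqrt{\lgm}$ uniformly on a window of size $O(\sqrt{\lgm})$ around the mean, on which $\SSS_\ell \asymp 1/\sqrt{\lgm}$; summing over even $\ell$ (noting $\P(\binLGM \text{ even}) = (1 + (1-2p_\GM)^{d-1})/2$ is bounded below away from $0$) yields $\E[\SSS_{\binLGM}] \asymp 1/\sqrt{\lgm}$, with tails controlled by Chernoff.

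The main obstacle is the quantitative sharpness of the estimates and their uniformity across all regimes allowed by \eqref{e:y.restriction}. In \eqref{e:bounds.Z.zro}, the small-$y$ regime is delicate because $q\to 1/2$ makes the per-term Chernoff exponent $\log\cosh(y/2) \asymp y^2$ small; only the enforced scaling $\CC(1-e^{-y/2})^2 \asymp 1$ restores the $e^{-\Omega(k)}$ factor, so the dependence on $y$ must be tracked throughout rather than discarded. In \eqref{e:bounds.Z.free}, the local CLT must be applied with constants uniform in $p_\GM$, and the parity restriction on $\SSS_\ell$ must not degrade the matching. Both issues are essentially technical and should be dispatched using standard concentration together with the binomial estimates deferred to \S\ref{ss:binomial}.
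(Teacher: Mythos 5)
Your proof is correct and follows essentially the same approach as the paper: reduce both ratios to binomial expectations $\E[\PPP_{\binLAM}]$ and $\E[\SSS_{\binLGM}]$, control the former by the Chernoff bound on $1-\PPP_\ell$ (your tilt at $t=y$ gives the sharp form $\cosh(y/2)^{-\ell}$, versus the paper's generic $\exp\{-\Omega(\ell\min\{y,y^2\})\}$, but $\log\cosh(y/2)\asymp\min\{y,y^2\}$ so they are interchangeable) and then average via the mgf, and control the latter by splitting on whether $\lgm$ is $O(1)$ or large, using $\P(\binLGM=0)\asymp 1$ in the first case and Stirling/local-CLT estimates for $\SSS_\ell$ plus concentration and the parity computation $\P(\binLGM\text{ even})\to\tfrac12$ in the second.
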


\begin{proof}[Proof of Proposition~\ref{p:initial.contraction}]
We start from $\dq\in\MMnf$, which means $x\equiv \dq(\free) \le 1/k^2$. This maps to $\hq=\hSP(\dq)$, where we can see readily from \eqref{e:clause.x.to.w} that
	\[
	1-\hq(\free) = w(x) 
	= \f{4}{2^k}\bigg\{1 + O\bigg(\f1k\bigg)\bigg\}\,.
	\]
This then maps to $\tq=\dSP(\hq)$. Substituting the bounds of Lemma~\ref{l:binom.Z.estimates} into 
\eqref{e:variable.w.to.x} gives
	\begin{align*}
	\tq(\free)
	\equiv\tilde{x}(w)
	&\asymp \f1{(\max\set{dw\cdot\GM,1})^{1/2}}
	\bigg(\f{1-w+w\cdot\GM}
	{1-w+w\cdot\AM}\bigg)^{d-1}\\
	&\asymp
	\f{\exp\{-d w(\AM-\GM)\}}
		{(\max\set{dw\cdot\GM,1})^{1/2}}
	\asymp 
	\f{2^{-k\gamma/2} }{(\max\set{\CC k e^{-y/2},1})^{1/2}}
	\,,
	\end{align*}
which implies $\tq\in\MM^\gamma\subseteq\MMstar$.\end{proof}

\noindent We next provide a simple (and rather crude) estimate on $\E\LAM$ and $\E\LGM$.

\begin{lem}[proved in \S\ref{ss:binomial}]
\label{l:simplified.binom.EL.estimates}
If $\alpha=\CC 2^{k-1}\log2$ with $\asat\le\alpha \le 4^k/k$, and $w\asymp 1/2^k$, then we have
	{\setlength{\jot}{0pt}\begin{align}
	\label{e:simplified.ELAM.estimate}
	|\E\LAM-\lam|&\le O((dw)^{1/2})
	=O((\CC k)^{1/2})
	\,,\\
	\label{e:simplified.ELGM.estimate}
	|\E\LGM-\lgm|
	&\le
	\min\set{dw\cdot\GM,(dw\cdot\GM)^{1/2}}
	\le O((\CC k)^{1/2})\,.
	\end{align}}%
uniformly over $y\ge0$ satisfying  \eqref{e:y.restriction}.
\end{lem}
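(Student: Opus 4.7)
The plan is to express both mean shifts as covariances under the pure binomial laws $\binLAM$ and $\binLGM$, and then control those covariances using the boundedness and near-smoothness of the tilting weights $\PPP_\ell$ and $\SSS_\ell$. Concretely, by definition $\P(\LAM=\ell)\propto\P(\binLAM=\ell)\,\PPP_\ell$, and by the identity $\AAA_\ell\QQQ_\ell=\GGG_\ell\SSS_\ell$ one also has $\P(\LGM=\ell)\propto\P(\binLGM=\ell)\,\SSS_\ell$. Therefore
\[
\E\LAM-\lam=\f{\Cov_{\binLAM}(\binLAM,\PPP_{\binLAM})}{\E\PPP_{\binLAM}},\qquad
\E\LGM-\lgm=\f{\Cov_{\binLGM}(\binLGM,\SSS_{\binLGM})}{\E\SSS_{\binLGM}},
\]
and the task reduces to bounding these covariances and the denominators.

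For \eqref{e:simplified.ELAM.estimate}, since $0\le\PPP_\ell\le 1$, Cauchy--Schwarz gives $|\Cov(\binLAM,\PPP_{\binLAM})|\le(\Var\binLAM)^{1/2}$. The variable $\binLAM$ is $\Bin(d-1,p')$ with $p'=w\AM/(1-w+w\AM)\asymp w\asymp 2^{-k}$, so the explicit binomial formula yields $\Var\binLAM\asymp dw\asymp\CC k$. The denominator $\E\PPP_{\binLAM}$ is at least $1/2$ by the lower bound in \eqref{e:bounds.Z.zro} of Lemma~\ref{l:binom.Z.estimates}, so $|\E\LAM-\lam|=O((\CC k)^{1/2})$ as claimed.

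For \eqref{e:simplified.ELGM.estimate} I would split on the size of $dw\GM$. When $dw\GM\le 1$, the series for $\dz_\free=\sum_\ell\GGG_\ell\SSS_\ell$ and for $\sum_\ell\ell\GGG_\ell\SSS_\ell$ are dominated by their $\ell\in\set{0,2}$ terms, which yields $\lgm\asymp dw\GM$ and $\E\LGM\asymp (dw\GM)^2\le\lgm$, so trivially $|\E\LGM-\lgm|=O(dw\GM)$. When $dw\GM\ge 1$, a naive Cauchy--Schwarz only gives $O((dw\GM)^{3/4})$, which is not sharp enough. Here I would instead use the integral representation $\SSS_\ell=\f1\pi\int_0^\pi\cos^\ell\phi\,d\phi$ (valid for all $\ell\ge 0$, with odd $\ell$ contributions vanishing by symmetry), which converts the sums into
\[
\E\SSS_{\binLGM}=\f1\pi\int_0^\pi(1-p''+p''\cos\phi)^{d-1}d\phi,\qquad
\E[\binLGM\SSS_{\binLGM}]=\f{(d-1)p''}{\pi}\int_0^\pi\cos\phi\,(1-p''+p''\cos\phi)^{d-2}d\phi,
\]
where $p''=w\GM/(1-w+w\GM)\asymp w\GM$. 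A Laplace-type analysis localizes the relevant mass near $\phi=0$ on the scale $\phi\asymp(dw\GM)^{-1/2}$, and expanding $\cos\phi=1-\phi^2/2+O(\phi^4)$ there shows that the ratio of the two integrals differs from $(d-1)p''=\lgm$ by at most $O((dw\GM)^{1/2})$. Combining the two cases yields $|\E\LGM-\lgm|\le\min\set{dw\GM,(dw\GM)^{1/2}}\le O((\CC k)^{1/2})$.

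The main technical obstacle is the large-$dw\GM$ regime: the covariance is small but not small enough to be absorbed by crude second-moment bounds, so one needs the integral representation and a careful split of the $\phi$-range (Gaussian core versus tail) to get uniform control across the full range $1\le dw\GM\le O(\CC k)$ allowed by \eqref{e:y.restriction}.
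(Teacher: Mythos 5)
Your plan is correct. The $\LAM$ half is essentially the paper's argument: Cauchy--Schwarz (the paper uses Jensen applied to $(\E\LAM-\lam)^2\le\E[(\LAM-\lam)^2]$), then $\PPP_\ell\le 1$ and $\Var\binLAM\asymp dw$. The $\LGM$ half is where your route differs. For $\lgm=O(1)$ the paper instead notes that $\ell\mapsto\SSS_\ell$ is nonincreasing on even $\ell$, so $\binLGM$ and $\SSS_{\binLGM}$ are negatively correlated under the law of $\binLGM$ conditioned to be even; this gives $\E\LGM\le\E\binLGM/\P(\binLGM\textup{ even})=O(\lgm)$ in one line, whereas your dominant-term expansion would need an extra word justifying the geometric decay of the $\ell\ge4$ contributions. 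For $\lgm$ large the paper does not rely on a crude second-moment bound: it cuts the tails $|\ell-\lgm|>\lgm/2$ by Chernoff, then on the bulk invokes $\SSS_\ell\asymp\lgm^{-1/2}$ uniformly (Lemma~\ref{l:binom.estimates}), after which a Cauchy--Schwarz step on the bulk gives $O((dw\cdot\GM)^{1/2})$; the factor $\lgm^{-1/4}$ that you found ``missing'' comes precisely from this local estimate on $\SSS_\ell$, not from a change of method. That said, your integral representation $\SSS_\ell=\pi^{-1}\int_0^\pi\cos^\ell\phi\,d\phi$ with a Laplace expansion is a genuinely different, more analytic route, and it is valid; if carried out carefully it in fact gives the sharper conclusion $\E\LGM=\lgm-O(1)$, since the relative error in the ratio of the two integrals localized near $\phi=0$ is $O(1/\lgm)$ rather than $O(\lgm^{-1/2})$. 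You would just need to check that the contributions from $\phi$ bounded away from $0$ (including near $\pi$) are $\exp(-\Omega(\lgm))$ and hence negligible when $\lgm$ is large.
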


\noindent 
Lemma~\ref{l:simplified.binom.EL.estimates}
immediately gives
$\E(\LAM-\LGM) \le  O((\CC k)^{1/2})$. We do not expect this bound to be tight, but it will suffice for our purposes. We also obtain the following lower bound:

\begin{lem}[proved in \S\ref{ss:binomial}]
\label{l:one.sided}
In the setting of Lemma~\ref{l:simplified.binom.EL.estimates} we also have $\E(\LAM-\LGM)\ge \Omega(k)$.
\end{lem}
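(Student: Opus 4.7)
The strategy is to decompose
\[
\E(\LAM - \LGM) = (\lam - \lgm) + (\E\LAM - \lam) - (\E\LGM - \lgm),
\]
show the main term satisfies $\lam - \lgm \ge \Omega(k)$, and bound the two correction terms by $o(k)$. For the main term, the explicit formulas $p_\AM = w\AM/(1-w+w\AM)$ and $p_\GM = w\GM/(1-w+w\GM)$ give
\[
\lam - \lgm = \f{(d-1)w(\AM-\GM)(1-w)}{(1-w+w\AM)(1-w+w\GM)}.
\]
Substituting $\AM - \GM = (1-e^{-y/2})^2/2$, combining $(d-1)w \asymp \CC k$ (from $w \asymp 1/2^k$ and $d \asymp \CC k 2^k$) with the hypothesis $\CC(1-e^{-y/2})^2 = \gamma/2 \asymp 1$ from \eqref{e:y.restriction}, yields $\lam - \lgm \asymp k$.

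For the corrections, extracting a factor $\ell$ from $\AAA_\ell$ (resp.\ $\GGG_\ell$) gives the identities
\[
\E\LAM - \lam = \lam(1-p_\AM)\f{\E[\PPP_{\binLAM+1} - \PPP_{\binLAM}]}{\E\PPP_{\binLAM}}, \qquad \E\LGM - \lgm = \lgm(1-p_\GM)\f{\E[\SSS_{\binLGM+1} - \SSS_{\binLGM}]}{\E\SSS_{\binLGM}}.
\]
When $y \ge \Omega(1)$ the hypothesis $\gamma\asymp 1$ forces $\CC = O(1)$, and Lemma~\ref{l:simplified.binom.EL.estimates} already yields corrections of size $O((\CC k)^{1/2}) = O(\sqrt k) = o(k)$. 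When $y$ is small I would argue: (i) a local-limit estimate gives $\PPP_\ell \approx \Phi(y\sqrt\ell/2)$, and since $y\sqrt\lam \asymp \sqrt k$ is large throughout the bulk of $\binLAM$, one has $\PPP_\ell = 1 - e^{-\Omega(k)}$ there, so $|\E\LAM - \lam|$ is exponentially small in $k$; (ii) since $\SSS_\ell = 0$ for odd $\ell$ and $\SSS_{2m} = (\pi m)^{-1/2}(1+o(1))$, while $\binLGM$ has parity imbalance bounded by $|(1-2p_\GM)^{d-1}| \le e^{-\Omega(\CC k)}$ (because $p_\GM \asymp 1/2^k$), the $\sqrt\ell$ prefactors in $\SSS_{\binLGM+1}$ and $\SSS_{\binLGM}$ nearly cancel between numerator and denominator, leaving $|\E\LGM - \lgm| = O(1)$. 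Combining, $\E(\LAM - \LGM) \ge ck - o(k) = \Omega(k)$.

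The technically delicate step is the small-$y$ bound on $\E\LGM - \lgm$: when $\CC \gg k$, the raw Lemma~\ref{l:simplified.binom.EL.estimates} estimate $O(\sqrt{\CC k})$ exceeds $k$, so one must exploit both the exact parity-vanishing of $\SSS_\ell$ and the near parity-symmetry of $\binLGM$. An equivalent route, using the relation $\AAA_\ell\QQQ_\ell = \GGG_\ell\SSS_\ell$ to re-express $\LGM$ as a $\QQQ$-tilt of $\binLAM$, makes the shift $\lam - \E\LGM$ visible as the mean-shift of $\binLAM$ under an exponential tilt with rate $\log\cosh(y/2) = y^2/8 + O(y^4)$, matching the expansion $\lgm = (\lam/\cosh(y/2))(1+O(w))$ up to an $O(\lam y^4) = O(ky^2) = o(k)$ error.
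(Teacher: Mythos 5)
Your overall strategy --- decomposing $\E(\LAM-\LGM)=(\lam-\lgm)+(\E\LAM-\lam)-(\E\LGM-\lgm)$, identifying the main term $\lam-\lgm\asymp dw(\AM-\GM)\asymp k$ via \eqref{e:y.restriction}, and controlling the corrections --- is exactly the paper's decomposition, and your main-term estimate is correct. The gap is in how you control the corrections, and the difference is more than cosmetic.

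First, notice that for a lower bound on $\E(\LAM-\LGM)$ you only need a \emph{lower} bound on $\E\LAM$ and an \emph{upper} bound on $\E\LGM$; attempting two-sided control, as you do, is both harder and unnecessary. The paper gets precisely the needed one-sided bounds from two monotonicity observations: $\PPP_\ell$ is nondecreasing in $\ell$ within each parity class (the identity $\PPP_{\ell+2}-\PPP_\ell\ge0$ is verified directly), and $\SSS_\ell$ is nonincreasing over even $\ell$. These yield the covariance inequalities $\E\LAM\ge\ddot{\E}\binLAM$ (up to a tiny error) and $\E\LGM\le\ddot{\E}\binLGM$, where $\ddot{\E}$ denotes expectation conditioned on parity, and the conditional means $\ddot{\E}\binLAM$, $\ddot{\E}\binLGM$ are computed to be $\lam+O(e^{-\Omega(\CC k)})$ and $\lgm+O(e^{-\Omega(\lgm)})$ respectively via the binomial moment-generating function. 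You do not exploit these monotonicity facts, and it is exactly their absence that leaves your argument with gaps.

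Concretely: (a) your claimed size-biasing identity $\E\LAM-\lam=\lam(1-\pam)\,\E[\PPP_{\binLAM+1}-\PPP_{\binLAM}]/\E\PPP_{\binLAM}$ is not quite right as stated --- the standard identity $\E[Xg(X)]=np\,\E[g(X'+1)]$ for $X\sim\Bin(n,p)$ puts $X'\sim\Bin(n-1,p)$, not $\Bin(n,p)$, on the right, so the formula mixes two different reference measures; (b) your argument for $|\E\LGM-\lgm|=O(1)$ --- ``the $\sqrt\ell$ prefactors in $\SSS_{\binLGM+1}$ and $\SSS_{\binLGM}$ nearly cancel'' --- is heuristic and does not engage with the fact that $\SSS_\ell$ vanishes on odd $\ell$, so $\SSS_{\binLGM+1}$ and $\SSS_{\binLGM}$ are supported on disjoint parity events rather than being nearby values to be Taylor-compared. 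To make (b) rigorous you would essentially have to reinvent the parity-conditioning plus monotonicity argument the paper already uses. Your exponential-tilt reformulation via $\AAA_\ell\QQQ_\ell=\GGG_\ell\SSS_\ell$ is an interesting observation, but as written it is a sketch, not a proof, and it still needs an honest treatment of the parity issue. The fix is to replace (i)--(ii) with the covariance inequality route, which requires only the one-sided bounds and avoids the local-limit/prefactor-cancellation heuristics entirely.
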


\begin{proof}[Proof of Proposition~\ref{p:derivative.MMstar}] 
From the clause \textsc{sp} recursion \eqref{e:clause.x.to.w} we calculate
	\[
	-w'(x)
	=\f{2(k-1)(1-x)^{k-2}}{2^{k-1}}
	=kw\bigg\{1 +O\bigg(\f1k\bigg)\bigg\}\,.
	\]
From the variable \textsc{sp} recursion \eqref{e:variable.w.to.x} we calculate
	\[
	-\tilde{x}'(w)
	=\f{2\dz_\zro \dz_\free}{(2\dz_\zro+\dz_\free)^2}
	\bigg\{\f1{\dz_\zro}\f{d\dz_\zro}{dw}
	-\f1{\dz_\free}\f{d\dz_\free}{dw}
	\bigg\}
	=\f{\tilde{x}(1-\tilde{x})
	\cdot \E(\LAM-\LGM)}{w(1-w)}\,.
	\]
Combining these gives
	\[
	\f{d[\SP(\dq)(\free)]}{d\dq(\free)}
	= \tilde{x}'(w(x)) w'(x)
	= \bigg\{1 +O\bigg(\f1k\bigg)\bigg\}
	\f{k\tilde{x}(1-\tilde{x})
	\cdot \E(\LAM-\LGM)}{(1-w)}\,.
	\]
It then follows from Lemmas~\ref{l:simplified.binom.EL.estimates}~and~\ref{l:one.sided} that
	\[
	\Omega(k^2\tilde{x})
	\le 
	\f{d[\SP(\dq)(\free)]}{d\dq(\free)}
	\le O \bigg(
	k \tilde{x} (\CC k)^{1/2}
	\bigg)\,.
	\]
Since $\dq\in\MMstar\subseteq\MMnf$, it follows from Proposition~\ref{p:initial.contraction} that $\tq\in\MMstar$ as well. The claimed bound then follows from the definition of $\MMstar$.\end{proof}

\subsection{Replica symmetric formulas for SP model}
\label{ss:variational.discussion}

We next evaluate the formula \eqref{e:FF.free.energy} for $\FF(y)$. Recall that $\dq_y\in\MMnf$ is the solution given by Proposition~\ref{p:sp}, and $\hq_y\equiv \hSP_y(\dq)$. We will now express all formulas in terms of $x_y \equiv\dq(\free)$ and $w_y \equiv1-\hq(\free)$. The normalizing constants of \eqref{e:sp.clause.marginal} and \eqref{e:sp.edge.marginal} are equal:
	\beq\label{e:clause.edge.eq}
	\hfz_y(\dq_y)
	= 1 - \f{2(1-x_y)^k(1-e^{-y})}{2^k}
	= 1 - \f{(1-x_y) w_y(1-e^{-y})}{2}
	= \efz_y(\dq_y,\hq_y)\,.
	\eeq
Meanwhile the normalizing constant of \eqref{e:sp.var.marginal} is 
	\beq\label{e:sp.var.marginal.Zdw}
	\dfz_y(\hq)
	=2\bigg( 
	\dfz_\zro(w) + 
	\f{\dfz_\free(w)}{2}\bigg)
	\equiv \dfz(w)
	\eeq
where  $\dfz_\zro(w)$ and $\dfz_\free(w)$ are defined similarly to \eqref{e:def.dz.zro} and \eqref{e:def.dz.free}, but with $d-1$ in place of $d$:
	\[
	\dfz_\zro(w)
	=\sum_{\ell=0}^d
	\AAA_{d,\ell}(w)\PPP_\ell\,,\quad
	\dfz_\free(w)
	=\sum_{\ell=0}^d
	\GGG_{d,\ell}(w)\SSS_\ell\,.
	\]
Then we have $\FF(y) = F(x_y,w_y,y)$ for
	\beq\label{e:Fxwy}
	F(x,w,y)
	\equiv
	\log \dfz(w)
	+ \alpha\log
	\bigg\{1 - \f{2(1-x)^k(1-e^{-y})}{2^k}\bigg\}
	-\alpha k\log
	\bigg\{ 1- \f{(1-x)w(1-e^{-y})}{2}\bigg\}
	\,.\eeq
Since  $\hfz_y(\dq_y)$ and $\efz_y(\dq_y,\hq_y)$ are equal as we noted above, there is more than one way to define $F(x,w,y)$ such that
$\FF(y) = F(x_y,w_y,y)$. We have chosen the representation \eqref{e:Fxwy} because it satisfies the following:

\begin{lem}\label{l:stationarity}
For any given $y>0$,
if the pair $(x,w)$ satisfies the equations $w=w(x)$
and $x=\tilde{x}(w)$
as in \eqref{e:clause.x.to.w} and \eqref{e:variable.w.to.x}
(where the second relation \eqref{e:variable.w.to.x}
depends also on $y$), then $(x,w)$ is a stationary point of 
$(x,w)\mapsto F(x,w,y)$.
\end{lem}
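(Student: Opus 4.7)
The plan is to verify both partial derivatives of $F(x,w,y)$ vanish at the SP fixed point by direct computation of the derivatives in \eqref{e:Fxwy}, with each partial cancelling due to one of the two SP equations.

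For $\partial F/\partial x$: only the two $\alpha$-log terms in \eqref{e:Fxwy} depend on $x$. The clause equation \eqref{e:clause.x.to.w} gives $2(1-x)^{k-1}/2^{k-1} = w$, hence $2(1-x)^k/2^k = (1-x)w/2$, which makes the argument of the middle log identical to that of the third, so the contributions from $\partial_x$ cancel. This step uses only $w = w(x)$, not $x = \tilde{x}(w)$.

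The substantive step is $\partial F/\partial w = 0$, which reduces to verifying
\[
\frac{\dfz'(w)}{\dfz(w)} = -\frac{\alpha k\,(1-x)(1-e^{-y})/2}{1-(1-x)w(1-e^{-y})/2}.
\]
Differentiating $\dfz = 2\dfz_\zro + \dfz_\free$ and shifting the index from $d$ to $d-1$ via $\binom{d}{\ell}\ell = d\binom{d-1}{\ell-1}$ and $\binom{d}{\ell}(d-\ell) = d\binom{d-1}{\ell}$ yields
\[
\frac{\dfz'(w)}{d} = \sum_{\ell\ge 0}\Big\{2\AAA_\ell[\AM\,\PPP_{\ell+1}-\PPP_\ell] + \GGG_\ell[\GM\,\SSS_{\ell+1}-\SSS_\ell]\Big\},
\]
with $\AAA_\ell \equiv \AAA_{d-1,\ell}(w)$ and $\GGG_\ell \equiv \GGG_{d-1,\ell}(w)$. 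The crucial combinatorial identity is that the $\ell$-th summand equals $-(1-e^{-y})\AAA_\ell \PPP_\ell$, yielding $\dfz'(w) = -d(1-e^{-y})\dz_\zro(w)$. I would prove this via a parity split: for even $\ell$, use $\SSS_{\ell+1}=0$ and the recursion $\PPP_{\ell+1}=\PPP_\ell+(1-p)\QQQ_\ell$ with $p = 1/(1+e^y)$ and $\AM(1-p)=1/2$, together with the paper's identity $\AAA_\ell\QQQ_\ell = \GGG_\ell\SSS_\ell$; for odd $\ell$, use $\QQQ_\ell = \SSS_\ell = 0$ together with the elementary identity $\AAA_\ell\binom{\ell}{(\ell-1)/2}p^{(\ell-1)/2}(1-p)^{(\ell+1)/2} = e^{y/2}\GGG_\ell\SSS_{\ell+1}$, which boils down to $2(1-p)\cosh(y/2) = e^{y/2}$.

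With $\dfz'(w)$ in hand, I would derive the companion identity $\dfz(w) = \dz(w) - w(1-e^{-y})\dz_\zro(w)$ by conditioning on which of the $d$ half-edges in $\dfz$ is the ``extra'' one not present in $\dz$: this gives $\dfz = (1-w)\dz + w[2\AM\sum_\ell\AAA_\ell\PPP_{\ell+1}+\GM\sum_\ell\GGG_\ell\SSS_{\ell+1}]$, and summing the key identity from the previous step over $\ell$ evaluates the bracket as $\dz - (1-e^{-y})\dz_\zro$. Dividing these two relations and invoking the variable SP equation $1-x = 2\dz_\zro(w)/\dz(w)$ from \eqref{e:variable.w.to.x} produces the claimed formula for $\dfz'(w)/\dfz(w)$ and concludes $\partial_w F = 0$. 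The main obstacle is the parity bookkeeping in the key summand identity; conceptually the result is the standard Bethe-free-energy stationarity at message-passing fixed points, but the specific form of our $\PPP$, $\QQQ$, $\SSS$ weights requires the verification above.
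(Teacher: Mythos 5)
Your proposal is correct, and it takes essentially the same route as the paper: the $\partial_x F$ step follows from $w=w(x)$ alone, while $\partial_w F=0$ comes down to a binomial parity computation that you and the paper resolve with the same underlying recursions. Your organization is slightly tighter than the paper's. You prove a single summand identity
\[
2\AAA_\ell\bigl[\AM\,\PPP_{\ell+1}-\PPP_\ell\bigr]
+\GGG_\ell\bigl[\GM\,\SSS_{\ell+1}-\SSS_\ell\bigr]
=-(1-e^{-y})\,\AAA_\ell\,\PPP_\ell\,,
\]
and extract \emph{both} $\dfz'(w)=-d(1-e^{-y})\dz_\zro(w)$ (by summing) and the companion relation $\dfz(w)=\dz(w)-w(1-e^{-y})\dz_\zro(w)$ (via Pascal's rule plus the same summed identity). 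The paper instead proves the case-split recursion for $\PPP_\ell+\QQQ_\ell/2$, applies it after a re-indexing in $\dfz'$, and establishes $\dfz(w)/\dz(w)=1-w(1-\tilde{x}(w))(1-\AM)$ by a separate probabilistic conditioning on $\hw_1$; your derivation avoids that second independent step, which is a small economy. Two details to flesh out when you write it fully: (i) in the $\partial_x$ step, identity of the two log arguments under $w=w(x)$ is not by itself enough --- you also need that the $\alpha$ versus $\alpha k$ prefactors are compensated by the chain-rule factor $k$ from $(1-x)^k$, and this works precisely because $A'(x)=k\cdot\tfrac{(1-e^{-y})}{2}\,w(x)$ matches $k\cdot\partial_x B$ when $w=w(x)$; (ii) in the odd-$\ell$ case you invoke the constant identity $2(1-p)\cosh(y/2)=e^{y/2}$ but you should also explicitly state the step-down relation $\PPP_{\ell+1}=\PPP_\ell-p\,\P(\Bin(\ell,p)=(\ell-1)/2)$ (equivalently $\PPP_{\ell+1}=\PPP_\ell-\QQQ_{\ell+1}/2$), which is needed to reduce the summand to the constant identity and is the odd-$\ell$ analogue of the recursion you use when $\ell$ is even. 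Both are easy to supply.
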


\noindent The proof of Lemma~\ref{l:stationarity} is deferred to \S\ref{ss:stationarity}, but we now point out its main consequences: first, it gives
	\[
	\FF'(y)
	=\f{\partial F}{\partial x}(x_y,w_y,y)\f{dx_y}{dy}
	+\f{\partial F}{\partial w}(x_y,w_y,y)\f{dw_y}{dy}
	+\f{\partial F}{\partial y}(x_y,w_y,y)
	=\f{\partial F}{\partial y}(x_y,w_y,y)\,.
	\]
The right-hand side above is equal to $-\ee(y)$,
as defined by \eqref{e:onersb.ee.of.y}. Recalling \eqref{e:inf.Fy.over.y}, it follows that
	\beq\label{e:deriv.Fy.over.y}
	-\f{d}{dy}\bigg( \f{\FF(y)}{y}\bigg)
	= \f{\FF(y) - y\FF'(y)}{y^2}
	= \f{\FF(y) + y\ee(y)}{y^2}
	= \f{\SIGMA(y)}{y^2}\,,
	\eeq
which is to say that a stationary point of $\FF(y)/y$ corresponds precisely to a root of $\SIGMA(y)$. Moreover
	\beq\label{e:SIGMA.prime}
	\SIGMA'(y)
	= \f{d}{dy}\bigg(
		\FF(y)+y\ee(y)
		\bigg)
	= \FF'(y) + \ee(y) + y\ee'(y)
	= y\ee'(y)
	= -y\FF''(y)\,.
	\eeq
To determine the sign, we will prove the following, which will be used in the proof of Proposition~\ref{p:eone.defn}:

\begin{ppn}[proved in \S\ref{ss:convexity}]\label{p:second.derivative}
In the setting of Proposition~\ref{p:sp} we have $\FF''(y)\asymp \CC/e^y$: in particular, $\FF$ is convex in the range of $y$ satisfying \eqref{e:y.restriction}.
\end{ppn}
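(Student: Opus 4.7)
The plan is to deduce $\FF''(y)\asymp\CC/e^y$ from the single identity $\FF''(y)=-\ee'(y)$ together with a two-sided estimate of $\ee'(y)$. The identity itself follows from Lemma~\ref{l:stationarity} and equation \eqref{e:SIGMA.prime}: at the stationary point $(x_y,w_y)$ the partial derivatives of $F$ in $(x,w)$ vanish, so $\FF'(y)=\partial_y F(x_y,w_y,y)=-\ee(y)$, and differentiating once more gives $\FF''(y)=-\ee'(y)$. Convexity of $\FF$ is thus equivalent to $\ee'(y)<0$, the physically natural statement that the 1RSB energy decreases with the inverse temperature $y$.

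First I would obtain the leading order of $\ee(y)$ itself. Using formula \eqref{e:onersb.ee.of.y} together with the Gibbs measures \eqref{e:sp.var.marginal}--\eqref{e:sp.edge.marginal}, all three summands become explicit functions of $x_y=\dq_y(\free)$, $w_y=1-\hq_y(\free)$, and the Boltzmann parameters $\AM,\GM$, each of which reduces to a binomial-type sum of the form treated in \S\ref{ss:simplification.sp}. Substituting the bounds $x_y=O(e^{-\Theta(k)}/\CC^{1/2})$ and $w_y\asymp 4/2^k$ provided by Propositions~\ref{p:sp} and \ref{p:initial.contraction}, and applying Lemmas~\ref{l:binom.Z.estimates} and \ref{l:simplified.binom.EL.estimates}, the dominant contribution comes from the clause term $\alpha\sum_{\udw}\hph(\udw)\hat\nu_y(\udw)\asymp\alpha e^{-y}/2^{k-1}\asymp\CC/e^y$; the variable term is of the same order but with the opposite partial dependence on $y$, and the edge term is controlled using the identity $\hfz_y(\dq_y)=\efz_y(\dq_y,\hq_y)$ from \eqref{e:clause.edge.eq}.

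Next I would differentiate $\ee(y)$ in $y$, splitting $\ee'(y)$ into an \emph{explicit} piece (from the Boltzmann factors $e^{-y\dph},e^{-y\hph}$ and from $\AM,\GM$ at fixed $(x,w)$) and an \emph{implicit} piece (from $dx_y/dy$ and $dw_y/dy$). The explicit piece produces the leading term $-\ee(y)(1+o(1))\asymp-\CC/e^y$, since the $y$-dependence of the Gibbs weights is through $e^{-y}$-linear factors and the derivative to leading order acts by multiplication by $-1$. For the implicit piece, implicitly differentiate the \textsc{sp} fixed-point equation $x_y=\tilde{x}(w(x_y);y)$: Proposition~\ref{p:derivative.MMstar} shows the recursion is strictly contractive with derivative $O(e^{-\Omega(k)})$, so $(1-\tilde{x}'(w)w'(x))^{-1}$ is of order one, $|dx_y/dy|$ and $|dw_y/dy|$ are controlled via the partial derivatives of $\tilde{x}$ in $y$, and the corresponding corrections to $\ee'(y)$ are of order $O(\CC e^{-y}/k)$.

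The main obstacle is the second step: several individual contributions to $\ee(y)$ and to $\ee'(y)$ are of order $\CC k/e^y$, strictly larger than the target $\CC/e^y$, and extracting the net scale requires tracking cancellations enforced by the very \textsc{sp} stationarity used in Lemma~\ref{l:stationarity}. Once these are unwound, the two-sided bound $|\ee'(y)|\asymp\CC/e^y$ yields $\FF''(y)\asymp\CC/e^y$, and the negativity of $\ee'(y)$ throughout the regime \eqref{e:y.restriction} gives the desired convexity of $\FF$.
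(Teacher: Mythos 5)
You correctly identify the key identity $\FF''(y)=-\ee'(y)$ (via Lemma~\ref{l:stationarity} and \eqref{e:SIGMA.prime}), and your overall strategy --- implicitly differentiate the SP fixed point and split $\ee'(y)$ into an explicit piece plus an implicit correction --- is the same route the paper follows in \S\ref{ss:convexity} (modulo the paper's change of coordinates to $(X,W)$ with $e^X=(1-x)/x$, $e^W=(1-w)/w$, which is cosmetic). But the proposal stops exactly where the hard work begins, and you say so yourself in the final paragraph. Three things are genuinely missing. First, the "explicit piece" is $-G_{yy}=-\bigl(\Var\dph-d(1-\tfrac1k)\Var\eph\bigr)$, and here $\Var\dph$ and $d\Var\eph$ are each $\asymp dw/e^y\asymp\CC k/e^y$ --- a factor $k$ too large --- and cancel down to $\asymp(d/k)\Var\eph\asymp\CC/e^y$. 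Your heuristic that "the derivative acts by multiplication by $-1$" does not produce this; one needs the explicit decomposition and bound on the residual (this is Lemma~\ref{l:Gyy}, which further splits the residual into pieces involving $S_0,S_1,S_2,S_{\ge2}$ and controls each via Lemma~\ref{l:S.estimates}). You acknowledge that "tracking cancellations" is required but provide no mechanism.

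Second, you never notice that the cross-derivative $G_{Xy}$ vanishes identically (it equals $d\{\Cov(\hph,J_1)-\Cov(\eph,J)\}=0$, a consequence of the SP consistency and the choice of parametrization). Without this the implicit piece is $\bigl(G_{XX}(G_{Wy})^2-2G_{XW}G_{Xy}G_{Wy}+G_{WW}(G_{Xy})^2\bigr)/\bigl((G_{XW})^2-G_{XX}G_{WW}\bigr)$, not the simpler $G_{XX}(G_{Wy})^2/\bigl((G_{XW})^2-G_{XX}G_{WW}\bigr)$ that the paper bounds. Third, to control that quotient the contraction bound from Proposition~\ref{p:derivative.MMstar} is not enough: one needs the explicit Hessian estimates of Lemma~\ref{l:determinant}, showing $(G_{XW})^2-G_{XX}G_{WW}>0$ with $|G_{XX}G_{WW}/(G_{XW})^2|\le O(e^{-\Omega(k)})$, and then a separate bound giving $|G_{XX}(G_{Wy})^2/(G_{XW})^2|\le\CC^{1/2}/(e^y e^{\Omega(k)})$. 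Your claim that the implicit corrections are $O(\CC e^{-y}/k)$ is stated without any supporting computation. In short, the proposal reproduces the skeleton of the paper's argument but omits the vanishing of $G_{Xy}$, the determinant estimate, and the cancellation in $G_{yy}$ --- which together constitute essentially all of the technical content of \S\ref{ss:convexity}.
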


\noindent
The proof of Proposition~\ref{p:second.derivative} is rather long, and is deferred to the very end of this paper. However, we take a moment to review the basic physical interpretations which lead us to expect the result of Proposition~\ref{p:second.derivative}.

\begin{rmk}\label{r:stationarity} As we already commented before, a salient open question is to prove the matching lower bound of Theorem~\ref{t:main}, i.e., to prove that  $\lim_N\E[\emin(\GG_N)]=\eone$ for all $\asat\le\alpha\le\aG$. A natural route is to validate the \textsc{sp} heuristic by proving that $N^{-1}\log\mathfrak{Z}(y)$ indeed converges --- for a suitable range of $y$, as we discuss in Remark~\ref{r:range.of.y} below --- to the predicted value $\FF(y)$ defined by \eqref{e:FF.free.energy}. This is believed to hold, although we do not have a proof in this paper.  However we now review how this prediction relates to other basic properties of $\FF(y)$.  In fact, we expect that in the limit $N\to\infty$ we have
	\beq\label{e:rs.first.mmt}
	\E\mathfrak{Z}(y) 
	\asymp \exp\{N\FF(y)\}
	= \exp\bigg\{
	 N\bigg[\SIGMA(y)  - y\ee(y)\bigg]\bigg\}
	 \eeq
for all $y$ satisfying \eqref{e:y.restriction}. On the other hand, we can express
	\beq\label{e:empir.msr.decomp}
	\E\mathfrak{Z}(y)
	=\sum_\nu
	\f{\E\mathfrak{Z}_\nu}{\exp\{ Ny(\nu,\vph)\}}
	\eeq
where $\nu\equiv (\dot{\nu},\hat{\nu},\bar{\nu})$ is a triple of probability measures (on $\set{\zro,\one,\free}^d$, $\set{\zro,\one,\free}^k$, $\set{\zro,\one,\free}^2$ respectively); $\mathfrak{Z}_\nu$ is the total count of warning configurations $\uw$ with empirical measure $\nu$; and for any such configuration we can express $\vph(\uw)$ as a linear functional of $\nu$, which we write as $N(\nu,\vph)$. From the representation \eqref{e:empir.msr.decomp} we see that on the range of $y$ where \eqref{e:rs.first.mmt} holds, the function $\FF$ should correspond to the $N\to\infty$ limit of cumulant-generating functions. In particular
(by interchanging the order between limit and differentiation) we expect $N\FF'(y)$ to correspond to the mean of $-(\nu,\vph)$. This is consistent with the above observation that $\FF'(y)=-\ee(y)$, since $\ee(y)$ is precisely the replica symmetric prediction for $\lim_N \langle\vph\rangle_N/N$. We further expect that $N\FF''(y)$ corresponds to the variance of $(\nu,\vph)$, and it is natural to guess that this scales as a positive constant times $N$. This explains why we expect $\FF'(y)>0$. 
\end{rmk}

\begin{rmk}\label{r:range.of.y} 
We show in Proposition~\ref{p:second.derivative} that $\ee(y)$ is strictly decreasing, so that $\Sigma(\ee(y))=\SIGMA(y)$ is well-defined.
Continuing from the previous remark, we expect that the typical value of the random variable $\mathfrak{Z}(y)$ is given by 
(compare with \eqref{e:rs.first.mmt})
	\[\lim_{N\to\infty}
	\f{\log\mathfrak{Z}(y)}{N}
	=\max_\ee\bigg\{ \Sigma(\ee)-y\ee
		:\Sigma(\ee)\ge0
		\bigg\}
	< \max_e\bigg\{ \Sigma(\ee)-y\ee
		\bigg\}
	=\lim_{N\to\infty}
	\f{\log \E\mathfrak{Z}(y)}{N}
	=\FF(y)\,,
	\]
where the two sides are equal for $y\le y_\star$. This makes precise the range of $y$ where we expect the \textsc{sp} heuristic to hold. We further expect that the second moment is given by
	\[\E[\mathfrak{Z}(y)^2]
	\asymp 
	\exp\bigg\{ N \max\bigg\{ 2\FF(y),
		 \SIGMA(y)-2y\ee(y) \bigg\}
		 \bigg\}\,,\]
--- in the maximum, the first term is the contribution from pairs of solutions in different clusters, while the second 
term is the contribution from pairs of solutions in the same cluster. Therefore $\E[\mathfrak{Z}(y)^2]/\E[\mathfrak{Z}(y)]^2$ is $O(1)$ for $y\le y_\star$, and is exponentially large (with respect to $N$) for $y>y_\star$. This is to say that we expect the second moment method to work on $\mathfrak{Z}(y)$ precisely up to the critical value $y_\star$.\end{rmk}

\noindent We emphasize that Remarks~\ref{r:stationarity}~and~\ref{r:range.of.y} are speculative (the limits stated there are not proved). We have included it only for the purpose of elaborating on the physical content of basic properties of the functions $\FF(y)$, $\ee(y)$, and $\SIGMA(y)$. We now proceed to formally prove our claims on the $\onersb$ ground state energy formula. 

\subsection{Comparison with known upper bound}
\label{ss:comparison.first.mmt}

We first establish the comparison \eqref{e:alpha.onersb.correction} between the $\onersb$ bound and the first moment upper bound. As above,
let $\dq_y$ be as given by Proposition~\ref{p:sp}, $\hq_y\equiv\hSP_y(\dq_y)$.
Denote $x\equiv x_y \equiv \dq_y(\free)$
and $w\equiv w_y\equiv 1-\hq_y(\free)$. Let
	\[\RSFF(y)
	=\log2 + 
	\alpha\log\bigg(1-\f{2(1-e^{-y})}{2^k}\bigg)\,,
	\]
and note from \eqref{e:ANP.first.moment.f.eta} that we can express
	\[\textsf{f}_\eta(\alpha,\ee)
	= \RSFF\bigg(\log\f1\eta\bigg) 
	+\ee \log\f1\eta\,.\]
Towards the proof of \eqref{e:alpha.onersb.correction}, we first establish the following comparison between
$\FF(y)$ and $\RSFF(y)$:

\begin{lem}\label{l:alpha.onersb.correction}
Under the conditions of Proposition~\ref{p:sp}, we have $\FF(y) \le \RSFF(y) - \Omega(x_y)$
where $x_y\equiv\dq_y(\free)$.

\begin{proof}
With some simple algebraic manipulations we can express
	\begin{align}\nonumber
	\FF(y) - \RSFF(y)
	&=- \log \bigg(1+\f{\dfz_\free(w)}
		{2\dfz_\zro(w)+\dfz_\free(w)}\bigg)+
	\log \f{\dfz_\zro(w)+\dfz_\free(w)}
	{(1-w(1-\AM))^d}\\
	\label{e:FF.RSFF.diff}
	&\qquad+ \alpha\bigg\{
	\log\f{1-w(1-\AM)}{1-(4/2^k)(1-\AM)}
	-(k-1)\log
	\f{1-w(1-\AM)}{1-(1-x)w(1-\AM)}\bigg\}\,.
	\end{align}
We will argue that the dominant contribution comes from the first term. To this end, recall from Proposition~\ref{p:sp} that 
	\[x\asymp \f{2^{-k\gamma/2}}{
	(\max\set{\CC k e^{-y/2},1})^{1/2}}
	\,.\]
From the equation $w=w(x)$ (see \eqref{e:clause.x.to.w}) we have
$w = 4[1-(k-1)x + O(k^2 x^2)]/2^k$. It follows that
	\begin{align*}
	\log\f{1-w(1-\AM)}{1-(4/2^k)(1-\AM)}
	&=\f{4}{2^k}(1-\AM) (k-1)x\bigg\{
	1 + O(kx)
	\bigg\}\,,\\
	(k-1)\log
	\f{1-w(1-\AM)}{1-(1-x)w(1-\AM)}\bigg\}
	&= \f{4}{2^k}(1-\AM) (k-1)x\bigg\{
	1 + O(kx)
	\bigg\}\,.\end{align*}
By combining these we see that the second line of \eqref{e:FF.RSFF.diff} is bounded by 
	\[
	O\bigg(
	\alpha \cdot 
	\f{4}{2^k}(1-\AM) (k-1)x \cdot
	kx
	\bigg)
	= O\bigg( \CC(1-\AM)  k^2 x^2
	\bigg)\le \f{O(x)}{\exp(\Omega(k))}\,,\]
where the last bound above uses 
the bound on $x$ together with the observation
that
$1-\AM\asymp\min\set{1,y}\asymp1/\CC^{1/2}$
by \eqref{e:y.restriction}. Next we estimate $\dfz_\zro(w)+\dfz_\free(w)$. Explicitly,
	\begin{align}
	\label{e:Z.d.zero.as.sum}
	\dfz_\zro(w)
	&= \sum_{\ell=0}^d\binom{d}{\ell}
	(w\cdot\AM)^\ell(1-w)^{d-\ell}\PPP_\ell\,,\\
	\label{e:Z.d.free.as.sum}
	\dfz_\free(w)
	&=
	\sum_{\ell=0}^d\binom{d}{\ell}
	(w\cdot\AM)^\ell(1-w)^{d-\ell}\QQQ_\ell
	\end{align}
with $\PPP_\ell$ as in \eqref{e:arith.prob}
and $\QQQ_\ell$ as in \eqref{e:geom.prob.alt}. We have
	\[
	\RRR_\ell\equiv
	\PPP_\ell+\QQQ_\ell
	=\P\bigg(
	\Bin\bigg(\ell,\f1{e^y+1}\bigg)\le\f{\ell}{2}
	\bigg)\le1\,,
	\]
so clearly $\dfz_\zro(w)+\dfz_\free(w)
< (1-w(1-\AM))^d$, which means that the second term in the first line of \eqref{e:FF.RSFF.diff} is negative. Finally, the estimates of Lemma~\ref{l:binom.Z.estimates} and Proposition~\ref{p:initial.contraction}
remain valid with
$d$ in place of $d-1$, so 
the first term in the first line of \eqref{e:FF.RSFF.diff} is
	\[
	\le -\Omega\bigg(
	\f{\dfz_\free(w)}
		{2\dfz_\zro(w)+\dfz_\free(w)}\bigg)
	\le-\Omega(x)\,.
	\]
The claim follows. 
\end{proof}
\end{lem}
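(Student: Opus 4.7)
The plan is to expand $\FF(y)-\RSFF(y)=F(x_y,w_y,y)-\RSFF(y)$ directly from the explicit formula \eqref{e:Fxwy}, and to identify a dominant negative contribution of size $\Omega(x_y)$. Since $\RSFF$ formally corresponds to $x=0$ (for which $w=4/2^k$), the difference arises entirely from deviations at $x_y>0$. A first step is to exploit the clause fixed-point relation $w_y=4(1-x_y)^{k-1}/2^k$ from \eqref{e:clause.x.to.w}, which implies the algebraic match $(1-x)w(1-e^{-y})/2=2(1-x)^k(1-e^{-y})/2^k$. Setting $a\equiv 4(1-\AM)/2^k$ so that $w(1-\AM)=a(1-x)^{k-1}$, the two $\alpha\log$ pieces in \eqref{e:Fxwy} then combine into
\[
\FF(y)-\RSFF(y)=\bigl[\log\dfz(w)-\log 2\bigr]-\alpha\log(1-a)-\alpha(k-1)\log\bigl(1-a(1-x)^k\bigr).
\]

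Next I would decompose the entropy piece. Writing $\dfz(w)=2\dfz_\zro(w)+\dfz_\free(w)$ yields
\[
\log\dfz(w)-\log 2=\log(\dfz_\zro+\dfz_\free)-\log\bigl[1+\dfz_\free/(2\dfz_\zro+\dfz_\free)\bigr],
\]
and one can then split $\log(\dfz_\zro+\dfz_\free)=\log\bigl[(\dfz_\zro+\dfz_\free)/(1-w(1-\AM))^d\bigr]+d\log(1-w(1-\AM))$. Since $\dfz_\zro+\dfz_\free\le(1-w(1-\AM))^d$ (each weight $\RRR_\ell\equiv\PPP_\ell+\QQQ_\ell$ is at most $1$), the bracketed ratio contributes a nonpositive correction. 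The remaining energetic sum $\alpha k\log(1-a(1-x)^{k-1})-\alpha(k-1)\log(1-a(1-x)^k)-\alpha\log(1-a)$ Taylor-expands in $x$ with bracketed polynomial $-k(1-x)^{k-1}+(k-1)(1-x)^k+1$ whose constant and linear coefficients vanish identically ($-k+(k-1)+1=0$ and $k(k-1)-k(k-1)=0$). The first nonzero term is quadratic, yielding total size $O(\alpha a k^2 x_y^2)=O(\CC(1-\AM)k^2 x_y^2)$. Using $1-\AM\asymp 1/\CC^{1/2}$ from \eqref{e:y.restriction} and $x_y\le O(2^{-k\gamma/2}/\CC^{1/2})$ with $\gamma\ge 1$ from Proposition~\ref{p:sp}, this residual is bounded by $x_y/e^{\Omega(k)}$, negligible compared to the target.

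The dominant negative contribution is therefore $-\log\bigl[1+\dfz_\free(w)/(2\dfz_\zro(w)+\dfz_\free(w))\bigr]$. By Lemma~\ref{l:binom.Z.estimates} (applied with $d$ in place of $d-1$), both $\dfz_\zro$ and $\dfz_\free$ match their analogues up to multiplicative constants, so this ratio is of the same order as the variable \textsc{sp} output $\tilde{x}(w)=\dz_\free(w)/\dz(w)=x_y$. Thus the entropy contribution is at most $-\log(1+\Omega(x_y))\le-\Omega(x_y)$, which yields the claim. The principal obstacle is verifying the polynomial-identity cancellation in the energetic sum: each individual term has magnitude $\Theta(\CC^{1/2}k x_y)$, far larger than the $\Omega(x_y)$ target, so one cannot afford an approximate cancellation but must exploit exactly the polynomial form $w=4(1-x)^{k-1}/2^k$ of the clause fixed-point relation, which is what makes the constant and linear coefficients vanish simultaneously.
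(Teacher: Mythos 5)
Your proof is correct and follows essentially the same route as the paper: isolate the dominant entropic term $-\log\bigl(1+\dfz_\free/(2\dfz_\zro+\dfz_\free)\bigr)\le-\Omega(x_y)$ using the estimates of Lemma~\ref{l:binom.Z.estimates} and Proposition~\ref{p:initial.contraction}, note the second entropic term $\log\bigl[(\dfz_\zro+\dfz_\free)/(1-w(1-\AM))^d\bigr]\le0$ since each weight $\RRR_\ell\le1$, and show the remaining energetic piece is $O(x_y e^{-\Omega(k)})$.

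Your treatment of the energetic residual is the cleaner one and in fact fixes a sign slip in the paper's displayed decomposition. After using $\hfz_y=\efz_y$ at the fixed point and $w(1-\AM)=a(1-x)^{k-1}$, the residual is $\alpha\bigl[k\log(1-a(1-x)^{k-1})-(k-1)\log(1-a(1-x)^k)-\log(1-a)\bigr]$, and your polynomial identity (constant and linear coefficients of $-k(1-x)^{k-1}+(k-1)(1-x)^k+1$ vanish) is exactly what yields the claimed $O(\alpha a k^2 x_y^2)$ bound; the $a^j$ contributions for $j\ge2$ cancel in the same way and are even smaller. The paper's \eqref{e:FF.RSFF.diff} writes the corresponding bracket as $\alpha\{\log\tfrac{1-w(1-\AM)}{1-a}-(k-1)\log\tfrac{1-w(1-\AM)}{1-(1-x)w(1-\AM)}\}$; with the minus sign as displayed this does not recombine to $\FF-\RSFF$ (the sign on the $(k-1)\log$ term should be $+$), and the paper's subsequent estimate that $(k-1)\log\tfrac{1-w(1-\AM)}{1-(1-x)w(1-\AM)}=\tfrac{4}{2^k}(1-\AM)(k-1)x\{1+O(kx)\}$ has the wrong sign as well (the ratio is less than $1$, so the log is negative). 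The two slips compensate, so the paper's conclusion stands, but your derivation is the one that makes the cancellation transparent and correct.
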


\noindent Recalling
\eqref{e:cc.parametrization},
\eqref{e:anp.eta.star},
\eqref{e:rs.alpha.ubd},
\eqref{e:y.restriction}, 
and \eqref{e:MM.gamma}, we define
	\beq\label{e:x} x(\pgs)\equiv
	\bigg(
	\f{
	\exp\{-(k\log2)
	\cdot
	2\CC(\pgs)(1-\eta(\pgs)^{1/2})^2
	\}
	}{
	\max\set{\CC(\pgs) k 
	\eta(\pgs)^{1/2},1}}
	\bigg)^{1/2}\,.\eeq
We extend \eqref{e:alpha.onersb.correction} to the following bound:

\begin{cor}\label{c:alpha.onersb.correction}
For $\Omega(k/2^k) \le \pgs^2\le1$ 
(corresponding to $1\le\CC(\pgs)\le 2^k/k$)
we have
	\[
	\amax(\pgs)
	\le\bigg\{
	1-\Omega(x(\pgs))\bigg\}
	\aubd(\pgs)
	\]
for $\pgs$ defined by \eqref{e:cc.parametrization}
and $x(\pgs)$ defined by \eqref{e:x}. 

\begin{proof} 
Parametrize $\ee=\alpha(1-\pgs)/2^{k-1}$ 
as in \eqref{e:cc.parametrization}, and let
$y\equiv y_\eta = -\log\eta$ where $\eta= \eta(\pgs)$ as in \eqref{e:anp.eta.star}. Recall that for this particular choice $y=y_\eta$ we have $\textsf{f}_\eta(\alpha,\ee)= \RSFF(y) +y \ee$. Then
Lemma~\ref{l:alpha.onersb.correction} gives 
	\[
	\FF(y_\eta) + y_\eta\ee
	\le \textsf{f}_\eta(\alpha,\ee)
	- \Omega(x(\pgs))
	= \log2
	-\Omega(x(\pgs))
	-\alpha
	\bigg\{
	\f{(1-\pgs)\log\eta}{2^{k-1}}
	-\log\bigg(1-\f{1-\eta}{2^{k-1}}\bigg)\bigg\}
	\,.
	\]
By essentially the same calculation as \eqref{e:rs.alpha.ubd}, the last expression above will be negative for all $\alpha$ larger than
	\[
	\bar{\alpha}(\pgs)
	\equiv \f{2^{k-1}(\log2-\Omega(x(\pgs)))}
	{(2^{k-1}-(1-\pgs))
	\log \f{2^{k-1}-(1-\pgs)}{2^{k-1}-1}
	+(1-\pgs)\log(1-\pgs)}
	\le 
	\bigg\{
	1-\Omega(x(\pgs))\bigg\}
	\aubd(\pgs)\,.
	\]
From our interpolation bound \eqref{e:inf.Fy.over.y}, if $\FF(y')+y'\ee$ is negative for any $y'\ge0$, then
	\[ -\einf(\alpha) 
	\le\inf_{y\ge0} \f{\FF(y)}{y}
	\le \f{\FF(y')}{y'} < -\ee\,.
	\]
Since this applies for all $\alpha>\bar{\alpha}(\pgs)$, we conclude $\amax(\pgs)\le\bar{\alpha}(\pgs)$, proving the claim.
\end{proof}
\end{cor}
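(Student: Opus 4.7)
The plan is to combine Lemma~\ref{l:alpha.onersb.correction} with the interpolation bound \eqref{e:inf.Fy.over.y}, using a specific choice of $y$ that mirrors the first moment calculation in \S\ref{ss:intro.anp}. Fix $\pgs$ in the stated range and parametrize $\ee=\alpha(1-\pgs)/2^{k-1}$. Take $\eta=\eta(\pgs)$ as in \eqref{e:anp.eta.star} and set $y_\eta\equiv-\log\eta$; by \eqref{e:log.eta.satisfies.conds} this $y_\eta$ lies in the range where \eqref{e:y.restriction} holds (with $\CC=\CC(\pgs)$). For this specific choice, unpacking \eqref{e:ANP.first.moment.f.eta} gives the identity $\textsf{f}_\eta(\alpha,\ee)=\RSFF(y_\eta)+y_\eta\ee$, which links the first moment exponent and the replica symmetric free energy.

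Next I would invoke Lemma~\ref{l:alpha.onersb.correction} at $y=y_\eta$ to conclude $\FF(y_\eta)\le\RSFF(y_\eta)-\Omega(x_{y_\eta})$, where $x_{y_\eta}=\dq_{y_\eta}(\free)$. The quantitative estimate on $\dq_y(\free)$ from Proposition~\ref{p:sp} and the membership $\dq_{y_\eta}\in\MM^\gamma$ (together with $1-\eta^{1/2}\asymp 1-\AM$ and $\gamma\doteq 2\CC(1-\eta^{1/2})^2$) show that $x_{y_\eta}\asymp x(\pgs)$, matching the formula \eqref{e:x}. Combining yields
\[
\FF(y_\eta)+y_\eta\ee\ \le\ \textsf{f}_\eta(\alpha,\ee)-\Omega(x(\pgs))\ =\ \log2-\Omega(x(\pgs))-\alpha\,g(\pgs),
\]
where $g(\pgs)$ is the bracketed expression from \eqref{e:rs.alpha.ubd}.

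Now I would repeat the calculation leading to \eqref{e:rs.alpha.ubd} with $\log2$ replaced by $\log2-\Omega(x(\pgs))$: the right hand side is negative for all $\alpha>\bar\alpha(\pgs)$, where
\[
\bar\alpha(\pgs)\ =\ \f{2^{k-1}\bigl(\log2-\Omega(x(\pgs))\bigr)}{g(\pgs)}\ \le\ \bigl\{1-\Omega(x(\pgs))\bigr\}\,\aubd(\pgs).
\]
For any such $\alpha$, the interpolation bound \eqref{e:inf.Fy.over.y} applied at $y'=y_\eta$ gives $-\einf(\alpha)\le\FF(y_\eta)/y_\eta<-\ee$, so $\einf(\alpha)>\ee=\alpha(1-\pgs)/2^{k-1}$, equivalently $\pinf(\alpha)<\pgs$. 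By the definition of $\amax(\pgs)$ right after Definition~\ref{d:maxnaesat}, this forces $\amax(\pgs)\le\bar\alpha(\pgs)$, giving the stated bound.

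The only real obstacle is the second step: verifying that the bound on $\dq_{y_\eta}(\free)$ from Proposition~\ref{p:sp} actually produces the explicit formula \eqref{e:x} with the correct dependence on $\pgs$ and $\CC(\pgs)$, and that the resulting $\Omega(x(\pgs))$ correction survives dividing by $g(\pgs)$ uniformly in the regime $1\le\CC(\pgs)\le 2^k/k$. Everything else is symbolic manipulation: the identification $\textsf{f}_\eta=\RSFF+y_\eta\ee$, the algebra parallel to \eqref{e:rs.alpha.ubd}, and the translation from ``$\FF(y_\eta)+y_\eta\ee<0$'' to an upper bound on $\amax(\pgs)$ through the interpolation inequality.
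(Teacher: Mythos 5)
Your proof is correct and follows essentially the same route as the paper: set $y_\eta=-\log\eta(\pgs)$, identify $\textsf{f}_\eta(\alpha,\ee)=\RSFF(y_\eta)+y_\eta\ee$, apply Lemma~\ref{l:alpha.onersb.correction}, rerun the algebra of \eqref{e:rs.alpha.ubd} with $\log2$ replaced by $\log2-\Omega(x(\pgs))$, and conclude via the interpolation bound \eqref{e:inf.Fy.over.y}. The two points you flag as potential obstacles do go through: $y_\eta$ satisfies \eqref{e:y.restriction} because \eqref{e:log.eta.satisfies.conds} forces $\Gamma(y_\eta)\doteq1$ hence $\gamma(y_\eta)\asymp1$, the identification $x_{y_\eta}\asymp x(\pgs)$ is exactly what $\dq_{y_\eta}\in\MM^\gamma$ gives once one substitutes $e^{-y_\eta/2}=\eta(\pgs)^{1/2}$ into \eqref{e:MM.gamma}, and the division by $g(\pgs)$ is harmless since it cancels in the ratio $\bar{\alpha}(\pgs)/\aubd(\pgs)=(\log2-\Omega(x(\pgs)))/\log2$ — details the paper passes over without comment.
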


\noindent For $k^2 \ll_k \CC \ll_k 4^k/k$ it is straightforward to verify that $x(\pgs)\asymp 1/d^{1/2}$, so Corollary~\ref{c:alpha.onersb.correction} subsumes \eqref{e:alpha.onersb.correction}. One can also verify that for all $\pgs$ in the stated range we have
	\[1\le 
	2\CC(\pgs)(1-\eta(\pgs)^{1/2})^2
	\le \f{2(1-(1-\pgs)^{1/2})^2}
	{\pgs+(1-\pgs)\log(1-\pgs)}
	\le2\,,
	\]
and substituting into \eqref{e:x} gives $x(\pgs) \le 1/2^{k/2}$. This confirms that the improved upper bound of Corollary~\ref{c:alpha.onersb.correction} does not contradict the lower bound \eqref{e:anp.result} (which, as we remarked before, is the analogue of the \cite{MR2295994} lower bound for this model).

\subsection{Ground state energy}
\label{ss:ground.state}

In \S\ref{ss:comparison.first.mmt} we effectively considered $\alpha$ in terms of $\eone$, and set the parameter $y$ exactly to match $\log\eta$ from the first moment calculation; this gives a relatively easy way to obtain the comparison \eqref{e:alpha.onersb.correction}. We now proceed to the proof of Proposition~\ref{p:eone.defn} where the main difficulty is to solve for $y$ for which $\SIGMA(y)=0$.

\begin{lem}\label{l:FF.y.estimate}
In the setting of Proposition~\ref{p:sp},
	\[
	\FF(y)
	=\log2\bigg\{1-\CC(1-e^{-y})\bigg\}
	+O\bigg(\f1{e^{\Omega(k)}}\bigg)\,.
	\]
\end{lem}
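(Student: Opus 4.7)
The plan is to Taylor-expand the explicit formula $\FF(y)=F(x_y,w_y,y)$ from \eqref{e:Fxwy}, with two basic inputs: Proposition~\ref{p:sp} gives $x_y\equiv\dq_y(\free)\le O(2^{-k\gamma/2})$ with $\gamma\asymp 1$, hence $x_y=O(1/e^{\Omega(k)})$, and then \eqref{e:clause.x.to.w} yields $w_y\equiv 1-\hq_y(\free)=w(x_y)=(4/2^k)(1+O(kx_y))\asymp 1/2^k$; meanwhile Lemma~\ref{l:binom.Z.estimates}, applied with $d$ in place of $d-1$ (as in the proof of Corollary~\ref{c:alpha.onersb.correction}), gives $\dfz_\zro(w)=(1-w(1-\AM))^d(1+O(1/e^{\Omega(k)}))$ and $\dfz_\free(w)/\dfz_\zro(w)=O(x_y)$.

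First combine the middle and last summands of $F(x,w,y)$. Since $2/2^k=1/2^{k-1}$ and $(1-x)w/2=(1-x)^k/2^{k-1}$, they share the argument $t:=(1-x)^k(1-e^{-y})/2^{k-1}$, and so
\begin{equation*}
B+C \equiv \alpha\log(1-t)-\alpha k\log(1-t)=-(k-1)\alpha\log(1-t)=(k-1)\CC\log 2\cdot(1-x_y)^k(1-e^{-y})+O(1/e^{\Omega(k)}),
\end{equation*}
where the error bounds the second-order Taylor term $(k-1)\alpha t^2/2=O(k/2^k)$, using $\alpha\le 4^k/k$ and $(1-e^{-y})^2\asymp 1/\CC$ from \eqref{e:y.restriction}. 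Next, $A\equiv\log\dfz(w)=\log 2+\log\dfz_\zro(w)+O(x_y)$ by the $\dfz_\free/\dfz_\zro$ bound, while the Lemma gives $\log\dfz_\zro(w)=d\log(1-w(1-\AM))+O(1/e^{\Omega(k)})$. The second-order correction $d(w(1-\AM))^2/2$ has size $\asymp (\CC k/2^k)\cdot(1/\CC)=k/2^k$ by the same estimate on $(1-\AM)^2=((1-e^{-y})/2)^2$. Since $dw(1-\AM)=\CC k\log 2\cdot(1-x_y)^{k-1}(1-e^{-y})$, this gives $A=\log 2-\CC k\log 2\cdot(1-x_y)^{k-1}(1-e^{-y})+O(1/e^{\Omega(k)})$.

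Adding $A$ and $B+C$ collapses to
\begin{equation*}
\FF(y)=\log 2-\CC\log 2(1-e^{-y})\cdot(1-x_y)^{k-1}\bigl[1+(k-1)x_y\bigr]+O(1/e^{\Omega(k)}).
\end{equation*}
The main technical step that remains is to replace the bracketed factor by $1$. A direct Taylor expansion gives $(1-x)^{k-1}[1+(k-1)x]=1-k(k-1)x^2/2+O(k^3x^3)=1+O(k^2x^2)$, so the residual error is $O(\CC k^2 x_y^2)$. To see this is $O(1/e^{\Omega(k)})$, substitute $x_y^2\asymp 2^{-k\gamma}/\max\{\CC ke^{-y/2},1\}$ from $\MM^\gamma$: a short case analysis on the two branches of the maximum, using \eqref{e:y.restriction} to control $(\CC,y)$ jointly (namely $y\asymp 1/\CC^{1/2}$ when $\CC$ is large, and $\CC=\Theta(1)$ otherwise), shows $\CC k^2 x_y^2\le O(k^2/2^k)$ throughout. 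This completes the proof.
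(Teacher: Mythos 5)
Your proposal is correct and takes essentially the same route as the paper: Taylor-expand the explicit formula \eqref{e:Fxwy} for $\FF(y)=F(x_y,w_y,y)$, using Lemma~\ref{l:binom.Z.estimates} (with $d$ in place of $d-1$), the bound $x_y\in\MM^\gamma$ from Proposition~\ref{p:sp}, and the identity $\hfz=\efz$ (which you rediscover by noting the two arguments coincide when $w=w(x)$). The only difference is bookkeeping. The paper regroups before expanding: it writes $\log\dfz_\zro(w)=\log[\dfz_\zro(w)/\hfz^d]+d\log\hfz$ and uses $d-\alpha(k-1)=\alpha$, so the only log that survives is $\alpha\log\hfz$ and the ratio term is shown to be $O(1/e^{\Omega(k)})$ directly. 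You instead Taylor-expand $\log\dfz_\zro(w)$ and $-\alpha(k-1)\log\hfz$ separately, obtaining two $\Theta(\CC^{1/2}k)$-sized terms that must cancel to leading order, and then verify the cancellation via the algebraic identity $k-(k-1)(1-x)=1+(k-1)x$ followed by the expansion $(1-x)^{k-1}[1+(k-1)x]=1+O(k^2x^2)$. Both work; the paper's regrouping keeps the intermediate quantities of order $O(1)$, whereas yours exposes the large cancelling pieces and pushes the cancellation into the $O(\CC k^2 x_y^2)$ bound, which you then correctly check is $O(1/e^{\Omega(k)})$ by the $\MM^\gamma$ estimate and a case analysis on $\CC ke^{-y/2}$ versus $1$. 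Both proofs are sound and are, structurally, the same argument.
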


\begin{lem}\label{l:energy.function.y.estimate}
In the setting of Proposition~\ref{p:sp},
	\[y\ee(y)= \f{\CC y \log2}{e^y}
		+O\bigg(\f1{e^{\Omega(k)}}\bigg)\,.\]
\end{lem}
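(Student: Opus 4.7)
\emph{Plan.} Combining the identity $\FF'(y)=-\ee(y)$ from \S\ref{ss:variational.discussion} with the envelope property at the stationary point (Lemma~\ref{l:stationarity}), we have $y\ee(y)=-y\,\partial_y F(x_y,w_y,y)$, where the partial derivative is taken with $x,w$ held fixed. At the stationary point, the identity $\hfz_y(\dq_y)=\efz_y(\dq_y,\hq_y)$ from~\eqref{e:clause.edge.eq} collapses~\eqref{e:Fxwy} to $F=\log\dfz(w;y)-\alpha(k-1)\log\efz(x,w,y)$, and direct differentiation yields
\[
\partial_y F(x,w,y)=\partial_y\log\dfz(w;y)+\frac{\alpha(k-1)(1-x)we^{-y}}{2\,\efz}.
\]

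\emph{The two terms.} For the second term, use $(1-x)w=4(1-x)^k/2^k$ from~\eqref{e:clause.x.to.w}, together with $\efz=1+O(1/2^k)$ and $(1-x)^k=1+O(kx)$ where $kx=O(1/e^{\Omega(k)})$ (by Proposition~\ref{p:sp}, since $\dq_y\in\MM^\gamma$); this yields $(k-1)\CC(\log 2)e^{-y}\bigl(1+O(1/e^{\Omega(k)})\bigr)$. For the first term, Lemma~\ref{l:binom.Z.estimates} (with $d$ in place of $d-1$, as remarked in \S\ref{ss:comparison.first.mmt}) gives $\dfz=2(1-w(1-\AM))^d\bigl(1+O(1/e^{\Omega(k)})\bigr)$; differentiating in $y$ with $w$ fixed yields
\[
\partial_y\log\dfz(w;y)=-\frac{dwe^{-y}}{2(1-w(1-\AM))}\bigl(1+O(1/e^{\Omega(k)})\bigr)=-k\CC(\log2)e^{-y}\bigl(1+O(1/e^{\Omega(k)})\bigr),
\]
using $dw\approx 2k\CC\log 2$ and $1-w(1-\AM)=1+O(1/2^k)$.

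\emph{Cancellation and error control.} Summing the two pieces produces a near-perfect cancellation at the $k$-scale: $\partial_y F=-\CC(\log 2)e^{-y}+O(k\CC e^{-y}/e^{\Omega(k)})$. The residual is controlled by combining $\CC\le O(2^k/k)$ (from $\alpha\le 4^k/k$) with the sharp $\MM^\gamma$ bound $kx\asymp k/2^{k\gamma/2}/\sqrt{\max(\CC ke^{-y/2},1)}$; tracking cases shows $k\CC e^{-y}\cdot kx=O(k^{O(1)}/2^{k/2})$ uniformly across the allowed $(\CC,y)$. Since $y=O(\log k)$ throughout the regime of \eqref{e:y.restriction} (cf.\ the discussion after~\eqref{e:sqrt.d}), multiplying by $y$ preserves the exponential decay of the error, yielding $y\ee(y)=\CC y(\log 2)/e^y+O(1/e^{\Omega(k)})$.

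\emph{Main obstacle.} The delicate step is transporting Lemma~\ref{l:binom.Z.estimates}'s multiplicative approximation of $\dfz$ through the logarithmic derivative, since the $y$-derivative of the $O(1/e^{\Omega(k)})$ correction is not automatically small from the value estimate alone. A clean resolution is to revisit the binomial/random-walk concentration argument underlying that lemma and simultaneously track the $y$-derivative, which introduces only polynomial-in-$k$ prefactors that are absorbed by the exponential decay; equivalently, one exploits the moment-generating-function identity $-\partial_y\log\dfz=\E_{\dot\nu_y}[\dph]$ and estimates $\E_{\dot\nu_y}[\dph]$ directly via concentration of $\dph$ under $\dot\nu_y$, along the same lines as \S\ref{ss:binomial}.
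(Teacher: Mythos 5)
Your proposal is essentially the same approach as the paper, just entered from a slightly different formula. You begin from the envelope identity $\ee(y)=-\partial_y F(x_y,w_y,y)$ (using Lemma~\ref{l:stationarity}), but expanding $-\partial_y F$ via the moment-generating-function relations reproduces exactly the expression \eqref{e:onersb.ee.of.y} that the paper works from directly: your $\alpha(k-1)(1-x)we^{-y}/(2\efz)$ is precisely the paper's $\alpha(k-1)\bar{\mathfrak{e}}(x,w)$, and your $-\partial_y\log\dfz$ equals the paper's variable term $\dot{\mathfrak{e}}(w)=\E_{\dot\nu_y}[\dph]$, which the paper estimates via Lemma~\ref{l:Ld.estimates}. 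You correctly flag the real issue (a multiplicative approximation of $\dfz$ does not control $\partial_y\log\dfz$), and your proposed resolution --- estimate $\E_{\dot\nu_y}[\dph]$ directly by a conditional-binomial concentration argument --- is exactly how the paper proceeds.

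Two small points worth spelling out if you write this up. First, the ``collapse'' of \eqref{e:Fxwy} to $\tilde F\equiv\log\dfz-\alpha(k-1)\log\efz$ uses $\hfz=\efz$, which holds only at the fixed point; it is not automatic that $\partial_y F=\partial_y\tilde F$ at that point. It is true, because $\partial_y\log\hfz$ and $\partial_y\log\efz$ have identical numerators whenever $w=w(x)$ and identical denominators at $(x_y,w_y)$, but this verification belongs in the proof. Second, your interim claim $k\CC e^{-y}\cdot kx=O(k^{O(1)}/2^{k/2})$ is not quite right without the $y$ factor (it can fail when $y$ is small and $\CC$ is near its maximum $\asymp2^k/k$); what actually closes the estimate is $y\cdot\CC k^2 x e^{-y}=O(1/e^{\Omega(k)})$, where $\CC^{1/2}y=O(1)$ from \eqref{e:y.restriction} supplies the crucial $1/\CC^{1/2}$ damping --- the same mechanism the paper invokes in \eqref{e:ebar.final.estimate} and \eqref{e:edot.final.estimate}.
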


\noindent The two lemmas immediately imply 
Proposition~\ref{p:eone.defn}:

\begin{proof}[Proof of Proposition~\ref{p:eone.defn}]
It follows from Lemma~\ref{l:FF.y.estimate}~and~\ref{l:energy.function.y.estimate} that
the energetic complexity function satisfies
	\[\f{\SIGMA(y)}{\log2}
	=\f{\FF(y)+y\ee(y)}{\log2}
	=1-\CC\bigg(1-\f{1+y}{e^y}\bigg)
	+O\bigg(\f1{e^{\Omega(k)}}\bigg)
	=1-\Gamma(y)+O\bigg(\f1{e^{\Omega(k)}}\bigg)\,.
	\]
This implies that $\SIGMA(y)$ must have at least one root $y_\star$ that satisfies the estimate \eqref{e:Gamma.star}. Now recall further from   
\eqref{e:SIGMA.prime} that $\SIGMA'(y)=-y\FF''(y)$, 
which by Proposition~\ref{p:second.derivative} is negative. It follows that $y_\star$ is unique. Finally, recall \eqref{e:deriv.Fy.over.y} that
	\[
	-\f{d}{dy}\bigg( \f{\FF(y)}{y}\bigg)
	= \f{\SIGMA(y)}{y^2}\,.
	\]
This means that in the range of $y$ satisfying \eqref{e:y.restriction}, the function $\FF(y)/y$ 
is decreasing in $y$ up to $y=y_\star$, and is increasing thereafter. This implies that the two characterizations of \eqref{e:def.eone} are equivalent, and concludes the proof.
\end{proof}

\noindent In the remainder of this subsection we prove the preceding two lemmas. As before, we let $\dq\equiv\dq_y$ be the solution of Proposition~\ref{p:sp}, and $\hq\equiv\hq_y\equiv\hSP(\dq_y)$. Let $x\equiv x_y\equiv \dq_y(\free)$ and $w\equiv w_y\equiv 1-\hq_y(\free)$.

\begin{proof}[Proof of Lemma~\ref{l:FF.y.estimate}] Recalling \eqref{e:clause.edge.eq} and \eqref{e:Fxwy}, we have (with some rearranging)
	\begin{align*}
	\FF(y)&=\log 2+\log\bigg( 
	\dfz_\zro(w) + \f{\dfz_\free(w)}{2}\bigg)
	-\alpha(k-1)\log\bigg\{ 1- (1-x)w(1-\AM)\bigg\}\\
	&=\log2+\log\f{\dfz_\zro(w)}{(1- (1-x)w(1-\AM))^d}
	+O(x_d(w))+\alpha\log
	\bigg\{ 1- (1-x)w(1-\AM)\bigg\}\,,
	\end{align*}
where we defined $x_d(w)$ similarly as \eqref{e:variable.w.to.x} but with $d$ in place of $d-1$:
	\beq\label{e:variable.w.to.x.d.version}
	x_d(w)\equiv
	\f{\dfz_\free(w)}
		{2\dfz_\zro(w)+\dfz_\free(w)}\,.
	\eeq
The estimates of Lemma~\ref{l:binom.Z.estimates} apply equally well with $d$ in place of $d-1$, so $x_d(w)\asymp x$, and
	\begin{align*}
	&\log\f{\dfz_\zro(w)}{(1- (1-x)w(1-\AM))^d}
	=-d\log\f{1- (1-x)w(1-\AM)}{1-w(1-\AM)}
	+O\bigg(\f1{e^{\Omega(k)}}\bigg)\\
	&\qquad=O\bigg(
	dw(1-\AM)x+
	\f1{e^{\Omega(k)}}\bigg)
	=O\bigg(
	\CC^{1/2} k x+
	\f1{e^{\Omega(k)}}\bigg)
	=O\bigg(\f1{e^{\Omega(k)}}\bigg)\,,
	\end{align*}
where we used \eqref{e:y.restriction}
to estimate $1-\AM\asymp \min\set{1,y}\asymp1/\CC^{1/2}$, and then obtained the final bound on $x$ using the result from Proposition~\ref{p:sp} that $\dq_y\in\MMstar$. Substituting into the expression for $\FF(y)$ and simplifying further gives
	\[
	\FF(y)
	= \log2
	+\alpha
	\log
	\bigg\{ 1- (1-x)w(1-\AM)\bigg\}
	+O\bigg(\f1{e^{\Omega(k)}}\bigg)
	=\log2
	-\alpha w(1-\AM)
	+O\bigg(\f1{e^{\Omega(k)}}\bigg)\,.
	\]
Recalling \eqref{e:clause.x.to.w}, we  have
$w=w(x)=[1-O(kx)]4/2^k$, while $\alpha=\CC 2^{k-1}\log2$. The result follows.\end{proof}

\begin{proof}[Proof of Lemma~\ref{l:energy.function.y.estimate}] Recall the definition \eqref{e:onersb.ee.of.y} of $\ee(y)$:
it consists of a variable term,
minus a clause term, minus an edge term. We estimate these separately:
\smallskip

\noindent\emph{Clause and edge terms.}
From the \textsc{sp} equations, the clause and edge terms of \eqref{e:onersb.ee.of.y} agree, and can be simplified as
	\[
	\sum_{\udw}\hph_k(\udw)\hat{\nu}_y(\udw)
	=\sum_{\ww} \eph(\ww)\bar{\nu}_y(\ww)
	= \f{2\dq_y(\zro)\hq_y(\one) e^{-y}}
	{1-2\dq_y(\zro)\hq_y(\one)(1-e^{-y})}
	=\f{ w(\AM-\tfrac12)}{(1-x)^{-1}-w(1-\AM)}
	\equiv
	\bar{\mathfrak{e}}(x,w)\,.
	\]
Recall from Proposition~\ref{p:sp} that $\dq_y\in\MM^\gamma\subseteq\MMstar$.
Expanding with respect to $x$ and $w$ gives
	\begin{align*}
	\alpha(k-1)\bar{\mathfrak{e}}(x,w)
	&=\f{\alpha(k-1) w(\AM-\tfrac12)}{1-w(1-\AM)}
	\bigg\{1+O(x)
	\bigg\}
	= \f{\alpha(k-1) w /(2e^y)}{1-w(1-\AM)}
	+O\bigg(\f{dwx}{e^y}\bigg)\\
	&= \f{\alpha(k-1) w}{2e^y}
	+O\bigg(
	\f{dw}{e^y}
	\Big(w (1-\AM)+x\Big)\bigg)
	= \f{\alpha(k-1) w}{2e^y}
	+O\bigg(\f{dw}{e^y \CC^{1/2}e^{\Omega(k)}}\bigg)\,,
	\end{align*}
where (similarly as in the proof of Lemma~\ref{l:FF.y.estimate})
we used \eqref{e:y.restriction} to estimate
$\AM-1\asymp\min\set{1,y} \asymp 1/\CC^{1/2}$, and used
$\dq_y\in\MMstar$ to bound $x$. Multiplying by $y$ and simplifying gives
	\beq\label{e:ebar.final.estimate}
	y\bigg\{\alpha(k-1)\bar{\mathfrak{e}}(x,w)
	-\f{\alpha(k-1) w }{2e^y}
	\bigg\}
	=O\bigg(\f{dwy}{e^y \CC^{1/2}e^{\Omega(k)}}\bigg)
	=
	O\bigg(\f{\CC^{1/2} k y}{e^y e^{\Omega(k)}}\bigg)
	=O\bigg(
	\f1{e^{\Omega(k)}}
	\bigg)\,,
	\eeq
where we again made use of \eqref{e:y.restriction}.\smallskip

\noindent\emph{Variable term.}
Define $\LdAM$, $\LdGM$, $\binLdAM$, $\binLdGM$
 similarly to \eqref{e:arith.Z}, \eqref{e:geom.Z},
 \eqref{e:binLAM}, \eqref{e:binLGM},
  but with $d$ in place of $d-1$: 
	\begin{alignat}{2}
	\label{e:LdAM}
	\P(\LdAM=\ell) 
	&= 
	\f{\AAA_{d,\ell}(w)\PPP_\ell}
		{\dfz_\zro(w)}
	\,,\quad
	& \P(\binLdAM=\ell) 
	&=\f{\AAA_{d,\ell}(w)}{(1-w+w\cdot\AM)^d}\,,
	\\
	\P(\LdGM=\ell) 
	&=\f{\GGG_{d,\ell}(w)\SSS_\ell}
		{\dfz_\free(w)}\,,
	& \P(\binLdGM=\ell) 
	&= \f{\GGG_{d,\ell}(w)}{(1-w+w\cdot\GM)^d}\,,
	\nonumber
	\end{alignat}
for all $0\le\ell\le d$, and with $\dfz_\zro(w)$ and $\dfz_\free(w)$ as defined by \eqref{e:Z.d.zero.as.sum} and \eqref{e:Z.d.free.as.sum}. Let
$x_d(w)$ be as defined by \eqref{e:variable.w.to.x.d.version}.
Conditional on $\LdAM=\ell$, let $X\sim\Bin(\ell,p=1/(1+e^y) )$. Then the variable term
in \eqref{e:onersb.ee.of.y} can be simplified as
	\begin{align}\nonumber
	\sum_{\uhw} \dph_d(\uhw)
		\dot{\nu}_y(\uhw)
	&= \f1{\dfz_y(\hq)}
	\sum_{\ell_\zro,\ell_\one}
	\min\set{\ell_\zro,\ell_\one}
	\binom{d}{\ell_\zro,\ell_\one}
	\f{\hq(\zro)^{\ell_\zro}\hq(\one)^{\ell_\zro}
	\hq(\free)^{d-\ell_\zro-\ell_\one}}
		{\exp\{ y\min\set{\ell_\zro,\ell_\one} \}}
	\\
	&= (1-x_d(w))\E\bigg\{
	\E\bigg(X\,\bigg|\, X< \f{\LdAM}{2}\bigg)
	\bigg\}
	+ x_d(w)
	\f{\E\LdGM}{2}
	\equiv \dot{\mathfrak{e}}(w)\,.
	\label{e:dot.e}
	\end{align}
Let $\ldam\equiv\E\binLdAM$ and $\ldgm\equiv\E\binLdGM$. We will show in \S\ref{ss:binomial}  (Lemma~\ref{l:Ld.estimates}) that
	\beq\label{e:energy.vertex.term.AM}
	\E\bigg\{
	\E\bigg(X\,\bigg|\, X< \f{\LdAM}{2}\bigg)\bigg\}
	= \f{dw}{2e^y}
	\bigg\{1- 
		O\bigg(\f1{\CC^{1/2}e^{\Omega(k)}}\bigg)
	\bigg\}\,.
	\eeq
Next, note that changing $d-1$ to $d$ does not affect the analysis of Lemma~\ref{l:simplified.binom.EL.estimates}, so we have $\E\LdGM= O(\ldgm)$ in general, and 
$\E\LdGM = \ldgm + O((\ldgm)^{1/2})$
if $\ldgm$ is large. We also note that
	\[
	x_d(w)
	\bigg\{\f{\ldgm}{2}
	-\ldam p
	\bigg\}
	\asymp
	\f{x_d(w) dw \min\set{1,y}}{e^{y/2}}
	\asymp 
	\f{dwx}{\CC^{1/2}e^{y/2}}
	\asymp 
	\f{\ldgm x}{\CC^{1/2}}\,,
	\]
where the estimate of $\min\set{1,y}$ comes from \eqref{e:y.restriction}. Consequently, if $\ldgm$ is large, we have
	\beq\label{e:energy.vertex.term.GM.large}
	x_d(w)
	\bigg\{
	\f{\E\LdGM}{2}
	-\E\bigg\{
	\E\bigg(X\,\bigg|\, X< \f{\LdAM}{2}\bigg)\bigg\}
	\bigg\}
	=O\bigg(x
	\bigg\{
	\f{\ldgm}{\CC^{1/2}} 
	+ (\ldgm)^{1/2}
	\bigg\}\bigg)
	=O\bigg(\f{\CC^{1/2}kx}{e^{y/2}}\bigg)
	\eeq
On the other hand, since $\lgm\asymp \CC k/e^{y/2}$, if $\lgm=O(1)$ then we must have $e^{y/2}\ge\Omega(\CC k)\ge \Omega(k)$, in which case \eqref{e:y.restriction} forces $\CC\asymp1$. In this case we can simply bound
	\beq\label{e:energy.vertex.term.GM.small}
	x_d(w)
	\bigg\{
	\f{\E\LdGM}{2}
	-\E\bigg\{
	\E\bigg(X\,\bigg|\, X< \f{\LdAM}{2}\bigg)\bigg\}
	\bigg\}
	=O\bigg(
	\f{dw x}{e^y}
	+
	\ldgm x
	\bigg)
	=O\bigg(\f{\ldgm x}{\CC^{1/2}}\bigg)
	=O\bigg(\f{\CC^{1/2}kx}{e^{y/2}}\bigg)\,.\eeq
By substituting
\eqref{e:energy.vertex.term.AM}, \eqref{e:energy.vertex.term.GM.large}, and \eqref{e:energy.vertex.term.GM.small}
into the formula
\eqref{e:dot.e} for 
$\dot{\mathfrak{e}}(w)$, we obtain 
	\beq\label{e:edot.final.estimate}
	y\bigg\{
	\dot{\mathfrak{e}}(w)
	-\f{dw}{2e^y}
	\bigg\}
	=O\bigg(
	\f{dwy}{\CC^{1/2} e^y e^{\Omega(k)}}
	+ \f{\CC^{1/2} k x y}{e^{y/2}}
	%\f{\ldgm xy}{\CC^{1/2}} 
	\bigg)
	= O\bigg(
	\f{\CC^{1/2}ky}{e^y e^{\Omega(k)}}
	+ \f{\CC^{1/2}k xy }{e^{y/2}}
	\bigg)
	= O\bigg(
	\f1{e^{\Omega(k)}}
	\bigg)\,,\eeq
where the last estimate
uses \eqref{e:y.restriction}
and $\dq\in\MMstar$.\smallskip

\noindent\emph{Combined.}
It follows from \eqref{e:ebar.final.estimate}~and~\eqref{e:edot.final.estimate} together that
	\[
	y\ee(y)
	=y\bigg\{\dot{\mathfrak{e}}(w)
		-\alpha(k-1)\bar{\mathfrak{e}}(x,w)
	\bigg\}
	= \f{y\alpha w}{2e^y}
	+O\bigg(\f1{e^{\Omega(k)}}\bigg)
	= \f{\CC y\log2}{e^y}
	+O\bigg(\f1{e^{\Omega(k)}}\bigg)\,,
	\]
where the last step uses that $w=[1-O(kx)]4/2^k$
and $\alpha=\CC 2^{k-1}\log2$.
\end{proof}

\subsection{Binomial estimates}
\label{ss:binomial}

We now prove the technical estimates used earlier in this section. Recall the classical binomial Chernoff bounds: if $X\sim\Bin(r,p)$ then it holds for any $t\ge0$ that
	\beq\label{e:chernoff.upper}
	\P\bigg(X\ge r p(1+t)\bigg)
	\le \exp\bigg\{ -r 
	\relent\bigg (p(1+t) \,
		\bigg|\, p\bigg)
	\bigg\}
	\le \exp\bigg\{- \f{r p t^2}{2+t}\bigg\}
	\le \exp\bigg\{- \f{r p 
		\min\set{t,t^2}}{3}\bigg\}\,.
	\eeq
In the lower tail a simpler bound holds: for all $0\le t\le 1$, we have
	\beq\label{e:chernoff.lower}
	\P\bigg(X\le r p(1-t)\bigg)
	\le \exp\bigg\{ -r 
	\relent\bigg (p(1-t) \,
		\bigg|\, p\bigg)
	\bigg\}
	\le\exp\bigg\{ - \f{rpt^2}{2} \bigg\}\,.
	\eeq
Throughout, $\relent(x|p)\equiv x\log (x/p)+(1-x)\log[(1-x)/(1-p)]$, the binary relative entropy function. We will make frequent use of \eqref{e:chernoff.upper} and \eqref{e:chernoff.lower} in the remainder of this section.

\begin{lem}\label{l:binom.estimates}
Let $\PPP_\ell$, $\SSS_\ell$ be as defined by \eqref{e:arith.prob}~and~\eqref{e:geom.prob}. 
For all $0\le\ell\le d-1$ we have
	\[
	\SSS_\ell = 
	\Ind{\ell\textup{ even}}
	\bigg(\f2{\pi\ell}\bigg)^{1/2}
	\bigg[1+O\bigg(\f1{1+\ell}\bigg)\bigg]\,.
	\]
It holds uniformly over all $y\ge0$ that
$1-\PPP_\ell\le \exp\{-\Omega(\ell\min\set{y,y^2})\}$.
We have
	\[1-\PPP_\ell\le 
	\exp\bigg\{-\f{\ell y^2}{8}
	\bigg[1 + O(y^2)\bigg]
	\bigg\}
	\]
for $y\ge0$ small enough.

\begin{proof}The estimate on $\SSS_\ell$ follows immediately from Stirling's approximation. The binomial Chernoff bound gives
	\[
	1-\PPP_\ell
	\le 
	\exp\bigg\{-\ell\relent
		\bigg(\f12
		\,\bigg|\, \f1{1+e^y}
		\bigg)\bigg\}
	=\exp\bigg\{-
	\Theta\bigg(\ell \min\set{y,y^2}\bigg)
	\bigg\}
	\]
uniformly over all $y\ge0$. The estimate for small $y$ follows by Taylor expanding the relative entropy function.\end{proof}
\end{lem}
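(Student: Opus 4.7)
My plan is to treat the three assertions independently, since each is a direct application of a classical tool: Stirling for $\SSS_\ell$, and the binomial Chernoff bound \eqref{e:chernoff.upper} (already recorded in the paper) for the two bounds on $1-\PPP_\ell$.

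For the formula for $\SSS_\ell$, the case $\ell$ odd is trivial since $\Bin(\ell,1/2)$ cannot hit $\ell/2$. For $\ell$ even, $\SSS_\ell = \binom{\ell}{\ell/2} 2^{-\ell}$. I would apply Stirling's approximation in the form $n! = (n/e)^n \sqrt{2\pi n}\,[1+O(1/n)]$ to each of the three factorials in $\binom{\ell}{\ell/2}$; the exponential factors collapse against $2^{-\ell}$, leaving $(2/(\pi \ell))^{1/2}[1+O(1/\ell)]$, which matches the stated estimate (absorbing the $\ell=0$ case into the $O(1/(1+\ell))$ factor).

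For the Chernoff bounds on $\PPP_\ell$, write $1-\PPP_\ell = \P(\Bin(\ell,p)\ge \ell/2)$ with $p=1/(1+e^y)\le 1/2$. Invoking \eqref{e:chernoff.upper} with $t$ chosen so that $p(1+t)=1/2$ gives $1-\PPP_\ell \le \exp\{-\ell\,\relent(1/2\mid p)\}$. The key computation is the identity
\begin{equation*}
\relent\!\left(\tfrac12 \,\middle|\, \tfrac{1}{1+e^y}\right)
= \tfrac12 \log\!\left(\tfrac{1+e^y}{2}\right)
+\tfrac12 \log\!\left(\tfrac{1+e^y}{2 e^y}\right)
= \log\cosh(y/2),
\end{equation*}
which I would verify by direct substitution into the definition of $\relent$. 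This reduces both claims to estimating $\log\cosh(y/2)$.

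For the uniform bound I would observe that $\log\cosh(y/2) \asymp \min\{y,y^2\}$ on $[0,\infty)$: near zero, $\cosh(y/2) = 1 + y^2/8 + O(y^4)$ so $\log\cosh(y/2) \asymp y^2$; for $y\ge 1$, $\cosh(y/2)\ge e^{y/2}/2$ so $\log\cosh(y/2) \ge y/2 - \log 2 \asymp y$. For the sharper small-$y$ bound, I would Taylor expand $\cosh(y/2) = 1 + y^2/8 + y^4/384 + O(y^6)$ and then expand $\log(1+u)$ to obtain
\begin{equation*}
\log\cosh(y/2) = \tfrac{y^2}{8}\bigl[1 + O(y^2)\bigr],
\end{equation*}
yielding the stated estimate. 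None of these steps is a real obstacle; the only thing to be careful about is keeping the error in the small-$y$ expansion in the precise form $[1+O(y^2)]$ rather than $[1+O(y)]$, which is ensured by the vanishing of the odd Taylor coefficients of $\log\cosh$.
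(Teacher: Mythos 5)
Your proof is correct and follows essentially the same route as the paper: Stirling for $\SSS_\ell$ and the Chernoff bound followed by Taylor expansion for $1-\PPP_\ell$. The one thing worth noting is that your explicit evaluation $\relent(1/2\mid 1/(1+e^y)) = \log\cosh(y/2)$ is a genuinely helpful simplification that the paper does not bother to record; it makes the two asymptotic regimes (and the vanishing of odd Taylor coefficients needed for the $[1+O(y^2)]$ error) completely transparent. A minor quibble: the $\ell=0$ case cannot literally be absorbed into the $O(1/(1+\ell))$ factor, since $(2/\pi\ell)^{1/2}$ diverges there; one should simply note the formula is intended for $\ell\ge 1$ (and for $\ell=0$ both sides are interpreted as $1$ by convention). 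This is an inconsequential sloppiness shared with the paper.
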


\noindent We now turn to the proofs of Lemmas~\ref{l:binom.Z.estimates}~and~\ref{l:simplified.binom.EL.estimates} which were introduced in \S\ref{ss:contraction}.

\begin{proof}[Proof of Lemma~\ref{l:binom.Z.estimates}]
As before we abbreviate $\dz_\zro\equiv \dz_\zro(w)$ and $\dz_\free\equiv \dz_\free(w)$.\smallskip

\noindent\emph{Bounds on $\dz_\zro$.} From the definition 
\eqref{e:def.dz.zro}, together with the trivial bound $\PPP_\ell\le1$, we immediately conclude
	\[\f{\dz_\zro}{(1-w+w\cdot\AM)^{d-1}}\le1\,.\]
By the lower bound on $\PPP_\ell$ from Lemma~\ref{l:binom.estimates}, we also have 
	\begin{align*}
	&1-\f{\dz_\zro}{(1-w+w\cdot\AM)^{d-1}}
	\le  \bigg(
	\f{1-w+\exp\{-\Omega(\min\set{y,y^2})\} w\cdot\AM }{1-w+w\cdot\AM}
	\bigg)^{d-1}\\
	&\qquad
	= \bigg(1-
	\f{w\cdot\AM[1-
		\exp\{-\Omega(\min\set{y,y^2})\}]}
		{1-w+w\cdot\AM}
	\bigg)^{d-1}
	\le \f1{\exp\{\Omega(dw\min\set{y^2,1})\}}
	\le \f1{\exp\{\Omega(k)\}}\,,
	\end{align*}
since $dw\min\set{y^2,1}\asymp \CC k\min\set{y^2,1} \asymp k$ by \eqref{e:y.restriction}. This proves \eqref{e:bounds.Z.zro}.
\smallskip

\noindent\emph{Bounds on $\dz_\free$.} 
From the definition \eqref{e:def.dz.free}, we have trivially
	\beq\label{e:lgm.small.sandwiching}
	\f1{\exp\{O(dw\cdot\GM) \}}
	\le
	\bigg( \f{1-w}{1-w+w\cdot\GM}\bigg)^{d-1}
	\le\f{\dz_\free}{(1-w+w\cdot\GM)^{d-1}} \le 1\,,
	\eeq
where the left-hand side is $\Omega(1)$ as long as $dw\cdot\GM=O(1)$. It remains to consider what happens when $dw\cdot\GM$ is large. From the estimate of Lemma~\ref{l:binom.estimates} we have
	\[\f{\dz_\free}{(1-w+w\cdot\GM)^{d-1}}
	=
	\E\bigg[	\Ind{\binLGM\textup{ even}}
	\bigg(\f{2}{\pi \binLGM}\bigg)^{1/2}
	\bigg\{1 + O\bigg(\f1{1+\binLGM}\bigg)\bigg\}
	\bigg]\,.
	\]
Since $\lgm\asymp dw \cdot\GM$, it follows from the Chernoff bound 
\eqref{e:chernoff.upper}
that for a large enough absolute constant $C$,
	 \[\P\bigg(|\binLGM - \lgm|
	 \ge  C (\lgm \log \lgm)^{1/2}\bigg) 
	 \le \f1{(\lgm)^{10}}\]
(where the power $10$ is somewhat arbitrary, but large enough for our purposes). This implies
	\[\f{\dz_\free}{(1-w+w\cdot\GM)^{d-1}}
	=
	\bigg(\f{2}{\pi \lgm}\bigg)^{1/2}
	\bigg\{1 + O\bigg(
		\f{(\lgm\log\lgm)^{1/2}}{\lgm}
		\bigg)\bigg\}
		\P(\binLGM\textup{ even})
	+O\bigg( \f1{(\lgm)^{3/2}}\bigg)\,.
	\]
We then note that
	\beq\label{e:prob.even.GM}
	\P(\binLGM\textup{ even})
	=  \f{1+ \E[(-1)^{\binLGM}]}{2}
	= \f12\bigg[1 + 
		\bigg(\f{1-w-w\cdot\GM}{1-w+w\cdot\GM}
		\bigg)^{d-1}\bigg]
	=\f12 + O\bigg(\f{1}{\exp\{\Omega(\lgm)\}}\bigg)\,.
	\eeq
Combining with the previous estimate gives 
	\[
	\f{\dz_\free}{(1-w+w\cdot\GM)^{d-1}}
	=\bigg(\f{1}{2\pi \lgm}\bigg)^{1/2}
	\bigg\{1+ O\bigg(
	\f{(\log\lgm)^{1/2}}{(\lgm)^{1/2}}
	\bigg)\bigg\}
	\asymp \f1{(\lgm)^{1/2}}\,,
	\]
from which \eqref{e:bounds.Z.free} follows.
\end{proof}

\begin{proof}[Proof of Lemma~\ref{l:simplified.binom.EL.estimates}] We abbreviate $p\equiv 1/(e^y+1)$ as well as
	\[
	\pam \equiv \f{w\cdot \AM}{1-w+w\cdot\AM}\,,\quad
	\pgm \equiv \f{w\cdot \GM}{1-w+w\cdot\GM}\,.
	\]
Recall the definitions \eqref{e:arith.Z}
and \eqref{e:geom.Z} of $\LAM$ and $\LGM$.
\smallskip

\noindent\emph{Bounds on $\E\LAM$.} By Jensen's inequality we have
	\beq\label{e:jensen.AM}
	\bigg(|\E\LAM-\lam|\bigg)^2
	\le
	\E\bigg[(\LAM-\lam)^2\bigg]
	=
	\sum_{\ell=0}^{d-1}
	\f{\AAA_\ell\PPP_\ell(\ell-\lam)^2}
		{\dz_\zro}
	\asymp
	\sum_{\ell=0}^{d-1}
	\f{\AAA_\ell\PPP_\ell(\ell-\lam)^2}
	{(1-w+w\cdot\AM)^{d-1}}\eeq
where the last estimate is by 
\eqref{e:bounds.Z.zro} from Lemma~\ref{l:binom.Z.estimates}. Since $\PPP_\ell\le1$, the last expression above is upper bounded by
	\[
	\sum_{\ell=0}^{d-1}
	\f{\AAA_\ell(\ell-\lam)^2}
	{(1-w+w\cdot\AM)^{d-1}}
	=\Var\binLAM
	=(d-1) \pam(1-\pam)
	\asymp (d-1)\pam
	\asymp dw\,,
	\]
which proves the first claim \eqref{e:simplified.ELAM.estimate}.\smallskip

\noindent\emph{Bounds on $\E\LGM$
when $\lgm$ is large.} Abbreviate $J$ for the set of even integers between $\lgm/2$ and $2\lgm$. Recalling the estimate \eqref{e:bounds.Z.free} from Lemma~\ref{l:binom.Z.estimates}, we have
	\beq\label{e:LGM.decompose}
	|\E\LGM-\lgm|
	\le \E|\LGM-\lgm|
	\le 
	\sum_{\ell\in J}
	\f{|\ell-\lgm|
	\GGG_\ell\SSS_\ell}{\dz_\free}
	+\sum_{\ell\notin J}
	\f{(\lgm)^{1/2}
	|\ell-\lgm|\GGG_\ell}{(1-w+w\cdot\GM)^{d-1}}
	\,.\eeq
The lower tail Chernoff bound
\eqref{e:chernoff.lower}
gives
	\[
	\sum_{\ell \le \lgm/2}
	\f{(\lgm)^{1/2}
	|\ell-\lgm|\GGG_\ell}{(1-w+w\cdot\GM)^{d-1}}
	\le (\lgm)^{3/2}
	\P\bigg(
	\Bin(d-1,\pgm) \le \f{\lgm}{2}\bigg)
	\le
	\f{(\lgm)^{3/2}}{\exp\{\Omega(\lgm)\}}\,,
	\]
The upper tail Chernoff bound \eqref{e:chernoff.upper} gives
	\begin{align}
	\sum_{\ell \ge 2\lgm}
	\f{(\lgm)^{1/2}
	|\ell-\lgm|\GGG_\ell}{(1-w+w\cdot\GM)^{d-1}}
	&\le
	\sum_{j\ge0}
	(\lgm)^{1/2}(\lgm+j)
	\P\bigg(
	\Bin(d-1,\pgm) \ge 2\lgm +j \bigg)
	\label{e:decompose.upper.tail}
	\\
	&\le
	\f{(\lgm)^{3/2}}{\exp\{\Omega(\lgm)\}}
	+ \sum_{j\ge0} 
	\f{(\lgm)^{1/2} j}{\exp\{\Omega(\lgm+j)\}}
	\le
	\f{(\lgm)^{3/2}}{\exp\{\Omega(\lgm)\}}\,.
	\nonumber
	\end{align}
By Lemma~\ref{l:binom.estimates} we have $\SSS_\ell \asymp 1/(\lgm)^{1/2}$ uniformly over $\ell\in J$.
We then have (similarly to \eqref{e:jensen.AM}, and using \eqref{e:bounds.Z.free} again)
	\begin{align*}
	\bigg(\f1{\dz_\free}
	\sum_{\ell\in J}
	|\ell-\lgm|
	\GGG_\ell\SSS_\ell\bigg)^2
	&\le
	\f1{\dz_\free}
	\sum_{\ell\in J}
	(\ell-\lgm)^2
	\GGG_\ell\SSS_\ell
	\asymp \sum_{\ell\in J}
	\f{(\ell-\lgm)^2\GGG_\ell}
		{(1-w+w\cdot\GM)^{d-1}}\,,\\
	&\le \sum_{\ell=0}^{d-1}
	\f{(\ell-\lgm)^2\GGG_\ell}
		{(1-w+w\cdot\GM)^{d-1}}
	=\Var\binLGM
	=(d-1)\pgm(1-\pgm) \asymp dw\,,
	\end{align*}
Substituting these estimates back into \eqref{e:LGM.decompose} gives the claim
\eqref{e:simplified.ELGM.estimate} in the case that $\lgm$ is large.\smallskip

\noindent\emph{Bounds on $\E\LGM$ when $\lgm=O(1)$.} Write $\ddot{\P}$ for the law of $\binLGM$ conditioned to be even. It is straightforward to check that $\SSS(\ell)\equiv\SSS_\ell$ is nonincreasing with respect to $\ell$ even. It follows that $\binLGM$ and $\SSS(\binLGM)$ have nonpositive covariance under $\ddot{\P}$: to see this, let $L,L'$ be independent samples from the law $\ddot{\P}$, and note
	\beq\label{e:standard.covar.bound}	
	\Cov_{\ddot{\P}}\bigg( \SSS(\binLGM),
		\binLGM\bigg)
	=\f12 \ddot{\E}\bigg\{
	\Big(\SSS(L)-\SSS(L')\Big)
	\Big( L-L'\Big)\bigg\}
	\le0\,,
	\eeq
where the last inequality holds since the random variable inside the expectation is nonpositive almost surely. Thus
	\begin{align}
	\label{e:E.LGM.covar.bound}	
	\E \LGM
	&= \f{\ddot{\E}[\SSS(\binLGM)\binLGM]}
		{\ddot{\E}[ \SSS({\binLGM})]}
	\le\ddot{\E} \binLGM
	\le \f{\E(\binLGM)}{\P(\binLGM\textup{ even})}\\
	\nonumber
	&\le \lgm
	\bigg(\f{1-w+w\cdot\GM}{1-w}\bigg)^{d-1}
	\le \lgm
	\exp\{O(\lgm)\}
	\le O(\lgm)\,,
	\end{align}
by the assumption that $\lgm \asymp dw\cdot\GM =O(1)$. This proves \eqref{e:simplified.ELGM.estimate} in the case $\lgm=O(1)$.\end{proof}

\begin{proof}[Proof of Lemma~\ref{l:one.sided}]
We first use simple correlation inequalities (such as \eqref{e:E.LGM.covar.bound}) to obtain one-sided improvements on the bounds of Lemma~\ref{l:simplified.binom.EL.estimates}.\smallskip

\noindent\emph{Upper bound on $\E\LGM$.} Recall from \eqref{e:E.LGM.covar.bound} that 
$\E \LGM\le\ddot{\E}\binLGM$, which is $O(\lgm)$ in the case $\lgm=O(1)$. If $\lgm$ is large, then
we can use the binomial moment-generating function to estimate
	\begin{align*}
	\E\Big( (-1)^{\binLGM}\Big)
	&= (1-2\pgm)^{d-1}
	= \bigg(\f{1-w-w\cdot\GM}{1-w+w\cdot\GM}\bigg)^{d-1}
	= O\bigg(\f1{\exp\{\Omega(\lgm)\}}\bigg)\,,\\
	\E\Big(\binLGM (-1)^{\binLGM}\Big)
	&=\f{d}{d\theta}
	(1-\pgm+\pgm e^\theta)^{d-1}\bigg|_{\theta=i\pi}
	=O\bigg( \f{\lgm}
		{\exp\{\Omega(\lgm)\}}\bigg)
	=  O\bigg(\f1{\exp\{\Omega(\lgm)\}}\bigg)\,.
	\end{align*}
Rearranging these bounds gives
	\begin{align}\nonumber
	\P(\binLGM\textup{ even})
	&= \f{1+ \E[(-1)^{\binLGM}]}{2}
	= \f12 
	+ O\bigg(\f1{\exp\{\Omega(\lgm)\}}\bigg)\,,\\
	\ddot{\E}(\binLGM)
	&=\f{ \E\binLGM + \E \binLGM (-1)^{\binLGM}}
	{2\P(\binLGM\textup{ even})}
	= \lgm + O\bigg(\f1{\exp\{\Omega(\lgm)\}}\bigg)\,.
	\label{e:binomial.mgf.deriv}
	\end{align}
We therefore conclude that if $\lgm$ is large then
	\beq\label{e:LGM.ubd}
	\E\LGM
	\le \lgm + 
	O\bigg( \f1{\exp\{\Omega(\lgm)\}}\bigg)\,,
	\eeq
which is an improvement on the upper bound on $\E\LGM$ obtained in Lemma~\ref{l:simplified.binom.EL.estimates}.
\smallskip

\noindent\emph{Lower bound on $\E\LAM$.} As in the proof of Lemma~\ref{l:stationarity},  for integers $i\ge1$ let $I_i$ be i.i.d.\ Bernoulli random variables with $\E I_i =  p \equiv 1/(e^y+1)$. For any finite subset $S$ of positive integers let $Y(S)$ be the sum of $I_i$ over $i\in S$. Abbreviating $[\ell]\equiv \set{1,\ldots,\ell}$, we have $\PPP_\ell \equiv \P(E_\ell)$ where $E_\ell\equiv \set{Y([\ell]) < \ell/2}$. We then note that
	\begin{align*}
	E_{\ell+2}\setminus E_\ell
	&=\bigg\{ \f{\ell}{2}\le Y([\ell])
	\le Y([\ell+2])< \f{\ell+2}{2}\bigg\}
	= \bigg\{
	Y([\ell])= \bigg\lceil
	\f{\ell}{2}\bigg\rceil,
	Y(\set{\ell+1,\ell_2})=0
	\bigg\}\,,\\
	E_\ell \setminus E_{\ell+2}
	&= \bigg\{
	\f{\ell}{2}-1 \le
	Y([\ell+2])-2 \le Y([\ell]) < \f{\ell}{2}
	\bigg\}
	=\bigg\{
	Y([\ell]) = \bigg\lceil\f{\ell}{2}-1\bigg\rceil,
	Y(\set{\ell+1,\ell_2})=2
	\bigg\}\,.
	\end{align*}
Writing $p_{\ell}(k)\equiv \P(\Bin(\ell,p)=k)$, the above implies
	\beq\label{e:P.ell.skip.two}
	\PPP_{\ell+2}-\PPP_\ell
	= p_\ell
	\bigg(\bigg \lceil \f{\ell}{2} \bigg\rceil \bigg)
	 (1-p)^2
	-p_\ell
	\bigg(\bigg \lceil \f{\ell}{2} \bigg\rceil -1
	\bigg) p^2
	= p_\ell
	\bigg(\bigg \lceil \f{\ell}{2} \bigg\rceil \bigg)
	 (1-p)^2
	 \bigg\{1 - \f{e^{-y} \lceil\ell/2\rceil}
	 {\ell+1- \lceil\ell/2\rceil}
	  \bigg\}\ge0\,,
	\eeq
since $2 \lceil\ell/2\rceil \le \ell+1$ and $y\ge0$.
Now let $\dot{\P}$ and $\ddot{\P}$ denote the laws of 
the binomial random variable $\binLAM$ conditioned to be odd and even, respectively. Write $\PPP(\ell)\equiv\PPP_\ell$. The bound \eqref{e:P.ell.skip.two}
 implies (by the same argument as in \eqref{e:standard.covar.bound})
that $\binLAM$ and $\PPP(\binLAM)$ have nonnegative covariance under $\dot{\P}$ and under $\ddot{\P}$. 
Rearranging gives
	{\setlength{\jot}{0pt}\begin{align}
	 \nonumber
	\E( \LAM \,|\, \LAM\textup{ even})
	&\ge \ddot{\E} \binLAM\,,\\
	\E( \LAM \,|\, \LAM\textup{ odd})
	&\ge \ddot{\E} \binLAM\,.
	\label{e:L.AM.stoch.dom}
	\end{align}}%
By the argument of \eqref{e:binomial.mgf.deriv} we have
	\begin{align*}
	\P(\binLAM\textup{ even})
	&=
	\f12 + O\bigg(\f1{\exp\{\Omega(\CC k)\}}\bigg)\,,\\
	\ddot{\E}(\binLAM)
	&= \lam + O\bigg(\f1{\exp\{\Omega(\CC k)\}}\bigg)
	= \dot{\E}(\binLAM)\,,
	\end{align*}
having used that $\lam\asymp dw\asymp \CC k$.
This gives
	\beq\label{e:LAM.lbd}
	\E\LAM\ge\lam 
	- O\bigg( \f1{\exp\{\Omega(\CC k)\}}\bigg)
	\eeq
which is an improvement on the lower bound on $\E\LAM$ obtained in Lemma~\ref{l:simplified.binom.EL.estimates}.
\smallskip

\noindent\emph{Conclusion.} First of all we note that
\eqref{e:y.restriction} implies
	\[
	\lam-\lgm
	= 
	\f{(d-1)w(1-w)(\AM-\GM)}
	{(1-w+w\cdot\AM)(1-w+w\cdot\GM)}
	\asymp dw(\AM-\GM) 
	\ge \Omega(k)\,.
	\]
If $\lgm$ is large, then combining with \eqref{e:LGM.ubd} and \eqref{e:LAM.lbd} gives
	\[
	\E(\LAM-\LGM)
	\ge \lam-\lgm
	-O\bigg(\f1{\exp\{\Omega(\lgm)\}}\bigg)
	\ge \Omega( k)\,.
	\]
If $\lgm=O(1)$ then 
$\E(\LAM-\LGM)\ge \lam-O(\lgm)
\ge \Omega(\CC k)-O(1) \ge \Omega(k)$. The result follows.\end{proof}

\noindent
The next lemma was used  in the proof of Lemma~\ref{l:energy.function.y.estimate}
 (in \S\ref{ss:ground.state}).

\begin{lem}\label{l:Ld.estimates}
In the setting of Proposition~\ref{p:eone.defn},
let $\LdAM$ and $\binLdAM$ be the random variables with laws given by \eqref{e:LdAM}. Conditional on $\LdAM=\ell$ let $X\sim\Bin(\ell,1/(e^y+1))$. Then
	\[
	\E\bigg\{\E\bigg(
		X \,\bigg|\,X<\f{\LdAM}{2}\bigg)\bigg\}
	=\f{dw}{2e^y}
	\bigg\{
	1
	- O\bigg( 
	\f1{\CC^{1/2}e^{\Omega(k)}}
	\bigg)\bigg\}\,.
	\]
(On the left-hand side, the inner expectation is over $X$ conditional on $\LdAM$, and the outer expection is over $\LdAM$.)

\begin{proof}
Let $p\equiv1/(1+e^y)$, and
$X\sim\Bin(\ell,p)$. We also define $\pam$ as before, so $\ldam \equiv \E\binLdAM= d\pam$, and
	\beq\label{e:simplified.EXL}
	(\E\binLdAM)p
	= \f{dw e^{-y}/2}{1-w(1-\AM)}
	= \f{dw}{2e^y}
	\bigg\{1 + O(w(1-\AM))\bigg\}
	= \f{dw}{2e^y}
	\bigg\{1 + O\bigg(
	\f1{2^k \CC^{1/2}}
	\bigg)\bigg\}\,,
	\eeq
where the last step used \eqref{e:y.restriction}
to estimate $1-\AM\asymp\min\set{1,y}\asymp1/\CC^{1/2
}$. We shall now consider the expectation of $X$ conditioned on (typical) $\LdAM$, then average over the law of $\LdAM$.\smallskip

\noindent
\emph{Conditional on $\LdAM=\ell$.}  Abbreviate
$E(\ell)\equiv \E(X\,|\,X<\ell/2)$. Then
	\beq\label{e:X.reflect}
	E(\ell)-\ell p
	=\f{ \E(X - \E X ; X<\ell/2)}{\P(X<\ell/2)}
	=\f{-\E(X-\E X;X\ge \ell/2)
	}{\P(X<\ell/2)}
	\le -\f{
	\P(X\ge\ell/2) \ell(1/2-p)
	}{\P(X<\ell/2)} \le0\,.\eeq
Now suppose for the moment that
$\ell\asymp \ldam$.
By Lemma~\ref{l:binom.estimates} and \eqref{e:y.restriction}, for 
such $\ell$ we have
	\[\P\bigg(X\ge \f{\ell}{2}\bigg)
	=1-\PPP_\ell
	\le\exp\bigg\{-\Omega
	\bigg(\ell\min\set{y,y^2}\bigg)\bigg\}
	\le\exp\bigg\{-\Omega
	\bigg(\CC k\min\set{1,y^2}\bigg)\bigg\}
	\le \f1{e^{\Omega(k)}}\,.
	\]
If $y$ is small then $\ell(1/2-p)\asymp \ell y$, so for a large enough constant $C$ we can bound
	\[
	\E\bigg(X-\E X; X\ge \f{\ell}{2}\bigg)
	\le 2C\ell y
	\P\bigg(X\ge \f{\ell}{2}\bigg)
	+ \ell\P\bigg(X\ge \f{\ell}{2} + C\ell y\bigg)
	\le O\bigg(
	\f{\ell y}{\exp\{\Omega( \ell y^2)\}}\bigg)
	\le O\bigg( \f{\ell p}
		{\CC^{1/2} e^{\Omega(k)}}\bigg)\,,
	\]
where the last step uses that for small $y$ we have $p\asymp1$, and $\ell y^2 \asymp \CC k y^2 \asymp k$ by \eqref{e:y.restriction}. On the other hand, if $y\ge\Omega(1)$ then $\ell(1/2-p)\asymp\ell$, and we can instead bound
	\[\E\bigg(X-\E X; X\ge \f{\ell}{2}\bigg)
	\le
	\ell \P\bigg(X\ge \f{\ell}{2}\bigg)
	\le O\bigg(\f{\ell}{e^{\Omega(ky)}}\bigg)
	\le O\bigg( \f{\ell p}{\CC^{1/2}
	e^{\Omega(ky)}}\bigg)\,,
	\]
where the last step uses that
for large $y$ we have
$p\asymp e^{-y}$, and
 $\CC\asymp1$ by \eqref{e:y.restriction}. Substituting into \eqref{e:X.reflect} gives
	\beq\label{e:E.ell.bound}
	E(\ell) = \E\bigg(
	X \,\bigg|\,X<\f{\ell}{2}\bigg)
	\ge 
	\ell p
	\bigg\{1
	-  O\bigg( \f1{\CC^{1/2}
	e^{\Omega(k\max\set{1,y})}}\bigg)
	\bigg\}
	\eeq
for $\ell\asymp\ldam$,
and for all $y\ge0$ satisfying \eqref{e:y.restriction}.\smallskip

\noindent
\emph{Averaging over $\LdAM$.}
Combining \eqref{e:X.reflect} with the lower tail Chernoff bound
\eqref{e:chernoff.lower} gives
	\[
	\E\bigg( \f{E(\binLdAM)}{p}	 ; \binLdAM
		\le \f{\ldam}{2}\bigg)
	\le \E\bigg( \binLdAM ; \binLdAM \le 
	\f{\ldam}{2}\bigg)
	\le O\bigg( \f{\ldam}
		{\exp\{\Omega(\ldam)\}} \bigg)\,.
	\]
Combining \eqref{e:X.reflect} with the upper tail Chernoff bound
\eqref{e:chernoff.lower} gives
	\[
	\E\bigg( \f{E(\binLdAM)}{p}	; \binLdAM
		\ge 2\ldam\bigg)
	\le\E\bigg( \binLdAM ; \binLdAM \ge 
	2\ldam\bigg)
	\le 
	\sum_{j\ge0}
	O\bigg( \f{2\ldam+j}
		{\exp\{\Omega(\ldam+j)\}} \bigg)\,.
	\]
Now abbreviate $J$ for the integers between $\ldam/2$ and $2\ldam$. Combining \eqref{e:X.reflect} with the last two bounds gives
	\[
	0\le \E\bigg(\binLdAM p- E(\binLdAM)\bigg)
	\le \E \bigg(\binLdAM p- E(\binLdAM) ;
		\binLdAM\in J\bigg)
	+ O\bigg( \f{\ldam p}
		{\exp\{\Omega(\ldam)\}} \bigg)\,.
	\]
Recall that $\ldam\asymp \CC k$.
Combining with \eqref{e:E.ell.bound} gives
that the right-hand side above is
	\[
	\le
	O\bigg(
	\f{\ldam p}{\CC^{1/2}e^{\Omega(k\max\set{1,y})}}
	+\f{\ldam p}{e^{\Omega(\CC k)}}
	\bigg)
	\le
	O\bigg(
	\f{\ldam p}{\CC^{1/2} e^{\Omega(k)}}
	\bigg)\,.
	\]
Rearranging this bound gives
	\[
	(\E\binLdAM)p\bigg\{
	1 - O\bigg(\f1{\CC^{1/2} e^{\Omega(k)}}
	\bigg)\bigg\}
	\le \E[E(\binLdAM)]
	\le (\E\binLdAM)p\,,
	\]
and the claim follows by recalling \eqref{e:simplified.EXL}.
\end{proof}
\end{lem}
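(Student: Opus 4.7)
The claim is that $\E[E(\LdAM)]$ is approximately $(\E\binLdAM) p$, where $E(\ell)\equiv\E(X\mid X<\ell/2)$, $X\sim\Bin(\ell,p)$, and $p=1/(1+e^y)<1/2$. My plan is to compute the target $(\E\binLdAM)p$ explicitly, show that the conditional expectation $E(\ell)$ is very close to the unconditional mean $\ell p$ for typical $\ell$, and dispose of atypical $\ell$ via Chernoff tail bounds --- switching between $\LdAM$ and $\binLdAM$ using the $d$-analogue of Lemma~\ref{l:binom.Z.estimates}.

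For the target value, I would compute $\E\binLdAM = d\pam = dw\AM/(1-w(1-\AM))$, so $(\E\binLdAM)p = (dw/(2e^y))\{1+O(w(1-\AM))\}$ after using $\AM p = e^{-y}/2$; substituting $w\asymp 2^{-k}$ and the constraint $1-\AM\asymp 1/\CC^{1/2}$ from \eqref{e:y.restriction} gives an error $O(1/(2^k \CC^{1/2}))$, well within the required tolerance. For typical $\ell\asymp\ldam\asymp\CC k$, I would then control $E(\ell)$ via the identity $\ell p - E(\ell) = \E(X-\ell p;X\ge\ell/2)/\P(X<\ell/2)$, which holds since $\E X=\ell p$ (so the conditional deficit on $\set{X<\ell/2}$ is balanced by the conditional excess on $\set{X\ge\ell/2}$): the denominator is $\Omega(1)$ because $p<1/2$ and $\ell p<\ell/2$, while the numerator is tiny. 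By Lemma~\ref{l:binom.estimates} we have $\P(X\ge\ell/2)\le\exp(-\Omega(\ell\min\set{y,y^2}))\le\exp(-\Omega(k))$ on this range of $\ell$, after invoking \eqref{e:y.restriction}; to upgrade this to a bound on the truncated first moment I would split into two subregimes of $y$: in the small-$y$ case $\ell(1/2-p)\asymp\ell y$ and a dyadic decomposition of the upper tail at scales $C\ell y$ yields $\E(X-\ell p;X\ge\ell/2) = O(\ell y/\exp(\Omega(\ell y^2))) = O(\ell p/(\CC^{1/2}e^{\Omega(k)}))$; in the large-$y$ case $p\asymp e^{-y}$ and \eqref{e:y.restriction} forces $\CC\asymp 1$, so the crude single-exponential bound $\ell \P(X\ge\ell/2)$ already does the job.

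It remains to handle atypical $\ell$ in the outer expectation; here I would use the (easy) bound $E(\ell)\le\ell p$ (since conditioning $X<\ell/2$ with $p<1/2$ removes only above-mean values) together with standard binomial Chernoff tails on $\binLdAM$ (the lower tail \eqref{e:chernoff.lower} for $\ell\le\ldam/2$ and a dyadic upper-tail bound via \eqref{e:chernoff.upper} for $\ell\ge 2\ldam$), both contributing at most $O(\ldam p/\exp(\Omega(\CC k))) = O(\ldam p/\exp(\Omega(k)))$. Converting from $\binLdAM$ to $\LdAM$ uses only the $d$-versions of \eqref{e:bounds.Z.zro} together with $\PPP_\ell = 1-e^{-\Omega(k)}$ for typical $\ell$, whose proofs are identical to those given in Lemma~\ref{l:binom.Z.estimates}. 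Combining the three steps yields $(\E\binLdAM)p\{1-O(1/(\CC^{1/2}e^{\Omega(k)}))\}\le\E[E(\LdAM)]\le(\E\binLdAM)p$, which is the claim. The main obstacle is propagating a single error of the form $1/(\CC^{1/2}e^{\Omega(k)})$ through the typical-$\ell$ analysis: the two subregimes of $y$ must each be treated separately, and the $\CC^{1/2}$ factor must be extracted from the correct source in each case (from $\min\set{1,y}\asymp 1/\CC^{1/2}$ in the small-$y$ regime, and from $\CC\asymp 1$ itself in the large-$y$ regime).
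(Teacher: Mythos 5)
Your proposal follows the same overall structure as the paper's proof: compute the target $(\E\binLdAM)p$ via \eqref{e:simplified.EXL}, control $E(\ell)$ versus $\ell p$ for $\ell\asymp\ldam$ through the reflection identity \eqref{e:X.reflect} together with a small-$y$/large-$y$ case split drawing the $\CC^{-1/2}$ factor from the correct source in each regime, and dispose of atypical $\ell$ via binomial Chernoff tails. The only cosmetic difference is in the typical-$\ell$ tail-moment estimate: you propose a full dyadic decomposition of $\{X\ge\ell/2\}$ at scales $C\ell y$, whereas the paper gets away with a cruder two-term split (a single cutoff at $\ell/2+C\ell y$); both yield $O(\ell y/\exp\{\Omega(\ell y^2)\})$. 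Worth noting: you make explicit the step of passing from the outer expectation over $\binLdAM$ back to $\LdAM$, which the paper leaves implicit (its written argument bounds $\E[E(\binLdAM)]$ and asserts the claim). Your observation that this uses the $d$-version of \eqref{e:bounds.Z.zro} together with $\PPP_\ell=1-e^{-\Omega(k)}$ for typical $\ell$ is correct, though it is worth spelling out that the difference $\E_{\LdAM}[E(\LdAM)]-\E_{\binLdAM}[E(\binLdAM)]=\Cov(\PPP_{\binLdAM},E(\binLdAM))/\E[\PPP_{\binLdAM}]$ is genuinely a covariance: a Cauchy--Schwarz bound using $\Var(\PPP_{\binLdAM})\le e^{-\Omega(k)}$ and $\Var(E(\binLdAM))\lesssim p^2\ldam$ gives a term $O(e^{-\Omega(k)}p\sqrt{\ldam})$, which is absorbable into $O(\ldam p/(\CC^{1/2}e^{\Omega(k)}))$ because $\sqrt{\ldam}\asymp\CC^{1/2}k^{1/2}$ supplies the needed $\CC^{1/2}$ factor.
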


\noindent We conclude this section with some estimates which will be used in the next two sections. For integers $i\ge0$ define
	\begin{align}\nonumber
	S_i
	&\equiv
	\sum_{\ell_1,\ell_\zro}
	\Ind{\ell_1=\ell_\zro + i}
	\binom{d-2}{\ell_\zro,\ell_\one}
	\f{\hq(\zro)^{\ell_\zro}
	\hq(\one)^{\ell_\one}
	\hq(\free)^{d-2-\ell_\zro-\ell_\one}}
	{\exp\{ y\ell_\zro \}}\\
	&= \f1{\GM^i}
	\sum_{\ell=i}^{d-2}
	\binom{d-2}{\ell}
	(w\cdot\GM)^\ell (1-w)^{d-2-\ell}
	\binom{\ell}{(\ell-i)/2}
	\f1{2^\ell}\,,
	\label{e:def.S.i}
	\end{align}
where $\ell$ contributes to the sum only if it has the same parity as $i$. Let
	\beq\label{e:def.S.geq.i}
	S_{\ge i} \equiv \sum_{t\ge0} S_{i+t}\,.
	\eeq
The next lemma is used in the proofs of Lemmas~\ref{l:determinant}~and~\ref{l:Gyy} (in \S\ref{ss:convexity}).

\begin{lem}\label{l:S.estimates} In the setting of Proposition~\ref{p:sp}, let
$S_i$ be defined by \eqref{e:def.S.i}. As long as $0\le i\le O(1)$ we have
	\beq\label{e:S.estimate.small.l}
	\f{S_i(\GM)^i}{S_0} \le 
	(\lgm)^i
	\bigg\{1+O(w\cdot\GM)\bigg\}\,.
	\eeq
If $\lgm$ is large we then have a stronger bound:
as long as $0\le i\le O(1)$,
	\beq\label{e:S.estimate.large.l}
	\f{S_i(\GM)^i}{S_0}
	= 1 + O\bigg( \f{(\log \lgm)^{1/2}}{(\lgm)^{1/2}}\bigg)\,.
	\eeq

\begin{proof}
If $\lgm=O(1)$, essentially the same calculation as \eqref{e:lgm.small.sandwiching} gives
	\[\Omega(1)\le
	\f{(1-w)^{d-2}}{(1-w(1-\GM))^{d-2}}
	\le
	\f{S_0}{(1-w(1-\GM))^{d-2}}
	\le1\,.
	\]
Next we rewrite the expression \eqref{e:def.S.i} for $S_{i+1}$ as
	\begin{align*}
	s_{i+1}&\equiv
	\f{S_{i+1}\cdot\GM^{i+1}}{(1-w(1-\GM))^{d-2}}
	= \sum_{\ell=i}^{d-2}
	\binom{d-2}{\ell+1}
	(\pgm)^{\ell+1} (1-\pgm)^{d-2-(\ell+1)}
	\binom{\ell+1}{(\ell-i)/2}
	\f1{2^{\ell+1}}\\
	&=\sum_{\ell=i}^{d-2}
	\binom{d-2}{\ell}
	\f{(d-2-\ell) \pgm}{(\ell+1)(1-\pgm)}
	(\pgm)^\ell (1-\pgm)^{d-2-\ell}
	\binom{\ell}{(\ell-i)/2}\f1{2^\ell}
	\f{\ell+1}{\ell+i+2}\\
	&\le\f{\lgm}{1-\pgm}
	\sum_{\ell=i}^{d-2}
	\binom{d-2}{\ell}
	(\pgm)^\ell (1-\pgm)^{d-2-\ell}
	\binom{\ell}{(\ell-i)/2}\f1{2^\ell}
	= \f{\lgm}{1-\pgm} s_i\,.
	\end{align*}
Since $\pgm=\Theta(w\cdot\GM)$, we conclude that in general we have
	\[
	\f{S_{i+1}\cdot\GM}{S_i}
	=\f{s_{i+1}}{s_i}
	\le \lgm
	\bigg\{1+O(w\cdot\GM)\bigg\}\,,
	\]
which implies \eqref{e:S.estimate.small.l}.
Now assume that $\lgm$ is large, and let $I$ be the integers $\ell$ with $|\ell-(d-2)\pgm| \le C(\lgm\log\lgm)^{1/2}$. It follows from the Chernoff bound that for a large enough constant $C$, the contribution to the above sum from $\ell\notin I$ is $O((\lgm)^{-10})$. This gives a better comparison between the $s_i$:
	\begin{align*}
	s_{i+1}
	&=O\bigg(\f1{(\lgm)^{10}}\bigg)
	+\sum_{\ell\in I}
	\bigg\{
	1 + O\bigg( \f{(\log \lgm)^{1/2}}{(\lgm)^{1/2}}
	\bigg)\bigg\}
	\binom{d-2}{\ell}
	(\pgm)^\ell(1-\pgm)^{d-2-\ell}
	\binom{\ell}{(\ell-i)/2}\f1{2^\ell}\\
	&= O\bigg(\f1{(\lgm)^{10}}\bigg)
	+s_i	\bigg\{
	1 + O\bigg( \f{(\log \lgm)^{1/2}}{(\lgm)^{1/2}}
	\bigg)\bigg\}
	= s_i	\bigg\{
	1 + O\bigg( \f{(\log \lgm)^{1/2}}{(\lgm)^{1/2}}
	\bigg)\bigg\}\,,
	\end{align*}
which implies \eqref{e:S.estimate.large.l}.
\end{proof}
\end{lem}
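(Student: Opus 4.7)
The plan is to normalize $S_i$ by dividing by $(1-w+w\cdot\GM)^{d-2}$: set
\[ s_i \equiv \frac{S_i\,\GM^i}{(1-w+w\cdot\GM)^{d-2}}, \qquad \pgm\equiv \frac{w\GM}{1-w+w\GM}, \]
so that $s_i$ becomes the expectation, with respect to a $\Bin(d-2,\pgm)$ law restricted to $\ell\equiv i\pmod 2$, of the central-type coefficient $\binom{\ell}{(\ell-i)/2}/2^\ell$. Because $(d-2)\pgm = \lgm(1+O(\pgm)) = \lgm(1+O(w\GM))$ and the tail of the summand is controlled by Chernoff, this recasting brings the problem into a binomial-concentration framework.

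For the first (universal) bound, I would compare $s_{i+1}$ to $s_i$ via an index shift $\ell\mapsto \ell+1$, using $\binom{d-2}{\ell+1}/\binom{d-2}{\ell}=(d-2-\ell)/(\ell+1)$ and $\binom{\ell+1}{(\ell-i)/2}/\binom{\ell}{(\ell-i)/2}=2(\ell+1)/(\ell+i+2)$. These identities produce
\[ s_{i+1} \;=\; \sum_{\ell\ge i,\,\ell\equiv i\,(2)}\binom{d-2}{\ell}\pgm^\ell(1-\pgm)^{d-2-\ell}\binom{\ell}{(\ell-i)/2}\frac{1}{2^\ell}\cdot\frac{\pgm(d-2-\ell)}{(1-\pgm)(\ell+i+2)}, \]
and the extra factor is bounded uniformly by $(d-2)\pgm/(1-\pgm) = \lgm(1+O(w\GM))$. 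Telescoping over at most $O(1)$ steps yields $s_i/s_0 \le (\lgm)^i\{1+O(w\GM)\}$, which is the first claim.

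For the stronger bound when $\lgm$ is large, I would refine the term-by-term comparison. Using a Chernoff bound analogous to the one already invoked in the proof of Lemma~\ref{l:binom.Z.estimates}, restrict the sum defining $s_i$ to $I\equiv\{\ell:|\ell-(d-2)\pgm|\le C(\lgm\log\lgm)^{1/2}\}$; for $C$ large enough the tail contribution from $\ell\notin I$ is $O((\lgm)^{-10})$. On $I$, the ratio factor above satisfies $\pgm(d-2-\ell)/((1-\pgm)(\ell+i+2)) = 1 + O((\log\lgm/\lgm)^{1/2})$ uniformly. Combining with the observation that $s_0 = \Theta((\lgm)^{-1/2})$ in this regime (which follows from Stirling applied to $\binom{\ell}{\ell/2}/2^\ell$ on $I$ together with \eqref{e:prob.even.GM} to handle the single-parity restriction, just as was done for $\dz_\free$ in Lemma~\ref{l:binom.Z.estimates}), the additive $(\lgm)^{-10}$ tail is negligible relative to $s_0$. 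Iterating the $(\log\lgm/\lgm)^{1/2}$ ratio bound over $i=O(1)$ steps gives the claimed $s_i/s_0 = 1 + O((\log\lgm/\lgm)^{1/2})$.

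The main obstacle is the bookkeeping in the large-$\lgm$ regime: one must check that the tail error $(\lgm)^{-10}$ is comfortably smaller than the true scale $s_0\asymp(\lgm)^{-1/2}$, and that the single-parity restriction on $\ell$ does not spoil the ratio comparison (the parity of $\ell$ is dictated by that of $i$, so $s_i$ and $s_{i+1}$ are supported on complementary parities, but after the $\ell\mapsto\ell+1$ index shift the parities align and the comparison is legitimate). Apart from this careful handling, everything else reduces to routine Stirling and binomial computations.
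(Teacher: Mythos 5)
Your proposal follows essentially the same route as the paper: same normalization $s_i=S_i\GM^i/(1-w+w\GM)^{d-2}$, same index shift $\ell\mapsto\ell+1$ producing the ratio factor $\pgm(d-2-\ell)/((1-\pgm)(\ell+i+2))$, same uniform bound for the universal estimate, and the same Chernoff truncation to a window $I$ of width $O((\lgm\log\lgm)^{1/2})$ for the sharp estimate when $\lgm$ is large. The extra observations you add (that $s_0\asymp(\lgm)^{-1/2}$ makes the $(\lgm)^{-10}$ tail negligible, and that the parities of the $s_i$ and $s_{i+1}$ sums align after the shift) are correct and are in fact left implicit in the paper's write-up.
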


\section{Gardner threshold and 2RSB perturbation}
\label{s:gardner}

In this section we evaluate the Gardner threshold and prove our main result. The explicit evaluation of the stability matrices is given in \S\ref{ss:eval.matrices}, and the asymptotics of the Gardner eigenvalue are extracted in \S\ref{ss:aux.matrices} to prove Proposition~\ref{p:gardner.defn}. The proof of the main result Theorem~\ref{t:main} is completed in
 \S\ref{ss:pert.ii}--\ref{ss:proof.main}.
 Let $\dq_y$ be as given by Proposition~\ref{p:sp}, and $\hq_y\equiv\hSP_y(\dq_y)$. For the most part we will suppress $y$ from the notation and write simply $\rho_{\dw}\equiv\dq_y(\dw)$ and $\psi_{\hw}\equiv\hq_y(\hw)$. For any integers $a\le b$ we write $x_{a:b}\equiv (x_a,\ldots,x_b)$.
 
\subsection{Evaluation of the stability matrix}
\label{ss:eval.matrices}

We decompose the stability matrix \eqref{e:full.stability.matrix} as a product of two matrices, as follows. Define the \bemph{clause stability matrix} to be the $9\times 9$ matrix $\hB$ with entries
	\[
	\hB_{\hw\hs,\dw\ds}
	\equiv
	\f{\displaystyle\rho_{\dw}
	\sum_{\dw_{3:k}}
	\I\bigg\{
	\begin{array}{c}
	\hw = \hWP(\dw\dw_{3:k})\\
	\hs = \hWP(\ds\dw_{3:k})
	\end{array}
	\bigg\}\prod_{j=3}^k \rho_{\dw_j}}
	{\displaystyle
	\sum_{\dv_{2:k}}
	\Ind{\hw = \hWP(\dv_{2:k})}
	\prod_{j=2}^k \rho_{\dv_j}}
	= \f{\rho_{\dw}}{\psi_{\hw}}
	\sum_{\dw_{3:k}}
	\I\bigg\{
	\begin{array}{c}
	\hw = \hWP(\dw\dw_{3:k})\\
	\hs = \hWP(\ds\dw_{3:k})
	\end{array}
	\bigg\}\prod_{j=3}^k \rho_{\dw_j}
	\equiv
	\f{\rho_{\dw}}{\psi_{\hw}} 
	\hN_{\hw\hs,\dw\ds}
	\,,
	\]
where the last equality is the definition of a $9\times 9$ matrix $\hN$.
Similarly, define the \bemph{variable stability matrix} to be the $9\times9$ matrix $\dB$ with entries
	\[
	\dB_{\dw\ds,\hw\hs}
	=\f{\displaystyle\psi_{\hw}
	\sum_{\uhw_{3:d}}
	\f{\I\bigg\{\begin{array}{c}
	\dw=\dWP(\hw\uhw_{3:d})\\
	\sw=\dWP(\hs\uhw_{3:d})
	\end{array}\bigg\}}
	{\exp\{ y\dph_{d-1}(\hs\uhw_{3:d})  \}}
	\prod_{i=3}^d \psi_{\hw_i}}
	{ \displaystyle
	\sum_{\hv_{2:d}}
	\f{\Ind{\dw=\dWP(\hv_{2:d})}}{
	\exp\{y\dph_{d-1}(\hv_{2:d}) \}
	} \prod_{i=2}^d \psi_{\hv_i}
	}
	=\f{ \psi_{\hw}}{\dz \rho_{\dw} }
	\sum_{\uhw_{3:d}}
	\I\bigg\{\begin{array}{c}
	\dw=\dWP(\hw\uhw_{3:d})\\
	\sw=\dWP(\hs\uhw_{3:d})
	\end{array}\bigg\}
	\prod_{i=3}^d \psi_{\hw_i}
	\equiv
	\f{ \psi_{\hw}}{\dz \rho_{\dw} }
	\dN_{\dw\ds,\hw\hs}\,,
	\]
where the last equality is the definition of a $9\times 9$ matrix $\dN$. The full stability matrix is $\bB=\dB\hB$, and we define
	\[
	\bB_{\dv\dr,\dw\ds}
	\equiv \f{\rho_{\dw} \bN_{\dv\dr,\dw\ds}}
		{\dz \rho_{\dv}}\,.
	\]
The pattern of non-zero entries in $\hB$ and $\bB$ is shown in Figure~\ref{f:B}. For instance, the last two rows of $\hB$ are identically zero because there is no choice of $\dw,\ds,\dw_{3:k}$ such that $\zro=\hWP(\dw\dw_{3:k})$ while $\one=\hWP(\ds\dw_{3:k})$. 

\begin{figure}[h!]
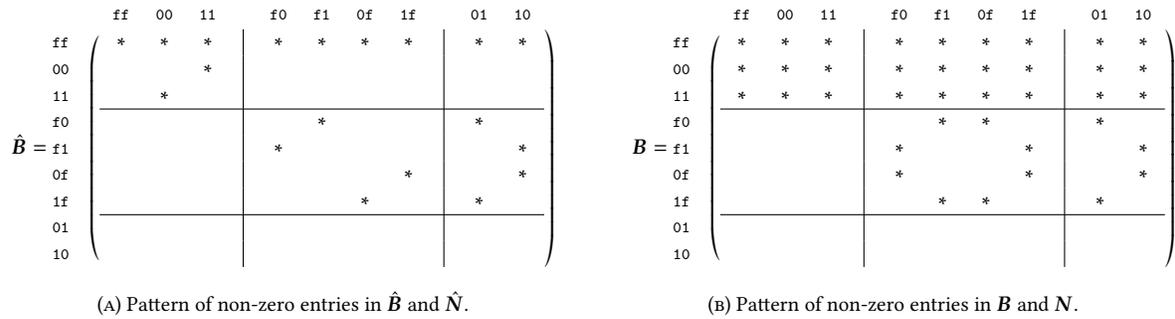

	\centering
    \begin{subfigure}[b]{0.47\textwidth}
    {\footnotesize\[\hB=
	\kbordermatrix{
	& \free\free & \zro\zro & \one\one
		& & \free\zro & \free\one & \zro\free
		& \one\free && \zro\one & \one \zro \\
	\free\free & * & * & * & \vrule & * & * & * & * 
		&\vrule& * & * \\
	\zro\zro   & & & * &\vrule&&&&&
		\vrule& &  \\
	\one\one   & & * &&\vrule&&&&&
		\vrule& &  \\
	\cline{2-12}
	\free\zro &&&&\vrule & & * &&&
		\vrule& * &  \\
	\free\one &&&&\vrule & * & &&&
		\vrule&  & * \\
	\zro\free &&&&\vrule&&&&*&\vrule& & * \\
	\one\free &&&&\vrule&&&*&&\vrule& *&  \\
	\cline{2-12}
	\zro\one  &&&&\vrule&&&&&\vrule \\
	\one\zro  &&&&\vrule&&&&&\vrule
	}\]}%
	\caption{Pattern of non-zero entries in $\hB$
		and $\hN$.}
	\label{f:hB}
    \end{subfigure}\quad
    \begin{subfigure}[b]{0.47\textwidth}
	{\footnotesize\[
	\quad
	\bB=\kbordermatrix{& \free\free & \zro\zro & \one\one
		& & \free\zro & \free\one & \zro\free
		& \one\free && \zro\one & \one \zro \\
\free\free &*&*&*&\vrule&*&*&*&*&\vrule&*&*\\
\zro\zro   &*&*&*&\vrule&*&*&*&*&\vrule&*&*\\
\one\one   &*&*&*&\vrule&*&*&*&*&\vrule&*&*\\
\cline{2-12}
\free\zro &&&&\vrule&&*&*&&\vrule& * &  \\
\free\one &&&&\vrule&*&&&*&\vrule&  & * \\
\zro\free &&&&\vrule&*&&&*&\vrule&  & * \\
\one\free &&&&\vrule&&*&*&&\vrule& * &  \\
\cline{2-12}
	\zro\one  &&&&\vrule&&&&&\vrule \\
	\one\zro  &&&&\vrule&&&&&\vrule 
	}
	\]}%
	\caption{Pattern of non-zero entries in $\bB$
		and $\bN$.}
		\label{f:bB}
	  \end{subfigure}
	\caption{The stability matrices $\hB$ and $\bB$.
	Only the top left $7\times 7$ submatrices will be used.}
	\label{f:B}
\end{figure}

\noindent
We will not use the last two columns of either matrix, so we will only evaluate the entries in the top left $7\times7$ submatrices, which we denote $\hB_{7\times7}$ and $\bB_{7\times7}$. Clearly, it is sufficient to evaluate $\hN_{7\times7}$ and 
$\bN_{7\times7}$. We have
	{\setlength{\jot}{0pt}\begin{align*}
	\hN_{\free\free,\free\free}
	&=1\,,\\
	\hN_{\free\free,\zro\zro}
	=\hN_{\free\free,\free\zro}
	=\hN_{\free\free,\zro\free}
	&=1-(\rho_\zro)^{k-2}\,,\\
	\hN_{\zro\zro,\one\one}
	=\hN_{\free\zro,\free\one}
	=\hN_{\zro\free,\one\free}
	&= (\rho_\zro)^{k-2}
	\end{align*}}%
All other entries of $\hN_{7\times7}$ (hence $\hB_{7\times7}$) are defined by the symmetry between $\zro$ and $\one$. To calculate $\bN_{7\times7}$, recall from \eqref{e:def.S.i} and \eqref{e:def.S.geq.i} the notation $S_i$ and $S_{\ge i}$. We will express the entries of $\bN$ in terms of these quantities. Note that
	\beq\label{e:dz.S}
	\dz
	= S_0 + 2\bigg(
		1 - (1-e^{-y})\psi_\zro
	\bigg)(S_1+S_{\ge2})
	\eeq
In the first three rows of $\bN_{7\times7}$ we have
	{\setlength{\jot}{0pt}\begin{align*}
	\bN_{\free\free,\free\free}
		&= S_0 \,,\\
	\bN_{\zro\zro,\free\free}
		=\bN_{\zro\zro,\free\one}
		=\bN_{\zro\zro,\one\free}
		&= S_1+S_{\ge2}\,,\\
	\bN_{\free\free,\free\zro}
		=\bN_{\free\free,\zro\free}
		&=(1-(\rho_\zro)^{k-2}) S_0\\
	\bN_{\zro\zro,\one\one}
		&= (\rho_\zro)^{k-2}S_0+ S_1+S_{\ge2}\,,\\
	\bN_{\zro\zro,\zro\free}
		&= (1-(\rho_\zro)^{k-2})S_1+S_{\ge2}\\
	\bN_{\free\free,\zro\zro}
		&=(1-(\rho_\zro)^{k-2}) S_0
		+(\rho_\zro)^{k-2} S_1 e^{-y}\,,\\
	\bN_{\zro\zro,\zro\zro}
		=\bN_{\zro\zro,\free\zro}
		&=(1-(\rho_\zro)^{k-2})(S_1+S_{\ge2})
		+ (\rho_\zro)^{k-2}S_{\ge2} e^{-y}\,,\end{align*}
All other entries in the first three rows of $\bN_{7\times7}$ are determined by the symmetry between $\zro$ and $\one$. 

\subsection{Gardner eigenvalue and auxiliary matrices} 
\label{ss:aux.matrices}

Now let $\bB_{4\times4}$ be the $4\times 4$ submatrix of $\bB$ given by row and column indices in $\set{\free\zro,\free\one,\zro\free,\one\free}$ (in the center of Figure~\ref{f:bB}): the corresponding entries of $\bN$ are given by
	\[
	\bN_{4\times4}
	=
	\kbordermatrix{
	&\free\zro&\free\one&\zro\free&\one\free\\
	\free\zro& & S_0 & S_1 & \\
	\free\one& S_0 & & & S_1\\
	\zro\free& \f{S_1}{e^y} & & & S_0\\
	\one\free& & \f{S_1}{e^y} & S_0
	} \cdot (\rho_0)^{k-2}\,.
	\]
From this it is easy to calculate that the largest eigenvalue of $\bB_{4\times4}$
(hence of the $6\times 6$ matrix $\bB_{\neq}$) is
	\beq\label{e:Gardner.lambda.explicit}
	\lambda 
	= \f{(\rho_\zro)^{k-2} 
	(S_0 + S_1 e^{-y/2})}{\dz}\,.
	\eeq
This is precisely the same $\lambda$ that appears in the statement of Proposition~\ref{p:gardner.defn}. Moreover, this $\lambda$ corresponds to a (right) eigenvector $\bxi\in\mathbb{R}^9$ of the full $9\times9$ matrix $\bB$, given explicitly by
	\beq\label{e:eigenvector}
	\bxi^\st =
	\kbordermatrix{
	&\free\free &\zro\zro &\one\one
	&\free\zro&\free\one&\zro\free&\one\free\\
	&-\f{2\rho_\zro}{e^{y/2}}
	&-\rho_\free&-\rho_\free
	& \rho_\zro & \rho_\zro  
	& \f{ \rho_\free}{e^{y/2}}
	& \f{ \rho_\free}{e^{y/2}}
	}\,.
	\eeq

\begin{proof}[Proof of Proposition~\ref{p:gardner.defn}]
Recalling \eqref{e:def.S.i}, we can rewrite $S_0$ 
in terms of $x,w$ as
	\[S_0
	=\sum_{\ell=0}^{d-2}
	\binom{d-2}{\ell}
	(w\cdot\GM)^\ell
	(1-w)^{d-2-\ell}
	\P\bigg(\Bin\bigg(\ell,\f12\bigg) =\f{\ell}{2}\bigg)\,.
	\]
Similarly we can $S_1,S_{\ge2}$ in terms of $x,w$ as
	\begin{align*}
	\f{S_1}{e^{y/2}}
	&=\sum_{\ell=0}^{d-2}
	\binom{d-2}{\ell}
	(w\cdot\GM)^\ell
	(1-w)^{d-2-\ell}
	\P\bigg(\Bin\bigg(\ell,\f12\bigg)
	=\f{\ell-1}{2}\bigg)\,,\\
	S_{\ge2}
	&=\sum_{\ell=0}^{d-2}
	\binom{d-2}{\ell}
	(w\cdot\AM)^\ell
	(1-w)^{d-2-\ell}
	\P\bigg(\Bin(\ell,p) \le \f{\ell}{2}-1
	\bigg)
	\end{align*}
The estimates of Lemma~\ref{l:binom.Z.estimates} apply equally well with $d-2$ in place of $d-1$, so
	\[
	S_0+ \f{S_1}{e^{y/2}}
	\asymp S_0
	\asymp \f{(1-w+w\cdot\GM)^{d-2}}{(\max\set{1,dw\cdot\GM})^{1/2}}\,,
	\]
and from \eqref{e:dz.S} we conclude
	\[
	\dz
	= S_0 + 2\bigg(
		1 - \f{(1-e^{-y})w}{2}
	\bigg)(S_1+S_{\ge2})
	\asymp S_0+S_1+S_{\ge2}
	\asymp(1-w+w\cdot\AM)^{d-2}\,.
	\]
Substituting into \eqref{e:Gardner.lambda.explicit}
gives
	\[
	\lambda\asymp
	\f1{2^k(\max\set{1,dw\cdot\GM})^{1/2}}
	\bigg(
	\f{1-w+w\cdot\GM}{1-w+w\cdot\AM}
	\bigg)^{d-2}
	\asymp \f{x}{2^k}\,,\]
therefore $\branch\lambda\asymp dk\lambda\asymp \CC k^2 x$. From Proposition~\ref{p:sp}~and~\ref{p:eone.defn} we have that $x$ is exponentially small with respect to $k$ for all $\asat\le\alpha\le 4^k$, so $\branch\lambda\ge1$ cannot occur before $\CC\ge e^{\Omega(k)}$. In this regime
the estimate \eqref{e:Gamma.star} 
(and the discussion leading to \eqref{e:sqrt.d})
implies $x\asymp 1/d^{1/2}$, therefore
	\[
	\branch\lambda
	\asymp \CC k^2 x
	\asymp \f{\CC k^2}{d^{1/2}}
	\asymp
	\bigg( \f{\CC k^3}{ 2^k}\bigg)^{1/2}\,.
	\]
This crosses one at $\CC \asymp 2^k/k^3$, corresponding to $\aG\asymp 4^k/k^3$ as claimed.
\end{proof}

\noindent
We now define some auxiliary matrices which will have a role in what follows. Let $\PROJ$ be the $9\times9$ symmetric matrix with entries $\PROJ_{\dv\dr,\dw\ds}=\Ind{\dv=\dw,\dr=\ds}$. Let $\bPi$, $\bXi$, $\bGa$ be the $9\times9$ matrices with entries
	\begin{align*}
	\bPi_{\hw\hs,\dw\ds}
	&= \psi_{\hw} \rho_{\dw}
		\exp\{ -y\eph(\ds\hs) \}\,,\\
	\bXi_{\hw\hs,\dw\ds}
	&= \psi_{\hw} \rho_{\dw}
	\exp\{ y\eph(\dw\hw)
		-y\eph(\dw\hs)-y\eph(\ds\hw)\}\,,\\
	\bGa_{\hw\hs,\dw\ds}
	&= \psi_{\hw} \rho_{\dw}
		\exp\{ -y\eph(\ds\hs) \}
		[\eph(\ds\hs)-\eph(\dw\hw)]\,.
	\end{align*}
Let $\bTe$ be the $9\times9$ matrix with entries
	\[
	\bTe_{\dv\dr,\dw\ds}
	\equiv \Ind{\dv=\dw}\sum_{\hw}
		\f{\psi_{\hw}\rho_{\dw}
		\exp(y\eph(\dw\hw))
		}{
		\exp(y\eph(\dr\hw)+y\eph(\ds\hw))
		}
	\equiv \Ind{\dv=\dw}\sum_{\hw}
	\f{\bPi_{\hw\hw,\dw\dr}\bPi_{\hw\hw,\hw\ds}}
		{\bPi_{\hw\hw,\dw\dw}}\,.
	\]
We remark for later use that $\bPi^\st\hB$, $\bXi^\st\hB$, and $\bGa^\st\hB$ are all symmetric matrices, as are $\bPi\dB$, $\bXi\dB$, and $\bGa\dB$. It also follows 
from the definitions that 
$\PROJ(\bPi-\bXi)$,
$(\bPi-\bXi)\PROJ$,
 and $\PROJ\bGa\PROJ$ are all identically zero. As a result, for any vector $\bde\in\mathbb{R}^9$ satisfying $\bde=\PROJ\bde$, we have $\PROJ\bGa\bde=\PROJ\bGa\PROJ\bde=0$,
and $\bPi\bde=\bXi\bde$. Since
$\bde=\PROJ\bde$ implies 
$\hB\bde=\PROJ\hB\bde$ and
$\bB\bde=\PROJ\bB\bde$,
two further consequences are that
$(\hB\bde,\bGa\bde)
=(\hB\bde)^\st \PROJ \bGa\PROJ\bde=0$
and similarly
$(\hB\bde,\bGa\bB\bde)
=(\hB\bde)^\st\PROJ\bGa\PROJ\bB\bde=0$.
We record also that $\bPi\bxi$ is identically zero,
while
	{\setlength{\jot}{0pt}\begin{align}
	(\hB\bxi,\bGa\bxi)
	&=(1-e^{-y/2})e^{-y/2}(1-x)x^2w\,,\\
	(\hB\bxi,\Xi\bxi)
	&=(1-e^{-y/2})
		(1-e^{-y})(1-x)x^2w
	\label{e:scalar.product.sign}
	\end{align}}%
We will use all these observations in what follows
to complete the proof of our main result Theorem~\ref{t:main}.

\subsection{Perturbation around type II degeneracy}
\label{ss:pert.ii}

Recall the discussion of \S\ref{ss:intro.interp}. As suggested by the physics literature \cite{MRT,KPW}, we evaluate
the zero-temperature 2RSB functional
\eqref{e:Phi.tworsb} on a slight perturbation of case \ref{II}, as follows. Let $y_1=y$ and take $y_2>y_1$ such that $\NU\equiv y_1/y_2$ is close to $1$, or equivalently that $\ZETA\equiv 1-\NU$ is small.
Suppose
	\beq\label{e:Q.delta}
	Q = \sum_{\dw} \rho_{\dw} (1+\delta_{\dw})
		Q_{\dw}\,.\eeq
where each $Q_{\dw}$ is a probability measure on $\Omega$ whose support is contained in a small neighborhood of $\I_{\dw}$. This means that if $\rho$ is sampled from $Q_{\dw}$, then $f\equiv f_\rho\equiv \rho-\I_{\dw}$ is a signed measure all of whose weights are small. Let
	\beq\label{e:def.epsilon}
	\ep_{\dw\ds}
	\equiv\int
	\bigg( \rho(\ds)-\Ind{\dw=\ds}\bigg)
	\,dQ_{\dw}(\rho)
	= \int f_\rho(\ds)\,dQ_{\dw}(\rho)
	\,;\eeq
this quantity captures the ``average mass sent from $\dw$ to $\ds$.'' Finally, for any $\dw,\dr,\ds$ define the scalar product
	\beq\label{e:def.upsilon}
	\Upsilon_{\dw\dr,\dw\ds}
	\equiv
	\int
	\bigg( \rho(\dr)-\Ind{\dw=\ds}\bigg)
	\bigg( \rho(\ds)-\Ind{\dw=\ds}\bigg)
	\,dQ_{\dw}(\rho)
	= \int f_\rho(\dr)f_\rho(\ds)\,dQ_{\dw}(\rho)
	\,.\eeq
In order for $Q,Q_\zro,Q_\one,Q_\free$ to all be valid probability measures on $\Omega$, we must have
	\beq\label{e:lin.constraints}
	\sum_{\dw} \rho_{\dw} \delta_{\dw}=0\,,\quad
	\sum_{\ds} \ep_{\dw\ds} = 0
	\textup{ for all }\dw\,.
	\eeq
We will write $\bep$ for the vector in $\mathbb{R}^9$ with entries $\ep_{\dw\ds}$. It will be convenient also to define vectors $\bde,\bpi\in\mathbb{R}^9$ where
	{\setlength{\jot}{0pt}\begin{align*}
	\bde_{\dw\ds}&=\Ind{\dw=\ds}\delta_{\dw}\,,\\
	\bpi_{\dw\ds}&=\Ind{\dw=\ds}\delta_{\dw}\ep_{\dw\dw}\,.
	\end{align*}}%
Note that $\bde$ can have at most three nonzero entries, and the same holds for $\bpi$. Let $\bup$ be the $9\times9$ matrix with entries
$\bup_{\dv\dr,\dw\ds}=\Ind{\dv=\dw}\Upsilon_{\dw\dr,\dw\ds}$.
For our purposes, the vectors $\bde$, $\bep$, and $\bup$ encode the key summary statistics of $Q$. We will assume that all entries of $\bde$ and $\bep$ are $O(\ZETA^2)$, while all entries of $\bup$ are $O(\ZETA^4)$. Let $Q_\II$ be    as in \eqref{e:Q.II}, corresponding to
$\bde$, $\bep$, $\bup$ all zero.

\begin{ppn}\label{p:two.perturb}
Suppose $y_1=y<y_2$ such that $\NU\equiv y_1/y_2$ is close to $1$. Let $\ZETA\equiv1-\NU$, and take $Q$ as in \eqref{e:Q.delta} such that for all $\dw$ we have
$\|\rho-\I_{\dw}\|_\infty =O(\ZETA^2)$
uniformly over all $\rho\in\supp Q_{\dw}$. If $Q$ has summary statistics $\bde,\bep,\bup$, then
	\begin{align*}
	\PhiTwo(y_1,y_2,Q)
	&=\PhiTwo(y,y,Q_\II)
	+
	\f{d(k-1)}{2}
	\bigg(\hB\bta,
		\bigg(\f{(\bPi-y\ZETA\bGa)}{\nu}
		-\ZETA\bXi\bigg)
		(\branch\bB-\bm{I})\bta
		\bigg)\\
	&\qquad-\f{d(k-1)(dk-d-k)}2
	\bigg(\I_9,\PROJ
		(\bPi-y\NU^{-1}\ZETA\bGa)\bta
	\bigg)^2
	+O(\ZETA^6)
	\end{align*}
where $\bta\equiv\bde+\NU(\bep+\bpi)\in\mathbb{R}^9$.
\end{ppn}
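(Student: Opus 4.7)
The plan is to Taylor expand $\PhiTwo(y_1,y_2,Q) = y_1^{-1}\log\WW - y_1^{-1}\alpha(k-1)\log\FG$ around the type \ref{II} degenerate point $(y,y,Q_\II)$, treating $\zeta = 1-\nu$ (with $\nu=y_1/y_2$) together with $\bde$, $\bep$ (all order $\zeta^2$) and $\bup$ (order $\zeta^4$) as joint small parameters, and collecting contributions up to order $\zeta^4$; everything else is relegated to the $O(\zeta^6)$ remainder.

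Uniformly, $\FG$ and $\WW$ have the form $\int T(\rho_{1:r})^\nu \prod_j dQ(\rho_j)$ with $(r,\psi)=(k,\hph)$ for $\FG$ and $(r,\psi)=(D,\vph)$, $D=d(k-1)$, for $\WW$, where $T(\rho_{1:r}) = \sum_{\dw_{1:r}} e^{-y_2\psi(\dw_{1:r})}\prod_j\rho_j(\dw_j)$. Using $Q=\sum_{\dw^0}\rho_{\dw^0}(1+\delta_{\dw^0})Q_{\dw^0}$ and writing $\rho_j=\I_{\dw_j^0}+f_j$ for $\rho_j\in\supp Q_{\dw_j^0}$, the moment identities $\int f_j(\ds)\,dQ_{\dw_j^0}=\ep_{\dw_j^0\ds}$ and $\int f_j(\dr)f_j(\ds)\,dQ_{\dw_j^0}=\Upsilon_{\dw_j^0\dr,\dw_j^0\ds}$ reduce the inner integration to these summary statistics. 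Expressing $T=T_0(1+\Delta)$ with $T_0=e^{-y_2\psi(\dw^0_{1:r})}$ and $\Delta=\sum_{\emptyset\ne A\subseteq[r]}\Delta_A$ multilinear of order $\zeta^{2|A|}$ in $(f_j)_{j\in A}$, I would expand $T^\nu = T_0^\nu\bigl(1+\nu\Delta+\tfrac{\nu(\nu-1)}{2}\Delta^2+\ldots\bigr)$; pieces with $|A|\ge 3$ or with $\Delta^{\ge 3}$ lie in the $O(\zeta^6)$ remainder.

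After integrating against $\prod_j dQ_{\dw_j^0}$ and summing over $\dw^0_{1:r}$ with the weights $\rho_{\dw_j^0}(1+\delta_{\dw_j^0})$, the matrices of Section~\ref{ss:aux.matrices} emerge naturally: $\bPi$ collects the bare $e^{-y\psi}$ weights from the $\nu\to 1$ limit of the $\Delta_{\{j\}}$-terms; $\bGa$ appears from $\zeta$-expanding $e^{-y_2\psi}=e^{-y\psi}e^{-y\zeta\psi/\nu}$, contributing a $\psi$-decorated weight; and $\bXi$ arises from the cross coefficient $\nu(\nu-1)/2$ applied to products $\Delta_{\{j\}}\Delta_{\{l\}}$ with $j\ne l$. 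Using $\hB,\dB$ and $\bB=\dB\hB$, the single-clause contribution in $\log\FG$ packages into $(\hB\bta,M\bta)$ with $M = (\bPi-y\zeta\bGa)/\nu-\zeta\bXi$; the analogous single-half-clause contributions in $\log\WW$ combine with the pair-of-half-clauses contributions to give the factor $\branch\bB-\bm I$, reflecting the iterated cavity recursion through the $\branch = (d-1)(k-1)$ grandchildren of a root edge. Together with the combinatorics of the $d$ clauses around the central variable, this produces the first displayed quadratic term with prefactor $d(k-1)/2$.

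The second term $-\tfrac{d(k-1)(dk-d-k)}{2}(\I_9,\PROJ(\bPi-y\nu^{-1}\zeta\bGa)\bta)^2$ captures the $-\tfrac12\mathrm{Var}$ correction from second-order Taylor-expanding $\log$ inside each of $\log\WW$ and $\log\FG$: the projector $\PROJ$ enters because ``mean-level'' contributions couple only diagonally through $\I_9$, and the scalar coefficient is a combinatorial count of pairs in the $\WW$-integrand that contribute to the squared mean but are not already accounted for through the $\hB$-pairing. The main technical obstacle is verifying exact cancellation of all linear-in-$\bta$ pieces between $\log\WW$ and $\alpha(k-1)\log\FG$, which follows from $\dq_y$ being the $\SP$ fixed point of Proposition~\ref{p:sp}; and then using the constraints~(\ref{e:lin.constraints}) together with the identities $\PROJ(\bPi-\bXi)=(\bPi-\bXi)\PROJ=0$ and $\PROJ\bGa\PROJ=0$ (recorded after the definitions of these matrices) to collapse all residual bookkeeping onto exactly the combinations $\hB\bta$, $\bB\bta$, and $\PROJ\bta$ appearing in the stated formula.
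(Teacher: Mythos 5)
Your proposal correctly identifies the paper's strategy: Taylor expand $\WW$ and $\FG$ around $(y,y,Q_\II)$ in the joint small parameters $\ZETA$, $\bde$, $\bep$, $\bup$, expand the inner power $T^\nu$ multilinearly in the $f_j = \rho_j - \I_{\dw_j}$ and collect terms into the auxiliary matrices $\bPi$, $\bGa$, $\bXi$ and the stability matrices $\hB$, $\dB$, $\bB$, then combine via the $\log$-expansion and the arithmetic of the prefactors $\alpha(k-1)\cdot k = d(k-1)$ and $\binom{d}{2}(k-1)^2 = \tfrac{d(k-1)}{2}\branch$. This is precisely the route taken in the paper via Lemmas~\ref{l:two.clause.perturb} and \ref{l:two.var.perturb}, and your attributions of the linear cancellation to the SP fixed point (through the identity $\dz = \WW_{i,j}/\FG_{i,j}$, equivalently $\dfz/\dz = \efz$) and of the $(\Delta_1)^2$ term to the second-order $\log$-expansion are both correct.
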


\subsection{Perturbed clause functional} In this subsection we analyze the clause 2RSB functional $\FG(y_1,y_2,Q)$ for $Q$ near $Q_\II$, and show how the clause stability matrix $\hB$ arises.

\begin{lem}\label{l:two.clause.perturb}
In the setting of Proposition~\ref{p:two.perturb},
	\begin{align*}
	\FG(y_1,y_2,Q)
	&=\FG(y,y,Q_\II)+
	k\bigg(\I_9,\PROJ
		(\bPi-y\NU^{-1}\ZETA\bGa)\bta
	\bigg)
	-\f{k\NU\ZETA}{2}\FG_3(\I_9, (\bup\odot\bTe)\I_9)\\
	&\qquad+\binom{k}{2}\bigg\{\f{(\hB\bta,
		(\bPi-y\ZETA\bGa)\bta)}{\nu}
	-\ZETA(\hB\bta,\bXi\bta)
	\bigg\}
	\end{align*}
where $\odot$ denotes the Hadamard (entrywise) matrix product.

\begin{proof}
Recall the definition \eqref{e:tworsb.clause}. We abbreviate $\FG\equiv\FG(y_1,y_2,Q)$, $\FG_\II\equiv\FG(y,y,Q_\II)$, and $\Delta\FG\equiv \FG-\FG_\II$. We also write
$\rho(\dw_{1:k})$ as shorthand for the $k$-fold product $\rho_{\dw_1}\cdots\rho_{\dw_k}$. Expanding according to the definition 
\eqref{e:Q.delta} gives
	\[
	\FG=\sum_{\dw_{1:k}}
	\rho(\dw_{1:k})
	\prod_{j=1}^k
	(1+\delta_{\dw_j})
	\int\bigg\{
	\sum_{\ds_{1:k}}
	\exp(-y_2\hph(\ds_{1:k}))
	\prod_{j=1}^k\rho_j(\ds_j)
	\bigg\}^\NU
	\prod_{j=1}^k dQ_{\dw_j}(\rho_j)\,.\]
For $\rho_j$ sampled from $Q_{\dw_j}$ let $f_j\equiv\rho_j-\I_{\dw_j}$. Notice that the inner sum above, over configurations $\ds_{1:k}$, is dominated by the contribution from the case $\ds_{1:k}=\dw_{1:k}$. We can expand it to second order (with respect to the $f_j$) as a sum of three terms $I_0,I_1,I_2$: the contribution from 
$\ds_{1:k}=\dw_{1:k}$ is
	\begin{align*}
	I_0 &= \f1{\exp(y_2\hph(\dw_{1:k}))}
	\prod_{j=1}^k(1+f_j(\dw_j))\\
	&= \f1{\exp(y_2\hph(\dw_{1:k}))}
	\bigg\{
	1 + \sum_{j=1}^k f_j(\dw_j)
		+\sum_{1\le i<j\le k}
		f_i(\dw_i)f_j(\dw_j)
	\bigg\} + O(\ZETA^6)\,.\end{align*}
The contribution from configurations $\ds_{1:k}$ that differ from $\dw_{1:k}$ in a single coordinate is
	\[I_1
	=\sum_{i=1}^k
	\sum_{\ds_i}
	\f{\Ind{\ds_i\ne\dw_i}}
		{\exp(y_2\hph(\ds_i\dw_{[k]\setminus i}))}
	f_i(\ds_i)
	\bigg\{
	1+\sum_{j\in[k]\setminus i} f_j(\dw_j)\bigg\}
	+O(\ZETA^6)\,,\]
where $\dw_{[k]\setminus i}$ refers to $\dw_{1:k}$ with the $i$-th entry dropped.
The contribution from configurations $\ds_{1:k}$ that differ from $\dw_{1:k}$ in two coordinates is
	\[I_2
	=\sum_{1\le i<j\le k}
	\sum_{\ds_i,\ds_j}
	\f{\Ind{\ds_i\ne\dw_i,\ds_j\ne\dw_j}}
		{\exp(y_2\hph(\ds_i\ds_j
			\dw_{[k]\setminus\set{i,j}}))}
	f_i(\ds_i) f_j(\ds_j) + O(\ZETA^6)\,,\]
where $\dw_{[k]\setminus\set{i,j}}$ refers to $\dw_{1:k}$ with the $i$-th and $j$-th entries dropped.
It is convenient for us to rearrange the terms and express $I_0+I_1+I_2=\exp(-y_2\vph(\dw_{1:k}))(1+J_1+J_2)$ where
	\begin{align*}
	J_1&\equiv
	\sum_{j=1}^k
	\sum_{\ds_j}
	\f{\exp(-y_2\hph(\ds_j\dw_{[k]\setminus j} ))}
		{\exp(-y_2\hph(\dw_{1:k}))}
	f_j(\ds_j) = O(\ZETA^2)\,,\\
	J_2
	&\equiv \sum_{1\le i<j\le k}
	\sum_{\ds_i,\ds_j}
	\f{\exp(-y_2\hph(\ds_i\ds_j
		\dw_{[k]\setminus\set{i,j}}))}
		{\exp(-y_2\hph(\dw_{1:k}))}
	f_i(\ds_i) f_j(\ds_j) = O(\ZETA^4)\,.
	\end{align*}
Since $(1+t)^\NU = 1+\NU t-\NU(1-\NU)t^2/2 + O(t^3)$ for small $t$, we can expand
	\[
	(1+J_1+J_2)^\NU
	= 1 + \NU J_1 + \NU J_2 
		- \f{\NU(1-\NU)}{2} (J_1)^2
		+O(\ZETA^6)\,.
	\]
Since $J_1$ involves a sum over $1\le i\le k$, its square is a double sum, and we can further decompose
$(J_1)^2=J_3+J_4$ where $J_3$ captures the diagonal terms while $J_4$ captures the off-diagonal terms.
Substituting this expansion back into the definition of $\FG$ results in the decomposition
	\beq\label{e:clause.expand.inside}
	\FG
	=\FG_0+ k\NU \FG_1+ \binom{k}{2} \NU\FG_2
		-k\f{\NU(1-\NU)}{2} \FG_3 
		-\binom{k}{2}\NU(1-\NU)  \FG_4
	+O(\ZETA^6)\,.
	\eeq
We now proceed to evaluate the $\FG_i$,
beginning with $\FG_0$ which is the value when $\bep$ and $\bup$ are zero:
	\begin{align*}
	\FG_0
	&=\sum_{\dw_{1:k}}
	\f{\rho(\dw_{1:k})}{\exp(y\hph(\dw_{1:k}))}
	\prod_{j=1}^k(1+\delta_{\dw_j})
	=\sum_{\dw_{1:k}}
	\f{\rho(\dw_{1:k})}{\exp(y\hph(\dw_{1:k}))}
	\bigg\{1 + k \delta_{\dw_1}
		+\binom{k}{2}\delta_{\dw_1}\delta_{\dw_2}
		\bigg\} 
		+O(\ZETA^6)\\
	&=\FG_\II
	+k(\I_9,\PROJ\bPi\bde)
	+\binom{k}{2}(\hB\bde,\bPi\bde)
		+O(\ZETA^6)\,.
	\end{align*}
For future use we denote the two scalar products appearing in the last expression above as
$\FG_{0,1}$ and $\FG_{0,2}$, so that
	\beq\label{e:G.zero.expansion}
	\FG_0
	=\FG_\II+k\FG_{0,1}
	+\binom{k}{2}\FG_{0,2}\,.\eeq
By symmetry among the coordinates $1\le j\le k$, the average of $J_1$ is $k\FG_1$ where
	\begin{align*}\nonumber
	\FG_1
	&=\sum_{\dw_{1:k}}
	\rho(\dw_{1:k})
	\prod_{j=1}^k(1+\delta_{\dw_j})
	\sum_{\ds_1}
	\f{\ep_{\dw_1\ds_1}
	\exp\{-(y_2-y)
		(\hph(\ds_1\dw_{2:k})
		-\hph(\dw_{1:k}))
		\}}
	{\exp\{y\hph(\ds_1\dw_{2:k})\}}
	\\\nonumber
	&=\sum_{\dw_{1:k}}
	\rho(\dw_{1:k})
	\bigg\{1 + \delta_{\dw_1}
		+(k-1)\delta_{\dw_2}\bigg\}
	\sum_{\ds_1}
	\f{\ep_{\dw_1\ds_1}
	\{
	1 - y_2(1-\NU)(\hph(\ds_1\dw_{2:k})
		-\hph(\dw_{1:k}))\}}
	{\exp\{y\hph(\ds_1\dw_{2:k})\}}+O(\ZETA^6)\\
	&=\bigg(\I_9,\PROJ
		(\bPi-y\NU^{-1}\ZETA\bGa)(\bep+\bpi)
	\bigg)
	+(k-1)\bigg(\hB\bde,
		(\bPi-y\ZETA\bGa)\bep\bigg)
	+O(\ZETA^6)\,.
	\end{align*}
For future use we denote the two scalar products appearing in the last expression above
as $\FG_{1,1}$ and $\FG_{1,2}$, so that
	\beq\label{e:G.one.expansion}
	k\FG_1
	=k\FG_{1,1}+2\binom{k}{2}\FG_{1,2}+O(\ZETA^6)\,.
	\eeq
The average of $J_2$ is $\binom{k}{2}\FG_2$ where
	\begin{align*}
	\FG_2
	&=\sum_{\dw_{1:k}}
	\f{\rho(\dw_{1:k})}
		{\exp(y\hph(\ds_1\ds_2\dw_{3:k}))}
	\sum_{\ds_1,\ds_2}
	\f{\ep_{\dw_1\ds_1}\ep_{\dw_2\ds_2}}
		{\exp\{(y_2-y)(\hph(\ds_1\ds_2\dw_{3:k})
		-\hph(\dw_{1:k}))\}}
		+O(\ZETA^6)\\
	&=\bigg(\hB\bep,
		(\bPi-y\ZETA\bGa)\bep\bigg)+O(\ZETA^6)\,.
	\end{align*}
The average of $J_3$ is $k\FG_3$ where 
	\begin{align*}
	\FG_3
	&=\sum_{\dw_{1:k}}
	\sum_{\dr_1,\ds_1}
	\f{\rho(\dw_{1:k})\exp(y\hph(\dw_{1:k}))
		\Upsilon_{\dw_1\dr_1,\dw_1\ds_1}}
		{\exp(y\hph(\dr_1\dw_{2:k})
			+ y\hph(\ds_1\dw_{2:k}))}
	+O(\ZETA^5)\\
	&=\sum_{\dw,\dr,\ds}
	\Upsilon_{\dw\dr,\dw\ds}
	\sum_{\hw}
	\f{\rho_{\dw}\psi_{\hw}
		\exp(y\eph(\dw\hw))
		}
		{\exp(y\eph(\dr\hw)
			+ y\eph(\ds\hw))}+O(\ZETA^5)
	= (\I_9, (\bup\odot\bTe)\I_9)
	+ O(\ZETA^5)
	\end{align*}
The average of $J_4$ is $\binom{k}{2}\FG_4$ where 
	\[\FG_4
	=\sum_{\dw_{1:k}}
	\sum_{\ds_1,\ds_2}
	\f{\rho(\dw_{1:k})\exp(y\hph(\dw_{1:k}))
		\ep_{\dw_1\ds_1}\ep_{\dw_2\ds_2}}
		{\exp(y\hph(\ds_1\dw_{2:k}))
		\exp(y\hph(\dw_1\ds_2\dw_{3:k}))}
		+O(\ZETA^5)
	=(\hB\bep,\bXi\bep)+ O(\ZETA^5)\,.\]
Collecting terms gives
	\beq\label{e:collect.Delta.FG}
	\Delta\FG
	= k\bigg\{
		\FG_{0,1}+\NU\FG_{1,1}-\f{\NU\ZETA}{2}\FG_3
	\bigg\}
	+\binom{k}{2}
	\bigg\{
	\FG_{0,2}+2\NU\FG_{1,2}+\NU\FG_2-\NU\ZETA\FG_4
	\bigg\}
	\equiv k \Delta_1 + \binom{k}{2} \Delta_2
	\,.\eeq
Recall that $\bPi^\st\hB$, $\bXi^\st\hB$, and $\bGa^\st\hB$ are all symmetric matrices. We have
	\[\Delta_1
	=\bigg(\I_9,\PROJ
		(\bPi-y\NU^{-1}\ZETA\bGa)(
		\bde+\NU(
		\bep+\bpi))
	\bigg)
	-\f{\NU\ZETA}{2}\FG_3(\I_9, (\bup\odot\bTe)\I_9)
	+O(\ZETA^6)\,,
	\]
having used that $\PROJ\bGa\bde$ is identically zero. We also have
	\[\Delta_2
	=(\hB\bde,(\bPi-y\ZETA\bGa)\bde)
	+2\NU\bigg(\hB\bde,
		(\bPi-y\ZETA\bGa)\bep\bigg)
	+\NU\bigg(\hB\bep,
		(\bPi-y\ZETA\bGa)\bep\bigg)
		-\ZETA(\hB\bep,\bXi\bep)+O(\ZETA^6)\,.\]
Recall from the discussion at the end of \S\ref{ss:aux.matrices} that $(\hB\bde,\bGa\bde)=0$, so we can freely add any multiple of $(\hB\bde,\bGa\bde)$ to the above. Recall also that $\bPi\bde=\bXi\bde$, so we can also freely interchange $(\bPi\bde,\hB\bep)$ with $(\bXi\bde,\hB\bep)$. Using these identities, and absorbing some errors into the $O(\ZETA^6)$ term, we can ``complete the square'' and obtain
	\beq\label{e:Delta.two.completed.square}
	\Delta_2
	=\f{(\hB(\bde+\bep),
		(\bPi-y\ZETA\bGa)(\bde+\bep))}{\nu}
	-\ZETA(\hB(\bde+\bep),\bXi(\bde+\bep))
	+O(\ZETA^6)\,.
	\eeq
The claimed result follows.
\end{proof}
\end{lem}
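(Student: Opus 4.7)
The plan is to start from the definition \eqref{e:tworsb.clause} of $\FG(y_1,y_2,Q)$, substitute the specific form \eqref{e:Q.delta} of $Q$, and expand to second order in the summary statistics $\bde,\bep,\bup$, using that each of their entries is $O(\ZETA^2)$ while $\NU = 1-\ZETA$ is close to $1$. The outer integrals against $\prod_j dQ_{\dw_j}(\rho_j)$ and the product $\prod_j (1+\delta_{\dw_j})$ can each contribute up to two factors, so we carry all contributions up to (and including) $O(\ZETA^4)$ and discard $O(\ZETA^6)$.

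Fix $\dw_{1:k}$ and write $\rho_j = \I_{\dw_j} + f_j$ where $f_j$ is a signed measure with $\|f_j\|_\infty = O(\ZETA^2)$. The inner quantity $\sum_{\ds_{1:k}} \exp(-y_2\hph(\ds_{1:k})) \prod_j \rho_j(\ds_j)$ is dominated by $\ds_{1:k}=\dw_{1:k}$, so I would split it by the number of coordinates in which $\ds$ differs from $\dw$: terms $I_0,I_1,I_2$ corresponding to $0,1,2$ differences (higher-order differences are $O(\ZETA^6)$). Factoring out $\exp(-y_2\hph(\dw_{1:k}))$ gives $I_0+I_1+I_2 = \exp(-y_2\hph(\dw_{1:k}))(1+J_1+J_2)$ with $J_1=O(\ZETA^2)$ linear in the $f_j$ and $J_2=O(\ZETA^4)$ bilinear in pairs $f_i,f_j$ with $i\ne j$. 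I then raise to the power $\NU$ via $(1+J_1+J_2)^\NU = 1 + \NU(J_1+J_2) - \tfrac{\NU(1-\NU)}{2}(J_1)^2 + O(\ZETA^6)$, and split $(J_1)^2 = J_3 + J_4$ into a diagonal part $J_3$ (same coordinate) and off-diagonal part $J_4$ (distinct coordinates).

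Next I would take the $Q$-expectation. The diagonal integrals produce the $\Upsilon_{\dw\dr,\dw\ds}$ entries of $\bup$ (only from $J_3$), while all other $f$-integrals reduce to the $\ep_{\dw\ds}$ entries of $\bep$ by \eqref{e:def.epsilon}. Meanwhile the $\prod_j (1+\delta_{\dw_j})$ factor contributes $\bde$ once (combinatorial weight $k$) or $\bde\otimes\bde$ twice (weight $\binom{k}{2}$), and interacts with one $\ep$ factor to yield precisely the $\bpi$ entries. To convert $y_2$-exponentials into $y$-exponentials I would write $y_2=y/\NU$ and expand $\exp(-\ZETA y \Delta\hph/\NU) = 1 - (y\ZETA/\NU)\Delta\hph + O(\ZETA^2)$; the first-order correction generates the $\bGa$ matrix (via the definition of $\bGa$ as $\bPi$ multiplied by an energy gap), while at leading order the exponential factors reconstruct $\bPi$ and (via the asymmetric form coming from two disjoint flips) $\bXi$. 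Collecting into scalar products and grouping by combinatorial factor $k$ vs $\binom{k}{2}$ produces a decomposition $\Delta\FG = k\Delta_1 + \binom{k}{2}\Delta_2 + O(\ZETA^6)$.

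The final step is to repackage $\Delta_2$ into the claimed form $\NU^{-1}(\hB\bta,(\bPi-y\ZETA\bGa)\bta) - \ZETA(\hB\bta,\bXi\bta)$ with $\bta=\bde+\NU(\bep+\bpi)$. The main obstacle, and the reason the identity is clean only after rearrangement, is that the three raw bilinear pieces $(\hB\bde,\cdot\bde)$, $(\hB\bde,\cdot\bep)$, $(\hB\bep,\cdot\bep)$ do not combine naively; one must use the symmetry of $\bPi^\st\hB$, $\bGa^\st\hB$, $\bXi^\st\hB$ (from \S\ref{ss:aux.matrices}), the identity $\PROJ\bGa\PROJ\equiv 0$ which kills $(\hB\bde,\bGa\bde)$, and the identity $\PROJ(\bPi-\bXi)=0$ which lets us interchange $\bPi\bde$ and $\bXi\bde$ freely. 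These identities supply exactly the freedom needed to complete the square. The $k$-order piece $\Delta_1$ is handled more directly: one term comes from the naive expansion producing $\PROJ\bPi\bta$, the $\bGa$ correction comes from the $y_2\ne y$ discrepancy, and the $\FG_3$ term is the sole contribution of $\bup$ through $J_3$, expressible via the Hadamard product with $\bTe$ since in this diagonal case $\hw$ is summed over freely with weights from $\bPi$ factors sharing a common $\dw$.
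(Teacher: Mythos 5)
Your proposal follows essentially the same route as the paper's proof: the same split of the inner sum into $I_0,I_1,I_2$ by number of flipped coordinates, the same rearrangement to $(1+J_1+J_2)^\NU$ with the split $(J_1)^2=J_3+J_4$, the same matching of $\bde$, $\bep$, $\bpi$, $\bup$ to the $\delta$-factor, linear $f$-integrals, their cross terms, and the diagonal $f$-integrals respectively, and the same use of the identities $\PROJ\bGa\PROJ=0$ and $\PROJ(\bPi-\bXi)=0$ to complete the square in $\Delta_2$. The argument is correct and differs from the paper only in level of detail, not in method.
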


\subsection{Perturbed variable functional}
In this subsection we analyze the variable 2RSB functional $\WW(y_1,y_2,Q)$ for $Q$ near $Q_\II$, and show how the full stability matrix $\bB$ arises.

\begin{lem}\label{l:two.var.perturb}
In the setting of Proposition~\ref{p:two.perturb},
	\begin{align*}
	\f{\WW(y_1,y_2,Q)}{\dz}
	&=\f{\WW(y,y,Q_\II)}{\dz}+
	 d(k-1) \bigg(\I_9,\PROJ
		(\bPi-y\NU^{-1}\ZETA\bGa)\bta
	\bigg)
	-\f{d(k-1)
	\NU\ZETA}{2}\FG_3(\I_9, (\bup\odot\bTe)\I_9)\\
	&\qquad+d\binom{k-1}{2}\bigg\{\f{(\hB\bta,
		(\bPi-y\ZETA\bGa)\bta)}{\nu}
	-\ZETA(\hB\bta,\bXi\bta)
	\bigg\}\\
	&\qquad+\binom{d}{2}(k-1)^2
	\bigg\{\f{(\hB\bta,
		(\bPi-y\ZETA\bGa)\bB\bta)}{\nu}
	-\ZETA(\hB\bta,\bXi\bB\bta)
	\bigg\}
	+O(\ZETA^6)\,,
	\end{align*}
where $\dz$ 
is the normalizing constant in the $\dSP_y$ recursion \eqref{e:sp.var}.

\begin{proof}
This is similar to the proof of Lemma~\ref{l:two.clause.perturb}, although slightly more involved because the input warnings are propagated two layers to the root. We recall the definition \eqref{e:tworsb.var}, and continue to write $D\equiv d(k-1)$. We abbreviate $\WW\equiv\WW(y_1,y_2,Q)$, $\WW_\II\equiv\WW(y,y,Q_\II)$,
and $\Delta\WW\equiv\WW-\WW_\II$. Then
	\[
	\WW=\sum_{\dw_{1:D}}\rho(\dw_{1:D})
		\prod_{j=1}^D(1+\delta_{\dw_j})
	\int\bigg\{
	\sum_{\ds_{1:D}}
	\exp(-y_2\vph(\ds_{1:D}))
	\prod_{j=1}^D\rho_j(\ds_j)
	\bigg\}^\NU\prod_{j=1}^D dQ_{\dw_j}(\rho_j)\,.
	\]
As in the proof of Lemma~\ref{l:two.clause.perturb}, the inner sum over $\ds_{1:D}$ is dominated by the
$\ds_{1:D}=\dw_{1:D}$ term. It can be expanded similarly as $\exp(-y_2\vph(\dw_{1:D}))(1+ J_1 + J_2
		+ J_{2,\textup{vx}})+ O(\ZETA^6)$ where the first-order correction is
	\[J_1=
	\sum_{j=1}^D
	\sum_{\ds_j}
	\f{\exp(-y_2\vph(\ds_j
		\dw_{[D]\setminus j}))}
		{\exp(-y_2\vph(\dw_{1:D}))}
	f_j(\ds_j)
	=O(\ZETA^2)\,,\]
and we now split the second-order correction into two components:
	\begin{align*}
	J_2
	&=\sum_{a=1}^d
	\sum_{2\le i<j\le k}
	\f{\exp(-y_2\vph(
			\ds_{ai}\ds_{aj}
			\dw_{[D]\setminus\set{ai,aj}}
			)}
		{\exp(-y_2\vph(\dw_{1:D}))}
	f_{ai}(\ds_{ai})
	f_{aj}(\ds_{aj})= O(\ZETA^4)\,,\\
	J_{2,\textup{vx}}
	&=\sum_{1\le a<b\le d}
	\sum_{i=2}^k\sum_{j=2}^k
	\f{\exp(-y_2\vph(
			\ds_{ai}\ds_{bj}
			\dw_{[D]\setminus\set{ai,bj}}
			)}
		{\exp(-y_2\vph(\dw_{1:D}))}
	f_{ai}(\ds_{ai})
	f_{bj}(\ds_{bj}) = O(\ZETA^4)\,.
	\end{align*}
We further decompose $(J_1)^2= J_3 + J_4 + J_{4,\textup{vx}}$ where $J_3$ is the contribution to the double sum from diagonal terms $(ai,ai)$, $J_4$ is the contribution from off-diagonal terms $(ai,aj)$ with $i\ne j$, and $J_{4,\textup{vx}}$ is the contribution from off-diagonal terms $(ai,bj)$ with $a\ne b$. Altogether it gives
	\[
	(1+ J_1 + J_2
		+ J_{2,\textup{vx}})^\NU
	=1+\NU J_1 + \NU J_2 + \NU J_{2,\textup{vx}}
		-\f{\NU\ZETA}{2}J_3
		-\f{\NU\ZETA}{2}J_4
		-\f{\NU\ZETA}{2}J_{4,\textup{vx}}
		+O(\ZETA^6)\,.
	\]
Then, similarly to \eqref{e:clause.expand.inside}, we have a corresponding expansion
	{\setlength{\jot}{2pt}\begin{align*}
	\WW
	&=\WW_0+d(k-1)\NU\WW_1
		+d\binom{k-1}{2}\NU\WW_2
		+\binom{d}{2}(k-1)^2\NU\WW_{2,\textup{vx}}\\
	&\qquad-d(k-1)\f{\NU\ZETA}{2}\WW_3
		-d\binom{k-1}{2} \NU\ZETA\WW_4
		-\binom{d}{2}(k-1)^2
		\NU\ZETA\WW_{4,\textup{vx}}+ O(\ZETA^6)\,.
	\end{align*}}%
Recalling
\eqref{e:G.zero.expansion} and 
\eqref{e:G.one.expansion}, we have similarly
	\begin{align*}
	\WW_0&=\WW_\II
		+d(k-1)\WW_{0,1}
		+d\binom{k-1}{2}\WW_{0,2}
		+\binom{d}{2}(k-1)^2
		\WW_{0,2,\textup{vx}}\,,\\
	d(k-1)\WW_1
	&=d(k-1)\WW_{1,1}
	+d\binom{k-1}{2}2\WW_{1,2}
	+2\binom{d}{2}(k-1)^2
	\WW_{1,2,\textup{vx}}
	\end{align*}
Recalling that $\dz$ is the normalizing constant in the $\dSP_y$ recursion \eqref{e:sp.var}, we have
	\[
	\dz
	=\f{\WW_{0,1}}{\FG_{0,1}}
	=\f{\WW_{0,2}}{\FG_{0,2}}
	=\f{\WW_{1,1}}{\FG_{1,1}}
	=\f{\WW_{1,2}}{\FG_{1,2}}
	=\f{\WW_2}{\FG_2}
	=\f{\WW_3}{\FG_3}
	=\f{\WW_4}{\FG_4}\,,
	\]
so it remains to calculate 
$\WW_{0,2,\textup{vx}}$, $\WW_{1,2,\textup{vx}}$, 
$\WW_{2,\textup{vx}}$, and
$\WW_{4,\textup{vx}}$. Recalling that $\bB\equiv\dB\hB$, we have
	\begin{align*}
	\WW_{0,2,\textup{vx}}/\dz
		&=(\hB\bde,\bPi\bB\bde)
			+O(\ZETA^6)\,,\\
	\WW_{1,2,\textup{vx}}/\dz
		&=(\hB\bde,(\bPi-y\ZETA\bGa)\bB\bep)
			+O(\ZETA^6)\,,\\
	\WW_{2,\textup{vx}}/\dz
	&=(\hB\bep,(\bPi-y\ZETA\bGa)\bB\bep)
			+O(\ZETA^6)\,,\\
	\WW_{4,\textup{vx}}/\dz
	&= (\hB\bep,\bXi\dB\hB\bep)+O(\ZETA^5)\,.
	\end{align*}
Collecting terms gives
	\begin{align*}
	\f{\Delta\WW}{\dz}
	= d(k-1) \Delta_1
	+d\binom{k-1}{2}\Delta_2
	+\binom{d}{2}(k-1)^2
	\bigg\{
	\WW_{0,2,\textup{vx}}
	+2\NU \WW_{1,2,\textup{vx}}
	+\NU\WW_{2,\textup{vx}}
	-\NU\ZETA\WW_{4,\textup{vx}}
	\bigg\}
	\end{align*}
for $\Delta_1,\Delta_2$ as defined by \eqref{e:collect.Delta.FG}. We denote the last coefficient above (in braces) as $\Delta_{2,\textup{vx}}$. Similarly to \eqref{e:Delta.two.completed.square} we have
	\[
	\Delta_{2,\textup{vx}}
	=\f{(\hB(\bde+\bep),
		(\bPi-y\ZETA\bGa)
		\bB(\bde+\bep))}{\nu}
	-\ZETA(\hB(\bde+\bep),
		\bXi\bB(\bde+\bep))
	+O(\ZETA^6)\,.
	\]
The result follows.
\end{proof}
\end{lem}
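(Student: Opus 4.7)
The plan is to imitate the strategy of Lemma~\ref{l:two.clause.perturb} but adapted to the variable functional, whose input is the $D=d(k-1)$-tuple organized as $\dw_{1:D}=(\dw_{ai})_{a\in[d],\,i\in\{2,\ldots,k\}}$. The key new feature is that the $d$ coordinate directions corresponding to different clauses $a\ne b$ interact only through propagation up \emph{two} layers of the tree, which will force the composed stability matrix $\bB=\dB\hB$ to appear, whereas pairs of coordinates sitting inside a common clause will contribute terms structurally identical to those already computed in Lemma~\ref{l:two.clause.perturb}.

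First, I would Taylor-expand the inner sum over $\ds_{1:D}$ around the diagonal $\ds_{1:D}=\dw_{1:D}$, writing the integrand as $\exp(-y_2\vph(\dw_{1:D}))\bigl(1+J_1+J_2+J_{2,\textup{vx}}\bigr)+O(\ZETA^6)$. Here $J_1$ is the first-order single-coordinate correction (involving $D=d(k-1)$ summands), $J_2$ collects ``intra-clause'' second-order corrections over the $d\binom{k-1}{2}$ index pairs $(ai,aj)$ with $i\ne j$, and $J_{2,\textup{vx}}$ collects ``inter-clause'' second-order corrections over the $\binom{d}{2}(k-1)^2$ pairs $(ai,bj)$ with $a\ne b$. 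Raising to the $\NU$-th power then produces $1+\NU(J_1+J_2+J_{2,\textup{vx}})-\tfrac{\NU\ZETA}{2}(J_1)^2+O(\ZETA^6)$, and $(J_1)^2$ is further split as $J_3+J_4+J_{4,\textup{vx}}$ along the same diagonal/intra/inter trichotomy.

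Next, I would identify each piece. The terms of combinatorial weight $d(k-1)$ and $d\binom{k-1}{2}$ reduce after averaging to exactly the $\FG_0,\FG_1,\FG_2,\FG_3,\FG_4$ expressions from the clause calculation (the dependence on inputs outside a given clause is untouched by the perturbation, so the averaging factors out), scaled by the variable normalization $\dz$ from \eqref{e:sp.var}. The new inter-clause pieces $\WW_{0,2,\textup{vx}}, \WW_{1,2,\textup{vx}}, \WW_{2,\textup{vx}}, \WW_{4,\textup{vx}}$ record perturbations at $\dw_{ai}$ and $\dw_{bj}$ that must climb through two distinct clauses before meeting at the root variable. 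Propagation across one clause is encoded by $\hB$ (by construction of $\hN$ in \S\ref{ss:eval.matrices}) and the subsequent propagation through the variable by $\dB$, so these cross-terms naturally produce bilinear forms against $\bB=\dB\hB$.

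Finally, I would assemble the pieces into the stated expansion by ``completing the square'' exactly as in \eqref{e:Delta.two.completed.square}: using $(\hB\bde,\bGa\bde)=0$, $\bPi\bde=\bXi\bde$, and the analogous identities obtained by replacing $\bde$ with $\bB\bde$ (valid because $\hB\bde=\PROJ\hB\bde$ has diagonal first-index pair, a property preserved by $\dB$, so $\bB\bde=\PROJ\bB\bde$ too), the triple $(\bde,\bep,\bpi)$ combines into the single vector $\bta=\bde+\NU(\bep+\bpi)$ inside every bilinear form. The main obstacle I anticipate is purely bookkeeping: keeping track of the many combinatorial weights and ensuring that the ``completing the square'' identities continue to apply in the inter-clause bilinear forms where one slot is multiplied by $\bB$; once this is verified, the three blocks with prefactors $d(k-1)$, $d\binom{k-1}{2}$, and $\binom{d}{2}(k-1)^2$ in the claim emerge directly.
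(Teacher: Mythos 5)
Your proposal follows the same route as the paper: Taylor-expand around the diagonal $\ds_{1:D}=\dw_{1:D}$, split the second-order corrections into intra-clause pairs $(ai,aj)$ and inter-clause pairs $(ai,bj)$, observe that the intra-clause blocks reduce to the already-computed $\FG_i$ (scaled by $\dz$), identify the inter-clause blocks as bilinear forms against $\bB=\dB\hB$, and complete the square using the projection identities $\hB\bde=\PROJ\hB\bde$ and $\bB\bde=\PROJ\bB\bde$. This matches the paper's proof in structure and in all the essential ingredients.
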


\subsection{Proof of main theorem}
\label{ss:proof.main}

We now prove Proposition~\ref{p:two.perturb}
and deduce our main result Theorem~\ref{t:main}.

\begin{proof}[Proof of Proposition~\ref{p:two.perturb}]
We abbreviate $\Delta\Phi\equiv\PhiTwo(y_1,y_2,Q)
	-\PhiTwo(y,y,Q_\II)$. 
By substituting the estimates of Lemmas~\ref{l:two.clause.perturb}~and~\ref{l:two.var.perturb} into \eqref{e:Phi.tworsb}, and recalling that $\log(1+x) = x -x^2/2 + O(x^3)$ for small $x$, we find
	\[
	\Delta\Phi
	=\f{d(k-1)}{2}(\branch
	\Delta_{2,\textup{vx}}-\Delta_2)
	-\f{d(k-1)(dk-d-k)}2(\Delta_1)^2
	+O(\ZETA^6)\,.
	\]
The result follows.
\end{proof}

\begin{proof}[Proof of Theorem~\ref{t:main}]
For $\asat\le\alpha\le 4^k/k$, and $y$ satisfying \eqref{e:y.restriction}, let $\dq_y$ be defined by Proposition~\ref{p:sp}, and let $Q_{\II,y}$ be defined by \eqref{e:Q.II}. Recall from \eqref{e:inf.Fy.over.y} that
	\[\f{\FF(y)}{y}
	=\PhiOne(y,\dq_y)
	=\PhiTwo(y,y,Q_{\II,y})\,.\]
Proposition~\ref{p:eone.defn} gives the well-defined formula
	\[\eone(\alpha)\equiv\ee(y_\star(\alpha))
	=- \inf \bigg\{ \f{\FF(y)}{y}
	:y\ge0\,,
	y \textup{ satisfies }\eqref{e:y.restriction}
	\bigg\}\,.
	\]
It follows from Proposition~\ref{p:interpolation} that
$\einf \ge \eone(\alpha)$ --- i.e., that
\eqref{e:diff} is nonnegative --- for all 
$\asat\le\alpha\le 4^k/k$. It remains to verify that 
\eqref{e:diff} is strictly positive above $\aG$. For this,
 recall from \S\ref{ss:aux.matrices} that the $9\times9$ stability matrix $\bB\equiv\dB\hB$ has eigenvalue $\lambda$ given explicitly by \eqref{e:Gardner.lambda.explicit}, which is the same as the $\lambda$ that appears in Proposition~\ref{p:gardner.defn}. Associated to this $\lambda$ is a right eigenvector $\bxi\in\mathbb{R}^9$ of $\bB$, given explicitly by \eqref{e:eigenvector}. We now note that this vector can be split as
$\bxi=\bvp+\bsi$ where
	\begin{align*}
	\begin{pmatrix}
	\bvp^\st\\
	\bsi^\st
	\end{pmatrix}
	&=\kbordermatrix{
	&\free\free &\zro\zro &\one\one
	&\free\zro&\free\one&\zro\free&\one\free\\
	& 2(1-e^{-y/2})\rho_\zro
	&-(1-e^{-y/2})\rho_\free
	&-(1-e^{-y/2})\rho_\free
	&0&0&0&0\\
	&-2\rho_\zro 
	& -\f{\rho_\free}{e^{y/2}}
	& -\f{\rho_\free}{e^{y/2}}
	&\rho_\zro & \rho_\zro
	& \f{\rho_\free}{e^{y/2}}
	& \f{\rho_\free}{e^{y/2}}
	}\,.
	\end{align*}
These vectors satisfy (cf.\ \eqref{e:lin.constraints}) the constraints
	\[
	\sum_{\dw}\rho_{\dw}
		\varpi_{\dw}=0\,,\quad
	\sum_{\ds}\sigma_{\dw\ds}=0
		\textup{ for all }\dw\,.
	\]
We apply Proposition~\ref{p:two.perturb}
with $\bde=\ZETA^2\bvp$ and $\bep=\ZETA^2\NU^{-1}\bsi$, so that $\bta=\bde+\NU(\bep+\bpi)
=\ZETA^2\bxi + O(\ZETA^4)$: this gives
	\begin{align*}
	\f{\Delta\Phi}{d(k-1)/2}
	&=
	(\branch\lambda-1)\ZETA^4
	\bigg(\hB\bta,
		\bigg(\f{(\bPi-y\ZETA\bGa)}{\nu}
		-\ZETA\bXi\bigg)\bxi
		\bigg)\\
	&\qquad-(dk-d-k)
	\ZETA^4
	\bigg(\I_9,\PROJ
		(\bPi-y\NU^{-1}\ZETA\bGa)\bxi
	\bigg)^2
	+O(\ZETA^6)
	\end{align*}
where, as before, we abbreviate $\Delta\Phi\equiv\PhiTwo(y_1,y_2,Q)-\PhiTwo(y,y,Q_\II)$. It follows from our earlier calculation \eqref{e:scalar.product.sign} that $\Delta\Phi$ is negative whenever $\branch\lambda>1$. The result follows by applying Proposition~\ref{p:interpolation}.
\end{proof}

\section{Interpolation, stationarity, and convexity}\label{s:interpolation}

In this final section we prove a few auxiliary results which were used in the proof of the main theorem. In order to keep our presentation somewhat self-contained, in \S\ref{ss:rpc} we review the Ruelle probability cascade weights; and in \S\ref{ss:as2} we give the heuristic derivation of the (positive-temperature) 2RSB functional in the setting of random regular \textsc{nae-sat}. In \S\ref{ss:interp} we review a general interpolation bound proved in prior work, and use it to deduce Proposition~\ref{p:interpolation}.
In \S\ref{ss:stationarity} we derive stationary equations to prove Lemma~\ref{l:stationarity}. Finally, in \S\ref{ss:convexity} we prove Proposition~\ref{p:second.derivative} on convexity of the function $\FF(y)$.

\subsection{Ruelle probability cascades}
\label{ss:rpc}

For $m\in(0,1)$ we shall write $\Pi\sim \PP(m)$ to mean that $\Pi$ is a Poisson point process on $(0,\infty)$ with intensity measure
	\[
	\f{m\,dx}{x^{m+1}}\,.
	\]
The key property of $\PP(m)$ is the following scaling relation:

\begin{lem}[{\cite[Thm.~2.6]{MR3052333}}]
\label{l:poissondirichlet}
Let $(\Omega,\mathscr{F},\P)$ be a probability space,
and $(X,Y):\Omega\to(0,\infty)\times S$ a pair of random elements on $\Omega$, where $(S,\mathscr{S})$ is a measurable space. Suppose $\E(X^m)<\infty$ and let $\nu_m$ be the measure on $S$ defined by
	\[
	\nu_\zeta(B)
	= \f{ \E(X^m \Ind{Y\in B}) }{\E(X^m)}\,.\]
Suppose $\Pi\sim\PP(m)$, and let $(u_n)_{n\ge1}$ denote the points of $\Pi$ arranged in decreasing order. Let $(X_n,Y_n)_{n\ge1}$ be an i.i.d.\ sequence of copies of $(X,Y)$, independent from $\Pi$. Then
$(u_n X_n, Y_n)_{n\ge1}$ is again a Poisson process,
and has the same intensity measure as
the process
$( \E(X^m)^{1/m} u_n, \bar{Y}_n)_{n\ge1}$
where $(\bar{Y}_n)_{n\ge1}$ is a sequence of i.i.d.\
 samples from $\nu_m$
that are also independent from $\Pi$.
\end{lem}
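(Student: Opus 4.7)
\smallskip

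The plan is to derive this as a direct consequence of the mapping (change-of-variables) theorem for Poisson point processes applied to a marked process. First I would form the marked Poisson process
$\Pi^\star = \{(u_n, X_n, Y_n)\}_{n\ge1}$
on $(0,\infty)\times(0,\infty)\times S$, which by the standard independent-marking property of Poisson processes (see e.g.\ Kingman) has intensity measure
\[
\mu(du, dx', dy)
= \frac{m\,du}{u^{m+1}}\otimes \mathbb{P}_{(X,Y)}(dx', dy).
\]
This is the clean starting point; the reason to introduce it is that the decreasing ordering of the $u_n$ is irrelevant for the Poisson distribution of the set of points, so attaching i.i.d.\ marks and relabeling causes no issue.

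Next I would apply the mapping $\Phi(u,x',y) = (ux', y)$ from $(0,\infty)\times(0,\infty)\times S$ to $(0,\infty)\times S$. By the mapping theorem, $\Phi_\sharp \Pi^\star = \{(u_n X_n, Y_n)\}_{n\ge1}$ is a Poisson process on $(0,\infty)\times S$ with intensity $\Phi_\sharp \mu$. To compute this pushforward on a test rectangle $B\times A$, I would for fixed $x'>0$ substitute $v = ux'$ in the integral $\int \mathbf{1}\{ux'\in B\}\,m u^{-(m+1)}\,du$, which gives $(x')^m \int_B m v^{-(m+1)}\,dv$; then integrate over $(x',y)$ against $\mathbb{P}_{(X,Y)}$ to obtain
\[
(\Phi_\sharp \mu)(B\times A)
= \E\bigl(X^m\,\mathbf{1}\{Y\in A\}\bigr)\int_B \frac{m\,dv}{v^{m+1}}
= \E(X^m)\,\nu_m(A)\int_B \frac{m\,dv}{v^{m+1}}.
\]

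Finally I would match this with the intensity of the candidate process $\{(\E(X^m)^{1/m} u_n, \bar Y_n)\}_{n\ge1}$. Scaling $\Pi\sim\PP(m)$ by a constant $c>0$ produces a Poisson process with intensity $\frac{m c^m\,dv}{v^{m+1}}$ (a one-line substitution $v = cu$), so with $c = \E(X^m)^{1/m}$ the scaled process has intensity $\frac{m\E(X^m)\,dv}{v^{m+1}}$; independently marking by $\bar Y_n\sim\nu_m$ multiplies the intensity by $\nu_m(dy)$, producing exactly $\Phi_\sharp\mu$. Since Poisson processes are determined in distribution by their intensity measure, the claim follows. The only step requiring any care is the change of variables inside the marked-process integral; everything else is bookkeeping, and I would not expect any genuine obstacle.
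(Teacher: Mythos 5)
The paper does not prove this lemma---it is imported verbatim as Theorem~2.6 of the cited reference (Panchenko's monograph), so there is no in-paper proof to compare against. Your argument is the standard and correct proof of that result: form the independently-marked Poisson process on $(0,\infty)\times(0,\infty)\times S$, push forward under $(u,x',y)\mapsto(ux',y)$ using Kingman's mapping theorem, and identify the image intensity $\E(X^m\,\Ind{Y\in dy})\,m v^{-(m+1)}dv$ with the intensity of the scaled-and-marked process $(\E(X^m)^{1/m}u_n,\bar Y_n)$. The change-of-variables computation is right, the hypothesis $\E(X^m)<\infty$ is exactly what guarantees the pushforward intensity is $\sigma$-finite (and locally finite near $\infty$, so that the points may still be arranged in decreasing order), and the absence of atoms in $v$ makes the image a simple point process; so there is no gap. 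As a small side note, the lemma as printed in the paper contains a typo ($\nu_\zeta$ should read $\nu_m$), which you correctly read through.
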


The following discussion generalizes trivially to any finite number of levels of replica symmetry breaking, but for concreteness we consider only two levels. Fix 2RSB parameters $0<m_1<m_2<1$.
 Let $\Pi\sim\textsf{P}(m_1)$, and let $(u_s)_{s\ge1}$ denote the points of $\Pi$ arranged in decreasing order. 
 For all integers $s\ge1$ let $\Pi_s$ be an independent sample from $\textsf{P}(m_2)$, and let $(u_{st})_{t\ge1}$ denote the points of $\Pi_s$ arranged in decreasing order. Let $w_{st}\equiv u_s u_{st}$, and let
	\beq\label{e:rpc}
	\nu_{st} \equiv \f{w_{st}}
	{\displaystyle\sum_{s',t'\ge1} w_{s',t'}}\,.\eeq
The doubly infinite array $\vec{\nu}\equiv(\nu_{st})_{s,t\ge1}$ gives the weights of a 2-level \bemph{Ruelle probability cascade} with parameters $m_1,m_2$. We hereafter abbreviate this as $\vec{\nu}\sim\RPC(m_1,m_2)$. (For a survey of properties of this process see \cite{MR3052333}. For further motivations see
\cite{derrida1981random,derrida1985generalization,MR875300,MR2070334,MR2070335}.)

\subsection{Heuristic derivation of 2RSB functional}
\label{ss:as2}

The heuristic derivation in this subsection expands on the outline presented in \S\ref{ss:intro.interp}; and is a simple application of the well-known ``cavity method.'' There are too many instances of the method to be adequately cited here, but we point out a few influential works \cite{bethe1935statistical,mezard2001bethe,yedidia2001bethe,aizenman2003extended}. Our discussion is based on \cite{MR3052333}, and we follow similar notation.

For simplicity we continue to assume that $\alpha=d/k$ is an integer. Let $\GG_N$, $\GG_{N+1/2}$, and $\GG_{N+1}$ be as defined in \S\ref{ss:intro.interp}. For $\beta\ge0$ we consider the Gibbs measure $\mu_\beta$ defined by \eqref{e:gibbs.intermediate}, using the Hamiltonian of $\GG_{N+1/2}$. We assume that the finite-dimensional marginals of $\mu_\beta$ are given by \eqref{e:rpc.cavity}, which we repeat here for convenience:
	\[
	\mu_\beta(x_1,\ldots,x_\ell)
	\approx
	\sum_{s,t\ge1}\nu_{st}
	\prod_{i=1}^\ell \nu_{st}
	\prod_{i=1}^\ell w_{st,i}(x_i)\,.
	\]
We sample the weights $\nu_{st}$ from the $\RPC(m_1,m_2)$ law, as defined by \eqref{e:rpc}. We recall that the $w_{st,i}$ are generated recursively, as follows. Let
 $\PROB_0\equiv\PROB$ be the space of probability measures over $\set{\zro,\one}$, and for $r\ge1$ let $\PROB_r$ be the space of probability measures over $\PROB_{r-1}$. Let $\pQ\in \PROB_2$. Let $(r_{s,i})_{s,i}$ be i.i.d.\ samples from law $\pQ$. For each $i$ and each $s$, let $(w_{st,i})_{t\ge1}$ be a sequence of i.i.d.\ samples from $r_{s,i}$. Note $r_{s,i}\in\PROB_1$ so $w_{st,i}\in\PROB$. Recall that $\GG_{N+1/2}$ is formed by deleting from $\GG_N$ a set of $d(1-1/k)$ random clauses, which we denote $F'$. Then
	\begin{align*}
	\log \f{Z_N(\beta)}{Z_{N+1/2}(\beta)}
	&\approx \log
	\sum_{s,t\ge1} \nu_{st}
	\prod_{a\in F'}\bigg\{
	\sum_{\ux_{\partial a}}
	\exp\{-\beta\HH_a(\ux_{\partial a})\}
	\prod_{i\in\partial a}
	w_{st,i}(x_i)\bigg\}
	= \log
	\sum_{s,t\ge1} \nu_{st}
	\prod_{a\in F'} 
	\hat{X}_{a,\beta}( w_{st,\partial a} )\\
	&=\log \sum_{s\ge1} u_s
	\sum_{t\ge1} u_{st} 
	\prod_{a\in F'}
	\hat{X}_{a,\beta}( w_{st,\partial a} )
	- \log \sum_{s\ge1} u_s
	\sum_{t\ge1} u_{st}
	\end{align*}
where $\hat{X}_{a,\beta}( w_{st,\partial a} )$ is a random function (depending on the labels $\lit_e$ of the edges $e\in\delta a$) of the $k$-tuple of measures 
$w_{st,\partial a} \equiv (w_{st,i})_{i\in\partial a}$. Taking expectations and applying  Lemma~\ref{l:poissondirichlet} for the 
$(u_{st})_{t\ge1}$ gives
	\[
	\E\log \f{Z_N(\beta)}{Z_{N+1/2}(\beta)}
	\approx
	\E \log \sum_{s\ge1} u_s
	\prod_{a\in F'}
	\E_s \bigg(\hat{X}_{a,\beta}
		(w_{s1,\partial a})
	^{m_2}\bigg)^{1/m_2}
	\sum_{t\ge1} u_{st} 
	- \E \log \sum_{s\ge1} u_s
	\sum_{t\ge1} u_{st} 
	\]
where $\E_s$ denotes expectation conditional on the $r_{s,i}$ and on the $\set{\zro,\one}$-labels of the edges incident to $F'$. Let $\E'$ denote expectation conditional only on the $\set{\zro,\one}$ edge labels. Applying  Lemma~\ref{l:poissondirichlet} again for $(u_s)_{s\ge1}$ gives
	\begin{align}\nonumber
	\E\log \f{Z_N(\beta)}{Z_{N+1/2}(\beta)}
	&\approx
	\E \log \bigg\{
	\E' \bigg[ 
	\prod_{a\in F'}
	\E_1 \bigg(\hat{X}_{a,\beta}
		(w_{11,\partial a})
	^{m_2}\bigg)^{m_1/m_2}
	\bigg]^{1/m_1}
	\sum_{s\ge1} u_s
	\sum_{t\ge1} u_{st} 
	\bigg\}
	-\E \log \sum_{s\ge1} u_s
	\sum_{t\ge1} u_{st} \\
	&=
	\f{\alpha(k-1)}{m_1}
	\E \log \E' \bigg[ \E_1 \bigg(\hat{X}_{a,\beta}
		(w_{11,\partial a})
	^{m_2}\bigg)^{m_1/m_2}
	\bigg]
	\equiv 
	\f{\alpha(k-1)
	\E \log G_{\beta,m_1,m_2}(\pQ)}{m_1}
	\,,\label{e:cavity.clause}
	\end{align}
where the last equality defines $G_\beta$. 
We can write it more explicitly as
	\begin{align}\label{e:cavity.clause.explicit}
	G_{\beta,m_1,m_2}(\pQ)
	&=  \int
	\bigg(
	\int
	\hat{X}_{a,\beta}(w_{1:k})^{m_2}
	\prod_{i=1}^k
	\,d r_i(w_i)
	\bigg)^{m_1/m_2}
	\,\prod_{i=1}^k
	d\pQ(r_i)\,, \\
	\hat{X}_{a,\beta}(w_{1:k})
	&= \sum_{x_{1:k}}
	\exp\{-\beta\HH_a(x_{1:k})\}
	\prod_{i=1}^k w_i(x_i)\,.\nonumber
	\end{align}
We emphasize that the comparison \eqref{e:cavity.clause}  holds \emph{under the heuristic \eqref{e:rpc.cavity}}.
Under the same assumption we can likewise derive a comparison between $\GG_{N+1/2}$ and $\GG_{N+1}$
--- since this is very similar to the preceding calculation, we omit the details and simply state the result. Write $D\equiv d(k-1)$ as before, and denote
	\[
	\bigg(x_{aj}
	:1\le a\le d,2\le j\le k
	\bigg)
	\equiv
	x_{1:D}\,.
	\]
Let $\HH_1,\ldots,\HH_d$ be the Hamiltonians for $d$ random clauses, and let
	\[
	\dot{\HH}(x_{0:D})
	\equiv \dot{\HH}(x_0, x_{1:D})
	\equiv
	\sum_{a=1}^d
	\HH_a(x_0, x_{a2},\ldots,x_{ak})\,.
	\]
Then, analogously to \eqref{e:cavity.clause}, 
we have the comparison
	\beq\label{e:cavity.var}
	\E\log\f{Z_{N+1}(\beta)}{Z_{N+1/2}(\beta)}
	\approx
	\f{\E\log W_{\beta,m_1,m_2}(\pQ)}{m_1}\,,
	\eeq
where the explicit form of $W_\beta$ is given, analogously to \eqref{e:cavity.clause.explicit}, by
	\begin{align*}
	W_{\beta,m_1,m_2}(\pQ)
	&\equiv
	\int
	\bigg(
	\int
	X_\beta(w_{1:D})^{m_2} \,
	\prod_{i=1}^D dr_i(w_i)
	\bigg)^{m_1/m_2}
	\prod_{i=1}^D
	d\pQ(r_i)\,,\\
	X_\beta(w_{1:D})
	&=\sum_{x_{0:D}}
	\exp\bigg\{
	-\beta \dot{\HH}(x_{0:D})
	\bigg\}
	\prod_{i=1}^D w_i(x_i)\,.
	\end{align*}
Combining \eqref{e:cavity.clause} and \eqref{e:cavity.var} gives, under the heuristic assumption 
\eqref{e:rpc.cavity}, the comparison
	\beq\label{e:positive.temp.2rsb.functional}
	\E\log \f{Z_{N+1}(\beta)}{Z_N(\beta)}
	\approx
	\f1{m_1}\E\bigg\{
	\log W_{\beta,m_1,m_2}(\pQ)
	- \alpha(k-1)\log G_{\beta,m_1,m_2}(\pQ)
	\bigg\}
	\equiv 
	\Phi_{\beta,m_1,m_2}(\pQ)\,,
	\eeq
where the last identity defines the \bemph{(positive-temperature) 2RSB functional} $\Phi_{\beta,m_1,m_2}$.
As we review next, one side of 
\eqref{e:positive.temp.2rsb.functional}
can be made rigorous via an interpolation bound
(Proposition~\ref{p:interp.pos.temp} below).

\subsection{General interpolation bound}\label{ss:interp}

Let $\GG_N$ be an instance of random $d$-regular $k$-\textsc{nae-sat} on $N$ variables, with Hamiltonian $\HH_N$. As before, let
	\beq\label{e:Z.beta}
	Z_N(\beta)
	\equiv \sum_{\ux} \exp\bigg\{-\beta\HH_N(\ux)\bigg\}\eeq
where the sum goes over $\ux\in\set{\zro,\one}^N$.
The following is a direct consequence of prior results:

\begin{ppn}[{proved in \cite[\S E.4]{ssz}}]
\label{p:interp.pos.temp}
Let $\GG_N$ be an instance of random $d$-regular $k$-\textsc{nae-sat} on $N$ variables, and let $Z_N(\beta)$ be as in \eqref{e:Z.beta}. If $\E$ denotes expectation over the law of $\GG_N$, then
	\[
	\f{\E\log Z_N(\beta)}{N\beta}
	\le
	\f{\Phi_{\beta,m_1,m_2}(\pQ)}{\beta}
	+ O_{m_1,m_2,\beta,\pQ}\bigg(\f1{N^{1/3}} \bigg)\,,
	\]
for any $\beta\ge0$, $0<m_1<m_2<1$, and $\pQ\in\PROB_2$.
\end{ppn}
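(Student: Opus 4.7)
The plan is to follow a Guerra--Toninelli interpolation as adapted to diluted spin-glass models by Franz--Leone and Panchenko--Talagrand, in the form used in the authors' earlier work \cite{ssz} for a closely related setting. The first step is to Poissonize the clause count (incurring $O(N^{-1/2})$ error via standard edge-switching concentration for the configuration model) and then build an interpolating random partition function $Z_N(t,\beta)$ for $t \in [0,1]$ living on the two-level Ruelle cascade $\vec\nu \sim \RPC(m_1,m_2)$. At $t=1$ one recovers the original Gibbs measure; at $t=0$ the model fully decouples into independent single-variable problems driven by cavity fields sampled via $\pQ$. A convenient splitting puts a $t$-fraction of the expected clauses as ``true'' $k$-body clauses on the graph and a $(1-t)$-fraction as ``cavity'' contributions (each touching a single variable plus $k-1$ freshly sampled weights from $\pQ$), so that the $t=0$ endpoint identity $\beta^{-1}\E\log Z_N(0,\beta)/N = \beta^{-1}\Phi_{\beta,m_1,m_2}(\pQ) + o(1)$ follows directly from the cavity computations reviewed in \S\ref{ss:as2}.

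Second, I would differentiate $\E\log Z_N(t,\beta)$ in $t$. Each derivative produces the log of a partition-function ratio, and applying Lemma~\ref{l:poissondirichlet} at both cascade levels reorganizes the contributions so that one recovers $\alpha(k-1) m_1^{-1} \E\log G_{\beta,m_1,m_2}(\pQ)$ from the clause side and $m_1^{-1} \E \log W_{\beta,m_1,m_2}(\pQ)$ from the cavity side, with residuals that vanish at $t=0$. The main obstacle is to show this derivative is non-positive for all $t \in [0,1]$. Using $\exp(-\beta\HH_a) = 1 - (1-e^{-\beta})\I\{\HH_a = 1\}$, the indicator for a violated NAE clause decomposes as a signed combination of tensor-product indicators $\prod_i \I\{\lit_{e_i} \oplus x_{v(e_i)} = c\}$ with $c \in \{\zro,\one\}$, each of which is a nonnegative rank-one functional on the spin configurations. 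The monotonicity then reduces to Panchenko-type inequalities for the two-level cascade functional $\sum_{s,t} \nu_{s,t} X_{s,t}$ applied coordinate-wise to nonnegative inputs, where the two-level structure with $0 < m_1 < m_2 < 1$ dictates the sign in the required direction.

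Third, I would handle endpoints and error bookkeeping. The Poissonization above, combined with a finitely-supported approximation of $\pQ \in \PROB_2$ needed to apply Lemma~\ref{l:poissondirichlet} uniformly, and an edge-switching comparison between the exact $d$-regular configuration model and its Poissonized version, collectively account for the claimed $O(N^{-1/3})$ error in $N$ (the exponent $1/3$ arising from balancing the various concentration rates against the smoothness of the 2RSB functional in $\pQ$). The hardest step is the sign verification in step two: decomposing the NAE Hamiltonian additively into rank-one factors so that each piece separately satisfies a Panchenko cascade inequality is the crucial algebraic point, and it is this step that relies on the specific combinatorial form of the NAE constraint rather than merely on finiteness of the spin space.
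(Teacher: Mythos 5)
The paper gives no argument for this proposition at all: it is stated as a direct citation to \cite[\S E.4]{ssz}, and the surrounding prose merely records it as ``a direct consequence of prior results.'' There is therefore no in-paper proof against which to compare your sketch; what can be said is how faithfully your sketch reconstructs what the cited appendix does. On the central point you are right: the interpolation is Franz--Leone/Panchenko style, the cavity fields are driven by a two-level Ruelle cascade through Lemma~\ref{l:poissondirichlet}, and the sign of the $t$-derivative hinges on writing $\exp(-\beta\HH_a)=1-(1-e^{-\beta})\bigl(\I\{\text{all }\zro\}+\I\{\text{all }\one\}\bigr)$ so that the NAE Hamiltonian decomposes as a nonnegative combination of rank-one tensor indicators, to which a Panchenko-type cascade positivity applies. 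You have correctly identified this as the crucial algebraic step.

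Two points in your bookkeeping are off, though neither is fatal to the architecture. First, the cited proof works directly with the exact $d$-regular configuration model; it does not Poissonize the clause count. The paper's reference list makes this explicit (it cites the Bayati--Gamarnik--Tetali combinatorial interpolation \cite{MR3161470} and Panchenko's regular-graph adaptation \cite{MR3256814}), and handling the hard degree constraint is precisely what forces the more intricate scheme and produces the $O(N^{-1/3})$ error --- it does not come from ``balancing concentration rates against smoothness of $\Phi$ in $\pQ$'' as you suggest, but from the discretization/truncation and near-regular coupling steps in that framework. Your Poissonization-plus-edge-switching route would have to re-derive a regularity correction and would not obviously land on the same $N^{-1/3}$ rate. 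Second, your $t=0$ endpoint statement $\beta^{-1}\E\log Z_N(0,\beta)/N = \beta^{-1}\Phi_{\beta,m_1,m_2}(\pQ)+o(1)$ is written too strongly: in the standard organization of these interpolations, $\Phi$ is assembled from the $t=0$ value \emph{together with} the integrated ``drift'' part of the $t$-derivative, the remaining part of the derivative being the quantity shown nonpositive. Apart from these calibration issues, the proposal captures the right mechanism.
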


\noindent To conclude, we take $\beta\to\infty$ to deduce the zero-temperature bound Proposition~\ref{p:interpolation}:

\begin{proof}[Proof of Proposition~\ref{p:interpolation}]
Since $Z_N(\beta) \ge \exp\{-N\beta\emin(\GG_N)\}$,  Proposition~\ref{p:interp.pos.temp} implies
	\[
	-\einf
	= \limsup_{N\to\infty}\E[-\emin(\GG_N)]
	\le
	\limsup_{N\to\infty}\f{\E[\log Z_N(\beta)]}{N\beta}
	\le \f{\Phi_{\beta,m_1,m_2}(\pQ)}{\beta}
	\]
for any $\beta\ge0$, $0<m_1<m_2<1$, and $\pQ\in\PROB_2$.
Now let $w^\zro\equiv \I_\zro$, $w^\one\equiv \I_\one$, and $w^\free \equiv (\I_\zro+\I_\one)/2$: in this way, for each $\ww\in\set{\zro,\one,\free}$ we have defined an element  $w^\ww\in\PROB$. Recall that $\Omega$ is the space of probability measures over $\set{\zro,\one,\free}$: for each $\rho\in\Omega$ we define $r^\rho \in\PROB_1$  which is
 supported only on the three points $w^\zro, w^\zro, w^\free$:
	\[r^\rho
	\equiv
	\sum_{\ww\in\set{\zro,\one,\free}}
	\rho(\ww)
	\I_{w^\ww}\,.
	\]
Finally, if $Q$ is a probability measure over $\rho\in \Omega$, we let $\pQ$ be the induced law of $r^\rho$.
(Formally $\pQ=\mathfrak{r}_\sharp Q$ if $\mathfrak{r}$ denotes the mapping $\rho\mapsto r^\rho$.) Then, as $\beta\to\infty$ we have
	{\setlength{\jot}{0pt}\begin{align*}
	\hat{X}_{a,\beta}(w_{1:k})^{y_2/\beta}
	&\to \exp\{ -y_2 \hph(\ww_{1:k})\}\,,\\
	X_\beta(w_{1:D})^{y_2/\beta}
	&\to \exp\{ -y_2 \vph(\ww_{1:D})\}\,.
	\end{align*}}%
It follows from this that as $\beta\to\infty$ we have
	{\setlength{\jot}{0pt}\begin{align*}
	G_{\beta,y_1/\beta,y_2/\beta}(\pQ)
	&\to \FG(y_1,y_w,Q)\,,\\
	W_{\beta,y_1/\beta,y_2/\beta}(\pQ)
	&\to \WW(y_1,y_w,Q)\,.
	\end{align*}}%
Therefore $\beta^{-1}\Phi_{\beta,y_1/\beta,y_2/\beta}(\pQ)
\to \PhiTwo(y_1,y_2,Q)$, and the result follows.
\end{proof}

\subsection{Stationarity equations}
\label{ss:stationarity}

We next verify that fixed points of the SP recursion correspond to stationary points of $F(x,w,y)$.

\begin{proof}[Proof of Lemma~\ref{l:stationarity}] It is straightforward to check with $w(x)$ as defined by \eqref{e:clause.x.to.w} we have
	\[
	\f{\partial F}{\partial x}(x,w,y)
	= \f{\alpha k(1-e^{-y})}{2}
	\bigg\{ \f{w(x)}
	{1-(1-x)w(x)(1-\AM)}
	-\f{w}{1-(1-x)w(1-\AM)}\bigg\}
	\]
which is zero if $w=w(x)$. The partial derivative with respect to $w$ is slightly more involved. We first consider the normalizing constant from
\eqref{e:sp.var.marginal}, which is written above as
\eqref{e:sp.var.marginal.Zdw}, and with more explicit expressions given in 
\eqref{e:Z.d.zero.as.sum} and
\eqref{e:Z.d.free.as.sum}. Denote
	\[
	\dq_y(\ell_\zro,\ell_\one)
	\equiv \f1{\dz(w_y)}
	\binom{d-1}{\ell_\zro,\ell_\one}
	\f{\hq_y(\zro)^{\ell_\zro}
	\hq_y(\one)^{\ell_\one}
	\hq_y(\free)^{d-1-\ell_\zro-\ell_\free}}
	{\exp\{y \min\set{\ell_\zro,\ell_\one}\}}\,.
	\]
We will write, for instance, $\dq_y(\ell_\zro \ge \ell_\one+2)$ for the sum of $\dq_y(\ell_\zro,\ell_\one)$ over all pairs $(\ell_\zro,\ell_\one)$ satisfying $\ell_\zro \ge \ell_\one+2$. 
With this notation, $\dq_y(\zro)=\dq_y(\one)=\dq_y(\ell_\zro\ge\ell_1+1)$
while $\dq_y(\free)=\dq_y(\ell_\zro=\ell_\one)$.
By decomposing
\eqref{e:sp.var.marginal} according to the first warning $\hw_1$, we can compare it with the normalizing constant $\dz(w)$ from the \textsc{sp} recursion \eqref{e:variable.w.to.x}:
	\begin{align*}
	\f{\dfz_\zro(w_y)}{\dz(w_y)}
	&= 
	\hq_y(\zro)\bigg(
		\dq_y(\zro)+\dq_y(\free)\bigg)
	+\hq_y(\free)\dq_y(\zro)
	+\f{\hq_y(\one)\dq_y(\ell_\zro \ge \ell_1+2)}{e^y}
	\,,\\
	\f{\dfz_\free(w_y)}{\dz(w_y)}
	&=\f{\hq_y(\zro)\dq_y(\ell_\one=\ell_\zro+1)}{e^y}
	+\f{\hq_y(\one)\dq_y(\ell_\zro=\ell_\one+1)}{e^y}
	+\hq_y(\free)\dq_y(\free)\,.
	\end{align*}
Combining with the symmetries
$\dq_y(\zro)=\dq_y(\one)$ and
$\hq_y(\zro)=\hq_y(\one)$ gives (after some algebra)
	\[
	\f{\dfz(w_y)}{\dz(w_y)}
	=2\hq_y(\zro)\bigg(\dq_y(\zro)
	(1+e^{-y})
	+\dq_y(\free)
		\bigg)
	+\hq_y(\free)
	=1-w_y(1-x_y)(1-\AM)\,.\]
In fact, by essentially the same derivation it holds for all $w$ that
	\beq\label{e:Zd.Zdminusone.reln}
	\f{\dfz(w_y)}{\dz(w)}
	= 1-w(1-\tilde{x}(w))(1-\AM)
	\eeq
with $\tilde{x}(w)$ as in \eqref{e:variable.w.to.x}.
Next, differentiating the above expressions for $\dz_\zro(w)$ and $\dz_\free(w)$ gives
	\begin{align}\nonumber
	{(\dfz_\zro)'(w)}
	&=\sum_{\ell=0}^d
	\binom{d}{\ell}
	(w\cdot\AM)^\ell(1-w)^{d-\ell}\PPP_\ell
	\f{\ell-dw}{w(1-w)}\\
	&= -\f{d {\dfz_\zro(w)}}{1-w}
	+ d w \cdot\AM \sum_{\ell=1}^d
	\binom{d-1}{\ell-1} (w\cdot\AM)^{\ell-1}
	(1-w)^{d-\ell}\PPP_\ell
	\label{e:Zd.prime.zro}\\
	{(\dfz_\free)'(w)}
	&=-\f{d {\dfz_\free(w)}}{1-w}
	+dw\cdot\AM
	\sum_{\ell=1}^d
	\binom{d-1}{\ell-1} (w\cdot\AM)^{\ell-1}
	(1-w)^{d-\ell}\QQQ_\ell
	\label{e:Zd.prime.free}\,.
	\end{align}
For integers $i\ge1$ let $I_i$ be i.i.d.\ Bernoulli random variables with $\E I_i =  p \equiv 1/(e^y+1)$. For any finite subset $S$ of positive integers let $Y(S)$ be the sum of $I_i$ over $i\in S$. Abbreviating $[\ell]\equiv \set{1,\ldots,\ell}$, we have $\PPP_\ell \equiv \P(E_\ell)$ where $E_\ell\equiv \set{Y([\ell]) < \ell/2}$, and $\QQQ_\ell=\P(F_\ell)$
where $F_\ell\equiv\set{Y([\ell])=\ell/2}$. We then consider two cases:
\begin{enumerate}[a.]
\item If $\ell$ is odd, then $E_{\ell-1}=\set{Y([\ell-1])<(\ell-1)/2}=\set{Y([\ell-1])\le (\ell-1)/2-1}$, and it implies
	\[
	Y([\ell])
	\le Y([\ell-1])+1
	\le \f{\ell-1}{2} < \f{\ell}{2}\,,
	\]
so $E_{\ell-1}\subseteq E_\ell$. On the event 
$E_\ell\setminus E_{\ell-1}$ we must have
	\[
	\f{\ell-1}{2} \le Y([\ell-1])\le Y([\ell]) < \f{\ell}{2}\,,
	\]
which means $
Y([\ell-1])= Y([\ell])=\ell/2$. Therefore
$E_\ell\setminus E_{\ell-1}
=F_{\ell-1}\cap \set{I_\ell=0}$, and so
	\[
	\PPP_\ell+\f{\QQQ_\ell}{2}
	=\PPP_\ell
	=\PPP_{\ell-1} + (1-p)\QQQ_{\ell-1}\,,
	\]
where the first equality holds simply because $\QQQ_\ell=0$ for $\ell$ odd.

\item If $\ell$ is even, then $E_{\ell-1}=\set{Y([\ell-1])<(\ell-1)/2}=\set{Y([\ell-1])
\le \ell/2-1}$, and it implies
	\[
	Y([\ell])
	\le Y([\ell-1])+1
	\le \f{\ell}{2}\,,
	\]
so $E_{\ell-1}\subseteq E_\ell \cup F_\ell$. On the event $(E_\ell \cup F_\ell)\setminus E_{\ell-1}$ we must have
	\[ \f{\ell-1}{2} \le Y([\ell-1]) \le
	Y([\ell]) \le \f{\ell}{2}\,,
	\]
which means $Y([\ell-1]) =Y([\ell]) = \ell/2$. Therefore 
$(E_\ell \cup F_\ell)\setminus E_{\ell-1}
=\set{Y([\ell-1])=\ell/2} \cap\set{I_\ell=0}$, and so
	\[
	\PPP_\ell + \QQQ_\ell
	=\PPP_{\ell-1}
	+(1-p) \P\bigg(
	\Bin(\ell-1,p) =\f{\ell}{2}
	\bigg)\,.
	\]
Recalling the definition of $\QQQ_\ell$ then gives
	\[
	\PPP_\ell + \f{\QQQ_\ell}{2}
	= \PPP_{\ell-1}
	+(1-p) \P\bigg(
	\Bin(\ell-1,p) =\f{\ell}{2}
	\bigg)
	-\f12\P\bigg(
	\Bin(\ell,p) =\f{\ell}{2}
	\bigg)
	=\PPP_{\ell-1}\,,
	\]
where the last step is by a simple algebraic manipulation of the binomial coefficients.
\end{enumerate}
Combining
\eqref{e:Zd.prime.zro} and
\eqref{e:Zd.prime.free} gives
	\[{\dfz'(w)}
	=-\f{d\dfz(w)}{1-w}
	+\f{2d\cdot\AM}{1-w}
	\sum_{\ell=1}^d
	\binom{d-1}{\ell-1} (w\cdot\AM)^{\ell-1}
	(1-w)^{d-\ell}
	\bigg(
	\PPP_\ell+\f{\QQQ_\ell}{2}\bigg)\,.\]
Substituting the above expressions for 
$\PPP_\ell+\QQQ_\ell/2$, then re-indexing $\ell-1$ as $\ell$, gives
	\begin{align*}
	{\dfz'(w)}
	&=-\f{d\dfz(w)}{1-w}
	+\f{2d\cdot\AM}{1-w}
	\sum_{\ell=0}^{d-1}
	\binom{d-1}{\ell} (w\cdot\AM)^\ell
	(1-w)^{d-\ell}\bigg( \PPP_\ell
	+(1-p)\QQQ_\ell\bigg)\\
	&=-\f{d\dfz(w)}{1-w}
	+\f{2d\cdot\AM}{1-w}
	\bigg(\dz_\zro(w)
	+(1-p)\dz_\free(w)\bigg)\\
	&=-\f{d\dfz(w)}{1-w}
	+\f{d\cdot\AM\cdot \dz(w)}{1-w}
	\bigg(
	1
	+
	\f{(1-\AM)\tilde{x}(w)}{\AM}\bigg) \,.
	\end{align*}
Finally, combining with \eqref{e:Zd.Zdminusone.reln} gives
	\[\f{\dfz'(w)}{\dfz(w)}
	=\f{-d[1-w(1-\tilde{x}(w))(1-\AM)]
	+ d[\AM
	+(1-\AM)\tilde{x}(w)]}{(1-w)
	[1-w(1-\tilde{x}(w))(1-\AM)]
	}
	=-\f{d(1-\AM)(1-\tilde{x}(w))}
	{1-w(1-\tilde{x}(w))(1-\AM)}\,,\]
where again the last equality is by some simple algebra. Altogether we obtain
	\[
	\f{\partial F}{\partial w}(x,w,y)
	=d(1-\AM)\bigg\{
	\f{1-x}{ 1- (1-x)w(1-\AM)}
	-\f{1-\tilde{x}(w)}
	{ 1- (1-\tilde{x}(w))w(1-\AM)}
	\bigg\}\,,
	\]
which is zero if $\tilde{x}(w)=x$.\end{proof}

\subsection{Convexity}
\label{ss:convexity}
We finally prove Proposition~\ref{p:second.derivative}. For this purpose it is useful to re-express \eqref{e:Fxwy} as
	\begin{align*}
	F(x,w,y)
	&\equiv 
	\log\dZ-d\bigg\{
	\log\eZ-\f1k\log\hZ\bigg\}\\
	&=
	\log \f{\dfz(w)}{(1-w)^d}
	-d\bigg\{
	\log \f{\hfz(x)}{(1-x)^k}
	-\f1k
	\log \f{\efz(x)}{(1-x)(1-w)}
	\bigg\}\,.
	\end{align*}
It is convenient to reparametrize $(x,w)$ as $(X,W)$
	\[
	\bigg(\f{1-x}{x},\f{1-w}{w}\bigg)
	\equiv (e^X,e^W)\,,
	\]
and then write $F(x,w,y)\equiv G(X,W,y)$.
In the above we have $\dZ\equiv\dZ(W,y)$,
$\hZ\equiv\hZ(X,y)$, and $\eZ\equiv \eZ(W,X,y)$.
We abbreviate $G_X$, $G_W$, $G_{XW}$, and so on for partial derivatives of $G$ evaluated at the point $(X_y,W_y,y)$.
 We divide the proof of Proposition~\ref{p:second.derivative} into a series of lemmas, which occupy the remainder of this section.

\begin{lem}\label{l:second.derivative}
In the setting of Proposition~\ref{p:sp} we have
	\[
	\FF''(y)
	=G_{yy}
	+\f{G_{XX}(G_{Wy})^2 
		-2G_{XW}G_{Xy}G_{Wy}
		+G_{WW}(G_{Xy})^2
	}{(G_{XW})^2-G_{XX}G_{WW}}\,.
	\]
as long as $(G_{XW})^2-G_{XX}G_{WW}\ne0$.

\begin{proof}
Since the pair $(x_y,w_y)$ is a stationary point of $F(\cdot,\cdot,y)$ by Lemma~\ref{l:stationarity}, the corresponding pair $(X_y,W_y)$ is a stationary point of $G(\cdot,\cdot,y)$, that is, $G_W=G_X=0$.
Since this holds for all $y$ in the range of Proposition~\ref{p:sp}, we can differentiate once more in $y$ to find
	\[
	\begin{pmatrix}
	G_{WW} & G_{WX}\\G_{XW} & G_{XX}
	\end{pmatrix}
	\begin{pmatrix} X'(y) \\ W'(y)
	\end{pmatrix} = - \begin{pmatrix}
	G_{Wy}\\G_{Xy}
	\end{pmatrix}\,.
	\]
On the other hand, recall that
 $-\ee(y)=\FF'(y) = G_y$. It follows that
	\[
	-\ee'(y)
	=\FF''(y)
	=\begin{pmatrix}
	G_{Xy} & G_{Wy}
	\end{pmatrix}
	\begin{pmatrix} X'(y) \\ W'(y)\end{pmatrix}
	+ G_{yy}\,.
	\]
Combining the above equations gives the claim.
\end{proof}
\end{lem}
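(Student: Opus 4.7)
The plan is to carry out a direct chain-rule calculation. First I would observe that, by Lemma~\ref{l:stationarity} applied in the $(X,W)$ coordinates, the pair $(X_y,W_y)$ is a stationary point of $G(\cdot,\cdot,y)$ for each $y$ in the relevant range, so $G_X(X_y,W_y,y)=G_W(X_y,W_y,y)=0$. Differentiating $\FF(y)=G(X_y,W_y,y)$ once in $y$ and using these two vanishing relations immediately yields $\FF'(y)=G_y(X_y,W_y,y)$, matching the Bethe-style identity $\FF'(y)=-\ee(y)$ noted earlier in \S\ref{ss:variational.discussion}.

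Next I would differentiate $\FF'(y)=G_y(X_y,W_y,y)$ once more in $y$ via the chain rule, which gives $\FF''(y)=G_{yy}+G_{Xy}X'(y)+G_{Wy}W'(y)$. To identify $(X'(y),W'(y))$, I would differentiate the two stationarity conditions $G_X=0$ and $G_W=0$ in $y$ as identities in $y$; this produces the linear system
\[
\begin{pmatrix} G_{XX} & G_{XW}\\ G_{XW} & G_{WW}\end{pmatrix}\begin{pmatrix} X'(y)\\ W'(y)\end{pmatrix} = -\begin{pmatrix} G_{Xy}\\ G_{Wy}\end{pmatrix},
\]
exactly the system recorded in the paper's own computation. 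Under the hypothesis $(G_{XW})^2-G_{XX}G_{WW}\neq 0$, the Hessian of $G$ in $(X,W)$ is invertible, and Cramer's rule (or equivalently multiplying by the adjugate) solves for $X'(y)$ and $W'(y)$ explicitly.

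Substituting these expressions for $X'(y)$ and $W'(y)$ back into $\FF''(y)=G_{yy}+G_{Xy}X'(y)+G_{Wy}W'(y)$ and collecting terms gives precisely
\[
\FF''(y)=G_{yy}+\frac{G_{XX}(G_{Wy})^2-2G_{XW}G_{Xy}G_{Wy}+G_{WW}(G_{Xy})^2}{(G_{XW})^2-G_{XX}G_{WW}},
\]
which is the claimed formula. The only non-bookkeeping step is justifying that $y\mapsto(X_y,W_y)$ is differentiable, but this is immediate from the implicit function theorem applied at the stationary points of $G(\cdot,\cdot,y)$ under the non-degeneracy assumption on the Hessian — which is exactly the hypothesis placed in the statement. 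I do not see any real obstacle; the content of the lemma is purely the implicit-differentiation identity, and the quantitative sign/size of $\FF''(y)$ (the actual content of Proposition~\ref{p:second.derivative}) is left for the subsequent lemmas that estimate the individual partial derivatives of $G$.
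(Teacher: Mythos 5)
Your proposal is correct and follows essentially the same route as the paper: exploit $G_X=G_W=0$ at $(X_y,W_y)$ (from Lemma~\ref{l:stationarity}), differentiate these identities in $y$ to get the linear system for $(X'(y),W'(y))$, solve it under the nondegeneracy hypothesis, and substitute into $\FF''(y)=G_{yy}+G_{Xy}X'(y)+G_{Wy}W'(y)$. Your additional remark that differentiability of $y\mapsto(X_y,W_y)$ is supplied by the implicit function theorem under the same Hessian nondegeneracy is a minor point the paper leaves implicit, but it changes nothing in the argument.
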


To simplify the above,
note that $\dZ(W)$ is the moment-generating function of the number $\dL$ of $\set{\zro,\one}$-warnings incoming to a variable; while $\hZ(X)$ is the moment-generating function of the number $\hJ$ of $\free$-warnings incoming to a clause. Similarly, $\eZ(X,W)$ is the joint moment-generating function of the pair $(L,J)$ where $L$ is the indicator that the clause-to-variable warning on the edge is in $\set{\zro,\one}$, while $J$ is the indicator that the variable-to-clause warning on the edge is $\free$. We therefore have, for example,
	\[
	(\log\hZ)_{Xy}
	=\f{\hZ_{Xy}}{\hZ}
	-\bigg(\f{\hZ_X}{\hZ}\bigg)
	\bigg(\f{\hZ_y}{\hZ}\bigg)
	=\av{\hph\hJ}-\av{\hph}\av{\hJ}
	\equiv\Cov(\hph,\hJ)
	\]
where $\av{}$ is the average with respect to the measure $\hat{\nu}_y$ of \eqref{e:sp.clause.marginal}.
To simplify further we decompose $\dL=L_1+\ldots+L_d$ where $L_j$ is the indicator that the $j$-th incoming warning to the variable is in $\set{\zro,\one}$; and similarly
$\hJ=J_1+\ldots +J_k$ where $J_i$ is the indicator
that the $i$-th incoming warning to the clause is $\free$.
Then
	\[
	G_{Xy}
	=d\bigg\{ \f1k \Cov(\hph,\hJ)-\Cov(\eph,J)
	\bigg\}
	= d\bigg\{ \Cov(\hph,J_1)-\Cov(\eph,J)\bigg\}
	=0\,.
	\]
This allows us to simplify the result of Lemma~\ref{l:second.derivative}:
as long as $(G_{XW})^2-G_{XX}G_{WW}\ne0$,
we have
	\beq\label{e:second.derivative.simp}
	\FF''(y)
	=G_{yy}
	+\f{G_{XX}(G_{Wy})^2 }{(G_{XW})^2-G_{XX}G_{WW}}\,.
	\eeq
We next turn to the evaluation of $(G_{XW})^2-G_{XX}G_{WW}$.

\begin{lem}\label{l:determinant}
In the setting of Proposition~\ref{p:sp} we have
	\beq\label{e:GXW.GXX}
	G_{XW}
	=\f{(1-w)G_{XX}}{(k-1)x}
	= -d\Cov(J,L)
	\asymp -\CC^{1/2} k x\,.
	\eeq
The quantity $(G_XW)^2-G_{XX}G_{WW}$ is positive:
moreover, recalling $\branch\equiv(d-1)(k-1)$, we have
	\beq\label{e:determinant.ratio}
	\bigg|\f{G_{XX}G_{WW}}{(G_{XW})^2}\bigg|
	=\bigg|
	\f{\branch \Cov(L_1,L_2)\Cov(J_1,J_2)}
		{\Cov(J,L)^2}
	\bigg|
	\le 
	O\bigg( \f1{e^{\Omega(k)}}\bigg)\,.
	\eeq
In particular, positivity of $(G_XW)^2-G_{XX}G_{WW}$ justifies the formula \eqref{e:second.derivative.simp}.

\begin{proof}
Following similar notation as above, we have
	{\setlength{\jot}{0pt}\begin{align*}
	- G_{XW}
	&=  d(\log\eZ)_{XW}
	= d\Cov(J,L)\,,\\
	G_{WW}
	&=(\log\dZ)_{WW}- d(\log\eZ)_{WW}
	=\Var\dL - d\Var L
	=d(d-1)\Cov(L_1,L_2)\,,
	\\
	G_{XX}
	&=\alpha(\log\hZ)_{XX}- d(\log\eZ)_{XX}
	=\alpha\Var\hJ-d\Var J
	=d(k-1)\Cov(J_1,J_2)\,.
	\end{align*}}%	
Recall  that
$\efz=1-w(1-x)(1-\AM)$. This allows us to evaluate
	\[
	\Cov(J,L)
	=-\Cov(J,1-L)
	= -\f{\dq(\free)\hq(\free)}{\efz}
	\bigg\{1 - \f1{\efz}\bigg\}
	= \f{xw(1-w)(1-x)(1-\AM)}{(\efz)^2}\,,
	\]
from which we conclude
$d\Cov(J,L)\asymp dxw(1-\AM)
\asymp \CC^{1/2} k x$, where the last step uses
\eqref{e:y.restriction}.
 Similarly,
	\[
	\Cov(J_1,J_2)
	=\f{\dq(\free)^2}{\hfz}
	\bigg\{1-\f1{\hfz}\bigg\}
	=
	-\f{\dq(\free)}{\hq(\free)}\Cov(J,L)
	=
	-\f{x}{1-w}\Cov(J,L)
	\,.
	\]
This implies \eqref{e:GXW.GXX}.
Combining the above calculations, we find
	\beq\label{e:second.partials.ratio.simp}
	\f{G_{XX}G_{WW}}{(G_{XW})^2}
	= \f{\branch \Cov(L_1,L_2)\Cov(J_1,J_2)}
		{\Cov(J,L)^2}
	= -\f{\branch \Cov(L_1,L_2)x}
		{\Cov(J,L)(1-w)}
	\asymp
	- \f{dk\Cov(L_1,L_2)}{w(1-\AM)}\,.
	\eeq
We next turn to the estimation of $\Cov(L_1,L_2)
=\Cov(1-L_1,1-L_2)$. Note that $\av{L_1}=\av{L_2}=\av{L}$. We have
	\begin{align}\label{e:av.one.minus.L}
	\av{1-L}
	&=\f{\hq(\free)}{\efz}
	=\f{1-w}{1-w(1-x)(1-\AM)}\,,\\
	\av{(1-L_1)(1-L_2)}
	&=\f{\hq(\free)^2(S_0 + 2S_{\ge1})}{\dfz}
	= \f{(1-w)^2(S_0 + 2S_{\ge1})}{\dfz}\,.
	\nonumber
	\end{align}
Recall from \eqref{e:dz.S} 
and \eqref{e:Zd.Zdminusone.reln} that $\dz=  S_0 + 2S_{\ge1}(1-w(1-\AM))$
and $\dfz/\dz = \efz=1-w(1-x)(1-\AM)$. Then
	\begin{align*}
	\f{\Cov(L_1,L_2)}{(1-w)^2}
	&= \f{1}{\dfz}
	\bigg\{
	S_0 + 2S_{\ge1}(1-w(1-\AM))
		+2S_{\ge1}w(1-\AM)
	\bigg\}
		- \bigg(\f{1}{\efz}\bigg)^2\\
	&=\f1{\efz}
	\bigg\{
	\f{2S_{\ge1} w(1-\AM)}{S_0 + 2S_{\ge1}(1-w(1-\AM))}
	-\f{w(1-x)(1-\AM)}{1-w(1-x)(1-\AM)}
	\bigg\}\\
	&=\f{(1- \acute{x})w(1-\AM)}
		{1-(1-\acute{x})w(1-\AM)}
	-\f{(1-x)w(1-\AM)}{1-(1-x)w(1-\AM)}
	\asymp w(1-\AM)(x-\acute{x})\,,
	\end{align*}
where $\acute{x}\equiv 2S_{\ge1}/(S_0+2S_{\ge1})$.
To compare $x$ with $\acute{x}$, note
	\[
	x
	= \f{\hq(\free) S_0 + 2\hq(\one) S_1/e^y}{\dz}
	= \f{(1-w) S_0 + w S_1/e^y}{
	S_0 + 2S_{\ge1}(1-w(1-\AM))
	}= \acute{x}\bigg\{
	1 + O(w (1-\AM))\bigg\}	\]
where the last estimate is easy if $y\asymp1$, and uses Lemma~\ref{l:S.estimates} if $y$ is small. Substituting into \eqref{e:second.partials.ratio.simp} gives
 	\[
	\bigg|\f{G_{XX}G_{WW}}{(G_{XW})^2}\bigg|
	\asymp dk
	w (1-\AM) x 
	\asymp \CC^{1/2} k^2 x\,,
	\]
where the last step uses \eqref{e:y.restriction} again.
The assertion
\eqref{e:determinant.ratio} follows since $\dq\in\MMstar$ by Proposition~\ref{p:sp}.
\end{proof}
\end{lem}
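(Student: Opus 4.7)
The plan is to interpret the second partial derivatives of $G$ as (co)variances under the measures $\dot{\nu}_y,\hat{\nu}_y,\bar{\nu}_y$ of \eqref{e:sp.var.marginal}--\eqref{e:sp.edge.marginal}, exploiting that $G$ is a difference of logarithms of partition functions: $-W$ is conjugate to the indicator $L$ that a clause-to-variable warning lies in $\set{\zro,\one}$, while $-X$ is conjugate to the indicator $J$ that a variable-to-clause warning equals $\free$. This immediately yields $G_{XW}=-d\Cov_{\bar\nu_y}(J,L)$, $G_{WW}=\Var\dL-d\Var L$, and $G_{XX}=\alpha\Var\hJ-d\Var J$. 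Decomposing $\dL=L_1+\cdots+L_d$ and $\hJ=J_1+\cdots+J_k$ by exchangeability, and noting that the SP fixed-point equation (via \eqref{e:Zd.Zdminusone.reln} together with $\hfz=\efz$ from \eqref{e:clause.edge.eq}) matches the marginal of $L_1$ under $\dot\nu_y$ with the edge marginal of $L$ under $\bar\nu_y$ (and likewise for $J_1$), the ``diagonal'' variance pieces cancel, leaving $G_{WW}=d(d-1)\Cov(L_1,L_2)$ and $G_{XX}=d(k-1)\Cov(J_1,J_2)$.

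The first identity in \eqref{e:GXW.GXX} will then follow from a direct computation of $\Cov(J,L)$ and $\Cov(J_1,J_2)$. Under $\bar\nu_y$, the event $\set{J=1,L=1}$ requires $(\dw,\hw)=(\free,\zro\textup{ or }\one)$ and so carries no $\eph$-penalty; under $\hat\nu_y$, both $\set{\dw_1=\dw_2=\free}$ and $\set{\dw_1=\free}$ force $\hph_k=0$. A short algebraic manipulation using $\efz=\hfz=1-(1-x)w(1-\AM)$ produces the proportionality $\Cov(J_1,J_2)=-[x/(1-w)]\Cov(J,L)$, from which the relation $G_{XW}=(1-w)G_{XX}/[(k-1)x]$ is immediate. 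The scaling $|G_{XW}|\asymp\CC^{1/2}kx$ then comes by substituting $dw(1-\AM)\asymp\CC^{1/2}k$ from \eqref{e:y.restriction}.

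The main obstacle is estimating $\Cov(L_1,L_2)$, which is where the delicate cancellation controlling the determinant ratio is hidden. I would expand $\av{(1-L_1)(1-L_2)}$ under $\dot\nu_y$ by summing over the configuration of the remaining $d-2$ incoming warnings, expressing the result in terms of the partition-sum moments $S_0,S_{\ge 1}$ of \eqref{e:def.S.i}--\eqref{e:def.S.geq.i}. Using $\dfz=\efz\dz$ and $\dz=S_0+2S_{\ge 1}(1-w(1-\AM))$ from \eqref{e:dz.S}, the covariance reduces to a difference of two terms of the form $\xi w(1-\AM)/(1-\xi w(1-\AM))$, one with $\xi$ equal to an auxiliary ratio $\acute x$ built from the $S_i$, the other with $\xi=1-x$. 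The crucial point is that the SP fixed-point relation \eqref{e:variable.w.to.x}, when rewritten in terms of these sums, forces $\acute x$ and $1-x$ to agree up to a multiplicative correction of size $O(w(1-\AM))$; this is where Lemma~\ref{l:S.estimates} enters, through the comparison $S_1\GM/S_0$. Consequently the covariance picks up an extra factor of $w(1-\AM)$ beyond the naive estimate, so that $\Cov(L_1,L_2)$ is of order $(1-w)^2 w^2(1-\AM)^2 x$ up to lower-order corrections.

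Putting everything together will yield
\[\left|\f{G_{XX}G_{WW}}{(G_{XW})^2}\right|
=\left|\f{\branch\Cov(L_1,L_2)\Cov(J_1,J_2)}{\Cov(J,L)^2}\right|
\asymp dk\,w(1-\AM)\,x\asymp\CC^{1/2}k^2x,\]
which is $O(e^{-\Omega(k)})$ because Proposition~\ref{p:sp} puts $\dq_y$ in $\MMstar$, forcing $x\le e^{-\Omega(k)}/\CC^{1/2}$. Since this is strictly less than one, $(G_{XW})^2-G_{XX}G_{WW}>0$, validating the simplified formula \eqref{e:second.derivative.simp}.
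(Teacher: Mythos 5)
Your outline reproduces the paper's proof essentially step for step: interpret $G_{XW},G_{WW},G_{XX}$ as (co)variances of the indicators $J,L$ under $\bar\nu_y,\hat\nu_y,\dot\nu_y$; cancel the diagonal pieces via the SP fixed point to get $G_{WW}=d(d-1)\Cov(L_1,L_2)$ and $G_{XX}=d(k-1)\Cov(J_1,J_2)$; evaluate $\Cov(J,L)$ and $\Cov(J_1,J_2)$ directly and deduce \eqref{e:GXW.GXX}; and express $\Cov(L_1,L_2)$ in terms of $S_0,S_{\ge 1}$, isolating the cancellation between $x$ and the auxiliary ratio built from the $S_i$, which is controlled by Lemma~\ref{l:S.estimates}. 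One phrase deserves tightening: you write that the SP fixed point ``forces $\acute x$ and $1-x$ to agree up to a multiplicative correction of size $O(w(1-\AM))$.'' Read literally, multiplicative closeness of two quantities of order one gives only \emph{additive} error $O(w(1-\AM))$, which would lose the crucial factor of $x$ in $\Cov(L_1,L_2)$. What you actually need (and what your stated conclusion $\Cov(L_1,L_2)\asymp(1-w)^2w^2(1-\AM)^2x$ presupposes) is that the two \emph{small} quantities, $x$ and $2S_{\ge 1}/(S_0+2S_{\ge 1})$, agree up to a factor $1+O(w(1-\AM))$, equivalently that their difference is $O(x\,w(1-\AM))$. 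Your invocation of Lemma~\ref{l:S.estimates} via the ratio $S_1\GM/S_0$ is exactly what delivers this, so the substance is right; just state the comparison at the correct scale.
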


\begin{lem}
In the setting of Proposition~\ref{p:sp} we have
	\[
	0\le -\f{G_{XX}(G_{Wy})^2 }{(G_{XW})^2}
	\le \f{\CC^{1/2}}{e^y e^{\Omega(k)}}\,.
	\]

\begin{proof}
We start by noting that
if $\dw_1=\dWP(\hw_{2:d})$, then we can decompose
	\[
	\dph(\hw_{1:d})
	=\min\bigg\{
	\#\set{1\le i\le d : \hw_i=\zro},
	\#\set{1\le i\le d : \hw_i=\one}
	\bigg\}
	=\eph(\hw_1,\dw_1)
	+ \dph_{d-1}(\hw_{2:d})\,.
	\]
We abbreviate the above as
$\dph=\eph_1+\bph$ where
$\eph_1\equiv\eph(\hw_1,\dw_1)$
and $\bph\equiv \dph_{d-1}(\hw_{2:d})$. Then
	\[
	G_{Wy}
	=\Cov(\dL,\dph)
	-d\Cov(L,\eph)
	=d\bigg\{
	\Cov(L_1,\dph)-\Cov(L,\eph)
	\bigg\}
	=d\Cov(L,\bph)\,.
	\]
Recall from \eqref{e:arith.Z}
and \eqref{e:geom.Z} the definitions of $\LAM\equiv L_{d-1,\AM}$ and $\LGM\equiv L_{d-1,\GM}$.
Recalling Lemma~\ref{l:Ld.estimates} and 
\eqref{e:energy.vertex.term.AM}, denote
	\[
	e_\zro\equiv\E\bigg\{
	\E\bigg( X\,\bigg|\,X<\f{\LAM}{2}\bigg)
	\bigg\}
	= \f{(d-1)w}{2e^y}
	\bigg\{
	1 - O\bigg( \f1{\CC^{1/2}e^{\Omega(k)}} \bigg)
	\bigg\}
	\]
as well as $e_\free\equiv (\E\LGM)/2$. We then evaluate
	\begin{align*}
	\av{(1-L_1)\bph}
	&=\f{\hq(\free)}{\dfz}
	 \bigg(
	 2\dz_\zro e_\zro + \dz_\free e_\free
	 \bigg)
	= \f{(1-w)}{\efz}
	\bigg\{
	(1-x)e_\zro + x e_\free
	\bigg\}
	\,,\\
	\av{\bph}
	&= \f{2\hq(\zro)}{\dfz}
	\bigg( \dz_\zro e_\zro
		+\dz_\free e_\free+\f{\dz_\one e_\zro}{e^y}
	 \bigg)
	 +\f{\hq(\free)}{\dfz}
	 \bigg(
	 2\dz_\zro e_\zro + \dz_\free e_\free
	 \bigg)=
	\f{1}{\efz}
	\bigg\{
	(1-x) e_\zro (1-w(1-\AM))+x e_\free
	\bigg\}\,.
	\end{align*}
We already calculated $\av{1-L}$ above
in \eqref{e:av.one.minus.L}, and combining these gives
\begin{align*}
	\f{\Cov(1-L_1,\bph)}{(1-w)/\efz}
	&=
	\bigg\{(1-x) e_\zro + x e_\free\bigg\}
	-\f1{\efz}
	\bigg\{(1-x) e_\zro (1-w(1-\AM))+x e_\free\bigg\}\\
	&=\f{xw(1-x)(1-\AM)}{\efz}
	( e_\zro-e_\free)
	=O\bigg(xw(1-\AM) \f{\CC^{1/2}k}{e^{y/2}}\bigg)
	=O\bigg(
	\f{kxw}{e^{y/2}}
	\bigg)\,,
	\end{align*}
--- the second-to-last step is similar to \eqref{e:energy.vertex.term.GM.large}~and~\eqref{e:energy.vertex.term.GM.small}, and the last step is by \eqref{e:y.restriction}. Combining with \eqref{e:GXW.GXX} gives
	\[
	-\f{G_{XX}(G_{Wy})^2}{(G_{XW})^2}
	\asymp-\f{ kx (G_{Wy})^2}{G_{XW}}
	\asymp\f{(G_{Wy})^2}{\CC^{1/2}}
	\asymp
	\f{\CC^{3/2} k^4 x^2}{e^y}\,.
	\]
The result follows by recalling $\dq\in\MMstar$.
\end{proof}
\end{lem}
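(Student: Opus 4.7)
The plan is to combine the estimate $G_{XW} \asymp -\CC^{1/2} k x$ from Lemma~\ref{l:determinant} with a separate bound on $G_{Wy}$ that exploits the additive structure of the variable potential. By \eqref{e:GXW.GXX} we have $G_{XX}/G_{XW} = (k-1)x/(1-w)$, so the quantity of interest rewrites as
\[
-\f{G_{XX}(G_{Wy})^2}{(G_{XW})^2}
\asymp \f{(G_{Wy})^2}{\CC^{1/2}},
\]
and it therefore suffices to prove $|G_{Wy}| \le \CC^{1/2} e^{-y/2}/e^{\Omega(k)}$.

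To bound $G_{Wy}$, I would peel off a single incoming edge at the variable. Writing $\dL = L_1+\cdots+L_d$ with $L_j = \mathbf{1}\{\hw_j \ne \free\}$ and noting the decomposition $\dph_d(\hw_{1:d}) = \eph(\hw_1,\dw_1)+\dph_{d-1}(\hw_{2:d})$ with $\dw_1 = \dWP(\hw_{2:d})$, the difference $G_{Wy} = \Cov(\dL,\dph_d) - d\Cov(L,\eph)$ telescopes: by exchangeability of the $d$ incoming edges the $\eph_1$ contribution cancels exactly (this is essentially the same cancellation that enforced $G_{Xy}=0$ above), leaving $G_{Wy} = d\,\Cov(L_1,\bph)$ with $\bph \equiv \dph_{d-1}(\hw_{2:d})$.

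The main step is then to evaluate this residual covariance under $\hat\nu_y$. Since $\hw_1$ is independent of $\hw_{2:d}$ in the product structure of $\hat\nu_y$, the conditional averages $\av{(1-L_1)\bph}$ and $\av{\bph}$ both factor, and each can be expressed via two basic conditional energies on a $(d-1)$-edge tree: $e_\zro$, the expected number of violations at a frozen root (whose near-optimal value $\approx (d-1)w/(2e^y)$ comes from Lemma~\ref{l:Ld.estimates}), and $e_\free \equiv (\E\LGM)/2$. After algebraic simplification using the identities derived in \S\ref{ss:stationarity} (in particular $\dfz/\dz = \efz$), the covariance collapses to a multiple of $xw(1-\AM)(e_\zro-e_\free)/\efz$, in which the prefactor $xw(1-\AM)$ is already small by \eqref{e:y.restriction}.

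The main obstacle will be the sharp comparison $|e_\zro-e_\free|$: although both quantities sit near $dw/(2e^y)$, one must show their difference is only $O(\CC^{1/2}k/e^{y/2})$, rather than a constant fraction of $dw/e^y$. This is the same type of bound underlying \eqref{e:energy.vertex.term.GM.large}--\eqref{e:energy.vertex.term.GM.small} and requires splitting into two regimes according to whether $\lgm$ is large or $\lgm = O(1)$ (the latter forcing $\CC\asymp 1$ by \eqref{e:y.restriction}). Chaining these estimates gives $|G_{Wy}| \asymp \CC^{3/2} k^2 x/e^y$; substituting back and using $x = O(e^{-\Omega(k)}/\CC^{1/2})$ from $\dq\in\MMstar$ (Proposition~\ref{p:sp}) yields the claimed exponential smallness.
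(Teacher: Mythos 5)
Your outline reproduces the paper's argument almost step for step: reduce via \eqref{e:GXW.GXX} and the $G_{XW}\asymp -\CC^{1/2}kx$ estimate to a bound on $|G_{Wy}|$, peel off one incoming edge to write $\dph_d=\eph_1+\bph$, exploit the consequent cancellation to obtain $G_{Wy}=d\Cov(L_1,\bph)$, then express the covariance through the two conditional energies $e_\zro,e_\free$ and bound $|e_\zro-e_\free|=O(\CC^{1/2}k/e^{y/2})$ by the two-regime splitting of \eqref{e:energy.vertex.term.GM.large}--\eqref{e:energy.vertex.term.GM.small}. Two small corrections. First, the independence claim for $\hw_1$ is not literally true: the relevant measure is $\dot\nu_y$ (not $\hat\nu_y$), and the reweighting factor $\exp\{-y\dph_d\}$ couples $\hw_1$ to $\hw_{2:d}$ through $\dw_1=\dWP(\hw_{2:d})$; what saves you is only \emph{conditional} independence given $\dw_1$, which is what the paper's case-by-case evaluation of $\av{(1-L_1)\bph}$ and $\av{\bph}$ actually uses. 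Second, the final displayed magnitude $|G_{Wy}|\asymp \CC^{3/2}k^2 x/e^y$ is an arithmetic slip: plugging $1-\AM\asymp 1/\CC^{1/2}$ and $|e_\zro-e_\free|=O(\CC^{1/2}k/e^{y/2})$ into $d\cdot xw(1-\AM)(e_\zro-e_\free)$ yields $|G_{Wy}|\asymp \CC k^2 x/e^{y/2}$ (as in the paper, where $(G_{Wy})^2/\CC^{1/2}\asymp\CC^{3/2}k^4x^2/e^y$). This is exactly the strength needed for the target $|G_{Wy}|\le\CC^{1/2}e^{-y/2}/e^{\Omega(k)}$ you correctly identified as sufficient; the stated $\CC^{3/2}k^2x/e^y$ version would give a strictly weaker (and possibly insufficient) conclusion because it implicitly uses $|e_\zro-e_\free|\asymp dw/e^y$, which is the very loss you pointed out must be avoided.
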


\begin{lem}\label{l:Gyy}
In the setting of Proposition~\ref{p:sp} we have
	\[
	G_{yy} 
	\asymp \f{d}{k} \Var\eph
	\asymp \f{\CC}{e^y}\,.
	\]
\begin{proof}
As before, we write $\dph$ for the random variable $\dph(\hw_{1:d})$ where $\hw_{1:d}$ is sampled from the measure $\dot{\nu}_y$ of \eqref{e:sp.var.marginal}
and write $\eph$ for the random variable $\eph(\dw,\hw)$ where $(\dw,\hw)$ is sampled from the measure $\bar{\nu}_y$ of \eqref{e:sp.edge.marginal}. We then have
	\[
	G_{yy}
	= (\log\dZ)_{yy}-d\bigg(1-\f1k\bigg) (\log\eZ)_{yy}
	= \Var \dph
	-d\bigg(1-\f1k\bigg)\Var \eph\,.
	\]
Let $\dF\equiv \Ind{\dWP(\hw_{1:d})=\free}$,
$F\equiv \Ind{\dw=\free}$,
$\dH\equiv 1-\dF$, and $H\equiv 1-F$. Then
	\[
	\dph
	= \dH \sum_{i=1}^d \eph_i
	+ \f{\dF}{2}\sum_{i=1}^d \eph_i\,,
	\]
while $\eph= F\eph + H\eph$. This allows us to expand
	\begin{align*}
	G_{yy}
	&=d\bigg\{\Var(H\eph)
	+(d-1)\Cov(\dH\eph_1,\dH\eph_2)
	+\f{1}{4}\Var(F\eph)
	+\f{(d-1)}{4}\Cov(\dF\eph_1,\dF\eph_2)
	- d \av{H\eph}\av{F\eph}\bigg\}\\
	&\qquad -d\bigg\{
	\Var(H\eph)+\Var(F\eph)-2\av{H\eph}\av{F\eph}
	\bigg\}
	+\f{d}{k}\Var\eph
	\end{align*}
Rearranging gives
	\begin{align*}
	\f{G_{yy}}{d}
	&=(d-1)\bigg\{
	\Cov(\dH\eph_1,\dH\eph_2)-\f{\av{H\eph}\av{F\eph}}{2}
	\bigg\}
	+\f{(d-1)}{4}
	\bigg\{
	\Cov(\dF\eph_1,\dF\eph_2)
	-2\av{H\eph}\av{F\eph}
	\bigg\}
	\\
	&\qquad -\f34 \Var(F\eph)
	+\av{H\eph}\av{F\eph}
	+\f1k\Var\eph\,.
	\end{align*}
Recall from \eqref{e:def.S.i}
and \eqref{e:def.S.geq.i} the definitions of $S_i$ and $S_{\ge1}$. We then evaluate
	\begin{align*}
	\av{\dH\eph_1\eph_2}
	&= \av{H\eph}
		- \f{w(1-w) S_{\ge2}}{e^y\dfz}
	= \f{w^2 S_{\ge3}}{2 e^{2y}\dfz}
	\,,\\
	\av{F\eph_1\eph_2}
	&= \av{F\eph}-\f{w(1-w)S_1}{e^y\dfz}
	= \f{w^2}{2 e^y}
	\bigg(\f{S_0}{\dfz} + \f{S_2}{e^y\dfz} \bigg)\,.
	\end{align*}
Recall from \eqref{e:dz.S}  and \eqref{e:Zd.Zdminusone.reln} that $\dz=  S_0 + 2S_{\ge1}(1-w(1-\AM))$ and $\dfz/\dz = \efz=1-w(1-x)(1-\AM)$. Recall also Lemma~\ref{l:S.estimates}, and note that combining \eqref{e:S.estimate.small.l}~with~\eqref{e:S.estimate.large.l} gives in general $S_i(\GM)^i \le O(\min\set{1,(\lgm)^i}) S_0$. We then have
$\av{H\eph}\asymp w/e^y$ and
$\av{F\eph}\asymp w^2 x/e^y$, so
	\[\f1k\Var\eph
	\asymp \f{\av{H\eph}}{k}
	\asymp \f{w}{k e^y}\,.\]
We will argue that this is the main contribution, so that $G_{yy} \ge \Omega(\alpha \Var\eph)$. To this end, note
	\[
	\f{\Var(F\eph)}{k^{-1}\Var\eph}
	\asymp \f{ \av{F\eph} }{k^{-1}\Var\eph}
	\asymp \f{ke^y}{w}\f{w^2 x}{e^y}
	\asymp kw x \le
	kw \le \f1{e^{\Omega(k)}}\,.
	\]
The remaining contributions to $G_{yy}$ require more careful analysis. First we expand
	\begin{align*}
	A&\equiv
	\Cov(\dH\eph_1,\dH\eph_2)-\f{\av{H\eph}\av{F\eph}}{2}
	=\f{w^2 S_{\ge3}}{2 e^{2y}\dfz}
	-\bigg\{1+O(w)\bigg\}
		\f{wS_{\ge2}}{e^y\dfz}
		\bigg\{
		\f{wS_{\ge2}}{e^y\dfz}
		+\f12
	\f{wS_1}{e^y\dfz} \bigg\} \\
	&=\f{w^2}{ e^{2y}\dfz}\bigg\{
	 \f{S_{\ge3}}{2}
	-\f{[1+O(w)]S_{\ge2}}{S_0 + 2 S_{\ge1}}
		\bigg(
		S_{\ge2}+\f{S_1}{2e^y} \bigg)
	\bigg\}
	= \f{w^2}{2 e^{2y}\dfz}
	\bigg\{
	S_1-S_2
	+\f{S_0}{2}
	-\f{S_1}{2e^y}
	+O(w \dz)
	\bigg\}
	\end{align*}
where the last step is obtained by expanding
$S_{\ge3}=S_{\ge1}-S_1-S_2$, etc., and simplifying. Similarly,
	\begin{align*}
	B&\equiv
	\Cov(\dF\eph_1,\dF\eph_2)
	-2\av{H\eph}\av{F\eph}
	=\f{w^2}{2 e^y}
	\bigg(\f{S_0}{\dfz} + \f{S_2}{e^y\dfz} \bigg)
	-\bigg\{1+O(w)\bigg\}
	\f{wS_1}{e^y\dfz}
	\bigg\{
	\f{wS_1}{e^y\dfz}+ \f{2w S_{\ge2}}{e^y\dfz}
	\bigg\}\\
	&=\f{w^2}{ e^y \dfz}
	\bigg\{
	\f{S_0}{2} + \f{S_2}{2e^y} 
	-\f{S_1}{e^y}
	+O(x(x+w) \dz)
	\bigg\}\,.
	\end{align*}
Recall Lemma~\ref{l:S.estimates}.
If $y\ge\Omega(1)$, then $\CC\asymp1$, and we can bound
	\begin{align*}
	-\f{(d-1) A }{k^{-1}\Var\eph}
	&\le O\bigg(
	\f{ke^y}{w}\f{d w^2}{e^{2y}\dfz}
	\bigg\{S_2+\f{S_1}{2e^y}+O(w \dz)
	\bigg\}\bigg)
	\le O\bigg(k^2
	\bigg\{\f{S_2}{e^y\dfz}+\f{w}{e^y}
	\bigg\}\bigg)\\
	&\le
	O\bigg(k^2
	\bigg\{x(\lgm)^2
	+\f{w}{e^y}
	\bigg\}\bigg)
	\le O\bigg( \f{k^2w}{e^y}
	\bigg\{k^2 wx
	+1
	\bigg\}\bigg)
	\le O\bigg( \f{k^2w}{e^y}\bigg)
	\le \f1{e^{\Omega(k)}}\,,\\
	-\f{(d-1) B }{k^{-1}\Var\eph}
	&\le O\bigg(
	\f{ke^y}{w} \f{dw^2 S_1}{e^{2y}\dfz}\bigg)
	\le O\bigg(
	\f{k^2}{e^{y/2}} \f{S_1}{e^{y/2}\dfz}\bigg)
	\le O\bigg(\f{k^2 \lgm x }{e^{y/2}} \bigg)
	\le O\bigg(\f{k^3  x }{e^y} \bigg)
	\le \f1{e^{\Omega(k)}}\,.
	\end{align*}
If $y$ is small (in which case $\lgm$ is large), we use the more precise estimate \eqref{e:S.estimate.large.l} to obtain
	\[\f{(d-1)A}{k^{-1}\Var\eph}
	= O\bigg(\f{ke^y}{w}
	\f{dw^2 S_0 y}{e^{2y}\dfz}\bigg)
	= O\bigg(\f{\CC k^2 x y}{e^y}\bigg)
	=O (\CC^{1/2} k^2 x)
	\le O\bigg( \f1{e^{\Omega(k)}}\bigg)\,,
	\]
where the last estimate uses that $\dq\in\MMstar$. Similarly we have
	\[\f{(d-1)B}{k^{-1}\Var\eph}
	= O\bigg(\f{ke^y}{w}\f{dw^2xy}{e^y}\bigg)
	= O\bigg( \CC^{1/2} k^2 x \bigg)
	\le O\bigg( \f1{e^{\Omega(k)}}\bigg)\,.
	\]
The claim follows.
\end{proof}
\end{lem}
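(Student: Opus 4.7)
The plan is to express $G_{yy}$ as a sum of local variances under $\dot\nu_y$, $\hat\nu_y$, $\bar\nu_y$, use a symmetry between the clause and edge terms to eliminate one variance, and then analyze the remaining cancellation via the combinatorial sums of Lemma~\ref{l:S.estimates}. Since $G(X,W,y) = \log\dZ - d\log\eZ + (d/k)\log\hZ$ and each of $\dZ, \hZ, \eZ$ is a log-normalizing-constant of an exponentially $y$-tilted measure, differentiating twice with $(X,W)$ held fixed gives $G_{yy} = \Var_{\dot\nu}\dph - d\Var_{\bar\nu}\eph + \alpha\Var_{\hat\nu}\hph$. At the stationary point, the identity $\hfz_y = \efz_y$ from \eqref{e:clause.edge.eq} (which holds as a function of $y$ whenever $w = w(x)$) implies that $\hph$ and $\eph$ are equidistributed Bernoulli random variables, so $\Var\hph = \Var\eph$; hence $G_{yy} = \Var\dph - d(1-1/k)\Var\eph$. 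The target is consistent with $\alpha\Var\eph \asymp \alpha w/e^y \asymp \CC/e^y$, provided the leading part of $\Var\dph$ matches $d\Var\eph$ up to an additive $\Theta(\alpha\Var\eph)$.

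For this step I would use the combinatorial identity $\dph = (\dH + \dF/2)\sum_{i=1}^d \eph_i$, where $\dH = \Ind{\dWP(\hw_{1:d}) \in \set{\zro,\one}}$ and $\dF = 1 - \dH$. In the hard case the cavity outgoing warning $\dw_i = \dWP(\hw_{[d]\setminus i})$ equals the majority spin, so $\sum_i \eph_i$ counts the minority $\hw_i$, which is $\min(\ell_\zro,\ell_\one) = \dph$; in the free case one has $\ell_\zro = \ell_\one$ and $\dw_i$ switches with $\hw_i$, so every non-free $\hw_i$ contributes $\eph_i = 1$, giving $\sum_i \eph_i = \ell_\zro + \ell_\one = 2\dph$. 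Expanding $\Var\dph$ then yields $d$ diagonal terms which, combined with $-d\Var\eph$, leave the desired residual of order $(d/k)\Var\eph$, plus $d(d-1)$ off-diagonal covariances $\Cov(\dH\eph_1,\dH\eph_2)$ and $\Cov(\dF\eph_1,\dF\eph_2)$ together with a cross term involving $\av{\dH\eph}\av{\dF\eph}$.

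The main obstacle is to show that the off-diagonal covariances are negligible relative to $\alpha\Var\eph \asymp \CC/e^y$. I would compute $\E[\dH\eph_1\eph_2]$ and $\E[\dF\eph_1\eph_2]$ explicitly in terms of the sums $S_0, S_1, S_2, S_{\ge 2}, S_{\ge 3}$ of \eqref{e:def.S.i}, which encode the joint law of the remaining $d-2$ incoming warnings conditional on $\hw_1, \hw_2$. Using $\dz = S_0 + 2S_{\ge 1}(1 - w(1-\AM))$ and $\efz = 1 - (1-x)w(1-\AM)$, the covariances simplify to combinations like $S_2/(e^y \dfz)$ and $wS_1/(e^y\dfz)$ up to $w$-factors. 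Applying Lemma~\ref{l:S.estimates} in the two regimes --- $\lgm = O(1)$ when $y$ is large relative to $\log(\CC k)$, and $\lgm$ large otherwise --- together with the estimate $1-\AM \asymp 1/\CC^{1/2}$ from \eqref{e:y.restriction} and the bound $\dq \in \MMstar$ from Proposition~\ref{p:sp}, each off-diagonal contribution is bounded by $\CC/e^y$ times a factor $e^{-\Omega(k)}$. Collecting terms gives $G_{yy} \asymp \CC/e^y$.
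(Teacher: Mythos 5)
Your proposal is correct and takes essentially the same route as the paper's proof: the combinatorial identity $\dph = (\dH + \tfrac12\dF)\sum_{i=1}^d\eph_i$, the diagonal/off-diagonal covariance decomposition of $\Var\dph$, the evaluation of $\Cov(\dH\eph_1,\dH\eph_2)$ and $\Cov(\dF\eph_1,\dF\eph_2)$ via the sums $S_0,S_1,S_2,S_{\ge2},S_{\ge3}$, and the two-regime bound using Lemma~\ref{l:S.estimates} and $\dq\in\MMstar$ are all the same. The one place you are more explicit than the paper is the reduction from the three-term expansion $\Var\dph - d\Var\eph + (d/k)\Var\hph$ to the two-term form via $\Var\hph=\Var\eph$, which follows from $\hfz_y=\efz_y$ at $w=w(x)$ \eqref{e:clause.edge.eq}; the paper uses this identity tacitly when it writes $G_{yy}=\Var\dph-d(1-1/k)\Var\eph$.
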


\begin{proof}[Proof of Proposition~\ref{p:second.derivative}]
Follows by combining Lemmas~\ref{l:second.derivative}--\ref{l:Gyy}.
\end{proof}

{\raggedright
\bibliographystyle{alphaabbr}
\bibliography{refs}
}

\end{document}